\documentclass[11pt]{article}
\usepackage[a4paper,margin=1in]{geometry}
\usepackage{amsmath,amssymb,amsthm,mathtools,bm,mathrsfs}
\usepackage[round,authoryear]{natbib}
\let\cite\citet
\usepackage{hyperref}
\usepackage{enumitem}
\usepackage{setspace}
\usepackage{booktabs}
\usepackage{lmodern}
\hypersetup{colorlinks=true,linkcolor=blue,citecolor=blue,urlcolor=blue}

\newtheorem{theorem}{Theorem}[section]
\newtheorem{proposition}{Proposition}[section]
\newtheorem{lemma}{Lemma}[section]
\newtheorem{corollary}{Corollary}[section]
\newtheorem{remark}{Remark}[section]
\newtheorem{assumption}{Assumption}[section]

\newcommand{\R}{\mathbb{R}}
\newcommand{\E}{\mathbb{E}}
\newcommand{\Prob}{\mathbb{P}}
\newcommand{\Var}{\mathrm{Var}}
\newcommand{\Cov}{\mathrm{Cov}}
\newcommand{\Corr}{\mathrm{Corr}}
\newcommand{\tr}{\mathrm{tr}}

\newcommand{\ind}{\mathbf 1}
\newcommand{\Poi}{\mathrm{Poi}}
\newcommand{\ph}{\phi}

\newcommand{\bPhi}{\bar\Phi}
\newcommand{\epsn}{\varepsilon_n}
\newcommand{\deln}{\delta_n}
\newcommand{\Tcal}{\mathcal T}

\newcommand{\1}{\mathbf 1}

\title{High Dimensional Bootstrap and Asymptotic Expansion for the $k$-th Largest Coordinate}
\author{Long Feng\\
School of Statistics and Data Science, LEBPS, KLMDASR,\\
 AAIS and LPMC, Nankai University}
\date{}

\begin{document}
\maketitle

\begin{abstract}
We study bootstrap inference for the $k$th largest coordinate of a normalized sum of independent high-dimensional random vectors. Existing second-order theory for maxima does not directly extend to order statistics, because the event $\{T_{n,[k]}\le t\}$ is not a rectangle and its local structure is governed by exceedance counts rather than by a single boundary. We develop an approach based on factorial moments and weighted inclusion--exclusion that reduces the problem to a collection of rare-orthant probabilities and allows high-dimensional Edgeworth and Cornish--Fisher expansions to be transferred to the order-statistic setting. Under moment, variance, and weak-dependence conditions, we derive a second-order coverage expansion for wild-bootstrap critical values of the $k$th order statistic. In particular, a third-moment matching wild bootstrap achieves coverage error of order $n^{-1}$ up to logarithmic factors, and the same second-order accuracy is obtained for a prepivoted double wild bootstrap. We also show that the maximal-correlation condition can be replaced by a stationary Gaussian exponential-mixing assumption at the price of an explicit dependence remainder $r_d$, and this remainder can itself be of order $n^{-1}$ when the dimension is sufficiently large relative to the sample size. These results extend recent second-order Gaussian and bootstrap approximation theory from maxima to the $k$th order statistic in high dimension.
\end{abstract}

\medskip
\noindent\textbf{Keywords:} bootstrap coverage expansion; high-dimensional Gaussian approximation; $k$th order statistic; second-order accuracy; wild bootstrap.

\section{Introduction}

High-dimensional Gaussian approximation for maxima and rectangular probabilities is now a basic tool in modern high-dimensional inference. For the maximum of a sum of independent random vectors, the seminal work of \cite{CCK2013} established Gaussian approximation and Gaussian multiplier bootstrap validity when the dimension is allowed to be much larger than the sample size. This line of work was sharpened substantially by \cite{CCK2017}, who extended the approximation theory to hyperrectangles and improved the first-order rate. Later, \cite{DengZhang2020} showed that third-moment matching bootstrap procedures enjoy a better logarithmic dependence in the first-order bound, and \cite{Koike2021Notes} proved that the same logarithmic rate is already available for normal approximation. Among the general first-order results under mild moment assumptions, \cite{CCKK2022} further improved the error bound to an $n^{-1/4}$-type rate up to logarithmic factors. Under additional nondegeneracy or structural assumptions, nearly parametric $n^{-1/2}$ rates up to logarithmic losses are also available; see, for example, \citep{LopesLinMuller2020,FangKoike2021,CCKoike2023,FangKoike2024}.

A decisive recent development for maxima is the asymptotic expansion theory developed by \cite{Koike2026}. That paper developed high-dimensional Edgeworth and Cornish--Fisher expansions for maxima and related rectangular probabilities by combining Stein-kernel arguments, smoothing inequalities, and a careful analysis of Gaussian anti-concentration. As a consequence, \cite{Koike2026} obtained a second-order bootstrap coverage expansion and showed that, in several important regimes, the coverage error can be improved from the first-order scale to $O\!\left(\log^{a}(dn)/n\right)$ for a suitable constant $a>0$. In particular, for third-moment matching wild bootstrap, the maximum statistic becomes second-order accurate even without studentization under suitable covariance assumptions.

Compared with the theory for maxima, the literature for the $k$th largest coordinate is still sparse. Classical results on order statistics and extremes, such as \citep{FisherTippett1928,Mu1966,WattsRootzenLeadbetter1982}, do not address high-dimensional Gaussian approximation for sums of random vectors. On the Gaussian side, \cite{Kozbur2021} studied dimension-free anti-concentration inequalities for Gaussian order statistics. In the genuinely high-dimensional setting, \cite{DingLiShiSunZhang2026} established Gaussian and Gaussian multiplier bootstrap approximations for the $k$th largest coordinate and for more general functionals of the top-$k$ order statistics. For the $k$th largest coordinate, their Kolmogorov bounds are of order
\[
k^2\Bigl(\frac{B_n^2\log^5(pn)}{n}\Bigr)^{1/4},
\]
up to universal constants, and the bounds for general top-$k$ functionals are of even larger order. Therefore the currently available theory for the $k$th largest coordinate is still essentially first-order and does not provide a second-order coverage expansion comparable to the one available for maxima.

The purpose of the present paper is to fill this gap. We prove that the $k$th largest coordinate of a high-dimensional normalized sum also admits a Koike-type second-order bootstrap expansion. Our argument starts from the exact exceedance-count representation of the event $\{T_{n,[k]}\le t\}$ and combines weighted inclusion--exclusion with a local rare-orthant analysis. This allows us to transfer the second-order expansion machinery from maxima to the $k$th order statistic. As a result, we show that third-moment matching wild bootstrap retains second-order accuracy for the $k$th largest coordinate, and we also obtain a second-order result for the prepivoted double wild bootstrap. In this way, the second-order theory that was previously available only for maxima is extended to the $k$th largest coordinate in high dimension.

We also give a complementary dependence formulation based on a stationary Gaussian reference field with exponentially decaying strong-mixing coefficients. This assumption is structurally different from the maximal-correlation condition used in the baseline theory: it exploits one-dimensional dependence and allows local clusters of highly correlated coordinates. In that setting we rework the Gaussian aggregation argument and obtain the same distributional, quantile, and coverage expansions with an explicit additional remainder $r_d$ that isolates the effect of local exceedance clustering. The resulting expression is fully explicit and can again be of order $n^{-1}$ when the dimension grows sufficiently quickly relative to the sample size.

The remainder of the paper is organized as follows. Section~\ref{sec:main} presents the main theoretical results, including the exponential-mixing alternative in Section~\ref{subsec:mixing-alt}. Section~\ref{sec:sim} reports simulation results comparing several bootstrap methods. Section~\ref{sec:con} concludes. Proofs are collected in Appendices~A and~B.

\medskip
\noindent\textbf{Notation.}
We write $[d]:=\{1,\dots,d\}$. For a vector $\bm x\in\R^m$, let
\[
\|\bm x\|_2:=\Bigl(\sum_{j=1}^m x_j^2\Bigr)^{1/2},
\qquad
\|\bm x\|_\infty:=\max_{1\le j\le m}|x_j|.
\]
We denote by $\mathbf{1}_d=(1,\dots,1)^\top\in\R^d$ the all-ones vector. For $r\in\mathbb N$, $(\R^m)^{\otimes r}$ denotes the set of real-valued $m$-dimensional $r$-tensors. If $\mathsf T\in(\R^m)^{\otimes q}$ and $\mathsf U\in(\R^m)^{\otimes r}$, then $\mathsf T\otimes\mathsf U\in(\R^m)^{\otimes(q+r)}$ denotes their tensor product. When $q=r$, we write
\[
\langle \mathsf T,\mathsf U\rangle
:=
\sum_{j_1,\dots,j_r=1}^m
T_{j_1,\dots,j_r}U_{j_1,\dots,j_r},
\]
and
\[
\|\mathsf T\|_1:=\sum_{j_1,\dots,j_r=1}^m |T_{j_1,\dots,j_r}|,
\qquad
\|\mathsf T\|_\infty:=\max_{1\le j_1,\dots,j_r\le m}|T_{j_1,\dots,j_r}|.
\]
For $\bm x\in\R^m$, $\bm x^{\otimes r}$ denotes the $r$th tensor power of $\bm x$. Whenever $\bm X_1,\dots,\bm X_n$ are under discussion, we set
\[
\bar{\bm X}^{\,r}:=\frac1n\sum_{i=1}^n \bm X_i^{\otimes r}.
\]
Given an $r$-times differentiable function $h: \mathbb{R}^d \rightarrow \mathbb{R}$, we set $\nabla^r h(x):= \left(\partial^{j_1, \ldots, j_r} h(x)\right)_{1 \leq j_1, \ldots, j_r \leq d} \in\left(\mathbb{R}^d\right)^{\otimes r}$ for $x \in \mathbb{R}^d$, where $\partial^{j_1, \ldots, j_r}=\frac{\partial^r}{\partial x_{j_1} \cdots \partial x_{j_r}}$. For $m \in \mathbb{N} \cup\{\infty\}, C_b^m\left(\mathbb{R}^d\right)$ denotes the set of bounded $C^m$ functions with bounded derivatives. For a multi-index $\alpha=(\alpha_1,\dots,\alpha_m)\in\mathbb{N}_0^m$, we write
\[
|\alpha|:=\sum_{j=1}^m \alpha_j,
\qquad
\partial^\alpha:=\partial_1^{\alpha_1}\cdots \partial_m^{\alpha_m}.
\]
For a positive definite matrix $V$, let $\ph_V$ denote the density of $N(\bm 0,V)$. We write $\Phi$ for the standard normal distribution function and $\bPhi:=1-\Phi$ for its survival function. For a distribution function $F:\R\to[0,1]$, its generalized inverse is defined by
\[
F^{-1}(p):=\inf\{t\in\R:F(t)\ge p\},
\qquad p\in(0,1).
\]
For $\alpha>0$ and a scalar random variable $Y$, let
\[
\|Y\|_{\psi_\alpha}
:=
\inf\Bigl\{C>0:\ \E\exp(|Y|^\alpha/C^\alpha)\le 2\Bigr\}.
\]
For a matrix $\mathbf{A}=(a_{j\ell})$, we set
\[
\|\mathbf{A}\|_{\max}:=\max_{1\le j,\ell\le m}|a_{j\ell}|,
\qquad
R_j(\mathbf{A}):=\sum_{\ell\ne j}|a_{j\ell}|.
\]
Also, $\Prob^*$ and $\E^*$ denote conditional probability and expectation given the data. 
We assume $d\ge 3$ whenever an expression containing $\log d$ appears, and similarly for $n$.

\section{Main Results}\label{sec:main}

\subsection{Asymptotic expansion of coverage probability}

Let $\bm X_1,\dots,\bm X_n$ be independent centered random vectors in $\R^d$, and define
\begin{equation*}
\bm S_n:=\frac{1}{\sqrt n}\sum_{i=1}^n \bm X_i,
\qquad
\bm Z\sim N(\bm 0,\mathbf{\Sigma}),
\qquad
\mathbf{\Sigma}:=\Var(\bm S_n).
\end{equation*}
Write
\begin{equation*}
T_{n,[1]}\ge T_{n,[2]}\ge \cdots \ge T_{n,[d]}
\end{equation*}
for the descending order statistics of the coordinates of $\bm S_n$, and define $T_{\bm Z,[k]}$ analogously from $\bm Z$. Set
\begin{equation*}
G_k(t):=\Prob(T_{\bm Z,[k]}\le t),
\qquad
f_k(t):=G_k'(t)
\end{equation*}
whenever the derivative exists.

Let $w_1,\dots,w_n$ be i.i.d. multipliers independent of the data. Put
\begin{equation*}
\bar{\bm X}:=\frac1n\sum_{i=1}^n \bm X_i,
\qquad
\bm S_n^*:=\frac1{\sqrt n}\sum_{i=1}^n w_i(\bm X_i-\bar{\bm X}).
\end{equation*}
Let $T_{n,[k]}^*$ denote the $k$th largest coordinate of $\bm S_n^*$, and write
\begin{equation*}
\hat F_{n,k}(t):=\Prob^*(T_{n,[k]}^*\le t),
\qquad
\hat c_{p,k}:=\inf\{t\in\R:\hat F_{n,k}(t)\ge p\}.
\end{equation*}
For each coordinate,
$\sigma_j^2:=\Sigma_{jj},
\underline\sigma:=\min_{1\le j\le d}\sigma_j,
\overline\sigma:=\max_{1\le j\le d}\sigma_j.$
For $p\in(0,1)$, define the Gaussian quantile
$ c^G_{p,k}:=G_k^{-1}(p).$
Fix $\epsilon\in(0,1/2)$ and define the quantile window
\begin{equation*}
\Tcal_{k,\epsilon}:=\{c^G_{p,k}:\ p\in[\epsilon/2,1-\epsilon/2]\}.
\end{equation*}
For $t\in\R$, define the exceedance counts
\begin{equation*}
N_n(t):=\sum_{j=1}^d \ind\{S_{n,j}>t\},
\qquad
N_n^*(t):=\sum_{j=1}^d \ind\{S_{n,j}^*>t\},
\qquad
N_Z(t):=\sum_{j=1}^d \ind\{Z_j>t\}.
\end{equation*}
Then
\begin{equation*}
T_{n,[k]}\le t \iff N_n(t)\le k-1.
\end{equation*}
For every integer $s\ge1$, define
\begin{equation*}
V_{n,s}(t):=\E\binom{N_n(t)}{s},
\qquad
V^*_{n,s}(t):=\E^*\binom{N_n^*(t)}{s},
\qquad
V_{Z,s}(t):=\E\binom{N_Z(t)}{s}.
\end{equation*}
For each nonempty $I\subset[d]$, write
\begin{equation*}
B_I(t):=\{\bm z\in\R^d:\ z_j>t, \forall j\in I\}
\end{equation*}
and
\begin{equation*}
\pi_I(t):=\Prob(Z_j>t, \forall j\in I).
\end{equation*}
Let $\ph_{\mathbf{\Sigma}}$ denote the density of $N(\bm0,\mathbf{\Sigma})$, and abbreviate $\ph:=\ph_{\mathbf{\Sigma}}$. The first-order Edgeworth density for $\bm S_n$ is
\begin{equation*}
 p_n(\bm z)
 :=
 \ph(\bm z)
 -\frac{1}{6\sqrt n}\bigl\langle \E[\bar{\bm X}^{\,3}],\nabla^3\ph(\bm z)\bigr\rangle.
\end{equation*}
Let $\gamma:=\E(w_1^3).$
The bootstrap Edgeworth density is defined by
\begin{equation*}
\hat p_{n,\gamma}(\bm z)
:=
\ph(\bm z)
+\frac12\bigl\langle \bar{\bm X}^{\,2}-\mathbf{\Sigma},\nabla^2\ph(\bm z)\bigr\rangle
-\frac{\gamma}{6\sqrt n}\bigl\langle \bar{\bm X}^{\,3},\nabla^3\ph(\bm z)\bigr\rangle.
\end{equation*}
For each integer $s\ge1$, define
\begin{equation*}
M_{n,s}(t)
:=
\sum_{\substack{I\subset[d]\\ |I|=s}}
\int_{(t,\infty)^s} p_{n,I}(\bm u)\,d\bm u,
\qquad
\hat M_{n,s,\gamma}(t)
:=
\sum_{\substack{I\subset[d]\\ |I|=s}}
\int_{(t,\infty)^s}\hat p_{n,\gamma,I}(\bm u)\,d\bm u,
\end{equation*}
where $p_{n,I}$ and $\hat p_{n,\gamma,I}$ denote the corresponding projected densities defined later. Also set
\begin{equation*}
M_{Z,s}(t):=\sum_{\substack{I\subset[d]\\ |I|=s}}\pi_I(t)=V_{Z,s}(t).
\end{equation*}

We now present the assumptions underlying our analysis.

\begin{assumption}
\label{ass:data}
The vectors $\bm X_1,\dots,\bm X_n$ are independent and centered. Each $\bm X_i$ admits a Stein kernel $\bm\tau_i(\bm X_i)$ in the sense that
\begin{equation*}
\E\bigl[\bm X_i^\top f(\bm X_i)\bigr]
=
\E\bigl[\tr\{\bm\tau_i(\bm X_i)\nabla f(\bm X_i)^\top\}\bigr]
\end{equation*}
for every smooth vector-valued test function for which both sides are finite. There exist constants $b>0$ and $\sigma_*>0$ such that
\begin{enumerate}[label=(\roman*)]
\item $\lambda_{\min}(\mathbf{\Sigma})\ge \sigma_*^2$;
\item
$\max_{1\le i\le n}\max_{1\le j\le d}\|X_{ij}\|_{\psi_1}\le b,
\max_{1\le i\le n}\max_{1\le j,\ell\le d}
\|\tau_{i,j\ell}(\bm X_i)-\E\tau_{i,j\ell}(\bm X_i)\|_{\psi_1}
\le b^2;$
\item
$\deln:=\frac{b^5}{\sigma_*^5}\frac{\log^3(dn)}{n},
\epsn:=\sqrt{\deln\log n}$
 satisfies $\epsn\to0$.
\end{enumerate}
\end{assumption}

\begin{remark}
    {Assumption~\ref{ass:data} is the data-side regularity condition.}
The Stein identity provides the analytic device behind the projected Edgeworth expansion and is a convenient substitute for classical Cram\'er-type smoothness conditions in high dimension. As emphasized in \cite[Remark~2.4]{Koike2026}, in one dimension the existence of a Stein kernel implies a nontrivial absolutely continuous component, and hence Cram\'er's condition, whereas in higher dimensions Stein kernels remain available even in situations where a multivariate Cram\'er condition is not appropriate, such as Gaussian laws with singular covariance matrices. In our setting, part~(i) requires a uniform lower bound on $\lambda_{\min}(\mathbf{\Sigma})$, which prevents global degeneracy of the Gaussian comparison law and guarantees that the projected Gaussian densities and their derivatives remain well behaved. Part~(ii) imposes sub-exponential control on both the coordinates $X_{ij}$ and the fluctuations of the Stein-kernel entries. Since
\[
\E\{\bm\tau_i(\bm X_i)\}=\E(\bm X_i\bm X_i^\top),
\]
the centered quantity
\[
\tau_{i,j\ell}(\bm X_i)-\E\tau_{i,j\ell}(\bm X_i)
\]
measures the random fluctuation of the local covariance proxy around its population counterpart; controlling these fluctuations is exactly what allows Koike's decomposition to be applied uniformly over the low-dimensional projections that enter our inclusion--exclusion argument. Finally, part~(iii) is the high-dimensional scaling condition ensuring that the resulting remainder terms vanish. In particular, it specifies the regime in which the projected Edgeworth approximation is accurate enough to deliver a valid second-order expansion for the coverage probability.
\end{remark}

\begin{assumption}
\label{ass:mult}
The multipliers $w_1,w_2,\dots$ are i.i.d., independent of the data, satisfy
\begin{equation*}
\E w_1=0,
\qquad
\E w_1^2=1,
\qquad
\E|w_1|^m<\infty\quad\text{for all }m\ge1,
\end{equation*}
and, in addition, satisfy one of the following two conditions:
\begin{enumerate}[label=(\roman*)]
\item $w_1\sim N(0,1)$;
\item $w_1$ admits a Stein kernel $\tau^w(w_1)$ and there exists a constant $b_w\ge1$ such that
\begin{equation*}
|w_1|\le b_w,
\qquad
|\tau^w(w_1)|\le b_w^2
\qquad\text{a.s.}
\end{equation*}
\end{enumerate}
The constants in the sequel are allowed to depend on $b_w$.
\end{assumption}

\begin{remark}
{Assumption~\ref{ass:mult} is the bootstrap analogue of Assumption~\ref{ass:data}.}
It ensures that, conditional on the data, the multiplier statistic admits the same kind of Stein--Edgeworth expansion as the original statistic. The Gaussian case is separated out because it is the canonical multiplier choice and automatically fits the required framework. The alternative bounded Stein-kernel condition covers smooth non-Gaussian multipliers and is particularly useful for moment matching, which is central to the second-order improvement. As discussed in \cite{Koike2026}, this framework does not cover two-point multipliers such as Mammen's weights, since two-point laws do not admit Stein kernels. Thus, the restriction is a limitation of the present proof strategy rather than of the bootstrap principle itself.
\end{remark}

\begin{assumption}
\label{ass:variance}
There exist constants $0<\underline\sigma\le \overline\sigma<\infty$ such that
\begin{equation*}
\underline\sigma^2\le \Sigma_{jj}\le \overline\sigma^2,
\qquad j=1,\dots,d.
\end{equation*}
\end{assumption}

\begin{remark}
{Assumption~\ref{ass:variance} places all coordinates on a common scale.}
Because our target is the raw order statistic $T_{n,[k]}$, we are ranking the coordinates of the normalized sum without any coordinatewise rescaling. Uniform upper and lower bounds on the marginal variances therefore rule out the possibility that some coordinates dominate the ranking merely because their variances diverge, or become asymptotically irrelevant because their variances vanish. Without this assumption, the geometry of the $k$th largest coordinate would depend on heterogeneous marginal scales, and the limiting problem would be substantially more complicated. In that regime one would typically need a different normalization or even a different target statistic.
\end{remark}

\begin{assumption}
\label{ass:weakcorr}
Let
$\rho_d:=\max_{1\le i\ne j\le d}|\Sigma_{ij}|.$
We assume $\rho_d\log d\to0.$
\end{assumption}

\begin{remark}
{Assumption~\ref{ass:weakcorr} is a weak-dependence condition tailored to our proof of the order-statistic expansion.}
The key step in the argument is to approximate the event $\{T_{n,[k]}>t\}$ by a finite-order inclusion--exclusion expansion and to show that the probability of having many coordinates simultaneously exceeding $t$ is negligible. For this strategy to work, exceedances above a high threshold must behave as rare events with only weak clustering, and the condition $\rho_d\log d\to0$ enforces exactly this feature. When pairwise correlations are too strong, exceedances can occur in large clusters, and then one can no longer guarantee that the probability of having more than $k_0$ coordinates above the threshold decays fast enough for the truncation argument to be valid. Handling such strongly dependent regimes would require substantially further studies.
\end{remark}

Fix a constant $A>0$ and define
$ k_0:=\left\lceil A\log(\epsn^{-1})\right\rceil$.
Throughout, $k\ge1$ is fixed.
Finally define
\begin{equation}
Q_{n,k}(t)
:=
-\sum_{s=k}^{k_0}(-1)^{s-k}\binom{s-1}{k-1}\{M_{n,s}(t)-M_{Z,s}(t)\},
\label{eq:def-Qnk}
\end{equation}
and define $\hat Q_{n,\gamma,k}(t)$ analogously with $\hat M_{n,s,\gamma}(t)$ in place of $M_{n,s}(t)$.

\begin{theorem}
\label{thm:coverage-main}
Assume Assumptions~\ref{ass:data}--\ref{ass:weakcorr}. Then, for $A>0$ large enough,
\begin{equation}
\sup_{\epsilon<\alpha<1-\epsilon}
\left|
\Prob\bigl(T_{n,[k]}\ge \hat c_{1-\alpha,k}\bigr)
-
\left[
\alpha-(1-\gamma)Q_{n,k}(c^G_{1-\alpha,k})-\E\{R_{n,k}(\alpha)\}
\right]
\right|
\le C\epsn^2,
\label{eq:coverage-main-corrected}
\end{equation}
where
\begin{equation*}
R_{n,k}(\alpha)
:=
\frac{f_k'(c^G_{1-\alpha,k})}{2f_k(c^G_{1-\alpha,k})^3}
\hat Q_{n,\gamma,k}(c^G_{1-\alpha,k})^2
-
\frac{\hat Q_{n,\gamma,k}'(c^G_{1-\alpha,k})}{f_k(c^G_{1-\alpha,k})^2}
\hat Q_{n,\gamma,k}(c^G_{1-\alpha,k}).
\end{equation*}
\end{theorem}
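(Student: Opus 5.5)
The proof reduces the order-statistic problem to rare-orthant probabilities, imports Koike-type projected Edgeworth expansions for those, and then runs the standard Cornish--Fisher inversion. I would organize it in four steps.

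\textbf{Step 1: weighted inclusion--exclusion.} Start from the identity $\{T_{n,[k]}>t\}=\{N_n(t)\ge k\}$. For any nonnegative integer-valued $N$,
\[
\ind\{N\ge k\}=\sum_{s\ge k}(-1)^{s-k}\binom{s-1}{k-1}\binom{N}{s}.
\]
Truncating this series at $s=k_0$ gives Bonferroni-type bounds, with error controlled by $\binom{k_0}{k-1}V_{\cdot,k_0+1}(t)$. Hence
\[
\Prob(T_{n,[k]}>t)=\sum_{s=k}^{k_0}(-1)^{s-k}\binom{s-1}{k-1}V_{n,s}(t)+O\bigl(k_0^{k}V_{n,k_0+1}(t)\bigr),
\]
and analogously for $\bm Z$ and for the bootstrap world. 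To make the remainder $O(\epsn^2)$ uniformly over $t\in\Tcal_{k,\epsilon}$, I would proceed as follows.
\begin{itemize}
\item On $\Tcal_{k,\epsilon}$ we have $t\asymp\sqrt{2\log d}$, by Assumption~\ref{ass:variance} and the fact that $G_k$ stays away from $0$ and $1$ there.
\item Assumption~\ref{ass:weakcorr} then gives $V_{Z,s}(t)\le C^s\lambda^s/s!$ with $\lambda=\E N_Z(t)=O(1)$. The point is that $\rho_d\log d\to0$ makes the orthant probabilities $\pi_I(t)$ comparable to $\prod_{j\in I}\bPhi(t/\sigma_j)$ up to a factor $e^{Cs^2\rho_d t^2}$.
\item For $s\le k_0\asymp\log(\epsn^{-1})$, this factor must stay bounded. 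That holds provided $k_0^2\rho_d\log d$ is controlled; if it is not, I would shrink $k_0$ or use only pairwise bounds.
\item Choosing $A$ large makes $C^{k_0}/k_0!\le\epsn^{2}$.
\item The same bound transfers to $V_{n,k_0+1}$ and $V^*_{n,k_0+1}$ through Step 2.
\end{itemize}

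\textbf{Step 2: projected Edgeworth expansion for each orthant.} For fixed $I$ with $|I|=s\le k_0+1$, the event $\{S_{n,j}>t,\ j\in I\}$ is a rectangle in $\R^s$. I would apply Koike's high-dimensional Edgeworth expansion to the projection $\bm S_{n,I}$:
\[
\Prob(\bm S_{n,I}\in B_I(t))=\int_{(t,\infty)^s}p_{n,I}+\text{rem}_I.
\]
The needed inputs are the Stein-kernel and $\psi_1$ bounds of Assumption~\ref{ass:data} and $\lambda_{\min}\ge\sigma_*^2$. The remainder has the form $\text{rem}_I\lesssim\epsn^2\,\pi_I(t)\,\mathrm{poly}(s)$, relative to the orthant mass. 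This relative form is essential: there are $\binom{d}{s}$ sets $I$, so only a relative error can be summed, after which it is bounded by $\epsn^2 V_{Z,s}(t)\,\mathrm{poly}(s)$. Summing over $s\le k_0$ with the factorial decay of Step 1 gives
\[
\Prob(T_{n,[k]}\le t)=G_k(t)+Q_{n,k}(t)+O(\epsn^2).
\]
Conditionally on the data, the bootstrap version reads
\[
\hat F_{n,k}(t)=G_k(t)+\hat Q_{n,\gamma,k}(t)+O_P(\epsn^2).
\]
To get this I would use Assumption~\ref{ass:mult}, and a Gaussian comparison from $\bar{\bm X}^{2}$ to $\mathbf\Sigma$, which accounts for the second-order term in $\hat p_{n,\gamma}$. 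Throughout, $\bar{\bm X}$ enters only through $\bar{\bm X}^{2}$ and $\bar{\bm X}^{3}$.

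I expect this step to be the main obstacle. Koike's bounds are stated as absolute errors over rectangles, whereas here one needs errors relative to tiny orthant probabilities $\pi_I(t)\approx d^{-s}$, uniformly over $\binom{d}{s}$ sets. My first attempt would be a change of measure: tilt by $e^{\theta^\top x}$ with $\theta\approx t\mathbf 1_I$, which moves the orthant corner to a moderate region, and then apply the Stein-kernel expansion there. In effect this is a moderate-deviation Edgeworth expansion, and the $\psi_1$ tails allow it because $t^3/\sqrt n\to0$ under Assumption~\ref{ass:data}(iii).

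\textbf{Step 3: Cornish--Fisher inversion.} The expansion for $\hat F_{n,k}$ gives
\[
\hat c_{1-\alpha,k}=c-\frac{\hat Q}{f_k}+\frac{\hat Q\hat Q'}{f_k^2}-\frac{f_k'\hat Q^2}{2f_k^3}+O_P(\epsn^2),\qquad c=c^G_{1-\alpha,k}.
\]
Here $f_k$ is bounded below on the window by Gaussian anti-concentration for order statistics, as in \cite{Kozbur2021}, and $\hat Q=O_P(\epsn)$. Substituting into $\Prob(T_{n,[k]}\le\hat c)$ uses the first-order expansion at a random point. To do this I would condition and use a delta argument: Taylor expand $G_k+Q_{n,k}$ around $c$ and handle the dependence between $\hat c$ and $\bm S_n$ by decoupling, writing $\hat Q=\E\hat Q+$ a fluctuation term. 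The fluctuation of $\bar{\bm X}^{3}$ is $O_P(\epsn)/\sqrt n$, which is negligible. The fluctuation of $\bar{\bm X}^{2}-\mathbf\Sigma$ is correlated with $\bm S_n$; its linear part cancels the matching Gaussian term, as in Koike's bootstrap argument.

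\textbf{Step 4: identify the coverage terms.} The surviving terms are as follows.
\begin{itemize}
\item The mean of $\hat Q$ is $\gamma Q_{n,k}$, because $\E\bar{\bm X}^{3}=\E[\bar{\bm X}^{\,3}]$ and $\E\bar{\bm X}^{2}=\mathbf\Sigma$. Subtracting it from $Q_{n,k}$ yields the term $(1-\gamma)Q_{n,k}$.
\item The quadratic terms give $\E R_{n,k}(\alpha)$.
\end{itemize}
This is the displayed expansion, uniformly in $\alpha$.
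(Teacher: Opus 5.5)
Your outline follows the paper's architecture: Lemma~\ref{lem:comb} truncated at $k_0$; orthant expansions with error $\epsn^2\pi_I(t)$ summed over $|I|=s$, as in Proposition~\ref{prop:projected-local} and Theorem~\ref{thm:FM}; then Theorems~\ref{thm:distribution} and~\ref{thm:CF} and Lemma~\ref{lem:centering}. However, the decoupling in your Step~3 is a genuine gap.

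\textbf{The decoupling step.} Put $c:=c^G_{1-\alpha,k}$. The random part of $\hat Q_{n,\gamma,k}(c)$ is dominated by the weighted sum over $I$ of $\frac12\int_{(c,\infty)^{s}}\langle\bar{\bm b}_I^{\,2}-\E\bar{\bm b}_I^{\,2},\nabla^2\ph_I\rangle$. This is a centered linear statistic of the $\bm X_i\bm X_i^\top$, of order $n^{-1/2}$ in $n$. It is correlated with $\bm S_n$ at order one, because $\Cov\bigl(n^{-1/2}\sum_iX_{i\ell}X_{im},S_{n,j}\bigr)=\E[\bar{\bm X}^{\,3}]_{\ell mj}$. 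With $F_{n,k}(t):=\Prob(T_{n,[k]}\le t)$, this gives to leading order
\[
\Prob\bigl(T_{n,[k]}\le\hat c_{1-\alpha,k}\bigr)-\E\bigl[F_{n,k}(\hat c_{1-\alpha,k})\bigr]\approx f_k(c)\,\E\bigl[\hat c_{1-\alpha,k}-\E\hat c_{1-\alpha,k}\,\big|\,T_{n,[k]}=c\bigr].
\]
This is a first-order term. It is not of the form $(1-\gamma)Q_{n,k}$, it survives at $\gamma=1$, and there is no reason for it to cancel.

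\textbf{A concrete case.} Take $k=1$ and independent coordinates with unit variance and common skewness $\kappa_3$; this model satisfies Assumptions~\ref{ass:data}--\ref{ass:weakcorr}. Let $\hat\sigma_j^2:=(\hat{\mathbf{\Sigma}}_X)_{jj}$ and $\lambda=\lambda(c)$. A leading-order computation gives three facts:
\begin{itemize}
\item $\hat c_{1-\alpha,1}-\E\hat c_{1-\alpha,1}\approx\frac{c}{2d}\sum_j(\hat\sigma_j^2-\E\hat\sigma_j^2)$;
\item $\E[\hat\sigma_j^2-1\mid S_{n,j}]\approx\kappa_3S_{n,j}/\sqrt n$;
\item $\E[\sum_jS_{n,j}\mid T_{n,[1]}=c]\approx c(1-\lambda)$.
\end{itemize}
Hence a term of size $\kappa_3c^3\lambda(1-\lambda)e^{-\lambda}/(2d\sqrt n)$ enters the coverage. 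For $d=1$ the same computation returns the classical error $\kappa_3z^2\Phi'(z)/(2\sqrt n)$ of the non-studentized percentile method \citep{Hall1992}. In high dimension the term is diluted by $1/d$; this is the blessing-of-dimensionality effect behind the extra covariance conditions in \cite{Koike2026}. It is nevertheless $O(\epsn^2)$ only if $d$ is at least of order $\sqrt n$ up to logarithms, and Assumptions~\ref{ass:data}--\ref{ass:weakcorr} do not require this.

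So an asserted cancellation cannot close the step. You must compute this covariance term and either keep it in the expansion or impose a condition that makes it $O(\epsn^2)$. The paper's proof does not supply this either: it writes $\Prob(T_{n,[k]}\le\hat c_{1-\alpha,k})=\E[F_{n,k}(\hat c_{1-\alpha,k})\ind_{E_n}]+O(\epsn^2)$, which treats $\hat c_{1-\alpha,k}$ as if it were independent of $T_{n,[k]}$.

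\textbf{Step~2, the local expansion.} Here you and the paper take different routes, and your sketch leaves this step open. The paper does not tilt; it stays in Koike's smoothing framework.
\begin{itemize}
\item It chooses $\vartheta_n\asymp\epsn^4/\{\log(dn)\log(2k_0)\}$, so that $t\,a_{\vartheta_n}\lesssim\epsn^2$.
\item It combines the localized derivative bound of Lemma~\ref{lem:koike-orthant-derivative} with the shift bound $\pi_I(t-a)\le C\pi_I(t)$ for $a\le c_0/t$.
\item As a result, each remainder in Koike's decomposition carries a factor $\pi_I(t)$.
\end{itemize}
Integrating that localized bound against the law of $\bm Y_{n,I}$ actually produces the non-Gaussian probability $\Prob(\sqrt{1-u}\,\bm Y_{n,I}+\sqrt u\,\bm Z_I^-\in A^-_{t-a_u})$. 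So an a priori moderate-deviation bound, of the kind your tilting would supply, is needed on either route. On the other hand, tilting alone turns the Stein kernel into an approximate one, with a $\beta$-term proportional to $\theta$ times the fluctuation of $\bm\tau_i$, and that term must then be carried through Koike's decomposition.

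\textbf{Two smaller points.}
\begin{itemize}
\item Anti-concentration as in \cite{Kozbur2021} bounds the density of $T_{\bm Z,[k]}$ from above, not from below. The paper obtains $f_k\ge m_{k,\epsilon}$ from $G_k=h_k(\lambda(t))+O(\eta_d)$ and $|\lambda'(t)|\asymp t$ (Lemma~\ref{lem:weighted-regularity}).
\item Your concern about the factor $e^{Cs^2\rho_dt^2}$ is well founded: equicorrelation already inflates $V_{Z,s}$ by such a factor, and the Gershgorin bound used in Lemma~\ref{lem:gauss-shift} does not remove it. Shrinking $k_0$ is not a remedy, because the truncation error $\binom{k_0}{k-1}V_{Z,k_0+1}$ forces $k_0\gtrsim\log(\epsn^{-1})/\log\log(\epsn^{-1})$. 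You would have to strengthen Assumption~\ref{ass:weakcorr}, e.g.\ to $k_0^2\rho_d\log d=O(1)$.
\end{itemize}
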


Theorem~\ref{thm:coverage-main} is the main second-order coverage statement for the single wild bootstrap. It shows that the leading coverage distortion is described by the deterministic linear term $(1-\gamma)Q_{n,k}$ together with the quadratic Cornish--Fisher correction $\E\{R_{n,k}(\alpha)\}$, while the remaining error is of order $\epsn^2$.

\begin{corollary}[Third-moment matching]
\label{cor:thirdmatch}
Under the assumptions of Theorem~\ref{thm:coverage-main}, if $\gamma=1$, then
\begin{equation*}
\sup_{\epsilon<\alpha<1-\epsilon}
\left|
\Prob\bigl(T_{n,[k]}\ge \hat c_{1-\alpha,k}\bigr)-\alpha
\right|
\le C\epsn^2.
\end{equation*}
\end{corollary}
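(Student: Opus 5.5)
Setting $\gamma=1$ in Theorem~\ref{thm:coverage-main} makes the linear term $(1-\gamma)Q_{n,k}(c^G_{1-\alpha,k})$ vanish identically. What is left is
\[
\Prob\bigl(T_{n,[k]}\ge \hat c_{1-\alpha,k}\bigr)=\alpha-\E\{R_{n,k}(\alpha)\}+O(\epsn^2),
\]
uniformly in $\epsilon<\alpha<1-\epsilon$. The corollary therefore follows once we show $\sup_\alpha|\E R_{n,k}(\alpha)|\le C\epsn^2$.

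\textbf{Step 1: the denominators.} The quadratic correction involves $f_k$, $f_k'$, $\hat Q_{n,1,k}$ and $\hat Q_{n,1,k}'$, all evaluated at $t=c^G_{1-\alpha,k}\in\Tcal_{k,\epsilon}$. By Assumptions~\ref{ass:variance} and \ref{ass:weakcorr}, exceedances are asymptotically Poisson over this window. Hence $f_k$ is bounded below by a constant $c_\epsilon>0$ uniformly on $\Tcal_{k,\epsilon}$, and $|f_k'|$ is bounded above by a constant there. Both facts are the ones the paper uses to derive the Cornish--Fisher form in Theorem~\ref{thm:coverage-main}, so I take them as available.

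\textbf{Step 2: a stochastic bound on $\hat Q_{n,1,k}$.} It then suffices to show
\[
\E\sup_{t\in\Tcal_{k,\epsilon}}\bigl(|\hat Q_{n,1,k}(t)|+|\hat Q_{n,1,k}'(t)|\bigr)^2\le C\epsn^2.
\]
Here $\hat Q_{n,1,k}(t)$ is a finite alternating sum, over $s\le k_0$, of differences $\hat M_{n,s,1}(t)-M_{Z,s}(t)$. Each difference equals
\[
\sum_{|I|=s}\int_{(t,\infty)^s}\Bigl[\tfrac12\bigl\langle \bar{\bm X}^{\,2}-\mathbf{\Sigma},\nabla^2\ph\bigr\rangle-\tfrac{1}{6\sqrt n}\bigl\langle \bar{\bm X}^{\,3},\nabla^3\ph\bigr\rangle\Bigr]_I .
\]
The derivatives $\nabla^r\ph$ integrated over rare orthants are bounded in $\ell_1$ by $C\,V_{Z,s}(t)\,\mathrm{polylog}$, and $V_{Z,s}(t)\le C^s/s!$ on the window. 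By sub-exponential concentration, $\|\bar{\bm X}^{\,2}-\mathbf{\Sigma}\|_\infty=O_P(\sqrt{\log(dn)/n})$ and $n^{-1/2}\|\bar{\bm X}^{\,3}\|_\infty=O_P(n^{-1/2}\mathrm{polylog})$, and both have exponential tails. With the combinatorial weights $\binom{s-1}{k-1}$ and the factor $1/s!$, the sum over $s\le k_0$ is controlled. This gives $|\hat Q_{n,1,k}|\lesssim\epsn$ with second moment $O(\epsn^2)$. The derivative in $t$ is handled identically, since differentiating the orthant integrals produces boundary integrals of the same size.

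\textbf{Main obstacle.} The delicate part is the uniformity in $s$ up to $k_0\asymp\log\epsn^{-1}$. We must make sure the polylogarithmic factors, together with $\binom{s-1}{k-1}$, do not overwhelm the factorial decay $1/s!$. We must also check that the resulting polylog factors are absorbed into $\epsn=\sqrt{\deln\log n}$, whose definition already carries $\log^3(dn)$. I expect this to be the same estimate used in proving Theorem~\ref{thm:coverage-main} for $\hat Q$, where $\hat Q=O_P(\epsn)$ is what makes $R_{n,k}$ second order. If so, squaring that estimate and taking expectations (using the exponential tails to pass from $O_P$ to an $L^2$ bound) completes the argument.
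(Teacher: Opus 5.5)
Your route is the paper's. With $\gamma=1$ the linear term in Theorem~\ref{thm:coverage-main} vanishes, and the paper bounds $|R_{n,k}(\alpha)|\le C(|\hat Q_{n,\gamma,k}|^2+|\hat Q_{n,\gamma,k}'|\,|\hat Q_{n,\gamma,k}|)$ and then invokes \eqref{eq:Q-reg-boot}, which is the estimate you sketch in Step~2.

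One claim in Step~1 is wrong: $|f_k'|$ is not bounded on $\Tcal_{k,\epsilon}$. There $t\asymp\sqrt{\log d}$ and $|\lambda'(t)|\asymp t$, so $f_k\asymp t$ and $|f_k'|$ is generically of order $t^2$, which diverges (see the proof of Lemma~\ref{lem:weighted-regularity}(iii)). What you actually need, and what \eqref{eq:regularity-main-corrected} provides, is $f_k\ge m_{k,\epsilon}$ together with $|f_k'|/f_k^3=|(G_k^{-1})''|\le B_{k,\epsilon}$. With these, both coefficients in $R_{n,k}$ are bounded and your reduction goes through.

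A related inaccuracy is in Step~2. Differentiating an orthant integral in $t$ produces boundary integrals that are larger by a Mills-ratio factor $\asymp t$, not ``of the same size''. This is harmless, because the coefficient $1/f_k^2\asymp t^{-2}$ multiplies $\hat Q'\hat Q$ and absorbs it.

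Your plan to prove an $L^2$ bound is more careful than the paper. The paper passes from the probability-$(1-C/n)$ bound \eqref{eq:Q-reg-boot} directly to $\E|R_{n,k}(\alpha)|\le C\epsn^2$ and never treats the exceptional event; your moment argument fills that in. For the polylogarithmic bookkeeping you flag, use the concentration \eqref{eq:koike-conc-third} of $\bar{\bm X}^{\,3}$ around $\E\bar{\bm X}^{\,3}=O(1)$. A $\log(dn)$ envelope on $\|\bar{\bm X}^{\,3}\|_\infty$ is too crude to be absorbed into $\epsn$ when $\log d$ is large relative to $\log n$.
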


Corollary~\ref{cor:thirdmatch} shows that matching the third multiplier moment removes the linear coverage distortion identified in Theorem~\ref{thm:coverage-main}. The wild bootstrap then becomes second-order accurate on the $\epsn^2$ scale without any further correction.

\begin{corollary}[Persistence of the first-order term]
\label{cor:firstorderpersist}
Under the assumptions of Theorem~\ref{thm:coverage-main},
\begin{equation*}
\sup_{\epsilon<\alpha<1-\epsilon}
\left|
\Prob\bigl(T_{n,[k]}\ge \hat c_{1-\alpha,k}\bigr)-\alpha+(1-\gamma)Q_{n,k}(c^G_{1-\alpha,k})
\right|
\le C\epsn^2.
\end{equation*}
In particular, if for some $\alpha_0\in(\epsilon,1-\epsilon)$ one has
\begin{equation*}
|Q_{n,k}(c^G_{1-\alpha_0,k})|\ge c_0\epsn,
\end{equation*}
then
\begin{equation*}
\left|
\Prob\bigl(T_{n,[k]}\ge \hat c_{1-\alpha_0,k}\bigr)-\alpha_0
\right|
\ge |1-\gamma|c_0\epsn-C\epsn^2.
\end{equation*}
\end{corollary}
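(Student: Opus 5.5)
Corollary~\ref{cor:firstorderpersist} follows from Theorem~\ref{thm:coverage-main} once we show that the quadratic Cornish--Fisher term is of order $\epsn^2$:
\[
\sup_{\epsilon<\alpha<1-\epsilon}|\E\{R_{n,k}(\alpha)\}|\le C\epsn^2 .
\]
Given this bound, the triangle inequality applied to \eqref{eq:coverage-main-corrected} gives the first display. For the second display, evaluate at $\alpha_0$ and apply the reverse triangle inequality:
\[
\bigl|\Prob(T_{n,[k]}\ge\hat c_{1-\alpha_0,k})-\alpha_0\bigr|
\ge |1-\gamma|\,|Q_{n,k}(c^G_{1-\alpha_0,k})|-C\epsn^2
\ge |1-\gamma|c_0\epsn-C\epsn^2 .
\]
So all the work is in bounding $\E R_{n,k}(\alpha)$.

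\emph{Step 1: Gaussian density bounds.} We need $f_k(t)\ge c>0$ and $|f_k'(t)|\le C$ uniformly over $t\in\Tcal_{k,\epsilon}$. These come from the exceedance representation $G_k(t)=\sum_{s<k}\Prob(N_Z(t)=s)$. Under Assumptions~\ref{ass:variance}--\ref{ass:weakcorr}, $N_Z(t)$ is approximately Poisson with mean $V_{Z,1}(t)$, and $V_{Z,1}(t)$ is of order one on the quantile window. Differentiating the Poisson approximation term by term gives $f_k\asymp |V_{Z,1}'|\asymp \sqrt{\log d}$ and $|f_k'|\lesssim \log d$. Thus the factor $f_k'/f_k^3$ is $O(1)$ and $1/f_k^2=O(1/\log d)$. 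These Gaussian estimates are presumably already established in the proof of Theorem~\ref{thm:coverage-main}, since they are needed for the quantile inversion.

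\emph{Step 2: second moments of $\hat Q$ and $\hat Q'$.} We need
\[
\E\hat Q_{n,\gamma,k}(t)^2\lesssim\epsn^2
\qquad\text{and}\qquad
\E|\hat Q_{n,\gamma,k}(t)\hat Q_{n,\gamma,k}'(t)|\lesssim \epsn^2\log d .
\]
Write $\hat M_{n,s,\gamma}-M_{Z,s}$ as a sum of two contributions:
\begin{itemize}
\item the Hessian term $\tfrac12\langle \bar{\bm X}^{2}-\mathbf{\Sigma},\nabla^2\ph\rangle$ integrated over the orthants $B_I(t)$;
\item the cubic term, with coefficient $\gamma n^{-1/2}\bar{\bm X}^{3}$.
\end{itemize}
Integrating by parts moves the derivatives onto the boundary, so each contribution is a linear combination of entries of $\bar{\bm X}^{2}-\mathbf{\Sigma}$ or $\bar{\bm X}^{3}$, weighted by Gaussian boundary densities of the rare orthants.

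For the Hessian term we use Bernstein-type maximal bounds under Assumption~\ref{ass:data}(ii):
\[
\E\|\bar{\bm X}^{2}-\mathbf{\Sigma}\|_\infty^2\lesssim b^4\log(dn)/n .
\]
The orthant weights sum (over $I$, $|I|=s\le k_0$) to $O(\log^{3/2} d)$ times $V_{Z,s}$, which is bounded on the window. Combining these gives the order $\delta_n\lesssim\epsn^2$.

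For the cubic term the coefficient $n^{-1/2}\bar{\bm X}^{3}$ has mean $n^{-1/2}\E\bar{\bm X}^{3}$, which is of size $O(\epsn)$ after the same weighting, plus a fluctuation of smaller order.

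Each differentiation in $t$ costs at most one factor $\sqrt{\log d}$ per derivative. Summing over $s\le k_0$ with the binomial weights $\binom{s-1}{k-1}$ uses the super-exponential decay of $V_{Z,s}$ in $s$ (Poisson-type, from Assumption~\ref{ass:weakcorr}), so the finite sum is controlled.

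\emph{Step 3: combine.} Putting Steps 1 and 2 together gives $|\E R_{n,k}|\lesssim\epsn^2$, with the $\log d$ factors cancelling against the $f_k$ normalisation. Inserting this into \eqref{eq:coverage-main-corrected} completes the proof.

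The main obstacle is Step 2. The projected orthant integrals must be tracked with the correct logarithmic weights so that the $\sqrt{\log d}$ powers from $\hat Q'$ are exactly offset by $1/f_k^2$. If the bookkeeping does not close uniformly over the window, the fallback is to reuse the intermediate bound $\sup_t|\hat Q_{n,\gamma,k}(t)|=O_P(\epsn)$ from the proof of Theorem~\ref{thm:coverage-main}, together with a truncation argument on its tail event.
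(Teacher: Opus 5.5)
Your proposal is correct, and its skeleton matches the paper's. You combine Theorem~\ref{thm:coverage-main} with a uniform bound $\sup_\alpha|\E R_{n,k}(\alpha)|\le C\epsn^2$, then apply the triangle and reverse triangle inequalities.

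The difference is in how that bound is obtained. The paper gets it in one line, in the proof of Corollary~\ref{cor:thirdmatch}. It uses $f_k\ge m_{k,\epsilon}$ and $|f_k'/f_k^3|=|(G_k^{-1})''|\le B_{k,\epsilon}$ from Lemma~\ref{lem:weighted-regularity}(iii), together with the high-probability bound \eqref{eq:Q-reg-boot} on $\hat Q_{n,\gamma,k}$ and its derivatives. The lemma already gives your Step~1, so you can simply cite it.

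Your Step~2 instead bounds $\E\hat Q_{n,\gamma,k}^2$ and $\E|\hat Q_{n,\gamma,k}\hat Q_{n,\gamma,k}'|$ directly. This is more work, but it buys two things the one-line argument leaves implicit:
\begin{enumerate}
\item \emph{Control on the exceptional event.} You get a genuine bound on the expectation, including the event of probability $C/n$ on which \eqref{eq:Q-reg-boot} may fail. The paper moves from that with-high-probability statement to $\E|R_{n,k}(\alpha)|\le C\epsn^2$ without treating the complement. Your truncation fallback would likewise need a moment input of this kind, e.g.\ Cauchy--Schwarz with a bound on $\E R_{n,k}^2$.
\item \emph{Derivative bookkeeping.} You track the extra factor $t\asymp\sqrt{\log d}$ produced by each $t$-derivative of an orthant integral, and correctly offset it by $f_k^{-2}=O(1/\log d)$.
\end{enumerate}

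One bookkeeping correction in Step~2. The Hessian term carries the weight $(1+t)^2\asymp\log d$, not $\log^{3/2}d$; the latter is the weight of the cubic term. With $\log^{3/2}d$, the $L^2$ size of the Hessian contribution would be $\sqrt{\log(dn)/n}\,\log^{3/2}d$, which is not $O(\epsn)$ when $\log d\gg\log n$. With the correct weight you get $\sqrt{\log(dn)/n}\,\log d\lesssim\epsn/\sqrt{\log n}$, so your conclusion $\E\hat Q_{n,\gamma,k}^2\lesssim\epsn^2$ stands.
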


Corollary~\ref{cor:firstorderpersist} shows that the term $(1-\gamma)Q_{n,k}$ is not an artifact of the proof. Unless the third moment is matched, the single-bootstrap coverage error typically remains of first-order size.

\subsection{Double wild bootstrap}\label{subsec:db}
Let $v_1,\dots,v_n$ be i.i.d. multipliers, independent of everything else, satisfying
\begin{equation*}
\E v_1=0,
\qquad
\E v_1^2=1,
\qquad
\E v_1^3=1,
\end{equation*}
and the same regularity condition as in Assumption~\ref{ass:mult}. Define
\begin{equation*}
\bm X_i^*:=w_i(\bm X_i-\bar{\bm X}),
\qquad
\bar{\bm X}^*:=\frac1n\sum_{i=1}^n\bm X_i^*,
\qquad
\bm S_n^{**}:=\frac1{\sqrt n}\sum_{i=1}^n v_i(\bm X_i^*-\bar{\bm X}^*).
\end{equation*}
Let $T^{**}_{n,[k]}$ be the $k$th largest coordinate of $\bm S_n^{**}$, let
\begin{equation*}
\hat F^*_{n,k}(t):=\Prob^{**}(T^{**}_{n,[k]}\le t),
\end{equation*}
and define
\begin{equation*}
\hat\beta_{\alpha,k}:=
\inf\Bigl\{\beta\in(0,1):\ \Prob^*\bigl(\hat F^*_{n,k}(T_{n,[k]}^*)\le \beta\bigr)\ge 1-\alpha\Bigr\}.
\end{equation*}
The prepivoted double-bootstrap test rejects when
\begin{equation*}
T_{n,[k]}\ge \hat c_{\hat\beta_{\alpha,k},k}.
\end{equation*}

\begin{theorem}
\label{thm:doublewild}
Assume Assumptions~\ref{ass:data}--\ref{ass:weakcorr} and the second-level multiplier condition above. Then, for every fixed $\epsilon\in(0,1/4)$,
\begin{equation}
\sup_{2\epsilon<\alpha<1-2\epsilon}
\left|
\Prob\bigl(T_{n,[k]}\ge \hat c_{\hat\beta_{\alpha,k},k}\bigr)-\alpha
\right|
\le C\epsn^2.
\label{eq:doublewild-main-corrected}
\end{equation}
\end{theorem}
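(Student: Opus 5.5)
The plan is to follow the prepivoting argument of \cite{Koike2026} for maxima, with the distributional and quantile expansions behind Theorem~\ref{thm:coverage-main} as the input. Write $\hat c_{\beta}:=\hat c_{\beta,k}$ and $F_{n,k}(t):=\Prob(T_{n,[k]}\le t)$. The key observation is that rejection $T_{n,[k]}\ge \hat c_{\hat\beta_{\alpha,k},k}$ is, up to ties of probability $O(\epsn^2)$, equivalent to $\hat F_{n,k}(T_{n,[k]})\ge \hat\beta_{\alpha,k}$. Hence the coverage probability equals $\Prob(\hat F_{n,k}(T_{n,[k]})\ge\hat\beta_{\alpha,k})$.

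\textbf{Step 1: expansion of the single bootstrap law.} From the proof of Theorem~\ref{thm:coverage-main} I would extract the one-term expansion, uniform over $t\in\Tcal_{k,\epsilon}$:
\[
\hat F_{n,k}(t)=G_k(t)+\hat Q_{n,\gamma,k}(t)+O_P(\epsn^2),
\qquad
F_{n,k}(t)=G_k(t)+Q_{n,k}(t)+O(\epsn^2),
\]
with the sign convention of \eqref{eq:def-Qnk}. I would record the rare event on which this fails and check that it has probability $O(\epsn^2)$.

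\textbf{Step 2: the same expansion one level down.} Conditionally on the data, the vectors $\bm X_i^*$ are independent and centered. Their conditional Stein kernels are built from $\tau^w$ (or are trivial if $w_1$ is Gaussian), and on a high-probability event they satisfy the analogue of Assumption~\ref{ass:data}, with second and third empirical moments $\bar{\bm X}^{*2}$ and $\gamma$-weighted third moments. Since $\E v_1^3=1$, I would apply the Theorem~\ref{thm:coverage-main} machinery conditionally, using the same inclusion--exclusion truncation at $k_0$ and the weak-correlation control, now with the empirical covariance. This should give
\[
\hat F^*_{n,k}(t)=G_k(t)+\hat Q^{*}_{k}(t)+O_{P}(\epsn^2),
\]
where $\hat Q^*_k$ is built from $\bar{\bm X}^{*\,2},\bar{\bm X}^{*\,3}$ and satisfies $\E^*\hat Q^*_k=\hat Q_{n,\gamma,k}+O(\epsn^2)$ to first order. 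Consequently the conditional law of $\hat F^*_{n,k}(T^*_{n,[k]})$ under $\Prob^*$ is $U(0,1)$ up to a term linear in the bootstrap skewness mismatch. This yields an expansion
\[
\hat\beta_{\alpha,k}=1-\alpha+\Delta_n(\alpha)+O_P(\epsn^2),
\]
where $\Delta_n$ is explicit and of order $\epsn$.

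\textbf{Step 3: plug in.} I would compute $\Prob(\hat F_{n,k}(T_{n,[k]})\ge 1-\alpha+\Delta_n)$ by conditioning as in the proof of Theorem~\ref{thm:coverage-main}, with the delta-method Cornish--Fisher inversion using $f_k$ and $f_k'$. The linear terms then cancel. The original-level distortion $(1-\gamma)Q_{n,k}$ is reproduced at the bootstrap level as $(1-\gamma)\hat Q$, which is $\Delta_n$, and the quadratic remainders agree to $O(\epsn^2)$. Because $\hat\beta$ ranges within $[\epsilon,1-\epsilon]$ for $\alpha\in(2\epsilon,1-2\epsilon)$ with probability $1-O(\epsn^2)$, the restriction to $\Tcal_{k,\epsilon}$ is harmless.

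\textbf{Main obstacle.} The hardest step is Step 2. It requires the rare-orthant and inclusion--exclusion bounds to hold conditionally, uniformly on a data event of probability $1-O(\epsn^2)$. In particular, Assumption~\ref{ass:weakcorr} and the eigenvalue bound must transfer to $\bar{\bm X}^{\,2}$, and the fluctuations of $\hat Q^*_k$ around $\hat Q_{n,\gamma,k}$ must be controlled in mean, not only in probability. I would first try sub-exponential maximal inequalities giving $\|\bar{\bm X}^{\,2}-\mathbf\Sigma\|_{\max}=O_P(\sqrt{\log(dn)/n})$, which is $o(1/\log d)$ under Assumption~\ref{ass:data}(iii).
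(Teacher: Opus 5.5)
\textbf{The cancellation you rely on in Step~3 does not occur under $\E v_1^3=1$.} Your Step~2 correctly records that the second level is third-moment matched relative to the first. Averaging over $w$, the second-level coefficients $\bar{\bm a}_I^{\,2}-\mathbf{\Sigma}_{II}$ and $\E v_1^3\,\bar{\bm a}_I^{\,3}/\sqrt n$ reproduce $\bar{\bm b}_I^{\,2}-\mathbf{\Sigma}_{II}$ and $\gamma\,\bar{\bm b}_I^{\,3}/\sqrt n$, so $\E^*\hat Q^*_k=\hat Q_{n,\gamma,k}+O(\epsn^2)$. But this means there is no skewness mismatch at the second level: the analogue there of the factor $1-\gamma$ is $1-\E v_1^3=0$. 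Hence $\hat F^*_{n,k}(T^*_{n,[k]})$ is uniform under $\Prob^*$ up to $O(\epsn^2)$, and $\Delta_n=O(\epsn^2)$. It is not of order $\epsn$, and it is not $(1-\gamma)\hat Q_{n,\gamma,k}$.

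The paper's proof extracts exactly this fact, but without expanding $\hat\beta_{\alpha,k}$. It applies the conditional version of Corollary~\ref{cor:thirdmatch} at levels $\alpha\mp\delta_n$, with $\delta_n=C\epsn^2$, to get $1-\alpha-\delta_n<\hat\beta_{\alpha,k}\le 1-\alpha+\delta_n$. Monotonicity of $p\mapsto\hat c_{p,k}$ then sandwiches $\hat c_{\hat\beta_{\alpha,k},k}$ between $\hat c_{1-\alpha-\delta_n,k}$ and $\hat c_{1-\alpha+\delta_n,k}$. Inserting this into Theorem~\ref{thm:coverage-main}, where $\E|R_{n,k}(\alpha)|\le C\epsn^2$ for every $\gamma$, gives a double-bootstrap rejection probability of $\alpha-(1-\gamma)Q_{n,k}(c^G_{1-\alpha,k})+O(\epsn^2)$. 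Prepivoting with this second level leaves the first-order term untouched.

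Consequently your argument closes only when $\gamma=1$, and in that case Step~3 is trivial. The paper's proof has the same restriction. Its final step uses ``the first-level second-order accuracy bound'' $\Prob(T_{n,[k]}\ge\hat c_{1-\alpha\pm\delta_n,k})=\alpha\mp\delta_n+O(\epsn^2)$, which is Corollary~\ref{cor:thirdmatch} and requires $\gamma=1$. For $\gamma\neq1$, the sandwich together with Corollary~\ref{cor:firstorderpersist} shows the error is at least $|1-\gamma|c_0\epsn-C\epsn^2$ whenever $|Q_{n,k}(c^G_{1-\alpha,k})|\ge c_0\epsn$. So the stated bound is not reachable by either route for general $\gamma$.

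A genuine prepivoting correction needs the second level to replicate the first-level distortion rather than remove it. To first order, $\hat Q_{n,\gamma,k}-\E^*\hat Q^*_k\approx\gamma(1-\E v_1^3)Q_{n,k}$. This gives $\hat\beta_{\alpha,k}\approx 1-\alpha-\gamma(1-\E v_1^3)Q_{n,k}(c^G_{1-\alpha,k})$. That shift cancels the single-bootstrap term $(1-\gamma)Q_{n,k}$ only if $\gamma(1-\E v_1^3)=1-\gamma$, and with $\E v_1^3=1$ this forces $\gamma=1$.
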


Theorem~\ref{thm:doublewild} shows that prepivoting removes the leading single-bootstrap distortion and restores second-order accuracy. Thus the double wild bootstrap achieves the same $\epsn^2$ coverage scale as the third-moment matching single bootstrap.

\subsection{A stationary exponential-mixing alternative}\label{subsec:mixing-alt}

The maximal-correlation condition in Assumption~\ref{ass:weakcorr} can be replaced by a one-dimensional dependence condition when the Gaussian reference field is generated by a stationary Gaussian sequence. The price is an explicit additional remainder that records the contribution of local clusters of exceedances.

\begin{assumption}[Stationary Gaussian coordinates with exponential strong mixing]
\label{ass:mixing-alt}
The Gaussian reference vector $\bm Z=(Z_1,\dots,Z_d)^\top$ is the first $d$ coordinates of a centered stationary Gaussian sequence $\{Z_j\}_{j\in\mathbb Z}$ with covariance function
\[
\Gamma(h):=\Cov(Z_0,Z_h),
\qquad
\Gamma(0)=\sigma^2\in[\underline\sigma^2,\overline\sigma^2].
\]
Its strong-mixing coefficients satisfy
\[
\alpha(\ell)
:=
\sup\Bigl\{
\bigl|\Prob(A\cap B)-\Prob(A)\Prob(B)\bigr|:
A\in\sigma(Z_j:j\le0),\ B\in\sigma(Z_j:j\ge \ell)
\Bigr\}
\le C_\alpha e^{-a_\alpha \ell},
\qquad \ell\ge1,
\]
for some constants $C_\alpha\ge1$ and $a_\alpha>0$.
\end{assumption}

Write
\[
\rho(h):=\Corr(Z_0,Z_h)=\Gamma(h)/\sigma^2,
\qquad h\in\mathbb Z,
\qquad
p(t):=\Prob(Z_1>t)=\bar\Phi(t/\sigma),
\qquad
\lambda(t):=d\,p(t).
\]
Because every $2\times2$ principal submatrix of $\Sigma$ has diagonal entries at most $\overline\sigma^2$ and smallest eigenvalue at least $\sigma_*^2$, we have
\begin{equation}
\sup_{h\ge1}|\rho(h)|
\le
1-\frac{\sigma_*^2}{\overline\sigma^2}
=:\vartheta_*
<1.
\label{eq:mix-rem-vartheta}
\end{equation}
Set
\begin{equation}
\beta_*:=\frac{1-\vartheta_*}{1+\vartheta_*}>0.
\label{eq:mix-rem-beta}
\end{equation}
Let $\Lambda_{k,\epsilon}>0$ be the unique constant satisfying
\begin{equation}
h_k(\Lambda_{k,\epsilon})=\epsilon/8,
\qquad
h_k(\lambda):=e^{-\lambda}\sum_{m=0}^{k-1}\frac{\lambda^m}{m!}.
\label{eq:mix-rem-Lambda}
\end{equation}

Fix
\begin{equation}
m_d:=\left\lceil d^{\beta_*/4}\right\rceil,
\qquad
\ell_d:=\left\lceil \frac{8(k_0+2)\log(2d)+8\log n}{a_\alpha}\right\rceil,
\qquad
q_d:=\left\lfloor \frac{d}{m_d+\ell_d}\right\rfloor.
\label{eq:mix-rem-block-lengths}
\end{equation}
Define
\begin{equation}
\eta_{1,d}
:=
\frac{\ell_d}{m_d}
+
\frac{m_d+\ell_d}{d}
+
d^{-3\beta_*/4}(\log d)^{-1/2},
\label{eq:mix-rem-eta1}
\end{equation}
and
\begin{equation}
r_d
:=
\eta_{1,d}
+
q_d^{-1}
+
d^{k_0+1}\alpha(\ell_d)
+
\frac{(3\Lambda_{k,\epsilon})^{k_0+1}}{(k_0+1)!}.
\label{eq:mix-rem-rd}
\end{equation}
Since \eqref{eq:mix-rem-block-lengths} implies
\begin{equation}
\alpha(\ell_d)
\le
C_\alpha n^{-8}(2d)^{-8(k_0+2)},
\label{eq:mix-rem-alpha-ld}
\end{equation}
the sequence $r_d$ tends to $0$.

\begin{theorem}[Stationary exponential-mixing alternative]
\label{prop:mixing-alt-main-rd}
Assume Assumptions~\ref{ass:data}, \ref{ass:mult}, \ref{ass:variance}, and \ref{ass:mixing-alt}. Then, for $A>0$ large enough, there exists a constant $C>0$ such that
\begin{align}
\sup_{t\in\Tcal_{k,\epsilon}}
\left|
\Prob(T_{n,[k]}\le t)-\bigl(G_k(t)+Q_{n,k}(t)\bigr)
\right|
&\le C(\epsn^2+r_d),
\label{eq:mix-rem-dist-data}
\\
\sup_{t\in\Tcal_{k,\epsilon}}
\left|
\Prob^*(T_{n,[k]}^*\le t)-\bigl(G_k(t)+\hat Q_{n,\gamma,k}(t)\bigr)
\right|
&\le C(\epsn^2+r_d)
\label{eq:mix-rem-dist-boot}
\end{align}
with probability at least $1-C/n$,
\begin{equation}
\sup_{\epsilon<\alpha<1-\epsilon}
\left|
\hat c_{1-\alpha,k}
-
\left[
 c^G_{1-\alpha,k}
 -\frac{\hat Q_{n,\gamma,k}(c^G_{1-\alpha,k})}{f_k(c^G_{1-\alpha,k})}
 +R_{n,k}(\alpha)
 \right]
\right|
\le
C(\epsn^3+r_d),
\label{eq:mix-rem-CF}
\end{equation}
with probability at least $1-C/n$,
\begin{equation}
\sup_{\epsilon<\alpha<1-\epsilon}
\left|
\Prob\bigl(T_{n,[k]}\ge \hat c_{1-\alpha,k}\bigr)
-
\left[
\alpha-(1-\gamma)Q_{n,k}(c^G_{1-\alpha,k})-\E\{R_{n,k}(\alpha)\}
\right]
\right|
\le
C(\epsn^2+r_d),
\label{eq:mix-rem-coverage}
\end{equation}
and, if $\gamma=1$,
\begin{equation}
\sup_{\epsilon<\alpha<1-\epsilon}
\left|
\Prob\bigl(T_{n,[k]}\ge \hat c_{1-\alpha,k}\bigr)-\alpha
\right|
\le
C(\epsn^2+r_d).
\label{eq:mix-rem-thirdmatch}
\end{equation}
The corresponding double wild bootstrap statement also holds with the same remainder:
\begin{equation}
\sup_{2\epsilon<\alpha<1-2\epsilon}
\left|
\Prob\bigl(T_{n,[k]}\ge \hat c_{\hat\beta_{\alpha,k},k}\bigr)-\alpha
\right|
\le
C(\epsn^2+r_d).
\label{eq:mix-rem-doublewild}
\end{equation}
\end{theorem}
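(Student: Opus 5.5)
The proof will rerun the argument behind Theorems~\ref{thm:coverage-main} and~\ref{thm:doublewild} and identify the single place where Assumption~\ref{ass:weakcorr} enters: the Gaussian aggregation step. The rest of the pipeline depends only on Assumptions~\ref{ass:data}, \ref{ass:mult} and~\ref{ass:variance}, namely
\begin{itemize}
\item the exceedance-count identity $\{T_{n,[k]}\le t\}=\{N_n(t)\le k-1\}$;
\item the weighted inclusion--exclusion formula $\Prob(N\ge k)=\sum_{s\ge k}(-1)^{s-k}\binom{s-1}{k-1}V_s$;
\item the projected Edgeworth expansion of each rare-orthant probability $\Prob(\bm S_n\in B_I(t))$ for $|I|\le k_0$, uniformly over $I$.
\end{itemize}
The Edgeworth step uses only $\lambda_{\min}(\mathbf\Sigma)\ge\sigma_*^2$ and the moment and Stein-kernel bounds, so it carries over verbatim. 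It gives $|V_{n,s}(t)-M_{n,s}(t)|$ and its bootstrap analogue with an error of order $\epsn^2$ times a combinatorial factor summable over $s\le k_0$.

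What must be redone is the control of the two truncation tails and of the quantile geometry:
\begin{itemize}
\item the tail $\sum_{s>k_0}\binom{s-1}{k-1}V_{Z,s}(t)$ (and its $\bm S_n$, $\bm S_n^*$ counterparts, via the Bonferroni sandwich);
\item the facts that $t\in\Tcal_{k,\epsilon}$ corresponds to $\lambda(t)=dp(t)$ bounded above and below, and that $f_k$ is bounded away from zero with $f_k'$ bounded on the window, which are needed for the Cornish--Fisher inversion.
\end{itemize}
Under weak correlation these followed from $V_{Z,s}\approx\lambda^s/s!$.

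Under Assumption~\ref{ass:mixing-alt} I will prove a Poisson approximation for $N_Z(t)$, uniformly over $t$ with $\lambda(t)\le 3\Lambda_{k,\epsilon}$, using Bernstein blocks of lengths $m_d$ (big) and $\ell_d$ (small), $q_d$ blocks in total. The argument has three steps.
\begin{enumerate}
\item \emph{Discarding the small blocks and the leftover.} These contain at most $q_d\ell_d+m_d+\ell_d$ coordinates, so they carry expected exceedance mass $O(\ell_d/m_d+(m_d+\ell_d)/d)$, which is the first two terms of $\eta_{1,d}$.
\item \emph{Within-block clustering.} By \eqref{eq:mix-rem-vartheta} and the bivariate normal tail bound $\Prob(Z_0>t,Z_h>t)\lesssim p(t)^{2/(1+\vartheta_*)}$, the probability of two exceedances in one block is at most $m_d^2p^{1+\beta_*}$. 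Summed over blocks this gives $d\,m_d\,p^{1+\beta_*}\lesssim d^{\beta_*/4}d^{-\beta_*}$ up to the $(\log d)^{-1/2}$ Mills factor, since $p\asymp 1/d$. This is the last term of $\eta_{1,d}$.
\item \emph{Decoupling the blocks.} Joint factorial moments of order up to $k_0+1$ across blocks are compared with the product measure via the mixing coefficient. Each of at most $d^{k_0+1}$ index tuples costs $\alpha(\ell_d)$, which gives the term $d^{k_0+1}\alpha(\ell_d)$. Independent block indicators with a common success probability then give a binomial with $q_d$ trials, whose factorial moments differ from the Poisson ones by $O(q_d^{-1})$.
\end{enumerate}
Finally, the Poisson tail beyond $k_0$ is at most $(3\Lambda_{k,\epsilon})^{k_0+1}/(k_0+1)!$. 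The conclusion is
\[
\sup_t\Bigl|\sum_{s>k_0}\binom{s-1}{k-1}V_{Z,s}(t)\Bigr|+\sup_t|G_k(t)-h_k(\lambda(t))|\le Cr_d,
\]
together with the same bounds for derivatives in $t$ (differentiate under the Gaussian density, where the same block argument applies). Since $h_k$ is explicit, this yields the geometric facts: $\lambda(t)\in[\lambda_-,3\Lambda_{k,\epsilon}]$ on $\Tcal_{k,\epsilon}$, and $f_k\asymp |\lambda'(t)|>0$.

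\emph{Main obstacle.} The hardest step is the uniform control of the weighted factorial-moment tail for $\bm S_n$ and $\bm S_n^*$, rather than for $\bm Z$, because $\binom{s-1}{k-1}$ grows with $s$. I will first use the Bonferroni inequality, which truncates exactly at $k_0$ with a sign-definite remainder equal to the $(k_0+1)$th term. This reduces everything to $V_{n,k_0+1}$, which is transferred to $V_{Z,k_0+1}$ by the orthant Edgeworth bound. That bound is in turn controlled by the Poisson estimate above.

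With these substitutes in hand, \eqref{eq:mix-rem-dist-data} and \eqref{eq:mix-rem-dist-boot} follow exactly as in the proof of Theorem~\ref{thm:coverage-main}, with $r_d$ added to each Gaussian-side remainder. The bootstrap statement \eqref{eq:mix-rem-dist-boot} holds on the event of probability $1-C/n$ where $\bar{\bm X}^{\,2}$ and $\bar{\bm X}^{\,3}$ concentrate. The quantile expansion \eqref{eq:mix-rem-CF} comes from inverting $G_k+\hat Q$ by a second-order Taylor expansion around $c^G_{1-\alpha,k}$; the additive $r_d$ passes through unamplified because $f_k$ is bounded below. The coverage statement \eqref{eq:mix-rem-coverage} then follows by the same conditioning and smoothing argument used for Theorem~\ref{thm:coverage-main}. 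Setting $\gamma=1$ gives \eqref{eq:mix-rem-thirdmatch}, since $Q$ then drops out. The double bootstrap bound \eqref{eq:mix-rem-doublewild} is obtained by applying \eqref{eq:mix-rem-dist-boot} at the second level and repeating the prepivoting argument of Theorem~\ref{thm:doublewild}.
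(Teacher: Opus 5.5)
Your architecture is the paper's. It uses Bernstein blocks with the bad-event and pairwise cluster bounds, mixing decoupling plus binomial-to-Poisson, truncation at $k_0$ via the Bonferroni remainder (Lemma~\ref{lem:mix-rem-bonf}), and transfer of $V_{n,k_0+1}$ to $M_{Z,k_0+1}$ through the orthant expansion on $|I|\le k_0+1$. The genuine gap is your claim that Assumption~\ref{ass:weakcorr} enters only in the Gaussian aggregation. You assert that the projected Edgeworth step ``carries over verbatim'' because it uses only $\lambda_{\min}(\mathbf\Sigma)\ge\sigma_*^2$ and moment bounds, but that is not so. What you need from that step is the \emph{relative} error $C\epsn^2\pi_I(t)$, because only this sums over the $\binom{d}{s}$ sets to $C\epsn^2M_{Z,s}(t)$. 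In the proof of Proposition~\ref{prop:projected-local}, the relative form comes from the localized derivative bound combined with the Gaussian shift and strip inequalities of Lemmas~\ref{lem:gauss-shift} and~\ref{lem:gauss-strip}: $\pi_I(t-a)\le C\pi_I(t)$ and $\pi_I(t-a)-\pi_I(t)\le Ca(1+t)\pi_I(t)$ for $0\le a\le c_0/t$. Those inequalities were derived from $\rho_d\log d\to0$ by comparing $\pi_I(t)$ with $\prod_{j\in I}\bPhi(t/\sigma_j)$. That comparison fails under Assumption~\ref{ass:mixing-alt}: two neighbouring coordinates with correlation $\rho>0$ bounded away from $0$ have $\pi_I(t)$ of order $p(t)^{2/(1+\rho)}$ up to logarithmic factors, not $p(t)^2$. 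So the shift and strip inequalities must be re-established for all $|I|\le k_0+1$ before the local expansion can be reused. The paper does this in Lemma~\ref{lem:mix-rem-shift}. It extracts $|\rho(h)|\le\sin\{2\pi\alpha(h)\}$ from the mixing coefficients (Lemma~\ref{lem:mix-rem-corr}), so $\sum_{h\ge1}|\rho(h)|<\infty$ and every such principal submatrix has spectrum in $[\sigma_*^2,\overline\sigma^2(1+2\sum_{h\ge1}|\rho(h)|)]$. It then reruns the shift/strip argument under these two spectral bounds. Your plan supplies neither this nor any substitute.

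Three smaller points. First, the bound you state on the full unsigned tail $\sum_{s>k_0}\binom{s-1}{k-1}V_{Z,s}(t)$ does not follow from your block argument. With $\ell_d$ as in \eqref{eq:mix-rem-block-lengths}, the decoupling error $d^s\alpha(\ell_d)$ is small only for $s$ of order $k_0$. The bound is also unnecessary: as in Lemma~\ref{lem:mix-rem-weighted}, only $\binom{k_0}{k-1}M_{Z,k_0+1}(t)\le Cr_d$ is needed, and your Bonferroni step uses nothing more. Second, for the regularity of $G_k$ you propose differentiating and rerunning the block argument. That needs a Palm-type version conditional on $Z_j=t$. Under that conditioning, a neighbour at lag $h$ exceeds $t$ with probability $\bPhi\{(t/\sigma)\sqrt{(1-\rho(h))/(1+\rho(h))}\}$, which is bounded only by $\bPhi(\sqrt{\beta_*}\,t/\sigma)$, so you must show this local cluster is negligible. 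The paper instead compares symmetric difference quotients of $G_k$ and $H_k=h_k\circ\lambda$ at scale $\delta_d=r_d^{1/4}$ (Lemma~\ref{lem:mix-rem-regularity}), which uses only $\sup|G_k-H_k|\le Cr_d$. Third, your remark that $r_d$ passes through the inversion unamplified applies equally to the $O(\epsn^2)$ remainder in \eqref{eq:mix-rem-dist-boot}. So the inversion as sketched gives $C(\epsn^2+r_d)$ in \eqref{eq:mix-rem-CF}, not $C(\epsn^3+r_d)$; the paper's inversion (proof of Theorem~\ref{thm:CF}) absorbs that remainder in the same way. This is harmless for \eqref{eq:mix-rem-coverage}--\eqref{eq:mix-rem-doublewild}, where an $O(\epsn^2)$ quantile error costs only $O(\epsn^2)$ in coverage.
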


Theorem~\ref{prop:mixing-alt-main-rd} replaces the maximal-correlation condition by a one-dimensional dependence assumption on the Gaussian reference field. The price is the explicit remainder $r_d$, which isolates the effect of local clustering while leaving the structure of the second-order expansion unchanged.

\begin{remark}
The remainder $r_d$ is driven mainly by the block-length ratio $\ell_d/m_d$. Since $k_0\le C\log n$, the definition of $\ell_d$ yields
\[
\ell_d\le C\log n\,\log(2d).
\]
Consequently,
\begin{align*}
r_d
&\le
C\frac{\log n\,\log(2d)}{d^{\beta_*/4}}
+
C d^{-1+\beta_*/4}
+
C d^{-3\beta_*/4}(\log d)^{-1/2}
+
C n^{-8}d^{-7k_0-15}
+
C\frac{(3\Lambda_{k,\epsilon})^{k_0+1}}{(k_0+1)!}.
\end{align*}
In particular, a sufficient condition for $r_d=O(n^{-1})$ is
\[
d^{\beta_*/4}\ge C n\log n\,\log(2d).
\]
If $d=n^c$, then
\begin{align*}
r_d
&\le
C n^{-c\beta_*/4}\log^2 n
+
C n^{-c(1-\beta_*/4)}
+
C n^{-3c\beta_*/4}(\log n)^{-1/2}
+
C n^{-8-c(7k_0+15)}
+
C\frac{(3\Lambda_{k,\epsilon})^{k_0+1}}{(k_0+1)!},
\end{align*}
so $r_d=O(n^{-1})$ whenever $c>4/\beta_*$.
\end{remark}

The proof of Theorem~\ref{prop:mixing-alt-main-rd} is given in Appendix~B. Only the Gaussian aggregation part of Appendix~A needs to be modified; the projected local Edgeworth expansion remains unchanged once the shift/strip bounds are re-established under Assumption~\ref{ass:mixing-alt}.

\section{Simulation}\label{sec:sim}

We investigate the finite-sample size of the bootstrap procedures for the $k$th largest coordinate
$T_{n,[1]} \ge T_{n,[2]} \ge \cdots \ge T_{n,[d]}$
of
$\bm S_n = \frac{1}{\sqrt n}\sum_{i=1}^n \bm X_i.$
The simulation design is kept fixed across all experiments, and only the target order statistic is varied.
We report results for
$k \in \{2,5,10\}.$
The case $k=1$ coincides with the maximum and is therefore omitted here.

Throughout the simulation, the dimension is fixed at
$d=400,$
and the sample size is taken from
$n \in \{200,400\}.$
For the dependence structure, we consider two correlation designs. In Design I,
\[
\mathbf{R} = \rho\, \mathbf{1}_d \mathbf{1}_d^\top + (1-\rho) \mathbf{I}_d,
\]
and in Design II,
\[
\mathbf{R}= (\rho^{|j-k|}),
\qquad 1 \le j,k \le d,
\]
with
$\rho \in \{0.2,0.8\}.$

Let $\Phi$ denote the standard normal distribution function. For $\theta>0$, let $F_\theta$ be the distribution function of the gamma distribution with shape parameter $\theta$ and unit scale. For each Monte Carlo repetition, we first generate
\[
\bm Z_i = (Z_{i1},\dots,Z_{id})^\top \sim N(\bm 0,\mathbf{R}),
\qquad i=1,\dots,n,
\]
independently, and then define
\[
U_{ij} = F_\theta^{-1}\bigl(\Phi(Z_{ij})\bigr),
\qquad 1\le i\le n,\; 1\le j\le d.
\]
This yields a Gaussian-copula model with gamma marginals.

We consider two cases.
\begin{itemize}
\item {\it Asymmetric case.}
We set $\theta=1$ and define
\[
\bm X_i = \bm U_i - \mathbf{1}_d,
\qquad i=1,\dots,n,
\]
where $\bm U_i = (U_{i1},\dots,U_{id})^\top$. Since each marginal has mean $1$, the vector $\bm X_i$ is centered.

\item {\it Symmetric case.}
We set $\theta=\tfrac12$. Let $\bm U_i'$ be an independent copy of $\bm U_i$, and define
\[
\bm X_i = \bm U_i - \bm U_i',
\qquad i=1,\dots,n.
\]
This symmetrization removes skewness. The choice $\theta=\tfrac12$ keeps the marginal kurtosis on the same scale as in the asymmetric setup.
\end{itemize}

We consider the following bootstrap methods:
\begin{itemize}

\item \textit{Empirical bootstrap (EB).}  The classic naive bootstrap methods;
\item \textit{Gaussian wild bootstrap (GB):}
$w_i \sim N(0,1).$

\item 
\textit{Mammen wild bootstrap (MB):}
\[
\Prob\!\left(w_i=\frac{1+\sqrt5}{2}\right)=\frac{\sqrt5-1}{2\sqrt5},
\qquad
\Prob\!\left(w_i=-\frac{\sqrt5-1}{2}\right)=\frac{\sqrt5+1}{2\sqrt5}.
\]

\item 
\textit{Rademacher wild bootstrap (RB):}
$\Prob(w_i=1)=\Prob(w_i=-1)=\frac12.$

\item 
\textit{Beta wild bootstrap (BB):}
let $\nu=0.1$ and define
\[
c_\nu = \nu^2 + 20\nu + 20,
\alpha_\nu
=
\frac{\nu}{2}
\left(
1-\frac{\nu+2}{\sqrt{c_\nu}}
\right),
\beta_\nu
=
\frac{\nu}{2}
\left(
1+\frac{\nu+2}{\sqrt{c_\nu}}
\right).
\]
Let $\eta_i \sim \mathrm{Beta}(\alpha_\nu,\beta_\nu)$ i.i.d., and standardize by
\[
w_i
=
\frac{\eta_i-\E[\eta_i]}{\sqrt{\Var(\eta_i)}}.
\]
Then
\[
\E[w_i]=0,
\qquad
\E[w_i^2]=1,
\qquad
\E[w_i^3]=1.
\]

\item 
\textit{double wild bootstrap (DB).} The bootstrap method proposed in subsection \ref{subsec:db}.
\end{itemize}

For the Monte Carlo implementation, we use
$B_1=499$
first-level bootstrap replications for EB, GB, MB, RB, and BB, and for DB we use
$B_1=499,
B_2=99$
at the first and second bootstrap levels, respectively.

Tables \ref{tab:rej_case_k2}-\ref{tab:rej_case_k10} report the emprical sizes of different bootstrap methods at the $10\%$ level for $k=2,5,10$, respectively. Across $k\in\{2,5,10\}$, the qualitative ordering of the bootstrap procedures is largely unchanged. The dominant source of finite-sample distortion is the underlying design---most notably asymmetry and the more difficult Design~II---rather than the value of $k$ itself. EB is uniformly conservative, with the under-rejection being especially visible in the asymmetric settings and in symmetric Design~II, although the distortion is somewhat mitigated as $n$ increases. GB is more design-sensitive: it is reasonably well calibrated in symmetric Design~I, but becomes distinctly liberal under asymmetry, particularly when $n$ is small and $\rho=0.2$. MB and BB display the most stable behavior overall; both are typically mildly conservative, yet they avoid the substantial over-rejection exhibited by GB and, more markedly, RB, and their performance is comparatively robust across designs and values of $k$. RB is the least robust method: it is very accurate, and often closest to the nominal level, in the symmetric experiments, but it becomes severely oversized under asymmetry, especially in Design~II. DB is frequently numerically closest to the nominal $10\%$ level in the asymmetric designs, although this occurs through a persistent liberal bias; under symmetry it likewise remains slightly oversized. Larger $n$ generally improves calibration, and increasing $k$ attenuates some distortions, but these effects are quantitative rather than qualitative. Overall, the evidence points to MB and BB as the most reliable choices when uniform size control across heterogeneous designs is the primary concern, whereas RB is competitive only when symmetry is a credible approximation.

\begin{table}[!htbp]
\centering
\caption{Empirical sizes at the $10\%$ level for the second largest coordinate $T_{n,[2]}$.}
\label{tab:rej_case_k2}
\setlength{\tabcolsep}{6pt}
\renewcommand{\arraystretch}{1.1}
\vspace{0.2cm}
\begin{tabular}{lcccccccc}
\hline
Design & $n$ & $\rho$ & EB & GB & MB & RB & BB & DB \\
\hline
\multicolumn{9}{l}{\textbf{Panel A: Asymmetric}} \\
\hline
I  & 200 & 0.2 & 0.0612 & 0.1211 & 0.0757 & 0.1464 & 0.0743 & 0.1114 \\
I  & 200 & 0.8 & 0.0731 & 0.0869 & 0.0739 & 0.0907 & 0.0719 & 0.1002 \\
I  & 400 & 0.2 & 0.0732 & 0.1172 & 0.0809 & 0.1301 & 0.0802 & 0.1044 \\
I  & 400 & 0.8 & 0.0814 & 0.0954 & 0.0809 & 0.0962 & 0.0821 & 0.1034 \\
\cline{1-9}
II & 200 & 0.2 & 0.0619 & 0.152 & 0.0888 & 0.215 & 0.0884 & 0.115 \\
II & 200 & 0.8 & 0.0686 & 0.137 & 0.0865 & 0.174 & 0.0857 & 0.107 \\
II & 400 & 0.2 & 0.0790 & 0.153 & 0.0946 & 0.182 & 0.0936 & 0.109 \\
II & 400 & 0.8 & 0.0826 & 0.134 & 0.0927 & 0.156 & 0.0921 & 0.105 \\
\hline
\multicolumn{9}{l}{\textbf{Panel B: Symmetric}} \\
\hline
I  & 200 & 0.2 & 0.0731 & 0.0829 & 0.0892 & 0.1047 & 0.0907 & 0.120 \\
I  & 200 & 0.8 & 0.0970 & 0.1015 & 0.1002 & 0.1058 & 0.0993 & 0.114 \\
I  & 400 & 0.2 & 0.0868 & 0.0914 & 0.0963 & 0.1035 & 0.0934 & 0.109 \\
I  & 400 & 0.8 & 0.0993 & 0.1016 & 0.1029 & 0.1038 & 0.1026 & 0.109 \\
\cline{1-9}
II & 200 & 0.2 & 0.0584 & 0.0653 & 0.0830 & 0.106 & 0.0814 & 0.116 \\
II & 200 & 0.8 & 0.0677 & 0.0759 & 0.0859 & 0.101 & 0.0848 & 0.104 \\
II & 400 & 0.2 & 0.0763 & 0.0807 & 0.0902 & 0.102 & 0.0893 & 0.107 \\
II & 400 & 0.8 & 0.0877 & 0.0911 & 0.0977 & 0.106 & 0.0984 & 0.110 \\
\hline
\end{tabular}
\end{table}

\begin{table}[!htbp]
\centering
\caption{Empirical sizes at the $10\%$ level for the 5th largest coordinate $T_{n,[5]}$.}
\label{tab:rej_case_panel}
\setlength{\tabcolsep}{6pt}
\renewcommand{\arraystretch}{1.1}
\vspace{0.2cm}
\begin{tabular}{lcccccccc}
\hline
Design & $n$ & $\rho$ & EB & GB & MB & RB & BB & DB \\
\hline
\multicolumn{9}{l}{\textbf{Panel A: Asymmetric}} \\
\hline
I  & 200 & 0.2 & 0.0645 & 0.1140 & 0.0731 & 0.1298 & 0.0730 & 0.107 \\
I  & 200 & 0.8 & 0.0738 & 0.0866 & 0.0742 & 0.0878 & 0.0735 & 0.099 \\
I  & 400 & 0.2 & 0.0754 & 0.1119 & 0.0823 & 0.1235 & 0.0809 & 0.102 \\
I  & 400 & 0.8 & 0.0835 & 0.0953 & 0.0852 & 0.0955 & 0.0843 & 0.103 \\
\cline{1-9}
II & 200 & 0.2 & 0.0565 & 0.1510 & 0.0887 & 0.2220 & 0.0854 & 0.118 \\
II & 200 & 0.8 & 0.0712 & 0.1380 & 0.0883 & 0.1730 & 0.0875 & 0.111 \\
II & 400 & 0.2 & 0.0731 & 0.1510 & 0.0907 & 0.1880 & 0.0884 & 0.111 \\
II & 400 & 0.8 & 0.0792 & 0.1290 & 0.0890 & 0.1450 & 0.0886 & 0.101 \\
\hline
\multicolumn{9}{l}{\textbf{Panel B: Symmetric}} \\
\hline
I  & 200 & 0.2 & 0.0820 & 0.0891 & 0.0918 & 0.1050 & 0.0914 & 0.113 \\
I  & 200 & 0.8 & 0.0985 & 0.1035 & 0.1006 & 0.1060 & 0.0974 & 0.112 \\
I  & 400 & 0.2 & 0.0933 & 0.0962 & 0.0986 & 0.1050 & 0.0979 & 0.109 \\
I  & 400 & 0.8 & 0.1005 & 0.1009 & 0.1007 & 0.1030 & 0.1017 & 0.107 \\
\cline{1-9}
II & 200 & 0.2 & 0.0587 & 0.0629 & 0.0826 & 0.1070 & 0.0798 & 0.122 \\
II & 200 & 0.8 & 0.0711 & 0.0780 & 0.0867 & 0.1010 & 0.0883 & 0.109 \\
II & 400 & 0.2 & 0.0781 & 0.0802 & 0.0918 & 0.1050 & 0.0917 & 0.115 \\
II & 400 & 0.8 & 0.0886 & 0.0906 & 0.0960 & 0.1040 & 0.0957 & 0.107 \\
\hline
\end{tabular}
\end{table}

\begin{table}[!htbp]
\centering
\caption{Empirical sizes at the $10\%$ level for the 10th largest coordinate $T_{n,[10]}$.}
\label{tab:rej_case_k10}
\setlength{\tabcolsep}{6pt}
\renewcommand{\arraystretch}{1.1}
\vspace{0.2cm}
\begin{tabular}{lcccccccc}
\hline
Design & $n$ & $\rho$ & EB & GB & MB & RB & BB & DB \\
\hline
\multicolumn{9}{l}{\textbf{Panel A: Asymmetric}} \\
\hline
I  & 200 & 0.2 & 0.0695 & 0.1073 & 0.0769 & 0.1173 & 0.0757 & 0.1052 \\
I  & 200 & 0.8 & 0.0745 & 0.0853 & 0.0749 & 0.0871 & 0.0730 & 0.0974 \\
I  & 400 & 0.2 & 0.0770 & 0.1046 & 0.0798 & 0.1111 & 0.0796 & 0.0983 \\
I  & 400 & 0.8 & 0.0843 & 0.0944 & 0.0867 & 0.0948 & 0.0856 & 0.1027 \\
\cline{1-9}
II & 200 & 0.2 & 0.0537 & 0.142 & 0.0812 & 0.223 & 0.0782 & 0.117 \\
II & 200 & 0.8 & 0.0774 & 0.132 & 0.0883 & 0.159 & 0.0875 & 0.110 \\
II & 400 & 0.2 & 0.0732 & 0.146 & 0.0905 & 0.187 & 0.0895 & 0.113 \\
II & 400 & 0.8 & 0.0842 & 0.129 & 0.0932 & 0.146 & 0.0919 & 0.104 \\
\hline
\multicolumn{9}{l}{\textbf{Panel B: Symmetric}} \\
\hline
I  & 200 & 0.2 & 0.0851 & 0.0894 & 0.0901 & 0.0997 & 0.0900 & 0.110 \\
I  & 200 & 0.8 & 0.1005 & 0.1033 & 0.1014 & 0.1056 & 0.1002 & 0.110 \\
I  & 400 & 0.2 & 0.0945 & 0.0969 & 0.0974 & 0.1019 & 0.0982 & 0.106 \\
I  & 400 & 0.8 & 0.1004 & 0.1014 & 0.1002 & 0.1026 & 0.1017 & 0.105 \\
\cline{1-9}
II & 200 & 0.2 & 0.0608 & 0.0636 & 0.0822 & 0.108 & 0.0789 & 0.124 \\
II & 200 & 0.8 & 0.0764 & 0.0786 & 0.0890 & 0.103 & 0.0904 & 0.110 \\
II & 400 & 0.2 & 0.0756 & 0.0774 & 0.0894 & 0.104 & 0.0891 & 0.114 \\
II & 400 & 0.8 & 0.0885 & 0.0902 & 0.0951 & 0.102 & 0.0960 & 0.107 \\
\hline
\end{tabular}
\end{table}

\section{Conclusion}\label{sec:con}
This paper studies Gaussian and bootstrap approximations for the $k$th largest coordinate statistic $T_{n,[k]}$ in high dimensions. We establish theoretical guarantees that justify bootstrap critical values when the ambient dimension is allowed to grow with the sample size, thereby extending valid inference beyond the maximum to nonmaximal order statistics. The simulation results show that the proposed framework delivers accurate finite-sample inference and clarify the relative robustness of the competing bootstrap procedures across a range of designs.

An important direction for future research is to develop analogous Gaussian approximation results for temporally dependent observations. Doing so would require a theory that accommodates serial dependence, long-run covariance estimation, and resampling schemes that preserve the time-series structure; see, for example, \citep{Shao2010,ZhangWu2017,ZhangCheng2014,ZhangCheng2018,ChangChenWu2024,ChangJiangShao2023,ChangJiangMcElroyShao2025}.

\appendix
\section{Appendix A: Proofs of Theorems}

\subsection{Combinatorial identities}

\begin{lemma}[Finite inclusion--exclusion identity]
\label{lem:comb}
For every integer $k\ge1$ and every nonnegative integer-valued random variable $N$,
\begin{equation}
\ind\{N\ge k\}
=
\sum_{s=k}^{N}(-1)^{s-k}\binom{s-1}{k-1}\binom{N}{s}.
\label{eq:comb-identity}
\end{equation}
Consequently,
\begin{equation}
\Prob(T_{n,[k]}>t)
=
\sum_{s=k}^{d}(-1)^{s-k}\binom{s-1}{k-1}V_{n,s}(t),
\label{eq:tail-factorial-main}
\end{equation}
and analogously with $N_n^*(t)$ and $N_Z(t)$.
\end{lemma}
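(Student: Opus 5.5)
The plan is to prove the deterministic identity \eqref{eq:comb-identity} for a fixed integer $N=m\ge 0$, and then obtain \eqref{eq:tail-factorial-main} by plugging in $N=N_n(t)$ and taking expectations.

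\emph{The identity for fixed $m$.} If $m<k$, the sum is empty and the indicator vanishes, so there is nothing to prove. For $m\ge k$, write $\binom{m}{s}=\binom{m-1}{s-1}+\binom{m-1}{s}$ and use induction on $m$. A cleaner route is a generating-function computation: show directly that
\[
\sum_{s=k}^{m}(-1)^{s-k}\binom{s-1}{k-1}\binom{m}{s}=1
\qquad (m\ge k).
\]
I would do this by differencing in $m$. Set
\[
a_m:=\sum_{s}(-1)^{s-k}\binom{s-1}{k-1}\binom{m}{s}.
\]
Then
\[
a_{m+1}-a_m=\sum_s(-1)^{s-k}\binom{s-1}{k-1}\binom{m}{s-1}
=\sum_{r}(-1)^{r+1-k}\binom{r}{k-1}\binom{m}{r}.
\]
The standard identity $\sum_r(-1)^r\binom{m}{r}\binom{r}{j}=(-1)^m\,\mathbf 1\{m=j\}$ turns this into
\[
a_{m+1}-a_m=(-1)^{m+1-k}(-1)^m\,\mathbf 1\{m=k-1\}=\mathbf 1\{m=k-1\}.
\]
The standard identity itself follows from $\binom{m}{r}\binom{r}{j}=\binom{m}{j}\binom{m-j}{r-j}$ and the binomial theorem $(1-1)^{m-j}$. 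Since $a_m=0$ for $m\le k-1$, this gives $a_m=\mathbf 1\{m\ge k\}$ for all $m\ge 0$. This small bookkeeping (index shift and sign) is the only step needing care.

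\emph{The probabilistic consequence.} Apply the identity with $N=N_n(t)$, which takes values in $\{0,\dots,d\}$, and recall that $\{T_{n,[k]}>t\}=\{N_n(t)\ge k\}$. Since $\binom{N}{s}=0$ for $s>N$, the upper summation limit $N$ can be replaced by $d$. Taking expectations of this finite sum with bounded terms, linearity gives \eqref{eq:tail-factorial-main}, because $V_{n,s}(t)=\E\binom{N_n(t)}{s}$. The same argument applies verbatim to $N_n^*(t)$ under $\E^*$ and to $N_Z(t)$.
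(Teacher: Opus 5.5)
Your proof is correct, and it takes a different and in fact sounder route than the paper. The paper attempts a one-line evaluation using $\binom{s-1}{k-1}\binom{m}{s}=\binom{m}{k}\binom{m-k}{s-k}$ followed by the binomial theorem. That identity is false: for $k=1$ the left side is $\binom{m}{s}$ while the right side is $s\binom{m}{s}$. The correct version has $\binom{s}{k}$ in place of $\binom{s-1}{k-1}$, and the resulting $\binom{m}{k}(1-1)^{m-k}$ equals $\ind\{m=k\}$, not $\ind\{m\ge k\}$. So that computation really proves only the ``exactly $k$'' identity $\sum_s(-1)^{s-k}\binom{s}{k}\binom{m}{s}=\ind\{m=k\}$.

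Your differencing argument supplies exactly the missing step. Pascal's rule turns $a_{m+1}-a_m$ into an ``exactly $k-1$'' sum, which the correct Vandermonde identity evaluates, and telescoping from $a_{k-1}=0$ converts ``exactly'' into ``at least''. An equivalent route is to sum $\ind\{m=j\}$ over $j\ge k$ and use $\sum_{j=k}^{s}(-1)^{s-j}\binom{s}{j}=(-1)^{s-k}\binom{s-1}{k-1}$.

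Two small repairs are needed. First, pulling the sign out of $\sum_r(-1)^{r+1-k}\binom{r}{k-1}\binom{m}{r}$ gives $(-1)^{1-k}(-1)^m\ind\{m=k-1\}$, not $(-1)^{m+1-k}(-1)^m\ind\{m=k-1\}$. Your expression equals $(-1)^{k-1}$ at $m=k-1$, although your final value $\ind\{m=k-1\}$ is right. Second, the sum defining $a_m$ should run over $s\ge k$, so that no generalized-binomial term $\binom{-1}{k-1}$ at $s=0$ enters. The passage to the probability formula by taking expectations is the same as in the paper.
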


\begin{proof}
For deterministic $N=m$, define
\begin{equation*}
S_{m,k}:=\sum_{s=k}^{m}(-1)^{s-k}\binom{s-1}{k-1}\binom{m}{s}.
\end{equation*}
Using
\begin{equation*}
\binom{s-1}{k-1}\binom{m}{s}
=
\binom{m}{k}\binom{m-k}{s-k},
\end{equation*}
we obtain
\begin{equation*}
S_{m,k}
=
\binom{m}{k}\sum_{r=0}^{m-k}(-1)^r\binom{m-k}{r}
=
\binom{m}{k}(1-1)^{m-k}.
\end{equation*}
Hence
\begin{equation*}
S_{m,k}=\begin{cases}
0,& m<k,\\
1,& m\ge k.
\end{cases}
\end{equation*}
This proves \eqref{eq:comb-identity}. Taking expectations with $N=N_n(t)$ gives \eqref{eq:tail-factorial-main}.
\end{proof}

\subsection{Projected quantities}

For every nonempty $I=\{i_1,\dots,i_s\}\subset[d]$, let $\bm P_I:\R^d\to\R^s$ denote the coordinate projection. Define
\begin{equation*}
\bm S_{n,I}:=\bm P_I\bm S_n,
\qquad
\mathbf{\Sigma}_{II}:=\bm P_I\mathbf{\Sigma}\bm P_I^\top,
\qquad
\ph_I:=\ph_{\mathbf{\Sigma}_{II}}.
\end{equation*}
Also define
\begin{equation*}
\bar{\bm X}_I^{\,r}:=\frac1n\sum_{i=1}^n (\bm P_I\bm X_i)^{\otimes r},
\qquad
\bm b_{i,I}:=\bm P_I(\bm X_i-\bar{\bm X}),
\qquad
\bar{\bm b}_I^{\,r}:=\frac1n\sum_{i=1}^n \bm b_{i,I}^{\otimes r}.
\end{equation*}
The projected Edgeworth densities are
\begin{align}
 p_{n,I}(\bm u)
 &:=
 \ph_I(\bm u)
 -\frac{1}{6\sqrt n}\bigl\langle\E[\bar{\bm X}_I^{\,3}],\nabla^3\ph_I(\bm u)\bigr\rangle,
 \label{eq:def-pnI-corrected}
 \\
 \hat p_{n,\gamma,I}(\bm u)
 &:=
 \ph_I(\bm u)
 +\frac12\bigl\langle\bar{\bm b}_I^{\,2}-\mathbf{\Sigma}_{II},\nabla^2\ph_I(\bm u)\bigr\rangle
 -\frac{\gamma}{6\sqrt n}\bigl\langle\bar{\bm b}_I^{\,3},\nabla^3\ph_I(\bm u)\bigr\rangle.
 \label{eq:def-phatI-corrected}
\end{align}

\begin{lemma}[Projection preserves the data-side assumptions]
\label{lem:projection}
Assume Assumption~\ref{ass:data}. For every nonempty $I\subset[d]$ the projected vectors $\bm P_I\bm X_1,\dots,\bm P_I\bm X_n$ satisfy the same Stein identity with covariance matrix $\mathbf{\Sigma}_{II}$, the same sub-exponential envelope $b$, and
\begin{equation*}
\lambda_{\min}(\mathbf{\Sigma}_{II})\ge \sigma_*^2.
\end{equation*}
\end{lemma}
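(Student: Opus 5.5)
We verify the three claims of Lemma~\ref{lem:projection} in turn, since each comes from how Assumption~\ref{ass:data} behaves under the linear map $\bm P_I$. Throughout, write $m=|I|$ and regard $\bm P_I$ as an $m\times d$ matrix with orthonormal rows.

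\emph{Stein identity.} We will show that $\bm\tau_{i,I}:=\bm P_I\bm\tau_i(\bm X_i)\bm P_I^\top$ is a Stein kernel for $\bm P_I\bm X_i$. Strictly, this is the conditional expectation $\E[\bm P_I\bm\tau_i(\bm X_i)\bm P_I^\top\mid \bm P_I\bm X_i]$, but for the identity it suffices to test with functions of $\bm X_i$.
\begin{enumerate}[label=(\alph*)]
\item Take a smooth $g:\R^m\to\R^m$ with the relevant expectations finite.
\item Define $f:\R^d\to\R^d$ by $f(\bm x):=\bm P_I^\top g(\bm P_I\bm x)$. Then $\bm X_i^\top f(\bm X_i)=(\bm P_I\bm X_i)^\top g(\bm P_I\bm X_i)$.
\item By the chain rule, $\nabla f(\bm x)^\top=\bm P_I^\top\nabla g(\bm P_I\bm x)^\top\bm P_I$.
\item The Stein identity for $\bm X_i$ and cyclicity of the trace give
\[
\E[(\bm P_I\bm X_i)^\top g(\bm P_I\bm X_i)]=\E\,\tr\{\bm P_I\bm\tau_i(\bm X_i)\bm P_I^\top\,\nabla g(\bm P_I\bm X_i)^\top\}.
\]
\item Conditioning on $\bm P_I\bm X_i$ then yields a Stein kernel measurable with respect to $\bm P_I\bm X_i$.
\end{enumerate}
The covariance of $\bm P_I\bm S_n$ is $\bm P_I\mathbf\Sigma\bm P_I^\top=\mathbf\Sigma_{II}$ by bilinearity.

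\emph{Envelope $b$.} The coordinates of $\bm P_I\bm X_i$ are a subset of those of $\bm X_i$, so the coordinate bound $\|X_{ij}\|_{\psi_1}\le b$ carries over unchanged. The entries of $\bm P_I\bm\tau_i\bm P_I^\top$ are entries $\tau_{i,j\ell}$ with $j,\ell\in I$, so the fluctuation bound $b^2$ also carries over. If the conditional-expectation version is used, we note that conditional expectation is a contraction in $\psi_1$-norm up to a universal constant (by Jensen's inequality applied to the convex function $\exp(|\cdot|/C)$). In that case we either absorb this into $b$ or simply keep the unconditioned kernel, since only the identity is used downstream.

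\emph{Eigenvalue bound.} This follows from Cauchy interlacing, or directly: for any $\bm v\in\R^m$,
\[
\bm v^\top\mathbf\Sigma_{II}\bm v=(\bm P_I^\top\bm v)^\top\mathbf\Sigma(\bm P_I^\top\bm v)\ge\sigma_*^2\|\bm P_I^\top\bm v\|_2^2=\sigma_*^2\|\bm v\|_2^2,
\]
because $\bm P_I\bm P_I^\top=\mathbf I_m$.

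The only step needing care is the Stein kernel: the kernel $\bm P_I\bm\tau_i\bm P_I^\top$ is a function of the full vector $\bm X_i$, not of $\bm P_I\bm X_i$ alone, and the rest of the argument must tolerate that. We resolve it by the conditional expectation in (e) together with Jensen's inequality for the $\psi_1$ control. The other two claims are routine.
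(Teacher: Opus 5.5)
Your proof is correct and follows the paper's argument essentially step for step. It uses the same lifted test function $f(\bm x)=\bm P_I^\top g(\bm P_I\bm x)$, with the chain rule and trace cyclicity producing the kernel $\bm P_I\bm\tau_i(\bm X_i)\bm P_I^\top$, the same sub-coordinate monotonicity for the $\psi_1$ bounds, and the same quadratic-form bound with $\bm P_I\bm P_I^\top=\mathbf{I}$ for $\lambda_{\min}(\mathbf{\Sigma}_{II})$.

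Your extra conditioning on $\bm P_I\bm X_i$ makes the kernel a genuine function of the projected vector, which the paper omits. It costs nothing: by conditional Jensen with the convex function $x\mapsto e^{|x|/C}$, conditional expectation contracts the $\psi_1$ norm with constant exactly $1$ (not just up to a universal constant), so $b$ need not be enlarged.
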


\begin{proof}
Let $\bm Y_i:=\bm P_I\bm X_i$. For any smooth $g:\R^{|I|}\to\R^{|I|}$ define
\begin{equation*}
 f(\bm x):=\bm P_I^\top g(\bm P_I\bm x).
\end{equation*}
Then
\begin{equation*}
\nabla f(\bm x)^\top
=
\bm P_I^\top \nabla g(\bm P_I\bm x)^\top \bm P_I.
\end{equation*}
Applying the Stein identity for $\bm X_i$ yields
\begin{align*}
\E[\bm Y_i^\top g(\bm Y_i)]
&=
\E[\bm X_i^\top f(\bm X_i)]\\
&=
\E\Bigl[\tr\{\bm\tau_i(\bm X_i)\bm P_I^\top \nabla g(\bm P_I\bm X_i)^\top\bm P_I\}\Bigr]\\
&=
\E\Bigl[\tr\{\bm P_I\bm\tau_i(\bm X_i)\bm P_I^\top\nabla g(\bm Y_i)^\top\}\Bigr].
\end{align*}
Hence $\bm P_I\bm\tau_i(\bm X_i)\bm P_I^\top$ is a Stein kernel for $\bm Y_i$. The $\psi_1$ bounds follow by monotonicity under projection. Finally, for every nonzero $\bm u\in\R^{|I|}$,
\begin{equation*}
\bm u^\top\mathbf{\Sigma}_{II}\bm u
=
(\bm P_I^\top\bm u)^\top\mathbf{\Sigma}(\bm P_I^\top\bm u)
\ge
\sigma_*^2\|\bm P_I^\top\bm u\|_2^2
=
\sigma_*^2\|\bm u\|_2^2.
\end{equation*}
\end{proof}

\subsection{External matrix, Gaussian-comparison, and Koike lemmas}
\begin{lemma}[Gershgorin interval theorem]
\label{lem:gershgorin}
Let $A=(a_{j\ell})\in\R^{s\times s}$ be symmetric. Then
\begin{equation}
\lambda_{\min}(A)\ge \min_{1\le j\le s}\{a_{jj}-R_j(A)\},
\qquad
\lambda_{\max}(A)\le \max_{1\le j\le s}\{a_{jj}+R_j(A)\}.
\label{eq:gershgorin-interval}
\end{equation}
\end{lemma}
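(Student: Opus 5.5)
The plan is to reduce both inequalities in \eqref{eq:gershgorin-interval} to a single bound on the location of every eigenvalue, using the Rayleigh-quotient characterization available for symmetric matrices. Let $\lambda$ be any eigenvalue of $A$ and let $\bm v\in\R^s$ be a corresponding eigenvector, normalized so that $\|\bm v\|_\infty=1$. Choose an index $j$ with $|v_j|=1$.

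From the $j$th row of $A\bm v=\lambda\bm v$ we get
\[
(\lambda-a_{jj})v_j=\sum_{\ell\ne j}a_{j\ell}v_\ell .
\]
Taking absolute values and using $|v_\ell|\le1=|v_j|$ gives $|\lambda-a_{jj}|\le R_j(A)$. Hence every eigenvalue lies in the Gershgorin disc (here, interval) $[a_{jj}-R_j(A),\,a_{jj}+R_j(A)]$ for some $j$.

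Since $A$ is symmetric, all its eigenvalues are real and $\lambda_{\min}(A)$, $\lambda_{\max}(A)$ are among them. Applying the disc bound with $\lambda=\lambda_{\min}(A)$ and its associated index $j$ yields
\[
\lambda_{\min}(A)\ge a_{jj}-R_j(A)\ge\min_{1\le j\le s}\{a_{jj}-R_j(A)\}.
\]
Applying it with $\lambda=\lambda_{\max}(A)$ yields
\[
\lambda_{\max}(A)\le a_{jj}+R_j(A)\le\max_{1\le j\le s}\{a_{jj}+R_j(A)\}.
\]
This is exactly \eqref{eq:gershgorin-interval}. There is no real obstacle. The only points to check are that symmetry guarantees real eigenvalues and real eigenvectors, so that the extreme eigenvalues are attained by actual eigenvectors, and that the normalization $\|\bm v\|_\infty=1$ makes the row estimate clean.
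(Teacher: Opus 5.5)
Your proof is correct and is essentially the paper's own argument: take an eigenvector, pick a coordinate of largest modulus, read off $|\lambda-a_{jj}|\le R_j(A)$ from that row, and then apply this to $\lambda_{\min}(A)$ and $\lambda_{\max}(A)$. Your opening mention of the Rayleigh-quotient characterization is never used and can be dropped. The only property of symmetry you actually need is that the spectrum is real.
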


\begin{proof}
Let $\lambda$ be an eigenvalue of $A$ with eigenvector $x=(x_1,\dots,x_s)^\top\ne 0$. Choose
\begin{equation*}
j_0\in\arg\max_{1\le j\le s}|x_j|.
\end{equation*}
Since $Ax=\lambda x$,
\begin{equation*}
(\lambda-a_{j_0j_0})x_{j_0}
=
\sum_{\ell\ne j_0}a_{j_0\ell}x_\ell.
\end{equation*}
Hence
\begin{equation*}
|\lambda-a_{j_0j_0}||x_{j_0}|
\le
\sum_{\ell\ne j_0}|a_{j_0\ell}||x_\ell|
\le
R_{j_0}(A)|x_{j_0}|,
\end{equation*}
and therefore
\begin{equation*}
|\lambda-a_{j_0j_0}|\le R_{j_0}(A).
\end{equation*}
This proves that every eigenvalue belongs to at least one Gershgorin interval
\begin{equation*}
[a_{jj}-R_j(A),a_{jj}+R_j(A)],
\qquad
j=1,\dots,s.
\end{equation*}
Taking the minimum and maximum over these intervals yields \eqref{eq:gershgorin-interval}.
\end{proof}

\begin{lemma}[Berman--Li--Shao normal comparison inequality]
\label{lem:berman}
Let $\xi=(\xi_1,\dots,\xi_s)^\top$ and $\eta=(\eta_1,\dots,\eta_s)^\top$ be centered Gaussian vectors with
\begin{equation*}
\Var(\xi_j)=\Var(\eta_j)=1,
\qquad
1\le j\le s.
\end{equation*}
Write $r_{j\ell}^\xi:=\Corr(\xi_j,\xi_\ell)$ and $r_{j\ell}^\eta:=\Corr(\eta_j,\eta_\ell)$, and define
\begin{equation*}
\rho_{j\ell}:=\max\{|r_{j\ell}^\xi|,|r_{j\ell}^\eta|\}.
\end{equation*}
Then for every $u=(u_1,\dots,u_s)^\top\in\R^s$,
\begin{align}
&\left|
\Prob(\xi_1\le u_1,\dots,\xi_s\le u_s)
-
\Prob(\eta_1\le u_1,\dots,\eta_s\le u_s)
\right|\notag\\
&\qquad\le
\frac{1}{2\pi}
\sum_{1\le j<\ell\le s}
\left|\arcsin(r_{j\ell}^\xi)-\arcsin(r_{j\ell}^\eta)\right|
\exp\!\left(
-\frac{u_j^2+u_\ell^2}{2(1+\rho_{j\ell})}
\right).
\label{eq:berman-main}
\end{align}
In particular, if $\eta$ has independent coordinates and
\begin{equation*}
\bar\rho:=\max_{1\le j<\ell\le s}|r_{j\ell}^\xi|<1,
\end{equation*}
then
\begin{align}
&\left|
\Prob(\xi_1\le u_1,\dots,\xi_s\le u_s)
-
\prod_{j=1}^s \Phi(u_j)
\right|\notag\\
&\qquad\le
\frac{1}{2\pi\sqrt{1-\bar\rho^2}}
\sum_{1\le j<\ell\le s}|r_{j\ell}^\xi|
\exp\!\left(
-\frac{u_j^2+u_\ell^2}{2(1+\bar\rho)}
\right).
\label{eq:berman-independent}
\end{align}
\end{lemma}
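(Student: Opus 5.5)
The statement just given is Lemma~\ref{lem:berman}, the Berman--Li--Shao normal comparison inequality. I would prove it by Gaussian interpolation, following Li and Shao. Take $\xi$ and $\eta$ independent and set $X(h):=\sqrt{h}\,\xi+\sqrt{1-h}\,\eta$ for $h\in[0,1]$. Each $X(h)$ has unit variances and correlation matrix $R(h):=hR^\xi+(1-h)R^\eta$. Put $F(h):=\Prob(X_j(h)\le u_j,\ \forall j)$. The standard Gaussian PDE identity $\partial\varphi_R/\partial r_{j\ell}=\partial^2\varphi_R/\partial x_j\partial x_\ell$ (Plackett's identity) gives
\[
F(1)-F(0)=\int_0^1\sum_{j<\ell}\bigl(r^\xi_{j\ell}-r^\eta_{j\ell}\bigr)\int_{\{x_i\le u_i,\ i\ne j,\ell\}}\varphi_{R(h)}(x)\big|_{x_j=u_j,x_\ell=u_\ell}\,dx_{-j\ell}\,dh .
\]
The inner integral is bounded by the bivariate density of $(X_j(h),X_\ell(h))$ at $(u_j,u_\ell)$. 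This bound is too crude for the arcsine form, though, because it loses the factor needed to integrate cleanly.

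To obtain the arcsine, I would instead use the Li--Shao reparametrization: interpolate the correlation of each pair along the path $r_{j\ell}(\theta)=\sin\bigl(\theta\arcsin r^\xi_{j\ell}+(1-\theta)\arcsin r^\eta_{j\ell}\bigr)$. Since this path is no longer a linear mixture, positive semidefiniteness along it is not automatic. To avoid that issue I would follow the original proof, which keeps the linear interpolation $h$ and bounds the integrand more carefully, as follows.
\begin{itemize}
\item First, bound the $(s-2)$-dimensional integral from above by the full bivariate density
\[
\frac{1}{2\pi\sqrt{1-r_{j\ell}(h)^2}}\exp\!\left(-\frac{u_j^2-2r_{j\ell}(h)u_ju_\ell+u_\ell^2}{2(1-r_{j\ell}(h)^2)}\right).
\]
\item Next, bound the exponent: $u_j^2-2ru_ju_\ell+u_\ell^2\ge(1-|r|)(u_j^2+u_\ell^2)$. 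Dividing by $1-r^2$ gives a decay of at least $\exp\{-(u_j^2+u_\ell^2)/(2(1+|r|))\}$. Since $|r_{j\ell}(h)|\le\rho_{j\ell}$ by convexity, this is at most $\exp\{-(u_j^2+u_\ell^2)/(2(1+\rho_{j\ell}))\}$.
\item Finally, integrate $|r^\xi_{j\ell}-r^\eta_{j\ell}|/\sqrt{1-r_{j\ell}(h)^2}$ over $h$. The substitution $r=r_{j\ell}(h)$ turns this into $\bigl|\int_{r^\eta}^{r^\xi}(1-r^2)^{-1/2}\,dr\bigr|=|\arcsin r^\xi_{j\ell}-\arcsin r^\eta_{j\ell}|$.
\end{itemize}
Summing over $j<\ell$ then yields \eqref{eq:berman-main}.

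For \eqref{eq:berman-independent}, take $r^\eta\equiv0$, so that $\rho_{j\ell}=|r^\xi_{j\ell}|\le\bar\rho$ and the exponential factor is at most the $\bar\rho$ version. By the mean value theorem, $|\arcsin r|\le|r|/\sqrt{1-\bar\rho^2}$ for $|r|\le\bar\rho$, which gives the stated bound.

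The main obstacle is justifying the derivative formula, i.e.\ differentiating an orthant probability under the integral, when $R(h)$ may be singular at the endpoints. I would first handle strictly positive definite correlation matrices by perturbing $R\mapsto(1-\delta)R+\delta I$, apply the argument there, and then let $\delta\downarrow0$. The limit is justified by continuity of Gaussian orthant probabilities in the covariance, together with continuity of the right-hand side, which holds after adjusting $\rho_{j\ell}$ along the perturbation.
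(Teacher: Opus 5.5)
Your proposal is correct and is the Gaussian-interpolation argument behind Li and Shao's Theorem~1, which the paper simply cites for \eqref{eq:berman-main} instead of reproving; your derivation of \eqref{eq:berman-independent} (set $r^\eta\equiv 0$, use $\rho_{j\ell}\le\bar\rho$, and apply the mean-value bound $|\arcsin r|\le |r|/\sqrt{1-\bar\rho^2}$) is the same as the paper's. One correction to your narrative: bounding the inner integral by the bivariate density is not ``too crude''---its factor $1/\sqrt{1-r_{j\ell}(h)^2}$ is exactly what integrates to the arcsine difference, so you can drop the sine-path detour.
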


\begin{proof}
The inequality \eqref{eq:berman-main} is Theorem~1 of \cite{LiShao2002}, which refines Berman's original comparison argument \citep{Berman1964}. If $\eta$ has independent coordinates, then $r_{j\ell}^\eta=0$ for all $j\ne \ell$, and the mean-value theorem gives
\begin{equation*}
\left|\arcsin(r_{j\ell}^\xi)-\arcsin(0)\right|
\le
\sup_{|u|\le \bar\rho}\frac{1}{\sqrt{1-u^2}}|r_{j\ell}^\xi|
\le
\frac{|r_{j\ell}^\xi|}{\sqrt{1-\bar\rho^2}}.
\end{equation*}
Substituting this estimate into \eqref{eq:berman-main} yields \eqref{eq:berman-independent}.
\end{proof}

\begin{lemma}[Koike smoothing identity]
\label{lem:koike-smoothing}
Let $A\subset\R^s$ be measurable, let $Z_I\sim N(0,\Sigma_{II})$, and define for $u\in(0,1]$,
\begin{equation*}
h_{A,u}(x)
:=
\E\bigl[\1_A(\sqrt{1-u}\,x+\sqrt u\,Z_I)\bigr].
\end{equation*}
Then
\begin{equation}
h_{A,u}(x)
=
\int_A
u^{-s/2}
\phi_I\!\left(\frac{y-\sqrt{1-u}\,x}{\sqrt u}\right)\,dy.
\label{eq:koike-h-repr}
\end{equation}
Moreover, for every multi-index $\alpha\in\mathbb N_0^s$ with $|\alpha|=r\ge 1$,
\begin{align}
\partial^\alpha h_{A,u}(x)
&=
(-1)^r
\left(\frac{1-u}{u}\right)^{r/2}
\int_A
\partial^\alpha\phi_I\!\left(\frac{y-\sqrt{1-u}\,x}{\sqrt u}\right)
u^{-s/2}\,dy .
\label{eq:koike-derivative-formula}
\end{align}
\end{lemma}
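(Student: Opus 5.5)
The representation \eqref{eq:koike-h-repr} follows from a linear change of variables in the Gaussian integral. The derivative formula \eqref{eq:koike-derivative-formula} then follows by differentiating under the integral sign and applying the chain rule.

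\emph{Step 1: the representation.} Fix $x\in\R^s$ and $u\in(0,1]$. The random vector $Y:=\sqrt{1-u}\,x+\sqrt u\,Z_I$ is Gaussian with mean $\sqrt{1-u}\,x$ and covariance $u\Sigma_{II}$. By Lemma~\ref{lem:projection}, $\Sigma_{II}$ is positive definite, so $Y$ has density
\[
y\mapsto \det(u\Sigma_{II})^{-1/2}(2\pi)^{-s/2}\exp\Bigl(-\tfrac{1}{2u}(y-\sqrt{1-u}\,x)^\top\Sigma_{II}^{-1}(y-\sqrt{1-u}\,x)\Bigr).
\]
Since $\det(u\Sigma_{II})^{-1/2}=u^{-s/2}\det(\Sigma_{II})^{-1/2}$, this density equals $u^{-s/2}\phi_I\bigl((y-\sqrt{1-u}\,x)/\sqrt u\bigr)$. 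Integrating it over $A$ gives $h_{A,u}(x)=\Prob(Y\in A)$, which is \eqref{eq:koike-h-repr}. Equivalently, one can substitute $y=\sqrt{1-u}\,x+\sqrt u\,z$ in $\int \1_A(\sqrt{1-u}\,x+\sqrt u\,z)\phi_I(z)\,dz$, which has Jacobian $u^{s/2}$.

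\emph{Step 2: the derivatives.} Put $g(x,y):=u^{-s/2}\phi_I\bigl((y-\sqrt{1-u}\,x)/\sqrt u\bigr)$. The map $x\mapsto (y-\sqrt{1-u}\,x)/\sqrt u$ is affine with linear part $-\sqrt{(1-u)/u}\,\mathrm{Id}$. Hence each partial derivative $\partial_{x_j}$ of $g$ produces a factor $-\sqrt{(1-u)/u}$ and replaces $\phi_I$ by $\partial_j\phi_I$, both evaluated at the same point. By induction on $|\alpha|=r$,
\[
\partial_x^\alpha g(x,y)=(-1)^r\Bigl(\tfrac{1-u}{u}\Bigr)^{r/2}u^{-s/2}(\partial^\alpha\phi_I)\Bigl(\tfrac{y-\sqrt{1-u}\,x}{\sqrt u}\Bigr).
\]
Integrating over $A$ then gives \eqref{eq:koike-derivative-formula}, once differentiation under the integral sign is justified.

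\emph{Step 3: justifying the interchange.} This step needs care, though it is routine. The derivative $\partial^\alpha\phi_I(z)$ equals $\phi_I(z)$ times a polynomial in $z$ of degree $r$; the polynomial is built from $\Sigma_{II}^{-1}z$ and the entries of $\Sigma_{II}^{-1}$ (Hermite-type). Restrict $x$ to a bounded neighborhood $\|x-x_0\|\le 1$. The lower bound $\lambda_{\min}(\Sigma_{II})\ge\sigma_*^2$ gives
\[
\phi_I(z)\le C\exp\bigl(-\|z\|^2/(2\lambda_{\max}(\Sigma_{II}))\bigr).
\]
Combining this with $\|y-\sqrt{1-u}\,x\|\ge \|y\|-\|x_0\|-1$, the integrand $|\partial_x^\alpha g(x,y)|$ is dominated, uniformly in such $x$, by an integrable function of $y$ of the form $C(1+\|y\|)^r e^{-c\|y\|^2}$. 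Because $\1_A\le 1$, dominated convergence (via the mean value theorem applied to difference quotients) allows differentiation under the integral sign one order at a time. The case $u=1$ is trivial, since then $h_{A,1}$ does not depend on $x$ and the prefactor $(1-u)^{r/2}$ vanishes.
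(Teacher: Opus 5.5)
Your proof is correct and follows the same route as the paper: the substitution $y=\sqrt{1-u}\,x+\sqrt u\,z$ (Jacobian $u^{-s/2}$) gives the representation, and differentiating under the integral sign, picking up the chain-rule factor $-\sqrt{(1-u)/u}$ once for each of the $r$ derivatives, gives the derivative formula. Your Step~3 additionally supplies the dominated-convergence justification for the interchange of differentiation and integration, which the paper leaves implicit.
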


\begin{proof}
Formulas \eqref{eq:koike-h-repr}--\eqref{eq:koike-derivative-formula} are the projected specialization of equations~(4.4)--(4.5) in \cite{Koike2026}. From the definition,
\begin{align*}
h_{A,u}(x)
&=
\int_{\R^s}
\1_A(\sqrt{1-u}\,x+\sqrt u\,z)\phi_I(z)\,dz.
\end{align*}
Set
\begin{equation*}
y:=\sqrt{1-u}\,x+\sqrt u\,z,
\qquad
z=\frac{y-\sqrt{1-u}\,x}{\sqrt u},
\qquad
dz=u^{-s/2}\,dy.
\end{equation*}
Then \eqref{eq:koike-h-repr} follows. Differentiate \eqref{eq:koike-h-repr} under the integral sign. For each derivative $\partial_{x_j}$,
\begin{equation*}
\partial_{x_j}
\phi_I\!\left(\frac{y-\sqrt{1-u}\,x}{\sqrt u}\right)
=
-\sqrt{\frac{1-u}{u}}
\,
\partial_{j}\phi_I\!\left(\frac{y-\sqrt{1-u}\,x}{\sqrt u}\right).
\end{equation*}
Applying this identity $r=|\alpha|$ times gives \eqref{eq:koike-derivative-formula}.
\end{proof}

\begin{lemma}[Koike orthant derivative bound]
\label{lem:koike-orthant-derivative}
Let
\begin{equation*}
A_t^-:=(-\infty,-t]^s.
\end{equation*}
There exist constants $c_r,C_r>0$, depending only on $r$, such that for every $u\in(0,1/2]$, every $x\in\R^s$, and every integer $r\ge 1$,
\begin{equation}
\|\nabla^r h_{A_t^-,u}(x)\|_1
\le
C_r u^{-r/2}(1+t)^r
\Prob\!\left(\sqrt{1-u}\,x+\sqrt u\,Z_I\in A_{t-a_u}^-\right),
\qquad
a_u:=c_r\sqrt{u\log(2s)}.
\label{eq:koike-orthant-derivative}
\end{equation}
In particular,
\begin{equation}
\sup_{x\in\R^s}\|\nabla^r h_{A_t^-,u}(x)\|_1
\le
C_r
\left(\frac{\log(2s)}{u}\right)^{r/2}.
\label{eq:koike-orthant-derivative-sup}
\end{equation}
\end{lemma}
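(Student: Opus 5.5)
Differentiating under the integral sign in \eqref{eq:koike-h-repr}, or using \eqref{eq:koike-derivative-formula} directly, reduces the bound to a Gaussian integral over the orthant $A_t^-$. For the orthant the integral factorizes along coordinates in a useful way. Write $\bm m:=\sqrt{1-u}\,x$ and $W:=\bm m+\sqrt u Z_I$. Then
\[
\partial^\alpha h_{A_t^-,u}(x)=(1-u)^{r/2}u^{-r/2}\,\E\bigl[H_\alpha(Z_I)\,\1\{W\in A_t^-\}\bigr],
\]
where $H_\alpha(z):=(-1)^r\partial^\alpha\phi_I(z)/\phi_I(z)$ is the multivariate Hermite polynomial associated with $\Sigma_{II}$.

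This representation is not used naively, because $\|\nabla^r h\|_1$ sums over $s^r$ multi-indices and a crude bound would lose a factor of $s^r$. Instead I would follow Koike's argument for maxima, which is the one-sided orthant case $\{\max_j W_j\le -t\}$ up to sign. The first step repeatedly differentiates the indicator in the direction of $y$. For the orthant, $\partial_{x_j}$ of $\Prob(\bm m+\sqrt uZ_I\in A_t^-)$ equals $-\sqrt{(1-u)/u}$ times a boundary integral on the face $\{y_j=-t\}$. Iterating produces a sum over faces of lower-dimensional boundary densities multiplied by conditional orthant probabilities. Summing absolute values over $j_1,\dots,j_r$ then gives a total of the form $u^{-r/2}\sum_{J}\phi\cdot\Prob(\cdots)$, and this sum is the derivative of a distribution function of the maximum. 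Its sum over faces is therefore controlled by the density of $\max_j W_j$ at $-t$, together with Hermite-type factors coming from higher derivatives of one-dimensional conditional densities.

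The second step controls these factors by $(1+t)^r$ plus a Gaussian tail. Conditional on $W_j=-t$, the Hermite factors are polynomials in $(t+m_j)/\sqrt u$ and in the Gaussian fluctuations. A standard truncation splits each factor into the event that the maximal fluctuation of $Z_I$ is at most $c_r\sqrt{\log(2s)}$ and its complement. On the first event one gets $(1+t)^r$ times the probability that $W$ lies in the slightly enlarged orthant $A_{t-a_u}^-$. On the complement, the sub-Gaussian tail of $\max_j|Z_{I,j}|$ decays faster than any polynomial in $s$, which pays for the combinatorics. Together these yield \eqref{eq:koike-orthant-derivative}. This is the step I expect to be the main obstacle. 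The difficulty is keeping the $\ell_1$ sum free of any polynomial factor in $s$. I would handle it by working with the sup-norm duality $\|\nabla^r h\|_1=\sup_{\|v\|_\infty\le1}\langle\nabla^r h, v^{\otimes\cdot}\rangle$ (after symmetrization), which reduces matters to directional derivatives $\partial_v^r$ with $\|v\|_\infty\le1$. Such a direction shifts the orthant boundary by at most $|\cdot|$ per coordinate, and these derivatives are exactly what Koike's anti-concentration-based bound (his Lemma 4.x) controls. If full rigor is too costly, I would simply cite this projected specialization of Koike's bound, in the same way Lemma~\ref{lem:koike-smoothing} was cited.

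For \eqref{eq:koike-orthant-derivative-sup}, the probability factor is at most one, so it remains to remove $(1+t)^r$. If $t\lesssim\sqrt{\log(2s)}$, the factor is absorbed into $(\log(2s))^{r/2}$. If $t$ is larger, I would use Mill's ratio instead. The boundary density at level $t$ carries a factor $e^{-ct^2}$ after the $\sqrt u$ rescaling only when $|m|$ is small. When $m$ is large the alternative form $\sup_y|\partial^\alpha\phi|$ integrated gives $u^{-r/2}$ times polynomials in the standardized boundary distance $((t+m_j)/\sqrt u)$ weighted by Gaussian densities. These are uniformly bounded by $C_r(\log(2s))^{r/2}$ through the same truncation at $\sqrt{\log(2s)}$. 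Either way the sup bound follows.
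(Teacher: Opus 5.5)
There is a genuine gap, and it is located at the step you flagged as the main obstacle. The duality $\|\nabla^r h\|_1=\sup_{\|v\|_\infty\le1}\langle\nabla^r h,v^{\otimes r}\rangle$ is false for $r\ge2$. On $(\R^s)^{\otimes r}$ the $\ell_1$-norm is dual to the entrywise sup-norm over \emph{all} tensors $V$ with $\|V\|_\infty\le1$. Restricting to rank-one (or multilinear) test tensors can lose polynomial factors in $s$. For example, a random symmetric $\pm1$ matrix $T$ has $\|T\|_1\approx s^2$, but with high probability $\sup_{\|v\|_\infty\le1}|v^\top Tv|\le s\|T\|_{\mathrm{op}}=O(s^{3/2})$. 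So bounds on directional derivatives $\partial_v^r h$ do not control $\|\nabla^r h\|_1$.

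Likewise, identifying the face sum with the density of $\max_jW_j$ works only for $r=1$, where all $\partial_jh$ share a sign. For $r\ge2$ the repeated-index terms involve derivatives of boundary densities and change sign, so the absolute values do not telescope. Hence neither display is established by your plan. In fact \eqref{eq:koike-orthant-derivative-sup} is precisely Lemma~4.4 of \cite{Koike2026}, which the paper imports with $d$ replaced by $s$ (via Lemma~\ref{lem:projection}). Your fallback of citing Koike is therefore what the paper actually does.

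The truncation step has two further problems. First, the complement of the truncation event contributes an \emph{additive} error of order $s^{-K}$. What is needed is a multiple of $\Prob(W\in A^-_{t-a_u})$, which can be far smaller: after integrating in $x$ it becomes $\pi_I(t)$, which is polynomially small in $d$. So this does not give a localized bound.

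Second, the truncation cannot output $(1+t)^r$. The Hermite factors are polynomials in $(t+m_j)/\sqrt u$ and in the fluctuations of $Z_I$, and since $x$ is arbitrary, $t$ controls neither. Truncating at $c_r\sqrt{\log(2s)}$ produces $(\log(2s))^{r/2}$ instead. No argument can do better. Take $\Sigma_{II}=I_s$, $u=1/2$, $t=0$, and $x=-w\mathbf 1_s$ with $\bar\Phi(w)=1/s$. Then $h_{A_0^-,1/2}(x)=\Phi(w)^s$ and
\[
\|\nabla h_{A_0^-,1/2}(x)\|_1=s\,\phi(w)\Phi(w)^{s-1}\ge e^{-1}w\asymp\sqrt{\log s},
\]
while the right side of \eqref{eq:koike-orthant-derivative} is at most $\sqrt2\,C_1$. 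So the localized bound can hold only for $t\gtrsim\sqrt{\log(2s)}$. This is true where the lemma is used, since $t^2\asymp\log d$ and $s\le d$. For the same reason, deriving \eqref{eq:koike-orthant-derivative-sup} by ``removing $(1+t)^r$'' from the localized bound runs the logic backwards.

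The paper reaches the localized bound by a different route. It replaces your truncation with the Anderson--Hall--Titterington estimate (Lemma~D.4 of \cite{Koike2026}), applied to the $a_u$-enlarged orthant in \eqref{eq:koike-derivative-formula}; this absorbs the Hermite growth multiplicatively. Be aware, however, that the paper puts absolute values inside the integral for each $\alpha$ and then sums over $|\alpha|=r$. This discards the cancellation exploited in Koike's Lemma~4.4: in the example above it gives order $s$, not $\sqrt{\log s}$. A correct localized statement therefore needs a summed estimate in the spirit of that lemma, together with the restriction $t\gtrsim\sqrt{\log(2s)}$.
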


\begin{proof}
The uniform estimate \eqref{eq:koike-orthant-derivative-sup} is exactly Lemma~4.4 in \cite{Koike2026} after replacing $d$ by $s$ and using $\lambda_{\min}(\Sigma_{II})\ge \sigma_*^2$ from Lemma~\ref{lem:projection}. The localized bound \eqref{eq:koike-orthant-derivative} is obtained by combining \eqref{eq:koike-derivative-formula} with the Anderson--Hall--Titterington bound stated as Lemma~D.4 in \cite{Koike2026}. Indeed, for each multi-index $\alpha$ with $|\alpha|=r$,
\begin{equation*}
|\partial^\alpha h_{A_t^-,u}(x)|
\le
\left(\frac{1-u}{u}\right)^{r/2}
\int_{A_t^-}
\left|
\partial^\alpha\phi_I\!\left(\frac{y-\sqrt{1-u}\,x}{\sqrt u}\right)
\right|
u^{-s/2}\,dy.
\end{equation*}
If $z=(y-\sqrt{1-u}\,x)/\sqrt u$, then $y\in A_t^-$ implies
\begin{equation*}
\sqrt{1-u}\,x+\sqrt u\,z\in A_t^-.
\end{equation*}
Applying Lemma~D.4 to the orthant enlarged by a cube of side length $a_u=c_r\sqrt{u\log(2s)}$ yields
\begin{equation*}
\int_{A_t^-}
\left|
\partial^\alpha\phi_I\!\left(\frac{y-\sqrt{1-u}\,x}{\sqrt u}\right)
\right|
u^{-s/2}\,dy
\le
C_r(1+t)^r
\Prob\!\left(\sqrt{1-u}\,x+\sqrt u\,Z_I\in A_{t-a_u}^-\right),
\end{equation*}
and summing over $|\alpha|=r$ proves \eqref{eq:koike-orthant-derivative}.
\end{proof}

\begin{lemma}[Koike projected decomposition]
\label{lem:koike-decomposition}
Let $\xi_{1,I},\dots,\xi_{n,I}$ be independent centered $\R^s$-valued random vectors with approximate Stein kernels $(\tau_{i,I},\beta_{i,I})$, and put
\begin{equation*}
W_I:=\sum_{i=1}^n \xi_{i,I},
\qquad
\bar T_I:=\sum_{i=1}^n \tau_{i,I}(\xi_{i,I})-\Sigma_{II},
\qquad
B_I:=\sum_{i=1}^n \beta_{i,I}(\xi_{i,I}).
\end{equation*}
For a bounded measurable function $h:\R^s\to\R$ and $u\in(0,1]$, define
\begin{equation*}
h_u(x):=\E\bigl[h(\sqrt{1-u}\,x+\sqrt u\,Z_I)\bigr],
\qquad
Z_I\sim N(0,\Sigma_{II}).
\end{equation*}
Let $p_{W_I}$ be the first-order Edgeworth density around $N(0,\Sigma_{II})$ as in equation~(4.1) of \cite{Koike2026}. Then for every bounded measurable $h:\R^s\to\R$ and every $\vartheta\in(0,1]$,
\begin{equation}
\E[h_\vartheta(W_I)]-\int_{\R^s}h_\vartheta(z)p_{W_I}(z)\,dz
=
\sum_{\nu=1}^6 R_{\nu,I}(h,\vartheta),
\label{eq:koike-decomposition-summary}
\end{equation}
where each $R_{\nu,I}(h,\vartheta)$ is one of the six terms displayed in equation~(4.6) of \cite{Koike2026}, specialized to the projected dimension $s$. In particular, each $R_{\nu,I}(h,\vartheta)$ is a finite linear combination of iterated integrals involving only the tensors
\begin{equation*}
\bar T_I,
\qquad
\sum_{i=1}^n \xi_{i,I}^{\otimes 3},
\qquad
\sum_{i=1}^n \tau_{i,I}(\xi_{i,I})^{\otimes 2},
\qquad
\sum_{i=1}^n \xi_{i,I}^{\otimes 3}\otimes \tau_{i,I}(\xi_{i,I}),
\qquad
\sum_{i=1}^n \xi_{i,I}^{\otimes 4},
\end{equation*}
and their $\beta$-counterparts. If $\beta_{i,I}\equiv 0$, then the last four terms in equation~(4.6) of \cite{Koike2026} disappear identically.
\end{lemma}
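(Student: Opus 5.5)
The statement is the projected specialization of Koike's decomposition, so the plan is to re-derive equation~(4.6) of \cite{Koike2026} in dimension $s=|I|$ rather than $d$, checking only that no step uses more than the structure of the summands. First I would note that the hypotheses are purely distributional and dimension-free. The $\xi_{i,I}$ are independent and centered, each carries an approximate Stein kernel $(\tau_{i,I},\beta_{i,I})$, and the reference law is $N(0,\Sigma_{II})$ with $\Sigma_{II}$ positive definite by Lemma~\ref{lem:projection}. Koike's derivation works in an arbitrary ambient dimension with an arbitrary positive definite covariance, so replacing $d$ by $s$ and $\Sigma$ by $\Sigma_{II}$ is legitimate at the level of hypotheses. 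In the exact-kernel case ($\beta\equiv0$) the Stein kernel itself comes from Lemma~\ref{lem:projection}.

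The decomposition itself follows Koike's Stein-interpolation scheme.
\begin{enumerate}
\item Fix bounded measurable $h$ and $\vartheta\in(0,1]$. Write $h_\vartheta$ via the Ornstein--Uhlenbeck semigroup, so that $h_\vartheta$ is smooth with derivatives given by Lemma~\ref{lem:koike-smoothing}. Then express $\E h_\vartheta(W_I)-\E h_\vartheta(Z_I)$ as $\int_\vartheta^1$ of $\E[\langle W_I,\nabla h_u\rangle-\langle\Sigma_{II},\nabla^2 h_u\rangle]$, up to the usual factor.
\item Apply the approximate Stein identity summand by summand, using the leave-one-out sums $W_I^{(i)}=W_I-\xi_{i,I}$.
\item Taylor expand $\nabla h_u(W_I^{(i)}+\xi_{i,I})$ to third order with integral remainder. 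This produces exactly the tensors listed: $\bar T_I$ from the second-order term, $\sum_i\xi_{i,I}^{\otimes3}$ from the third-order term, and the higher cross terms $\sum_i\tau_{i,I}^{\otimes2}$, $\sum_i\xi_{i,I}^{\otimes3}\otimes\tau_{i,I}$ and $\sum_i\xi_{i,I}^{\otimes4}$ from re-expanding the leave-one-out shifts. The $\beta$-terms arise in parallel wherever the Stein identity is used.
\item Subtract the Edgeworth correction $\int h_\vartheta\, p_{W_I}$. Its third-cumulant term cancels the leading $\E\sum_i\xi_{i,I}^{\otimes3}$ contribution, because $\int\langle\kappa_3,\nabla^3\phi_I\rangle h_\vartheta$ equals the corresponding Gaussian integral after integrating by parts in the semigroup variable.
\end{enumerate}
The six leftover pieces are then, term by term, those of (4.6).

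Each remainder is a finite linear combination of iterated integrals over $u$ and Taylor parameters, paired with $\nabla^r h_u$. The last claim follows directly: when $\beta_{i,I}\equiv0$, the four terms in which $B_I$ or $\beta_{i,I}$ appears as a factor vanish identically. This is bookkeeping and needs no estimate.

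The main obstacle is the cancellation step (4), that is, confirming that $p_{W_I}$ as defined in Koike's (4.1) matches the third-order Taylor coefficients exactly in dimension $s$. I would first verify this by computing $\int_\vartheta^1$ of the Gaussian expectation of $\langle\E\sum_i\xi_{i,I}^{\otimes3},\nabla^3h_u(Z_I)\rangle$. Using $\E\nabla^3 h_u(Z_I)=\int h\,\nabla^3\phi_I$, which holds by Gaussian integration by parts since the semigroup preserves $N(0,\Sigma_{II})$, this should give the $-\frac{1}{6}\langle\kappa_3,\nabla^3\phi_I\rangle$ density correction. All other steps are a direct transcription.
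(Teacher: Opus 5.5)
Your argument is essentially the paper's, whose entire proof is your opening observation (Koike's Lemma~4.3 is an exact identity valid in any dimension and for any positive definite reference covariance, so it applies verbatim with $d$ replaced by $s=|I|$ and $\Sigma$ by $\Sigma_{II}$), together with reading off from (4.6) that the $\beta$-summands vanish when $\beta_{i,I}\equiv0$; in particular, the cancellation you flag as the main obstacle is already part of Koike's identity and needs no re-verification in dimension $s$. If you nevertheless carry out steps 1--4, the sketch needs three repairs: first, $\bar T_I$ enters through the exact identity $\E\langle W_I,\nabla h_u(W_I)\rangle=\E\langle\sum_i\tau_{i,I}(\xi_{i,I}),\nabla^2h_u(W_I)\rangle$ (up to the $\beta$-term), after which one expands $\nabla^2h_u$ paired with $\tau_{i,I}-\E\tau_{i,I}$, since expanding $\nabla h_u(W_I^{(i)}+\xi_{i,I})$ directly only yields $\E\xi_{i,I}\xi_{i,I}^\top$ at second order; second, when $\E\bar T_I\neq0$, as in the bootstrap application, the second-order term $\tfrac12\langle\E\bar T_I,\nabla^2\phi_I\rangle$ of $p_{W_I}$ must be matched as well; and third, the correct Gaussian identity is $\E\nabla^3h_u(Z_I)=-(1-u)^{3/2}\int h\,\nabla^3\phi_I$, because with your version the $u$-integral against the weight $\tfrac{1}{2(1-u)}$ diverges, whereas with the factor the Gaussian part of the leading third-order term integrates to exactly $-\tfrac16\int h_\vartheta\langle\kappa_3,\nabla^3\phi_I\rangle$, where $\kappa_3:=\sum_i\E\xi_{i,I}^{\otimes3}$.
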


\begin{proof}
This is exactly Lemma~4.3 in \cite{Koike2026} after replacing the ambient dimension $d$ by the projected dimension $s=|I|$. The statement about the coefficient tensors follows by reading term-by-term the six displays in equation~(4.6) of \cite{Koike2026}. When $\beta_{i,I}\equiv 0$, every summand involving $\beta_{i,I}$ vanishes.
\end{proof}

\begin{lemma}[Koike tensor and concentration bounds]
\label{lem:koike-concentration}
Under Assumption~\ref{ass:data}, there exists $C>0$ such that, for every $a\ge 1$,
\begin{align}
\Prob\!\left(
\left\|
\frac1n\sum_{i=1}^n \bm X_i\bm X_i^\top-\mathbf{\Sigma}
\right\|_{\max}
>
Cb^2\sqrt{\frac{a\log(dn)}{n}}
\right)
&\le n^{-a},
\label{eq:koike-conc-cov}
\\
\Prob\!\left(
\|\bar{\bm X}\|_\infty
>
Cb\sqrt{\frac{a\log(dn)}{n}}
\right)
&\le n^{-a},
\label{eq:koike-conc-mean}
\\
\Prob\!\left(
\sup_{\|V\|_1\le 1}
\left|
\frac1n\sum_{i=1}^n
\bigl\langle
\bm X_i^{\otimes 3}-\E[\bm X_i^{\otimes 3}],V
\bigr\rangle
\right|
>
Cb^3 a\frac{\log n}{\sqrt n}
\right)
&\le n^{-a}.
\label{eq:koike-conc-third}
\end{align}
Moreover, for every nonempty $I\subset[d]$ with $|I|\le k_0$, the coefficient tensors appearing in Lemma~\ref{lem:koike-decomposition} satisfy
\begin{equation}
\E\|\mathsf A_{I,\nu}\|_\infty\le C\deln,
\qquad
\nu=1,\dots,6,
\label{eq:koike-tensor-delta}
\end{equation}
where $\mathsf A_{I,\nu}$ denotes any coefficient tensor multiplying $\nabla^2 h_u$, $\nabla^3 h_u$, $\nabla^4 h_u$, or $\nabla^5 h_u$ in the six terms of \eqref{eq:koike-decomposition-summary}.
\end{lemma}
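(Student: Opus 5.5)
The plan is to treat the three scalar concentration bounds and the tensor bound \eqref{eq:koike-tensor-delta} separately. Each of the first three is a maximal inequality over a family of sub-exponential or sub-Weibull averages, combined with a union bound. For \eqref{eq:koike-conc-mean}, each $X_{ij}$ has $\|X_{ij}\|_{\psi_1}\le b$, so Bernstein's inequality for sub-exponential sums gives, for each coordinate $j$,
\[
\Prob\bigl(|\bar X_j|>x\bigr)\le 2\exp\bigl\{-c\min(nx^2/b^2,\,nx/b)\bigr\}.
\]
Choosing $x=Cb\sqrt{a\log(dn)/n}$ and taking a union bound over $j\le d$ yields $d\cdot 2(dn)^{-cC^2a}\le n^{-a}$ for $C$ large. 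Here we use that $\epsn\to0$ forces $\log(dn)/n\to0$, so the quadratic branch of the minimum is the active one.

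For \eqref{eq:koike-conc-cov}, the products $X_{ij}X_{i\ell}$ satisfy $\|X_{ij}X_{i\ell}\|_{\psi_{1/2}}\le b^2$. I would apply a Bernstein-type inequality for $\psi_{1/2}$ variables, e.g.\ Kuchibhotla--Chakrabortty, or the Adamczak truncation argument, with a union bound over $d^2$ pairs. The heavy-tail part contributes a term of order $b^2(a\log(dn))^2/n$. This is dominated by $b^2\sqrt{a\log(dn)/n}$ once $\log^3(dn)/n\to0$, which is implied by Assumption~\ref{ass:data}(iii); any residual factor of $a$ can be absorbed by redefining $C$ in terms of $a$ or by restricting to bounded $a$ as used later.

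For \eqref{eq:koike-conc-third}, note that the supremum over $\|V\|_1\le1$ of a linear functional equals the $\|\cdot\|_\infty$ norm of the centered tensor average. This reduces the problem to a union bound over $d^3$ entries of averages of $\psi_{1/3}$ variables $X_{ij}X_{ik}X_{il}$ with norm at most $b^3$. The same sub-Weibull Bernstein inequality gives a deviation of order
\[
b^3\bigl\{\sqrt{a\log(dn)/n}+(a\log(dn))^3/n\bigr\},
\]
and under the scaling in Assumption~\ref{ass:data}(iii) this is at most $Cb^3a\log n/\sqrt n$, up to absorbing $\log d$ via $\deln\to0$. This step is where I expect the stated rate to need care: the union-bound cost $\log d$ must be hidden, and I would rely on Koike's Lemma (the analogue in \cite{Koike2026}, Section 4/Appendix D) as the reference for the exact form rather than rederive constants.

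Finally, for \eqref{eq:koike-tensor-delta} I would not recompute anything. Lemma~\ref{lem:projection} shows that the projected vectors $\bm P_I\bm X_i/\sqrt n$ satisfy Assumption~\ref{ass:data} with the same $b$ and $\sigma_*$, with Stein kernels $\bm P_I\bm\tau_i\bm P_I^\top$ and $\beta\equiv0$. By Lemma~\ref{lem:koike-decomposition}, the coefficient tensors are projections, i.e.\ sub-arrays, of the ambient tensors $\bar T$, $\sum\xi_i^{\otimes3}$, and so on. Entrywise sup-norms are monotone under taking sub-arrays, so $\|\mathsf A_{I,\nu}\|_\infty\le\|\mathsf A_{[d],\nu}\|_\infty$ uniformly in $I$. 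The bound $\E\|\mathsf A_{[d],\nu}\|_\infty\le C\deln$ is then exactly the moment estimate that Koike establishes for the full-dimensional terms in the proof of his main expansion (with $\Delta$-type quantities normalized by $\sigma_*$). This proves the claim uniformly over $|I|\le k_0$. The main obstacle is verifying that every term in Koike's (4.6) is indeed bounded by $\deln$, rather than by $\sqrt{\deln}$, in expectation; for the terms where this fails, such as the raw $\bar T$ term, I would check that they enter only multiplied by a further $n^{-1/2}$ scaling in Koike's normalization.
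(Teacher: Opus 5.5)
Structurally your proposal is the paper's proof. The paper handles \eqref{eq:koike-conc-cov}--\eqref{eq:koike-conc-mean} by citing Koike's maximal inequality (his Lemma D.10) and \eqref{eq:koike-conc-third} by citing his Lemma D.11. It handles \eqref{eq:koike-tensor-delta} with exactly your sub-array monotonicity argument on top of the bounds in Koike's proof of his Theorem 4.1. Replacing the first citation by a sub-exponential/sub-Weibull Bernstein inequality plus a union bound is fine. Your restriction to bounded $a$ is necessary, not cosmetic. Centered exponential coordinates satisfy Assumption~\ref{ass:data}, with Stein kernel $\tau_{i,jj}=X_{ij}+1$, and $\Prob(\bar X_1>x)\ge e^{-n(1+x)}$ shows that \eqref{eq:koike-conc-mean} fails once $a\gg n\log(dn)$. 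The paper only ever uses $a=2$.

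The step that fails is your treatment of \eqref{eq:koike-conc-third}. Assumption~\ref{ass:data}(iii) gives only $\log^3(dn)=o(n)$ and does not bound $\log d$ by $a\log^2 n$. So the leading term $b^3\sqrt{a\log(dn)/n}$ of your union bound cannot be absorbed into $Cb^3a\log n/\sqrt n$ ``via $\deln\to0$''. No other argument can do this either, because the display is false when $\log d\gg\log^2 n$. Take $\bm X_i\sim N(\bm 0,I_d)$ with $\log d=n^{1/5}$, so Assumption~\ref{ass:data} holds. The left-hand side is at least $\max_j|n^{-1}\sum_iX_{ij}^3|$, a maximum of $d$ independent averages with variance $15/n$. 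A moderate-deviation lower bound then shows it exceeds $Cb^3a\log n/\sqrt n$ with probability tending to one, for any fixed $a$ and $C$.

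What your union bound honestly delivers, $Cb^3\{\sqrt{a\log(dn)/n}+(a\log(dn))^3/n\}$, is the correct statement. The paper's bare citation of Lemma D.11 does not justify the $\log n/\sqrt n$ rate either. However, \eqref{eq:koike-conc-third} is never used later, since Lemma~\ref{lem:first-bootstrap-event} derives its own third-moment bound.

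Your worry about \eqref{eq:koike-tensor-delta} is also well-founded. If $\bar T_I$ itself counts among the $\mathsf A_{I,\nu}$, then in the exponential example $\E\|\bar T_I\|_\infty\ge c\,n^{-1/2}$, which exceeds $\deln$ whenever $\log^6(dn)=o(n)$. That bound can therefore only concern the cross-moment tensors produced after Koike's second Stein step. Neither your proposal nor the paper checks this beyond the citation.
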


\begin{proof}
The mean and covariance bounds \eqref{eq:koike-conc-cov}--\eqref{eq:koike-conc-mean} follow from Lemma~D.10 in \cite{Koike2026} applied to $Y_i=\mathrm{vec}(\bm X_i\bm X_i^\top)$ and $Y_i=\bm X_i$, respectively. The third-order tensor bound \eqref{eq:koike-conc-third} is Lemma~D.11 in \cite{Koike2026} with $r=3$. Finally, the coefficient-tensor estimate \eqref{eq:koike-tensor-delta} is the projected specialization of the bounds obtained in the proof of Theorem~4.1 in \cite[pp.~22--24]{Koike2026}. Since projection only removes coordinates, every projected $\ell_\infty$-tensor norm is bounded by the corresponding full-dimensional norm.
\end{proof}

\begin{lemma}[Explicit high-probability event for the first bootstrap array]
\label{lem:first-bootstrap-event}
Under Assumption~\ref{ass:data}, there exists an event $\Omega_{n,X}$ such that
\begin{equation*}
\Prob(\Omega_{n,X}^c)\le \frac{C}{n^2}
\end{equation*}
and, on $\Omega_{n,X}$,
\begin{align}
\|\bar{\bm X}\|_\infty
&\le
Cb\sqrt{\frac{\log(dn)}{n}},
\label{eq:first-bootstrap-mean}
\\
\left\|
\frac1n\sum_{i=1}^n \bm X_i\bm X_i^\top-\mathbf{\Sigma}
\right\|_{\max}
&\le
Cb^2\sqrt{\frac{\log(dn)}{n}},\notag
\\
\max_{1\le i\le n}\|\bm X_i\|_\infty
&\le
Cb\log(dn).
\label{eq:first-bootstrap-max}
\end{align}
Consequently, if
\begin{equation*}
\hat{\mathbf{\Sigma}}_X
:=
\frac1n\sum_{i=1}^n(\bm X_i-\bar{\bm X})(\bm X_i-\bar{\bm X})^\top,
\qquad
\bm b_i:=\bm X_i-\bar{\bm X},
\end{equation*}
then on $\Omega_{n,X}$,
\begin{align}
\|\hat{\mathbf{\Sigma}}_X-\Sigma\|_{\max}
&\le
Cb^2\sqrt{\frac{\log(dn)}{n}},
\label{eq:first-bootstrap-hatSigma}
\\
\max_{1\le i\le n}\|\bm b_i\|_\infty
&\le
Cb\log(dn),
\label{eq:first-bootstrap-centered-max}
\\
\sup_{\substack{I\subset[d]\\1\le |I|\le k_0}}
\|\bar{\bm b}_I^{\,2}-\Sigma_{II}\|_{\max}
&\le
Cb^2\sqrt{\frac{\log(dn)}{n}},
\label{eq:first-bootstrap-projected-second}
\\
\sup_{\substack{I\subset[d]\\1\le |I|\le k_0}}
\|\bar{\bm b}_I^{\,3}\|_\infty
&\le
Cb^3\log(dn).
\label{eq:first-bootstrap-projected-third}
\end{align}
\end{lemma}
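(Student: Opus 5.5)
We define $\Omega_{n,X}$ as the intersection of three events and prove the deterministic consequences on it. Apply \eqref{eq:koike-conc-mean} and \eqref{eq:koike-conc-cov} of Lemma~\ref{lem:koike-concentration} with $a=2$; each fails with probability at most $n^{-2}$. This gives \eqref{eq:first-bootstrap-mean} and the covariance bound after absorbing $\sqrt2$ into $C$. For \eqref{eq:first-bootstrap-max} we use a union bound over the $nd$ pairs $(i,j)$. Since $\|X_{ij}\|_{\psi_1}\le b$, we have $\Prob(|X_{ij}|>x)\le 2e^{-x/b}$. Taking $x=Cb\log(dn)$ with $C\ge 3$ gives
\[
\Prob\Bigl(\max_{i,j}|X_{ij}|>Cb\log(dn)\Bigr)\le 2nd\,(dn)^{-C}\le 2n^{-2}.
\]
Hence $\Prob(\Omega_{n,X}^c)\le C/n^2$.

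On $\Omega_{n,X}$ the remaining bounds follow by elementary algebra. Since $\hat{\mathbf\Sigma}_X=\frac1n\sum_i\bm X_i\bm X_i^\top-\bar{\bm X}\bar{\bm X}^\top$, we get
\[
\|\hat{\mathbf\Sigma}_X-\mathbf\Sigma\|_{\max}\le Cb^2\sqrt{\log(dn)/n}+\|\bar{\bm X}\|_\infty^2 .
\]
The last term is at most $Cb^2\log(dn)/n$. Under Assumption~\ref{ass:data}(iii) we have $\log(dn)/n\to0$, so this is bounded by $Cb^2\sqrt{\log(dn)/n}$, which proves \eqref{eq:first-bootstrap-hatSigma}. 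Next, $\|\bm b_i\|_\infty\le\|\bm X_i\|_\infty+\|\bar{\bm X}\|_\infty\le Cb\log(dn)$, because $\sqrt{\log(dn)/n}\le\log(dn)$; this is \eqref{eq:first-bootstrap-centered-max}. For the projected second moment, note that $\bar{\bm b}_I^{\,2}=\bm P_I\hat{\mathbf\Sigma}_X\bm P_I^\top$ and $\mathbf\Sigma_{II}=\bm P_I\mathbf\Sigma\bm P_I^\top$. Projection only selects entries, so the max-norm cannot increase, and \eqref{eq:first-bootstrap-projected-second} holds uniformly in $I$. For the third tensor, each entry satisfies
\[
|\tfrac1n\textstyle\sum_i b_{ij}b_{i\ell}b_{im}|\le \max_i\|\bm b_i\|_\infty\cdot\tfrac1n\sum_i|b_{ij}b_{i\ell}| .
\]
By Cauchy--Schwarz, the average on the right is at most $\max_j(\hat{\mathbf\Sigma}_X)_{jj}$. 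By \eqref{eq:first-bootstrap-hatSigma} and Assumption~\ref{ass:variance}, this is at most $\overline\sigma^2+o(b^2)\le Cb^2$, using $\overline\sigma\le Cb$ (the $\psi_1$ bound controls second moments). Combined with \eqref{eq:first-bootstrap-centered-max}, this yields \eqref{eq:first-bootstrap-projected-third}, again uniformly over $I$ because projection selects entries.

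The only point needing care is that the constant $C$ must not depend on $I$, $k_0$, or $n$. This holds because every projected bound is inherited entrywise from a full-dimensional quantity, so no union bound over the $\binom{d}{s}$ subsets is needed. The argument is otherwise routine; the main step is choosing $a=2$ and the threshold in the union bound so that the total failure probability is $O(n^{-2})$.
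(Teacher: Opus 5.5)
Your proposal is correct and follows the paper's proof essentially step for step. The shared steps are the Koike concentration bounds with $a=2$, a sub-exponential union bound over the $nd$ entries, the identity $\hat{\mathbf\Sigma}_X=\frac1n\sum_i\bm X_i\bm X_i^\top-\bar{\bm X}\bar{\bm X}^\top$, the triangle inequality for $\bm b_i$, and Cauchy--Schwarz for the third tensor. The one cosmetic difference is that you invoke Assumption~\ref{ass:variance} for $\Sigma_{jj}\le Cb^2$; this is unnecessary, and it is not among the lemma's hypotheses, since the $\psi_1$ bound in Assumption~\ref{ass:data} already gives $\Sigma_{jj}\le 2b^2$, as you yourself note.
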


\begin{proof}
Apply \eqref{eq:koike-conc-mean} and \eqref{eq:koike-conc-cov} with $a=2$ to obtain
\begin{equation}
\Prob\!\left(
\|\bar{\bm X}\|_\infty
>
Cb\sqrt{\frac{\log(dn)}{n}}
\right)
+
\Prob\!\left(
\left\|
\frac1n\sum_{i=1}^n \bm X_i\bm X_i^\top-\mathbf{\Sigma}
\right\|_{\max}
>
Cb^2\sqrt{\frac{\log(dn)}{n}}
\right)
\le \frac{C}{n^2}.
\label{eq:first-bootstrap-proof-1}
\end{equation}
Next, since $\|X_{ij}\|_{\psi_1}\le b$, the tail bound for sub-exponential random variables implies
\begin{equation*}
\Prob(|X_{ij}|>u)\le 2\exp\!\left(-\frac{u}{Cb}\right),
\qquad
u>0.
\end{equation*}
Set $u=C_1 b\log(dn)$ with $C_1$ large enough. Then
\begin{equation}
\Prob\!\left(\max_{1\le i\le n}\max_{1\le j\le d}|X_{ij}|>C_1b\log(dn)\right)
\le
2nd\,(dn)^{-4}
\le
\frac{2}{n^2}.
\label{eq:first-bootstrap-proof-2}
\end{equation}
Define $\Omega_{n,X}$ as the intersection of the events in \eqref{eq:first-bootstrap-proof-1} and \eqref{eq:first-bootstrap-proof-2}. Then $\Prob(\Omega_{n,X}^c)\le C/n^2$, and \eqref{eq:first-bootstrap-mean}--\eqref{eq:first-bootstrap-max} hold on $\Omega_{n,X}$.

Now
\begin{equation*}
\hat{\mathbf{\Sigma}}_X
=
\frac1n\sum_{i=1}^n \bm X_i\bm X_i^\top-\bar{\bm X}\bar{\bm X}^\top.
\end{equation*}
Hence, on $\Omega_{n,X}$,
\begin{align*}
\|\hat{\mathbf{\Sigma}}_X-\mathbf{\Sigma}\|_{\max}
&\le
\left\|
\frac1n\sum_{i=1}^n \bm X_i\bm X_i^\top-\mathbf{\Sigma}
\right\|_{\max}
+
\|\bar{\bm X}\bar{\bm X}^\top\|_{\max}\\
&\le
Cb^2\sqrt{\frac{\log(dn)}{n}}
+
Cb^2\frac{\log(dn)}{n}\\
&\le
Cb^2\sqrt{\frac{\log(dn)}{n}},
\end{align*}
which proves \eqref{eq:first-bootstrap-hatSigma}. Also,
\begin{equation*}
\max_{1\le i\le n}\|\bm b_i\|_\infty
\le
\max_{1\le i\le n}\|\bm X_i\|_\infty+\|\bar{\bm X}\|_\infty
\le
Cb\log(dn),
\end{equation*}
which proves \eqref{eq:first-bootstrap-centered-max}. For every $I$,
\begin{equation*}
\bar{\bm b}_I^{\,2}
=
\bm P_I\hat{\mathbf{\Sigma}}_X\bm P_I^\top,
\end{equation*}
so \eqref{eq:first-bootstrap-projected-second} follows from \eqref{eq:first-bootstrap-hatSigma}. Finally, for every coordinate triple $(j_1,j_2,j_3)$ belonging to $I$,
\begin{align*}
\left|
\frac1n\sum_{i=1}^n b_{ij_1}b_{ij_2}b_{ij_3}
\right|
&\le
\left(\max_{1\le i\le n}\|\bm b_i\|_\infty\right)
\frac1n\sum_{i=1}^n |b_{ij_1}b_{ij_2}|\\
&\le
\left(\max_{1\le i\le n}\|\bm b_i\|_\infty\right)
\left(
\frac1n\sum_{i=1}^n b_{ij_1}^2
\right)^{1/2}
\left(
\frac1n\sum_{i=1}^n b_{ij_2}^2
\right)^{1/2}\\
&\le
Cb\log(dn)\cdot
\max_{1\le j\le d}\hat\Sigma_{X,jj}\\
&\le
Cb^3\log(dn),
\end{align*}
because $\hat\Sigma_{X,jj}\le \Sigma_{jj}+Cb^2\sqrt{\log(dn)/n}\le Cb^2$ on $\Omega_{n,X}$. Taking the maximum over all coordinate triples gives \eqref{eq:first-bootstrap-projected-third}.
\end{proof}

\begin{lemma}[Multiplier maximum event]
\label{lem:multiplier-max}
Under Assumption~\ref{ass:mult}, there exists an event $\Omega_{n,w}$ such that
\begin{equation*}
\Prob(\Omega_{n,w}^c)\le \frac{C}{n^2}
\end{equation*}
and
\begin{equation}
\max_{1\le i\le n}|w_i|\le C\log(dn)
\qquad\text{on }\Omega_{n,w}.
\label{eq:multiplier-max-main}
\end{equation}
\end{lemma}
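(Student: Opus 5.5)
The bound will follow from a union bound over the $n$ multipliers combined with a tail estimate for a single $w_1$, treating the two alternatives of Assumption~\ref{ass:mult} separately. We set
\[
\Omega_{n,w}:=\Bigl\{\max_{1\le i\le n}|w_i|\le C_1\log(dn)\Bigr\},
\]
with $C_1$ to be chosen, and aim for
\[
\Prob(\Omega_{n,w}^c)\le n\,\Prob\bigl(|w_1|>C_1\log(dn)\bigr)\le C/n^2 .
\]

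\emph{Case (ii): bounded multipliers.} Here $|w_1|\le b_w$ almost surely. Since $d,n\ge3$, we have $\log(dn)\ge \log 9>2$. Taking $C_1\ge b_w/2$ (recall that constants may depend on $b_w$) makes $\Prob(|w_1|>C_1\log(dn))=0$. Hence $\Omega_{n,w}$ has full probability and the claim is trivial.

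\emph{Case (i): Gaussian multipliers.} Here $w_1\sim N(0,1)$, and the standard Gaussian tail bound gives
\[
\Prob(|w_1|>x)\le 2e^{-x^2/2}.
\]
With $x=C_1\log(dn)$ and $\log(dn)\ge 1$, we have $x^2/2\ge (C_1^2/2)\log(dn)$. Choosing $C_1^2/2\ge 3$ yields
\[
n\,\Prob(|w_1|>x)\le 2n(dn)^{-3}\le 2n^{-2}.
\]

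There is no substantive obstacle. The only points needing care are the choice of constants and the observation that $\log(dn)$ is bounded below by a positive constant, so that the bounded case is absorbed. In fact the argument shows something stronger: $\sqrt{\log(dn)}$ would already suffice, since the Gaussian tail is sub-Gaussian rather than merely sub-exponential.
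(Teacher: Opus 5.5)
Your proposal is correct and follows the paper's own proof essentially verbatim. The bounded case (ii) is trivial once the constant is at least of order $b_w$, using that $\log(dn)$ is bounded below, and the Gaussian case (i) is a union bound with the tail $2e^{-u^2/2}$ at $u=C\log(dn)$; your side remark that $\sqrt{\log(dn)}$ would already suffice is also correct.
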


\begin{proof}
If Assumption~\ref{ass:mult}(ii) holds, then $|w_i|\le b_w$ almost surely, so \eqref{eq:multiplier-max-main} is trivial for $C\ge b_w$. If Assumption~\ref{ass:mult}(i) holds, then $w_i\sim N(0,1)$ and
\begin{equation*}
\Prob(|w_i|>u)\le 2e^{-u^2/2},
\qquad
u>0.
\end{equation*}
Choose $u=C_0\log(dn)$ with $C_0$ large enough. Then
\begin{equation*}
\Prob\!\left(\max_{1\le i\le n}|w_i|>C_0\log(dn)\right)
\le
2n\exp\!\left(-\frac{C_0^2\log^2(dn)}{2}\right)
\le \frac{C}{n^2}.
\end{equation*}
This proves the claim.
\end{proof}

\subsection{The projected local input}

\begin{proposition}[Projected local orthant expansion]
\label{prop:projected-local}
Assume Assumptions~\ref{ass:data}, \ref{ass:variance}, and \ref{ass:weakcorr}. Then there exists $C>0$ such that for every nonempty $I\subset[d]$ with $|I|\le k_0$,
\begin{equation}
\sup_{t\in\Tcal_{k,\epsilon}}
\left|
\Prob\bigl(\bm S_{n,I}\in(t,\infty)^{|I|}\bigr)
-
\int_{(t,\infty)^{|I|}}p_{n,I}(\bm u)\,d\bm u
\right|
\le C\epsn^2\pi_I(t).
\label{eq:projected-local-main}
\end{equation}
Moreover, with probability at least $1-C/n$,
\begin{equation}
\sup_{t\in\Tcal_{k,\epsilon}}
\left|
\Prob^*\bigl(\bm S_{n,I}^*\in(t,\infty)^{|I|}\bigr)
-
\int_{(t,\infty)^{|I|}}\hat p_{n,\gamma,I}(\bm u)\,d\bm u
\right|
\le C\epsn^2\pi_I(t)
\label{eq:projected-local-boot-main}
\end{equation}
holds simultaneously for all such $I$.
\end{proposition}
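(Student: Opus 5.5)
The plan is to run Koike's smoothing-plus-Stein-kernel argument in the projected dimension $s=|I|\le k_0$, but to keep every error term \emph{localized}, so that it is a multiple of the orthant probability $\pi_I(t)$ rather than an additive constant. By Lemma~\ref{lem:projection}, $\bm S_{n,I}=\sum_i n^{-1/2}\bm P_I\bm X_i$ satisfies the hypotheses of Lemma~\ref{lem:koike-decomposition} with $\beta_{i,I}\equiv0$, covariance $\mathbf\Sigma_{II}$ and $\lambda_{\min}(\mathbf\Sigma_{II})\ge\sigma_*^2$. After the reflection $\bm x\mapsto-\bm x$, the event $\{\bm S_{n,I}\in(t,\infty)^s\}$ becomes an event of the form $A^-_t$, so Lemmas~\ref{lem:koike-smoothing}--\ref{lem:koike-orthant-derivative} apply directly.

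\textbf{Step 1 (smoothing).} Fix a smoothing level $\vartheta\asymp\epsn^{2}$, up to a logarithmic factor to be tuned. Replace $\ind_{A_t^-}$ by $h_{A_t^-,\vartheta}$, and sandwich it between smoothed indicators of $A^-_{t\pm a}$ with $a\asymp\sqrt{\vartheta\log(ns)}$. This bounds the smoothing error by two quantities:
\begin{itemize}
\item the Edgeworth mass of the strip $(t-a,\infty)^s\setminus(t+a,\infty)^s$;
\item the discrepancy at the shifted thresholds $t\pm a$.
\end{itemize}

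\textbf{Step 2 (remainder bounds).} Apply Lemma~\ref{lem:koike-decomposition} to $h=\ind_{A^-_{t\pm a}}$. Each remainder $R_{\nu,I}$ is a coefficient tensor paired with $\nabla^r h_u$, $r\in\{2,\dots,5\}$, integrated over $u\in[\vartheta,1]$. Bound the tensors by Lemma~\ref{lem:koike-concentration}, using \eqref{eq:koike-tensor-delta}, which gives $O(\deln)$. Bound the derivatives with the \emph{localized} estimate \eqref{eq:koike-orthant-derivative} instead of \eqref{eq:koike-orthant-derivative-sup}. The result is a bound of the form
\[
C\deln\int_\vartheta^1 u^{-r/2}(1+t)^r\,\Prob\bigl(\sqrt{1-u}\,\bm x+\sqrt u\,\bm Z_I\in A^-_{t-a_u}\bigr)\,du ,
\]
where the probability is averaged against the law of $\bm S_{n,I}$. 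Since $\sup_{t\in\Tcal_{k,\epsilon}}t\le C\sqrt{\log d}$, the powers of $(1+t)$ and the $u$-integral (with its $\log\vartheta^{-1}$ factor) are absorbed into the definition of $\epsn^2=\deln\log n$. It then remains to control the resulting probabilities by $C\pi_I(t)$; this is done in Step 3. For the strip term, use the explicit Gaussian and Edgeworth densities: the strip mass is at most $C s\,a\,(1+t)\pi_I(t)$, which is $O(\epsn^2\pi_I(t))$ after the choice of $\vartheta$, because $s\le k_0$ is logarithmic.

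\textbf{Step 3 (main obstacle: shift stability of rare orthants).} The hardest step is the uniform ratio bound
\[
\pi_I(t-\eta)\le C\,\pi_I(t),\qquad |\eta|\le C\sqrt{\vartheta\log(dn)},\quad |I|\le k_0,\quad t\in\Tcal_{k,\epsilon},
\]
together with a similar bound for the mixed Gaussian–data probabilities appearing in Step 2. For fixed $s$ this is trivial, but here $t\sim\sqrt{2\log d}$ and $s$ grows like $\log\epsn^{-1}$. I would first compare $\pi_I(t)$ with $\prod_{j\in I}\bar\Phi(t/\sigma_j)$, using Lemma~\ref{lem:berman} in the form \eqref{eq:berman-independent} together with $\rho_d\log d\to0$ (Assumption~\ref{ass:weakcorr}). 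The comparison error is $O\bigl(s^2\rho_d\, e^{-t^2/(1+\rho_d)}\bigr)$; since $e^{t^2\rho_d}=O(1)$, this is of the same order as the product. Gershgorin (Lemma~\ref{lem:gershgorin}) keeps the correlation matrix of $\bm Z_I$ well conditioned, so the relevant constants stay bounded. For the product itself, the Mills ratio gives $\bar\Phi((t-\eta)/\sigma_j)/\bar\Phi(t/\sigma_j)\le e^{C\eta t}$. The total exponent $C s\eta t$ stays bounded provided $\vartheta$ is chosen with enough logarithmic slack; if $s\eta t$ is not bounded, I would lose a factor and compensate by shrinking $\vartheta$, balancing this against the $\vartheta^{-1/2}$ growth of the remainders.

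\textbf{Step 4 (identification and bootstrap part).} Integrating $p_{n,I}$ over $(t,\infty)^s$ gives exactly the main term of \eqref{eq:projected-local-main}, which proves the first claim. For \eqref{eq:projected-local-boot-main}, work on $\Omega_{n,X}\cap\Omega_{n,w}$ from Lemmas~\ref{lem:first-bootstrap-event}--\ref{lem:multiplier-max}; its complement costs only $C/n$ in probability. Conditionally on the data, the summands $w_i\bm b_{i,I}/\sqrt n$ have Stein kernel $\tau^w(w_i)\bm b_{i,I}\bm b_{i,I}^\top/n$; in the Gaussian case the conditional law is exactly Gaussian. Their conditional covariance is $\bar{\bm b}^{\,2}_I$, whose deviation from $\mathbf\Sigma_{II}$ is controlled by \eqref{eq:first-bootstrap-projected-second}; this produces the $\tfrac12\langle\bar{\bm b}_I^{\,2}-\mathbf\Sigma_{II},\nabla^2\ph_I\rangle$ term. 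Their third cumulant is $\gamma\bar{\bm b}_I^{\,3}/\sqrt n$, bounded by \eqref{eq:first-bootstrap-projected-third}. Rerunning Steps 1--3 with these deterministic tensor bounds in place of \eqref{eq:koike-tensor-delta} gives the same $C\epsn^2\pi_I(t)$ bound. Squared covariance errors are $O(\log(dn)/n)\le C\epsn^2$, and all the bounds are uniform in $I$ because the good event does not depend on $I$.
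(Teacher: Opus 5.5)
\textbf{Your smoothing scale is wrong, and with it the argument only reaches first-order accuracy.} You take $\vartheta\asymp\epsn^{2}$ (up to a free logarithmic factor) and $a\asymp\sqrt{\vartheta\log(ns)}$. Your own strip bound $C\,s\,a\,(1+t)\pi_I(t)$ is then of order $s\,\epsn\sqrt{\log(ns)\log d}\;\pi_I(t)$. That is an $O(\epsn)$ error up to logarithms, not $O(\epsn^2\pi_I(t))$, and no logarithmic retuning recovers a missing factor $\epsn$. Your fallback in Step~3, balancing against a $\vartheta^{-1/2}$ growth of the remainders, would also stop at $\sqrt{\deln}$-type accuracy.

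What makes second order possible, and what the paper uses, is that the remainder depends on $\vartheta$ only through $\log(\vartheta^{-1})$. Once the localized derivative bounds are inserted into the kernels of Koike's equation~(4.6), the paper obtains $C\deln(1+t)^4\{1+\log(\vartheta^{-1})\}\pi_I(t)$, so $\vartheta$ may be taken polynomially small. The paper sets $\vartheta_n=\min\{1/2,\epsn^4/(\log(dn)\log(2k_0))\}$. This gives $t\,a_{\vartheta_n}\le C\epsn^2$, while $\log(\vartheta_n^{-1})\le C\log n$ is exactly the extra $\log n$ in $\epsn^2=\deln\log n$. Your Step~2 display is inconsistent with this. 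Read literally, $\int_\vartheta^1u^{-r/2}\,du\asymp\vartheta^{1-r/2}$ for $r=4,5$, so you must integrate the actual $u$-weights of (4.6), not the crude factor $u^{-r/2}$.

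With $\vartheta$ of order $\epsn^4$ divided by logarithmic factors, the shift bound at the smoothing scale is easy: $s\,a_\vartheta\,t\lesssim k_0\epsn^2\to0$, so the Mills-ratio product $e^{Cs\eta t}$ is $O(1)$.

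However, your route to the product comparison fails. Inequality \eqref{eq:berman-independent} is an additive, pairwise bound. With $u_j=-t/\sigma_j$, its right-hand side is of order $s^2\rho_d\,t^2$ times a \emph{two}-coordinate product $\bPhi(t/\sigma_j)\bPhi(t/\sigma_\ell)$. By contrast, $\prod_{j\in I}\bPhi(t/\sigma_j)$ carries $s-2$ further factors, each polynomially small in $d$. So it gives multiplicative control only for $s=2$. The paper instead gets the comparison from the Gershgorin bound on $\Corr(\bm Z_I)$ plus a density comparison with the independent Gaussian (Lemma~\ref{lem:gauss-shift}).

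You also flag, but do not prove, the bound for the mixed averages $\E_{\bm x\sim\mathcal L(\bm S_{n,I})}\Prob(\sqrt{1-u}\,\bm x+\sqrt u\,\bm Z_I\in A^-_{t-a_u})$. The paper replaces these by the Gaussian orthant probability $\Prob(\bm Z_I^-\in A^-_{t-a_u})$ and then applies Lemma~\ref{lem:gauss-shift}(i). Step~2 needs this at every shift $a_u$ in the integration range, not only at $u=\vartheta$. For $u$ of order one, $a_u=c_r\sqrt{u\log(2s)}$ is bounded below by a constant, far beyond $c_0/t$, so your Step~3 does not cover that range. The paper's text asserts $a_u\le c_0/t$ on all of $[\vartheta_n,1/2]$, but its choice of $\vartheta_n$ does not give this, so that range needs a separate estimate in either version.

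Finally, in Step~4 drop $\Omega_{n,w}$. $\Prob^*$ integrates over the multipliers, so the good event must be data-measurable, and $\Omega_{n,X}$ alone suffices, as in the paper.
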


\begin{proof}
Fix a nonempty $I\subset[d]$ and write $s:=|I|$.
Set
\begin{equation*}
\bm Y_{n,I}:=-\bm S_{n,I},
\qquad
A_t^-:=(-\infty,-t]^s.
\end{equation*}
Then
\begin{equation*}
\Prob\bigl(\bm S_{n,I}\in(t,\infty)^s\bigr)
=
\Prob\bigl(\bm Y_{n,I}\in A_t^-\bigr).
\end{equation*}
If $\bm Z_I^-\sim N(0,\Sigma_{II})$, then
\begin{equation*}
\pi_I(t)=\Prob(\bm Z_I^-\in A_t^-).
\end{equation*}
For $u\in(0,1]$, define
\begin{equation*}
h_{t,u}(x):=h_{A_t^-,u}(x)
=
\E\bigl[\1_{A_t^-}(\sqrt{1-u}\,x+\sqrt u\,\bm Z_I^-)\bigr].
\end{equation*}

By Lemma~\ref{lem:koike-smoothing}, for every multi-index $\alpha$ with $|\alpha|=r$,
\begin{equation*}
\partial^\alpha h_{t,u}(x)
=
(-1)^r\left(\frac{1-u}{u}\right)^{r/2}
\int_{A_t^-}
\partial^\alpha\phi_I\!\left(\frac{y-\sqrt{1-u}\,x}{\sqrt u}\right)
u^{-s/2}\,dy.
\end{equation*}
Applying Lemma~\ref{lem:koike-orthant-derivative} with $r\in\{2,4,5\}$ gives
\begin{equation}
\|\nabla^r h_{t,u}(x)\|_1
\le
C_r u^{-r/2}(1+t)^r
\Prob\!\left(\sqrt{1-u}\,x+\sqrt u\,\bm Z_I^-\in A_{t-a_u}^-\right),
\qquad
a_u:=c_r\sqrt{u\log(2s)}.
\label{eq:prop-derivative-bound}
\end{equation}

Lemma~\ref{lem:gauss-shift}(i) implies that there exists $c_0>0$ such that, uniformly for $t\in\Tcal_{k,\epsilon}$ and $0\le a\le c_0/t$,
\begin{equation}
\pi_I(t-a)\le C\pi_I(t).
\label{eq:prop-rare-ratio}
\end{equation}
Choose
\begin{equation*}
\vartheta_n
:=
\min\left\{
\frac12,\,
\frac{\epsn^4}{\log(dn)\log(2k_0)}
\right\}.
\end{equation*}
Then
\begin{equation}
\log(\vartheta_n^{-1})\le C\log n,
\qquad
t\,a_{\vartheta_n}\le C\epsn^2
\qquad
\text{uniformly on }\Tcal_{k,\epsilon},
\label{eq:vartheta-control-local}
\end{equation}
because $t^2\asymp\log d$ by Lemma~\ref{lem:t-order}. Since $s\le k_0$, \eqref{eq:vartheta-control-local} implies
\begin{equation*}
a_u\le \frac{c_0}{t}
\qquad
\text{for every }u\in[\vartheta_n,1/2].
\end{equation*}
Therefore, integrating \eqref{eq:prop-derivative-bound} with respect to the law of $\bm Y_{n,I}$ and using \eqref{eq:prop-rare-ratio},
\begin{align}
\int \|\nabla^r h_{t,u}(x)\|_1\,d\Prob_{\bm Y_{n,I}}(x)
&\le
C_r u^{-r/2}(1+t)^r
\Prob\bigl(\bm Z_I^-\in A_{t-a_u}^-\bigr)\notag\\
&\le
C_r u^{-r/2}(1+t)^r\pi_I(t),
\qquad
u\in[\vartheta_n,1/2].
\label{eq:prop-derivative-expectation}
\end{align}
Exactly the same estimate holds with $\Prob_{\bm Y_{n,I}}$ replaced by the Gaussian law.

Let
\begin{equation*}
p_{n,I}^Y(y):=p_{n,I}(-y).
\end{equation*}
Apply the smoothing inequality in Lemma~4.1 of \cite{Koike2026} to the bounded measurable function $\1_{A_t^-}$, with
\begin{equation*}
\mu=\mathcal L(\bm Y_{n,I}),
\qquad
\nu(dy)=p_{n,I}^Y(y)\,dy,
\qquad
K=N(0,\vartheta_n\Sigma_{II}).
\end{equation*}
Using Lemma~\ref{lem:gauss-strip}(with $a=a_{\vartheta_n}$) and Lemma~\ref{lem:gauss-shift}(ii), we obtain
\begin{align}
&\left|
\Prob(\bm Y_{n,I}\in A_t^-)-\E[h_{t,\vartheta_n}(\bm Y_{n,I})]
\right|
+
\left|
\int_{A_t^-}p_{n,I}^Y(y)\,dy
-
\int h_{t,\vartheta_n}(y)p_{n,I}^Y(y)\,dy
\right|\notag\\
&\qquad\le
C a_{\vartheta_n}(1+t)^4\pi_I(t)
\le
C\epsn^2\pi_I(t).
\label{eq:prop-smoothing-error}
\end{align}

For each $i$, define
\begin{equation*}
\xi_{i,I}:=-\frac1{\sqrt n}\bm P_I\bm X_i.
\end{equation*}
Then
\begin{equation*}
\sum_{i=1}^n \xi_{i,I}=\bm Y_{n,I},
\qquad
\sum_{i=1}^n \E[\tau_{i,I}(\xi_{i,I})]=\Sigma_{II},
\qquad
\beta_{i,I}\equiv 0
\end{equation*}
with projected Stein kernels inherited from Lemma~\ref{lem:projection}. Therefore Lemma~\ref{lem:koike-decomposition} gives
\begin{equation*}
\E[h_{t,\vartheta_n}(\bm Y_{n,I})]
-
\int h_{t,\vartheta_n}(y)p_{n,I}^Y(y)\,dy
=
\sum_{\nu=1}^{6}R_{\nu,I}(t,\vartheta_n).
\end{equation*}
Because $\beta_{i,I}\equiv 0$, only the terms involving $\bar T_I$,
\begin{equation*}
\bar T_I
:=
\sum_{i=1}^n\tau_{i,I}(\xi_{i,I})-\Sigma_{II},
\end{equation*}
and $\sum_i \xi_{i,I}^{\otimes 3}$ remain. By \eqref{eq:koike-tensor-delta},
\begin{equation}
\E\|\mathsf A_{I,\nu}\|_\infty\le C\deln,
\qquad
\nu=1,\dots,6.
\label{eq:prop-A-delta}
\end{equation}
Substituting \eqref{eq:prop-derivative-expectation} and \eqref{eq:prop-A-delta} into the six explicit terms of equation~(4.6) in \cite{Koike2026}, and integrating the kernels exactly as they appear there, yields
\begin{equation}
\sum_{\nu=1}^{6}|R_{\nu,I}(t,\vartheta_n)|
\le
C\deln\,(1+t)^4\bigl\{1+\log(\vartheta_n^{-1})\bigr\}\pi_I(t).
\label{eq:prop-koike-remainder}
\end{equation}
Since $t^2\asymp \log d$ on $\Tcal_{k,\epsilon}$ and $\epsn^2=\deln\log n$, \eqref{eq:vartheta-control-local} implies
\begin{equation*}
\deln\,(1+t)^4\bigl\{1+\log(\vartheta_n^{-1})\bigr\}\le C\epsn^2.
\end{equation*}
Hence
\begin{equation}
\left|
\E[h_{t,\vartheta_n}(\bm Y_{n,I})]
-
\int h_{t,\vartheta_n}(y)p_{n,I}^Y(y)\,dy
\right|
\le
C\epsn^2\pi_I(t).
\label{eq:prop-data-smoothed-final}
\end{equation}
Combining \eqref{eq:prop-smoothing-error} and \eqref{eq:prop-data-smoothed-final},
\begin{equation*}
\left|
\Prob(\bm Y_{n,I}\in A_t^-)-\int_{A_t^-}p_{n,I}^Y(y)\,dy
\right|
\le
C\epsn^2\pi_I(t).
\end{equation*}
Changing variables $y=-u$ gives \eqref{eq:projected-local-main}.

Work on the event $\Omega_{n,X}$ from Lemma~\ref{lem:first-bootstrap-event}. Then, for every $I$ with $|I|\le k_0$,
\begin{equation}
\|\bar{\bm b}_I^{\,2}-\Sigma_{II}\|_{\max}
\le
Cb^2\sqrt{\frac{\log(dn)}{n}},
\qquad
\|\bar{\bm b}_I^{\,3}\|_\infty
\le
Cb^3\log(dn).
\label{eq:prop-bootstrap-tensors}
\end{equation}
Condition on the data. Define
\begin{equation*}
\xi_{i,I}^*:=\frac1{\sqrt n}w_i\bm b_{i,I},
\qquad
W_I^*:=\sum_{i=1}^n \xi_{i,I}^*=\bm S_{n,I}^*.
\end{equation*}
If $w_i\sim N(0,1)$, then $\xi_{i,I}^*$ has exact Stein kernel
\begin{equation*}
\tau_{i,I}^*(\xi_{i,I}^*)
=
\frac1n\bm b_{i,I}\bm b_{i,I}^\top.
\end{equation*}
If Assumption~\ref{ass:mult}(ii) holds, then
\begin{equation*}
\tau_{i,I}^*(\xi_{i,I}^*)
=
\frac1n\tau^w(w_i)\bm b_{i,I}\bm b_{i,I}^\top
\end{equation*}
is again an exact Stein kernel, because for every smooth vector field $g:\R^s\to\R^s$,
\begin{align*}
\E^*[\xi_{i,I}^{*\top}g(\xi_{i,I}^*)]
&=
\frac1{\sqrt n}
\E^*\bigl[w_i \bm b_{i,I}^\top g(n^{-1/2}w_i\bm b_{i,I})\bigr]\\
&=
\frac1n
\E^*\bigl[
\tau^w(w_i)
\tr\{\bm b_{i,I}\bm b_{i,I}^\top \nabla g(n^{-1/2}w_i\bm b_{i,I})^\top\}
\bigr].
\end{align*}
Thus, conditionally on $\Omega_{n,X}$,
\begin{equation*}
\sum_{i=1}^n \E^*[\tau_{i,I}^*(\xi_{i,I}^*)]
=
\bar{\bm b}_I^{\,2},
\qquad
\E^*\left[\sum_{i=1}^n (\xi_{i,I}^*)^{\otimes 3}\right]
=
\frac{\gamma}{\sqrt n}\bar{\bm b}_I^{\,3},
\qquad
\beta_{i,I}^*\equiv 0.
\end{equation*}
Therefore, conditionally on $\Omega_{n,X}$, set
\begin{equation*}
W_I^*:=-\bm S_{n,I}^*,
\qquad
\bar T_I^*:=\bar{\bm b}_I^{\,2}-\Sigma_{II}.
\end{equation*}
Then 
\begin{equation*}
\Prob^*(\bm S_{n,I}^*\in(t,\infty)^s)=\Prob^*(W_I^*\in A_t^-).
\end{equation*}
And we have, for $r\in\{2,4,5\}$,
\begin{equation*}
\E^*\bigl[\|\nabla^r h_{t,u}(W_I^*)\|_1\bigr]
\le
C_r u^{-r/2}(1+t)^r \pi_I(t),
\end{equation*}
uniformly over $u\in[\vartheta_n,1/2]$, because Lemma~\ref{lem:gauss-shift} depends only on the Gaussian reference law $N(0,\Sigma_{II})$. Here, we gives the smoothing error
\begin{equation*}
\left|\Prob^*(W_I^*\in A_t^-)-\E^*[h_{t,\vartheta_n}(W_I^*)]\right|
\le
C\epsn^2\pi_I(t).
\end{equation*}
Finally, the coefficient tensors in Koike's decomposition satisfy
\begin{equation*}
\E^*[\|\bar T_I^*\|_{\infty}]+\frac{1}{\sqrt n}\|\bar{\bm b}_I^{\,3}\|_{\infty}
\le C\epsn
\end{equation*}
by \eqref{eq:prop-bootstrap-tensors}. Substituting these conditional bounds into the six remainder terms in \eqref{eq:prop-koike-remainder} yields, on $\Omega_{n,X}$,
\begin{equation*}
\sup_{t\in\Tcal_{k,\epsilon}}
\left|
\Prob^*(\bm S_{n,I}^*\in(t,\infty)^s)
-
\int_{(t,\infty)^s}\hat p_{n,\gamma,I}(u)\,du
\right|
\le
C\epsn^2\pi_I(t).
\end{equation*}
Since $\Prob(\Omega_{n,X}^c)\le C/n^2$, this proves \eqref{eq:projected-local-boot-main}.
\end{proof}

\begin{lemma}[Gaussian threshold scale]
\label{lem:t-order}
Assume Assumptions~\ref{ass:variance} and \ref{ass:weakcorr}. Then there exist constants $0<c_1<C_1<\infty$ such that
\begin{equation*}
 c_1\log d\le t^2\le C_1\log d
 \qquad\text{for every }t\in\Tcal_{k,\epsilon}
\end{equation*}
for all sufficiently large $d$.
\end{lemma}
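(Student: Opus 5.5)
We need to show that every $t\in\Tcal_{k,\epsilon}$, i.e. every Gaussian quantile $c^G_{p,k}$ with $p\in[\epsilon/2,1-\epsilon/2]$, satisfies $t^2\asymp\log d$. Since $G_k$ is nondecreasing, it suffices to find thresholds $t_-=\sqrt{c_1\log d}$ and $t_+=\sqrt{C_1\log d}$ with
\[
G_k(t_-)<\epsilon/2
\qquad\text{and}\qquad
G_k(t_+)>1-\epsilon/2
\]
for all large $d$. Then every quantile in the window lies in $[t_-,t_+]$, and the lower bound $t\ge t_->0$ gives $t^2\ge c_1\log d$.

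\emph{Upper threshold.} By the union bound and Assumption~\ref{ass:variance},
\[
1-G_k(t_+)=\Prob(N_Z(t_+)\ge k)\le \Prob(N_Z(t_+)\ge1)\le d\,\bPhi(t_+/\overline\sigma).
\]
With $C_1=4\overline\sigma^2$, the right-hand side is at most $d\cdot d^{-2}\to0$. Hence $1-G_k(t_+)<\epsilon/2$ for large $d$. This step needs no dependence assumption.

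\emph{Lower threshold.} We need $\Prob(N_Z(t_-)\le k-1)<\epsilon/2$, which is a second-moment (Paley--Zygmund or Chebyshev) argument for the count $N_Z(t_-)$.
\begin{enumerate}
\item Take $c_1=\underline\sigma^2/2$. Then each $p_j:=\Prob(Z_j>t_-)\ge\bPhi(t_-/\underline\sigma)$ is of order $d^{-1/4}/\sqrt{\log d}$ by the Mills ratio, so $\mu:=\E N_Z(t_-)\ge c\,d^{3/4}(\log d)^{-1/2}\to\infty$.
\item Write
\[
\Var N_Z(t_-)=\sum_j p_j(1-p_j)+\sum_{j\ne\ell}\{\Prob(Z_j>t_-,Z_\ell>t_-)-p_jp_\ell\}.
\]
The diagonal part is at most $\mu$.
\item For the off-diagonal part, standardize $Z_j/\sigma_j$ and apply Lemma~\ref{lem:berman}, inequality \eqref{eq:berman-independent}, to the pair with $s=2$. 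This is the Berman comparison, applied to the orthant probabilities via complements or the symmetric version. Each covariance term is bounded by
\[
C|r_{j\ell}|\exp\!\left(-\frac{u_j^2+u_\ell^2}{2(1+\rho')}\right),
\qquad u_j=t_-/\sigma_j,
\]
where $\rho'\le\rho_d/\underline\sigma^2\to0$ is the maximal correlation.
\item Because $\rho_d\log d\to0$, we have $\exp(\rho' t_-^2/(1+\rho'))\to1$. Hence $e^{-(u_j^2+u_\ell^2)/(2(1+\rho'))}\le C e^{-u_j^2/2}e^{-u_\ell^2/2}\le C' t_-^2 p_jp_\ell$, using the Mills ratio.
\item The off-diagonal sum is therefore at most $C\rho' t_-^2\mu^2\le C\rho_d\log d\,\mu^2=o(\mu^2)$.
\end{enumerate}
Chebyshev then gives $\Prob(N_Z(t_-)\le k-1)\le \Var N_Z/(\mu-k)^2\to0$, so $G_k(t_-)<\epsilon/2$ for large $d$. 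This yields $t\ge t_-$.

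The main obstacle is step 4, the covariance bound: we must absorb the factor $e^{\rho' t^2}$ and the Mills-ratio polynomial factors. The choice $c_1$ small makes $\mu$ polynomially large, which leaves room to spare. The only delicate point is that $\rho_d\log d\to0$ is exactly what keeps $\exp(\rho' t_-^2)$ bounded. If the variances differ across coordinates, we handle each pair with its own $u_j,u_\ell$, and the argument is unchanged.
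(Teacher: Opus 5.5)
Your proof is correct, and it takes a genuinely different route from the paper's.

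\textbf{The paper's route.} The paper derives the lemma from the Poisson approximation $G_k(t)=h_k(\lambda(t))+O(\eta_d)$ of Lemma~\ref{lem:gaussian-factorial}. Because $G_k\in[\epsilon/2,1-\epsilon/2]$ on the window and $h_k$ is strictly decreasing, $\lambda(t)=\sum_j\bPhi(t/\sigma_j)$ is trapped in a compact interval $[\lambda_-,\lambda_+]$. The sandwich $d\,\bPhi(t/\underline\sigma)\le\lambda(t)\le d\,\bPhi(t/\overline\sigma)$ together with Mills' ratio then gives $t^2\asymp\log d$. This yields sharper constants ($t^2$ essentially between $2\underline\sigma^2\log d$ and $2\overline\sigma^2\log d$). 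It also yields the compactness of $\lambda(t)$ that Lemma~\ref{lem:weighted-regularity}(i) reuses.

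\textbf{The circularity.} As written, the paper's argument is circular. The proof of Lemma~\ref{lem:gaussian-factorial} itself appeals to Lemma~\ref{lem:t-order}, both to turn $\rho_d(1+t^2)$ into $\rho_d\log d$ in the Berman step and to keep $\lambda(t)$ bounded.

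\textbf{What your route buys.} Your bracketing argument uses a union bound at $t_+$, which needs no dependence assumption, and a Chebyshev bound at $t_-$, which uses Lemma~\ref{lem:berman} only for pairs. It is elementary and self-contained, and your constants $c_1=\underline\sigma^2/2$, $C_1=4\overline\sigma^2$ do not depend on $(k,\epsilon)$. Since it relies on nothing downstream, it breaks the loop.

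\textbf{It also recovers compactness.} Once $c_1\log d\le t^2\le C_1\log d$ is known on the window, your variance estimate $\Var N_Z(t)\le\lambda(t)+C\rho_d\log d\,\lambda(t)^2$ still holds at window points. This is because the factor $e^{\rho'(u_j^2+u_\ell^2)/2}$ stays bounded and $u_ju_\ell\le C\log d$ there. Hence $G_k(t)\ge\epsilon/2$ forces $\lambda(t)\le\lambda_+$ for a suitable constant $\lambda_+$. Markov's inequality gives the other side: $\lambda(t)\ge\Prob(N_Z(t)\ge k)\ge\epsilon/2$. So your argument supplies, without circularity, the compactness of $\lambda(t)$ needed later in the paper.
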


\begin{proof}
Set
\begin{equation*}
\lambda(t):=\sum_{j=1}^d \bPhi\!\left(\frac{t}{\sigma_j}\right).
\end{equation*}
Lemma~\ref{lem:gaussian-factorial} below yields
\begin{equation*}
 G_k(t)=h_k(\lambda(t))+O(\eta_d),
 \qquad
 h_k(\lambda):=e^{-\lambda}\sum_{m=0}^{k-1}\frac{\lambda^m}{m!},
\end{equation*}
with $\eta_d\to0$ uniformly on $\Tcal_{k,\epsilon}$. Since $G_k(t)\in[\epsilon/2,1-\epsilon/2]$ on that window and $h_k$ is continuous and strictly decreasing, there exist constants $0<\lambda_-<\lambda_+<\infty$ such that
\begin{equation}
\lambda_-\le \lambda(t)\le \lambda_+
\qquad\text{for every }t\in\Tcal_{k,\epsilon}
\label{eq:lambda-compact-window}
\end{equation}
for all sufficiently large $d$. Also,
\begin{equation}
 d\,\bPhi\!\left(\frac{t}{\underline\sigma}\right)
 \le \lambda(t)
 \le d\,\bPhi\!\left(\frac{t}{\overline\sigma}\right).
\label{eq:lambda-sandwich-appendix}
\end{equation}
Applying Mills' ratio to \eqref{eq:lambda-sandwich-appendix} and using \eqref{eq:lambda-compact-window} gives the claim.
\end{proof}

\begin{lemma}[Gaussian shift and strip bounds]
\label{lem:gauss-shift}
Assume Assumptions~\ref{ass:variance} and \ref{ass:weakcorr}. Then there exists $c_0>0$ such that the following hold uniformly over all nonempty $I\subset[d]$ with $|I|\le k_0$ and all $t\in\Tcal_{k,\epsilon}$:
\begin{enumerate}[label=(\roman*)]
\item if $0\le a\le c_0/t$, then
\begin{equation*}
\pi_I(t-a)\le C\pi_I(t);
\end{equation*}
\item if $0\le a\le c_0/t$, then
\begin{equation*}
\pi_I(t-a)-\pi_I(t)\le C a(1+t)\pi_I(t).
\end{equation*}
\end{enumerate}
\end{lemma}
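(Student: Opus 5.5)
The plan is to reduce both statements to a one-dimensional comparison along the diagonal direction. Write $\pi_I(t-a)=\Prob(\bm Z_I+a\mathbf 1_s\in(t,\infty)^s)$, where $s=|I|\le k_0$ and $\bm Z_I\sim N(0,\Sigma_{II})$. Change measure by the Cameron--Martin formula: with $\bm v:=\Sigma_{II}^{-1}\mathbf 1_s$,
\[
\pi_I(t-a)=\E\Bigl[\ind\{\bm Z_I\in(t,\infty)^s\}\exp\bigl(a\,\bm v^\top\bm Z_I-\tfrac{a^2}{2}\mathbf 1_s^\top\Sigma_{II}^{-1}\mathbf 1_s\bigr)\Bigr].
\]
Both parts then follow once we control the tilt $\exp(a\bm v^\top\bm Z_I)$ on the orthant event, uniformly in $I$ and $t$.

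The structural input is that $\Sigma_{II}$ is close to a diagonal matrix. By Lemma~\ref{lem:gershgorin} with Assumptions~\ref{ass:variance}--\ref{ass:weakcorr}, every off-diagonal row sum is at most $(s-1)\rho_d\le k_0\rho_d$. Since $k_0\asymp\log(\epsn^{-1})\lesssim\log n$, I will need (or assume, as the paper implicitly does) that $k_0\rho_d\to0$, for instance via $\log n\lesssim\log d$. Under that, a Neumann series gives
\[
\Sigma_{II}^{-1}=D^{-1}+E,\qquad \|E\mathbf 1_s\|_\infty\le Ck_0\rho_d,
\]
where $D=\mathrm{diag}(\sigma_j^2)$. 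Hence each coordinate satisfies $v_j\in[c,C]$ and $v_j\ge0$ for large $d$, and $\mathbf 1^\top\Sigma_{II}^{-1}\mathbf 1\asymp s$.

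For part (i), I would not bound the tilt naively, since $\bm v^\top\bm Z_I$ is unbounded above. Instead I decompose the orthant event by the overshoot. On $\{\bm Z_I\in(t,\infty)^s\}$, write $\bm Z_I=t\mathbf 1+\bm W$ with $\bm W\ge0$. Then
\[
a\bm v^\top\bm Z_I=at\,\bm v^\top\mathbf 1+a\bm v^\top\bm W .
\]
The term $at\,\bm v^\top\mathbf 1\le Cc_0 s$ is not $O(1)$ when $s$ grows. This is the main obstacle. It must be cancelled against the Gaussian factor, so I would rather compare directly with the near-product structure.

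Concretely, use Lemma~\ref{lem:berman} (the independent-comparison form) to write
\[
\pi_I(t)=\prod_{j\in I}\bar\Phi(t/\sigma_j)\,(1+O(\text{err})),
\]
with relative error controlled by $s^2\rho_d e^{\rho_d t^2}$-type terms, which are small because $\rho_d\log d\to0$. One caveat: Berman bounds are additive, not relative. Making them relative needs the exponential factor $\exp(-t^2/(1+\rho))$ compared with $\bar\Phi(t)^2\asymp e^{-t^2}/t^2$. The ratio is $t^2e^{\rho t^2}=O(\log d)$, so the relative error is about $s^2\rho_d\log d\cdot\bar\Phi(t)^{2-s}$, which fails for $s\ge3$. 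So the upper bound must come from a different route.

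My preferred route is a conditioning and induction argument on $s$, using the inclusion $\{\bm Z_I>t\}\subset\{Z_j>t,\ \bm Z_{I\setminus j}>t\}$. The key fact is that the conditional law of $\bm Z_{I\setminus j}$ given $Z_j=z$ has mean shift $O(\rho_d z)$. For $z\le t+O(1/t)$ this shift is $O(\rho_d t)=o(1/t)$. The plan is therefore: first establish the one-dimensional ratio $\bar\Phi((t-a)/\sigma)\le e^{c_0'}\bar\Phi(t/\sigma)$ and the derivative bound
\[
\bar\Phi((t-a)/\sigma)-\bar\Phi(t/\sigma)\le Ca(1+t)\bar\Phi(t/\sigma)
\]
from Mills' ratio, valid for $a\le c_0/t$. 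Then propagate these coordinatewise.

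For part (ii), write $\pi_I(t-a)-\pi_I(t)\le\sum_{j\in I}\Prob(t-a<Z_j\le t,\ \bm Z_{I\setminus j}>t-a)$. Each summand is a strip probability times a conditional orthant probability. Using the conditioning bound above and part (i), this is bounded by
\[
Ca(1+t)\,\pi_I(t)\cdot s .
\]
The factor $s\le k_0$ is a logarithmic loss. It can either be absorbed into $C$, if the constant is allowed to be polylogarithmic, or be avoided by integrating the Cameron--Martin derivative $\tfrac{d}{da}\pi_I(t-a)$ and using that $\E[\bm v^\top\bm Z_I\mid\bm Z_I>t]\approx t\,\bm v^\top\mathbf 1$. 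Controlling this uniformly in $s\le k_0$, with the weak correlation giving only $o(1)$ interaction per pair, is where I expect the real difficulty to lie.
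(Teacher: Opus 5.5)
Your proposal does not prove the statement. Propagating the one-dimensional Mills bound coordinatewise gives $\pi_I(t-a)\le C^{|I|}\pi_I(t)$ in (i), and an extra factor $|I|$ in (ii). The tilt term $at\,\bm v^\top\mathbf 1\asymp c_0|I|$ that you isolated cannot be ``cancelled against the Gaussian factor.'' In fact no argument can close this gap, because the lemma is false as stated once $k_0\to\infty$, which happens since $\epsn\to0$.

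Take $\mathbf{\Sigma}=\mathbf I_d$, which satisfies Assumptions~\ref{ass:variance} and \ref{ass:weakcorr} with $\rho_d=0$. Take any $I$ with $|I|=s\le k_0$, any $t\in\Tcal_{k,\epsilon}$, and $0<a\le c_0/t$. Since $\phi$ is decreasing on $(0,\infty)$ and $\phi(t)\ge t\bar\Phi(t)$,
\[
\bar\Phi(t-a)-\bar\Phi(t)\ge a\phi(t)\ge at\,\bar\Phi(t).
\]
Hence $\pi_I(t-a)/\pi_I(t)=\{\bar\Phi(t-a)/\bar\Phi(t)\}^s\ge(1+at)^s$, which equals $(1+c_0)^s$ at $a=c_0/t$. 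Likewise,
\[
\pi_I(t-a)-\pi_I(t)\ge s\,\bar\Phi(t)^{s-1}\{\bar\Phi(t-a)-\bar\Phi(t)\}\ge s\,at\,\pi_I(t).
\]
With $s=k_0$, part (i) fails for every fixed $c_0>0$. Part (ii) fails for every $a>0$, however small, because it would force $s\le 2C$. Your Cameron--Martin idea for (ii) confirms this rather than avoiding it. When $\bm v\ge0$,
\[
\tfrac{d}{da}\pi_I(t-a)\big|_{a=0}=\E\bigl[\ind\{\bm Z_I>t\mathbf 1\}\,\bm v^\top\bm Z_I\bigr]\ge t\,\mathbf 1^\top\mathbf{\Sigma}_{II}^{-1}\mathbf 1\,\pi_I(t),
\]
and the right-hand side is $\asymp |I|\,t\,\pi_I(t)$. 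So the $|I|$-dependence you found is the true size of the effect, not a loss from your method.

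The paper's proof has the same gap, without acknowledging it. It asserts $\pi_I(t)\asymp\prod_r\bar\Phi(t/\sigma_{j_r})$ with constants uniform over $|I|\le k_0$, via a density comparison ``up to multiplicative constants.'' Near $u=t\mathbf 1$ that density ratio has exponent of order $|I|^2\rho_d\log d$, and the argument invokes $|I|\rho_d\to0$, which, as you noted, does not follow from $\rho_d\log d\to0$. It then multiplies the bound $\bar\Phi((t-a)/\sigma)\le C\bar\Phi(t/\sigma)$ over $r$, which yields $C^{|I|}$, not $C$. Your observation that Berman's inequality gives only additive errors of order $\bar\Phi(t)^2$, and hence no relative control of $\pi_I$ for $|I|\ge3$, is also correct.

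The productive move is to state and prove an $|I|$-explicit version: a constant of order $e^{C|I|c_0}$ in (i) and a factor $|I|$ in (ii). Your conditioning induction is a natural route to that. Where the lemma is applied in Proposition~\ref{prop:projected-local} with $a=a_{\vartheta_n}$, one has $|I|\,t\,a\le Ck_0\epsn^2\to0$. So the (i)-type factor is harmless there, while (ii) picks up an extra factor of order $k_0$ in the strip error.
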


\begin{proof}
Let $I=\{j_1,\dots,j_s\}$ and standardize $Y_r:=Z_{j_r}/\sigma_{j_r}$. The covariance matrix of $(Y_1,\dots,Y_s)$ has diagonal entries $1$ and off-diagonal entries bounded by $\rho_d/\underline\sigma^2$. Since $s\rho_d\to0$, Lemma~\ref{lem:gershgorin} applied to the correlation matrix yields
\begin{equation*}
\lambda_{\max}(\Corr(Y_1,\dots,Y_s))\le 2
\end{equation*}
for all sufficiently large $d$. Hence the Gaussian density on $\R^s$ is bounded above and below, on the relevant orthant boundary region, by the density of an independent Gaussian vector up to multiplicative constants depending only on $\underline\sigma,\overline\sigma$. Consequently,
\begin{equation}
\pi_I(t)
\asymp
\prod_{r=1}^s \bPhi\!\left(\frac{t}{\sigma_{j_r}}\right)
\label{eq:gauss-joint-prod-comparison}
\end{equation}
uniformly for $|I|\le k_0$ and $t\in\Tcal_{k,\epsilon}$. By Mills' ratio,
\begin{equation*}
\frac{\bPhi((t-a)/\sigma_{j_r})}{\bPhi(t/\sigma_{j_r})}
\le
\exp\!\left\{\frac{at}{\underline\sigma^2}\right\}
\le C
\qquad\text{if }0\le a\le c_0/t,
\end{equation*}
which proves part~(i) after multiplication over $r$. Also,
\begin{equation*}
\bPhi\!\left(\frac{t-a}{\sigma}\right)-\bPhi\!\left(\frac{t}{\sigma}\right)
\le
\frac{a}{\sigma}\ph\!\left(\frac{t-a}{\sigma}\right)
\le
Ca(1+t)\bPhi\!\left(\frac{t}{\sigma}\right),
\end{equation*}
again by Mills' ratio. Multiplying over coordinates and using \eqref{eq:gauss-joint-prod-comparison} proves part~(ii).
\end{proof}

\begin{lemma}[Gaussian strip bound for the Edgeworth density]
\label{lem:gauss-strip}
Under Assumptions~\ref{ass:data}--\ref{ass:weakcorr}, for every nonempty $I\subset[d]$ with $|I|\le k_0$, every $t\in\Tcal_{k,\epsilon}$, and every $0\le a\le c_0/t$,
\begin{equation}
\int_{(t-a,\infty)^{|I|}\setminus(t,\infty)^{|I|}}\ph_I(\bm u)\,d\bm u
\le C a(1+t)\pi_I(t),
\label{eq:gauss-strip-phi}
\end{equation}
and
\begin{equation}
\int_{(t-a,\infty)^{|I|}\setminus(t,\infty)^{|I|}}|p_{n,I}(\bm u)-\ph_I(\bm u)|\,d\bm u
\le C a(1+t)^4 n^{-1/2}\pi_I(t).
\label{eq:gauss-strip-pn}
\end{equation}
On the event of Lemma~\ref{lem:bootstrap-array}, the same proof with $\bar{\bm c}_I^{\,2}-\Sigma_{II}$ replaced by $\bar{\bm b}_I^{\,2}-\Sigma_{II}$ and $\bar{\bm c}_I^{\,3}$ replaced by $(\gamma/\sqrt n)\bar{\bm b}_I^{\,3}$ gives
\begin{equation*}
\int_{(t-a,\infty)^{|I|}\setminus(t,\infty)^{|I|}}|\hat p_{n,\gamma,I}(\bm u)-\ph_I(\bm u)|\,d\bm u
\le C a(1+t)^4 n^{-1/2}\pi_I(t).
\end{equation*}
\end{lemma}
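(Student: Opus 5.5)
The first bound \eqref{eq:gauss-strip-phi} is essentially Lemma~\ref{lem:gauss-shift}(ii) restated. Write $D_a:=(t-a,\infty)^{s}\setminus(t,\infty)^{s}$ with $s=|I|$. Since $(t,\infty)^s\subset(t-a,\infty)^s$, we have
\begin{equation*}
\int_{D_a}\ph_I(\bm u)\,d\bm u=\pi_I(t-a)-\pi_I(t)\le Ca(1+t)\pi_I(t)
\end{equation*}
for $0\le a\le c_0/t$.

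For \eqref{eq:gauss-strip-pn}, the difference is
\begin{equation*}
p_{n,I}-\ph_I=-\tfrac1{6\sqrt n}\bigl\langle\E[\bar{\bm X}_I^{\,3}],\nabla^3\ph_I\bigr\rangle .
\end{equation*}
I would proceed in three steps.

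\emph{Step 1.} Bound the coefficient tensor: $\|\E[\bar{\bm X}_I^{\,3}]\|_\infty\le Cb^3$ by the $\psi_1$ bounds of Assumption~\ref{ass:data}(ii).

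\emph{Step 2.} Write $\partial^{j_1j_2j_3}\ph_I(\bm u)=H_{j_1j_2j_3}(\bm u)\ph_I(\bm u)$, where $H$ is a cubic Hermite-type polynomial built from $\Sigma_{II}^{-1}\bm u$ and entries of $\Sigma_{II}^{-1}$. By the Gershgorin argument already used in Lemma~\ref{lem:gauss-shift}, $\lambda_{\max}(\Sigma_{II})\le C$ and $\lambda_{\min}(\Sigma_{II})\ge\sigma_*^2$. Moreover, $s\rho_d\to0$ makes $\Sigma_{II}^{-1}$ close to diagonal, with off-diagonal row sums $O(s\rho_d)$, so $|(\Sigma_{II}^{-1}\bm u)_j|\le C(|u_j|+s\rho_d\|\bm u\|_\infty)$.

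\emph{Step 3.} The crude sum $\sum_{j_1,j_2,j_3}|H_{j_1j_2j_3}|$ carries a factor $s^3$, and $s\le k_0=O(\log n)$. I would not just absorb $s^3$. Instead, keep the index structure: the terms involving coordinate $j$ contribute $(1+|u_j|)$ factors, and only three coordinates enter each term. This yields an integrand bounded by $C(1+\max_{j\in I}|u_j|)^3\ph_I(\bm u)$ times combinatorial factors that I expect can be controlled by $\|\cdot\|_1$-type summability. Any residual polylog$(s)$ factors are then absorbed into the slack $(1+t)^4$ versus $(1+t)$, since $t^2\asymp\log d$ by Lemma~\ref{lem:t-order}.

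Then one must control the moment $\int_{D_a}(1+\max_j|u_j|)^3\ph_I$. On $D_a$ with $\bm u$ near the orthant, the coordinates are roughly $t+O(1/t)$. Large $|u_j|$ excursions are handled by a Gaussian tail estimate: split $\{\max_j|u_j|\le 2t\}$ from its complement. The complement has mass at most $\pi_I(t)e^{-ct^2}$ by the product comparison \eqref{eq:gauss-joint-prod-comparison}. On the main part the polynomial factor is at most $C(1+t)^3$, and we reduce to \eqref{eq:gauss-strip-phi}. This gives $Cn^{-1/2}(1+t)^3\cdot a(1+t)\pi_I(t)$, which is the claimed bound. The negative coordinates contribute nothing extra because $\bm u\in(t-a,\infty)^s$ forces all $u_j>t-a>0$.

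\emph{Bootstrap version.} The same argument applies to $\hat p_{n,\gamma,I}-\ph_I$, now with the extra second-order term $\tfrac12\langle\bar{\bm b}_I^{\,2}-\Sigma_{II},\nabla^2\ph_I\rangle$. On the high-probability event, $\|\bar{\bm b}_I^{\,2}-\Sigma_{II}\|_{\max}\le Cb^2\sqrt{\log(dn)/n}$ and $\|\bar{\bm b}_I^{\,3}\|_\infty\le Cb^3\log(dn)$ by Lemma~\ref{lem:first-bootstrap-event}. The quadratic Hermite polynomial contributes $(1+t)^2$, and the logarithmic factors are absorbed using $t^2\asymp\log d$; this is the step where the stated $n^{-1/2}$ rate may require allowing an extra $\log(dn)$.

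The main obstacle is Step~3: controlling the $\ell_1$ sum over the $s^3$ index triples of the Hermite coefficients, uniformly in $s\le k_0$, without losing powers of $k_0$. My first attempt would exploit near-diagonality of $\Sigma_{II}^{-1}$, which follows from $\rho_d\log d\to0$ via a Neumann series, so that $\nabla^3\ph_I/\ph_I$ is essentially a sum of univariate Hermite terms.
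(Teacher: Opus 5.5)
Your outline follows the paper's own proof. The first display comes from Lemma~\ref{lem:gauss-shift}(ii). The second comes from $\|\E[\bar{\bm X}_I^{\,3}]\|_\infty\le Cb^3$, the pointwise bound $|\partial^\alpha\ph_I(\bm u)|\le C(1+\|\bm u\|_\infty^3)\ph_I(\bm u)$, and a reduction to the first display. The paper does this in two lines: it silently sums over the $s^3$ index triples, and it replaces $\|\bm u\|_\infty^3$ by $(1+t)^3$ on the unbounded strip. So the places where you hesitate are exactly the places the paper passes over.

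\textbf{The $s$-dependence cannot be removed.} Your repair of Step~3 cannot work, because the dependence on $s$ is genuine. Take independent coordinates, e.g.\ centered gamma ones. These admit diagonal Stein kernels and satisfy all the assumptions with $\rho_d=0$. Let them have variance $\sigma^2$ and common third moment $\mu_3\neq0$.
\begin{itemize}
\item For the first display, $\pi_I(t-a)/\pi_I(t)=\{\bPhi((t-a)/\sigma)/\bPhi(t/\sigma)\}^s\ge 1+c\,s\,a\,t$. Hence $\int_{D_a}\ph_I\ge c\,s\,a\,t\,\pi_I(t)$.
\item For the second display, $p_{n,I}-\ph_I$ equals $\mu_3/(6\sigma^3\sqrt n)$ times $\sum_{j\in I}\{(u_j/\sigma)^3-3u_j/\sigma\}\ph_I(\bm u)$. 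On $D_a$ this has a fixed sign and size at least $c\,s\,t^3n^{-1/2}\ph_I$. Hence $\int_{D_a}|p_{n,I}-\ph_I|\ge c\,s^2a\,t^4n^{-1/2}\pi_I(t)$.
\end{itemize}
Since $|I|$ runs up to $k_0\to\infty$, \emph{neither} display holds with $C$ uniform in $I$. The first already fails because Lemma~\ref{lem:gauss-shift} drops the $s$-dependence when it multiplies the one-dimensional Mills bounds over coordinates: part (ii) loses a factor $s$, and part (i) honestly gives $C^s$.

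Your proposed fixes do not help:
\begin{itemize}
\item Near-diagonality of $\mathbf{\Sigma}_{II}^{-1}$ is irrelevant, since here $\mathbf{\Sigma}_{II}$ is exactly diagonal. The growth comes from the number of coordinates, and only the sup-norm of $\E[\bar{\bm X}_I^{\,3}]$ is controlled, not its $\ell_1$ norm.
\item There is no slack to absorb the factor. The Hermite factor $(1+t)^3$ and the strip factor $a(1+t)$ use up all of $(1+t)^4$. Also, $s^3\le k_0^3$ is a power of $\log n$, unrelated to $t^2\asymp\log d$.
\end{itemize}
What can be proved along your lines carries explicit polynomial factors in $s$. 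Roughly, these are $s\,a(1+t)\pi_I(t)$ in the first display (when $s\,a(1+t)\le1$), and $s\,\|\E[\bar{\bm X}_I^{\,3}]\|_1\,a(1+t)^4n^{-1/2}\pi_I(t)$ in the second. Such factors must be carried and absorbed downstream, e.g.\ by the factorial decay $M_{Z,s}\lesssim\lambda_+^s/s!$ in the sums over $s$ weighted by $\binom{s-1}{k-1}$. They cannot be removed inside this lemma.

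\textbf{The tail split loses the factor $a$.} A bound like $\pi_I(t)e^{-ct^2}$ for the part of $D_a$ where $\max_j|u_j|>2t$ does not shrink as $a\downarrow0$. So it is not dominated by $a(1+t)^4n^{-1/2}\pi_I(t)$ for small $a$, and the lemma is applied with the tiny value $a=a_{\vartheta_n}$. Keep the strip constraint instead:
\begin{itemize}
\item cover $D_a$ by the slabs $\{u_r\in(t-a,t]\}$, $r\in I$;
\item bound the weight by $(1+\max_j u_j)^3\le 4\{(1+t)^3+\sum_j(u_j-t)_+^3\}$;
\item integrate against the product-comparable density.
\end{itemize}
On $\{u_j>t-a\}$ the overshoots $(u_j-t)_+$ are of order $1/t$. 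So the weight costs only $C(1+t)^3$ up to lower-order terms, and the slab width supplies the factor $a$.

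\textbf{The bootstrap rate.} Your suspicion is correct. With $\|\bar{\bm b}_I^{\,2}-\mathbf{\Sigma}_{II}\|_{\max}\lesssim\sqrt{\log(dn)/n}$ and $\|\bar{\bm b}_I^{\,3}\|_\infty\lesssim\log(dn)$, the argument yields $\epsn$ in place of $n^{-1/2}$. That is exactly what the paper's own proof concludes, namely $Ca(1+t)^4\epsn\pi_I(t)$ via $\epsn\ge Cn^{-1/2}\log(dn)$. It is not the $n^{-1/2}$ rate written in the statement.
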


\begin{proof}
Let $s:=|I|$. For the Gaussian part, write
\begin{equation*}
\mathcal S_{t,a}^I
:=
(t-a,\infty)^s\setminus (t,\infty)^s.
\end{equation*}
Since
\begin{equation*}
\mathcal S_{t,a}^I
\subset
\bigcup_{r=1}^s
(t-a,t]\times (t-a,\infty)^{r-1}\times (t,\infty)^{s-r},
\end{equation*}
we have
\begin{align*}
\int_{\mathcal S_{t,a}^I}\phi_I(u)\,du
&=
\pi_I(t-a)-\pi_I(t)\\
&\le
Ca(1+t)\pi_I(t)
\end{align*}
by Lemma~\ref{lem:gauss-shift}(ii). This proves \eqref{eq:gauss-strip-phi}.

For the Edgeworth correction, \eqref{eq:def-pnI-corrected} gives
\begin{equation*}
p_{n,I}(u)-\phi_I(u)
=
-\frac{1}{6\sqrt n}
\left\langle
\E[\bar{\bm X}_I^{\,3}],
\nabla^3\phi_I(u)
\right\rangle.
\end{equation*}
Because $|I|\le k_0$ and $\|X_{ij}\|_{\psi_1}\le b$, all components of $\E[\bar{\bm X}_I^{\,3}]$ are bounded by $Cb^3$. Also,
\begin{equation*}
|\partial^\alpha \phi_I(u)|
\le
C_\alpha(1+\|u\|_\infty^3)\phi_I(u),
\qquad
|\alpha|=3.
\end{equation*}
Hence
\begin{align*}
\int_{\mathcal S_{t,a}^I}|p_{n,I}(u)-\phi_I(u)|\,du
&\le
\frac{C}{\sqrt n}
\int_{\mathcal S_{t,a}^I}
(1+\|u\|_\infty^3)\phi_I(u)\,du\\
&\le
\frac{C(1+t)^3}{\sqrt n}
\int_{\mathcal S_{t,a}^I}\phi_I(u)\,du\\
&\le
Ca(1+t)^4 n^{-1/2}\pi_I(t),
\end{align*}
which proves \eqref{eq:gauss-strip-pn}.

For the bootstrap density, work on the event $\Omega_{n,X}$ from Lemma~\ref{lem:first-bootstrap-event}. Then
\begin{equation*}
\hat p_{n,\gamma,I}(u)-\phi_I(u)
=
\frac12\left\langle \bar{\bm b}_I^{\,2}-\Sigma_{II},\nabla^2\phi_I(u)\right\rangle
-
\frac{\gamma}{6\sqrt n}
\left\langle \bar{\bm b}_I^{\,3},\nabla^3\phi_I(u)\right\rangle.
\end{equation*}
By \eqref{eq:first-bootstrap-projected-second}--\eqref{eq:first-bootstrap-projected-third},
\begin{equation*}
\|\bar{\bm b}_I^{\,2}-\Sigma_{II}\|_{\max}
\le
Cb^2\sqrt{\frac{\log(dn)}{n}},
\qquad
\|\bar{\bm b}_I^{\,3}\|_\infty
\le
Cb^3\log(dn).
\end{equation*}
Using
\begin{equation*}
|\partial^\alpha \phi_I(u)|
\le
C_\alpha(1+\|u\|_\infty^{|\alpha|})\phi_I(u),
\qquad
|\alpha|\in\{2,3\},
\end{equation*}
and \eqref{eq:gauss-strip-phi}, we obtain on $\Omega_{n,X}$,
\begin{align*}
\int_{\mathcal S_{t,a}^I}|\hat p_{n,\gamma,I}(u)-\phi_I(u)|\,du
&\le
C\left\{
\sqrt{\frac{\log(dn)}{n}}(1+t)^2
+
\frac{\log(dn)}{\sqrt n}(1+t)^3
\right\}
\int_{\mathcal S_{t,a}^I}\phi_I(u)\,du\\
&\le
Ca(1+t)^4\epsn\,\pi_I(t),
\end{align*}
because $\epsn\ge Cn^{-1/2}\log(dn)$ under Assumption~\ref{ass:data}. This is the bootstrap analogue of \eqref{eq:gauss-strip-pn}.
\end{proof}

\subsection{Gaussian factorial moments, aggregation, and regularity}

Define
\begin{equation*}
p_j(t):=\Prob(Z_j>t)=\bPhi\!\left(\frac{t}{\sigma_j}\right),
\qquad
\lambda(t):=\sum_{j=1}^d p_j(t).
\end{equation*}
Also define the elementary symmetric polynomial
\begin{equation*}
e_s(t):=\sum_{\substack{I\subset[d]\\ |I|=s}}\prod_{j\in I}p_j(t).
\end{equation*}

\begin{lemma}[Gaussian factorial moments]
\label{lem:gaussian-factorial}
Assume Assumptions~\ref{ass:variance} and \ref{ass:weakcorr}. Then there exists a sequence $\eta_d\downarrow0$ such that, uniformly over $t\in\Tcal_{k,\epsilon}$ and $1\le s\le k_0$,
\begin{equation}
\left|V_{Z,s}(t)-\frac{\lambda(t)^s}{s!}\right|\le C\eta_d,
\label{eq:gauss-factorial-main}
\end{equation}
where one may take
\begin{equation*}
\eta_d:=C\Bigl(d^{-a_\sigma}(\log d)^{-1/2}+\rho_d\log d\Bigr),
\qquad
a_\sigma:=\frac{\underline\sigma^2}{\overline\sigma^2}.
\end{equation*}
Consequently,
\begin{equation}
\sup_{t\in\Tcal_{k,\epsilon}}|G_k(t)-h_k(\lambda(t))|\le C\eta_d,
\qquad
h_k(\lambda):=e^{-\lambda}\sum_{m=0}^{k-1}\frac{\lambda^m}{m!}.
\label{eq:Gk-poisson-main}
\end{equation}
\end{lemma}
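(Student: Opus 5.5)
The argument splits $V_{Z,s}(t)-\lambda(t)^s/s!$ into three pieces, each controlled separately:
\begin{equation*}
V_{Z,s}(t)-\frac{\lambda(t)^s}{s!}
=
\sum_{|I|=s}\Bigl\{\pi_I(t)-\prod_{j\in I}p_j(t)\Bigr\}
+\Bigl\{e_s(t)-\frac{\lambda(t)^s}{s!}\Bigr\}.
\end{equation*}
The first sum is the dependence error. The second difference is a purely combinatorial error, coming from the fact that an elementary symmetric polynomial differs from a power sum divided by $s!$. Along the way I use that $\lambda(t)$ stays in a compact interval $[\lambda_-,\lambda_+]$ on $\Tcal_{k,\epsilon}$. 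This must not be taken from Lemma~\ref{lem:t-order}, which is circular here. Instead I get it directly: at $s=1$ one has $V_{Z,1}=\lambda$ exactly, and a Bonferroni bound gives
\[
1-G_k(t)\le \Prob(N_Z(t)\ge k)\le \lambda^k/k!.
\]
Hence $\lambda$ cannot be too small when $G_k\le 1-\epsilon/2$. For the other direction I use the second-moment (Paley--Zygmund) argument applied to $N_Z$ via the $s=2$ estimate, which excludes $\lambda\to\infty$ when $G_k\ge\epsilon/2$. From the compact range of $\lambda$ and the variance sandwich \eqref{eq:lambda-sandwich-appendix} (Mills ratio), I obtain $t^2\asymp\log d$ and $p_j(t)\le C d^{-a_\sigma}(\log d)^{-1/2}$ for every $j$. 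For the latter, note that $d\,\bPhi(t/\overline\sigma)\le\lambda_+$ forces $t/\overline\sigma\ge\sqrt{2\log d}(1-o(1))$, and then $\bPhi(t/\sigma_j)\le\bPhi(t/\overline\sigma)^{\sigma_j^{-2}\overline\sigma^2}$ up to the Mills factor.

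\textbf{Combinatorial error.} Newton's identities (or directly expanding $\lambda^s=\sum_{j_1,\dots,j_s}\prod_r p_{j_r}$ and removing the index tuples with a repetition) give
\[
0\le \frac{\lambda^s}{s!}-e_s\le \binom{s}{2}\frac{\lambda^{s-2}}{(s-2)!}\sum_j p_j^2\le C\binom{s}{2}\frac{\lambda^{s-1}}{(s-2)!}\max_j p_j.
\]
Using $\lambda\le\lambda_+$ and summing the factorial factor, this is bounded by $C\max_j p_j\le C d^{-a_\sigma}(\log d)^{-1/2}$ uniformly in $s\le k_0$, since $s^2\lambda_+^{s}/s!$ is bounded.

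\textbf{Dependence error.} For each $I$ with $|I|=s$, apply Lemma~\ref{lem:berman} (in the form \eqref{eq:berman-independent}) to the standardized vector $-Z_I/\sigma_I$ with thresholds $u_j=-t/\sigma_j$. Orthant probabilities of the form $\{\xi>u\}$ are handled by the sign symmetry of Gaussian vectors, which turns them into lower orthants with the same correlations. This bounds $|\pi_I(t)-\prod_{j\in I}p_j|$ by
\[
C\rho_d\sum_{j<\ell\in I}\exp\Bigl(-\frac{t^2/\sigma_j^2+t^2/\sigma_\ell^2}{2(1+\rho_d')}\Bigr),
\qquad \rho_d':=\rho_d/\underline\sigma^2 .
\]
The raw Berman bound does not carry the factor $\prod_{r\in I\setminus\{j,\ell\}}p_r$, and this is the main obstacle: summed over all $\binom{d}{s}$ sets it would be far too large. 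I would remedy it by conditioning, or equivalently by a Berman-type interpolation applied to the orthant indicator. The interpolation derivative with respect to $r_{j\ell}$ equals the bivariate density at $(t,t)$ times the conditional probability that the remaining coordinates exceed $t$. Because the correlations are $O(\rho_d)$ and $t\rho_d\to0$, the conditional exceedance probabilities are comparable to $p_r$ up to a factor $e^{Cs\,t^2\rho_d}\le C$. For this comparability I use $s\,t^2\rho_d\le Ck_0\rho_d\log d$; this is not covered by the assumption $\rho_d\log d\to0$ alone, so I would either absorb the $k_0$ growth into the choice of $A$ (on the $e^{Ck_0\rho_d\log d}$ scale) or note that it is a $(1+o(1))^{k_0}$ factor and handle it as the delicate point. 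Each pair's term is then
\[
C\rho_d\, p_jp_\ell\, e^{t^2\rho_d'}\prod_{r\in I\setminus\{j,\ell\}}p_r\le C\rho_d\,t^2\prod_{r\in I}p_r,
\]
using $\varphi(t/\sigma)\asymp (t/\sigma)\,p$ twice. Summing over pairs and over $I$ gives at most
\[
C\rho_d t^2\binom{s}{2}e_s\le C\rho_d\log d\,\frac{s^2\lambda_+^s}{s!},
\]
which is bounded by $C\rho_d\log d$ uniformly in $s\le k_0$.

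\textbf{Poisson limit for $G_k$.} For \eqref{eq:Gk-poisson-main}, write $G_k(t)=1-\Prob(N_Z\ge k)$ and expand with Lemma~\ref{lem:comb}, truncating at $k_0$. By Bonferroni, truncating the alternating sum at $s=k_0$ costs at most $\binom{k_0}{k-1}V_{Z,k_0+1}$. This is $\le C\lambda_+^{k_0+1}/(k_0+1)!+C\eta_d$ by the estimate just proved at level $k_0+1$, and it is $o(\eta_d)$-negligible for $A$ large. Replacing each $V_{Z,s}$ by $\lambda^s/s!$ costs $C\eta_d\sum_s\binom{s-1}{k-1}$; to keep this bounded I would instead carry the factor $s^2\lambda_+^s/s!$ from the two bounds above, which is summable. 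The Poisson series $\sum_{s\ge k}(-1)^{s-k}\binom{s-1}{k-1}\lambda^s/s!$ equals $1-h_k(\lambda)$, and its tail beyond $k_0$ is again factorially small. Together these give \eqref{eq:Gk-poisson-main}.
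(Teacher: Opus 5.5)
Your two diagnoses are correct, and the paper's proof has both weaknesses. First, it takes $\lambda(t)\in[\lambda_-,\lambda_+]$ and $t^2\asymp\log d$ from Lemma~\ref{lem:t-order}, which is itself derived from this lemma, so the argument is circular. Second, it gets $|\pi_I(t)-\prod_{j\in I}p_j(t)|\le C\rho_d(1+t^2)\prod_{j\in I}p_j(t)$, uniformly in $|I|\le k_0$, by ``applying Lemma~\ref{lem:berman} repeatedly''. But the Li--Shao bound is additive and lacks the factor $\prod_{r\in I\setminus\{j,\ell\}}p_r$.

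Your Plackett-type interpolation is the right tool, but the step giving $C\rho_dt^2\prod_{r\in I}p_r$ per pair fails. Comparability of the conditional \emph{marginal} exceedance probabilities with $p_r$ does not control the conditional \emph{joint} probability that the other $s-2$ coordinates all exceed $t$. Those coordinates are still correlated at level $\rho_d$, so bounding their joint probability by a product is the claim itself in dimension $s-2$. The sharp outcome, obtainable for instance by Slepian comparison with an equicorrelated vector, is a factor $\exp\{c\binom{s}{2}\rho_dt^2\}$, not $\exp\{cs\rho_dt^2\}$. A model case shows this is sharp. Take equicorrelated $\Sigma$ with $\rho\log d=\delta$. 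Given the common factor $W$ the exceedances are i.i.d., so $N_Z(t)$ is approximately Poisson with lognormal mean $\lambda e^{\sqrt{2\delta}W-\delta}$, and $V_{Z,s}(t)\approx\lambda^s e^{\delta s(s-1)}/s!$.

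So the difficulty sits somewhere other than where you put it. A factor $e^{Cs\rho_dt^2}=(e^{C\rho_dt^2})^s$ would be harmless, because it is geometric in $s$ and absorbed by $\lambda_+^s/s!$. The true quadratic factor is not absorbed once $s\rho_d\log d\gtrsim\log s$. Neither remedy you propose helps, since enlarging $A$ only enlarges $k_0$.

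In fact \eqref{eq:gauss-factorial-main}, uniformly over $s\le k_0$, does not follow from Assumption~\ref{ass:weakcorr}. Take $d=n$, Gaussian data and $\rho=1/(\log n\log\log n)$. Then $k_0\asymp\log n$, $V_{Z,k_0}(t)\to\infty$ on $\Tcal_{k,\epsilon}$, while $\lambda(t)^{k_0}/k_0!\to0$. An extra condition such as $k_0\rho_d\log d=O(1)$ is needed. Under it, $e^{cs^2\rho_d\log d}\le e^{Cs}$, and your bookkeeping of the factor $s^2\lambda_+^s/s!$ closes the argument; that bookkeeping is more careful than the paper's uniform $C\eta_d$.

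The Poisson statement \eqref{eq:Gk-poisson-main} does survive under Assumption~\ref{ass:weakcorr}. Bonferroni bounds hold at every truncation level, so truncate at $m\asymp\log(1/\eta_d)$ rather than at $k_0$. At that level $\lambda_+^m/m!\le\eta_d$ and $m^2\rho_d\log d\to0$.

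Smaller repairs:
\begin{itemize}
\item For the lower bound $\lambda\ge\lambda_-$, use $\Prob(N_Z\ge k)\le\E N_Z=\lambda$. The bound $\lambda^k/k!$ presupposes the dependence estimate you have not yet proved.
\item For the upper bound, run the second-moment argument at the single point $t_+$ with $\lambda(t_+)=\lambda_+$. There $t_+^2\asymp\log d$ follows from Mills' ratio alone. Then transfer the conclusion to all of $\Tcal_{k,\epsilon}$ by monotonicity of $G_k$.
\item The sandwich is used backwards. The bound $\lambda\le\lambda_+$ gives $d\bPhi(t/\underline\sigma)\le\lambda_+$, hence $t\ge\underline\sigma\sqrt{2\log d}(1-o(1))$. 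This yields $\max_jp_j=\bPhi(t/\overline\sigma)\lesssim d^{-a_\sigma}$ up to a power of $\log d$. Your version would force $\max_jp_j\le C/d$, which fails when the variances are unequal.
\end{itemize}
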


\begin{proof}
Fix $s\in\{1,\dots,k_0\}$. First compare $V_{Z,s}(t)$ with the elementary symmetric polynomial $e_s(t)$. For every $I=\{j_1,\dots,j_s\}$, Lemma~\ref{lem:berman} applied repeatedly to the standardized vector $\bigl(Z_{j_r}/\sigma_{j_r}\bigr)_{r\le s}$ yields
\begin{equation*}
\left|
\pi_I(t)-\prod_{j\in I}p_j(t)
\right|
\le
C\rho_d(1+t^2)\prod_{j\in I}p_j(t).
\end{equation*}
Summing over $|I|=s$ and using $t^2\asymp\log d$ from Lemma~\ref{lem:t-order} gives
\begin{equation}
|V_{Z,s}(t)-e_s(t)|
\le
C\rho_d\log d\,e_s(t).
\label{eq:VZ-es-comparison}
\end{equation}
Next compare $e_s(t)$ with $\lambda(t)^s/s!$. Writing
\begin{equation*}
\lambda(t)^s
=
\sum_{j_1,\dots,j_s=1}^d p_{j_1}(t)\cdots p_{j_s}(t)
\end{equation*}
and separating the terms with repeated indices, we obtain
\begin{equation}
\left|\frac{\lambda(t)^s}{s!}-e_s(t)\right|
\le C_s\lambda(t)^{s-2}\sum_{j=1}^d p_j(t)^2.
\label{eq:es-lambda-comparison}
\end{equation}
Because $\lambda(t)$ stays in a compact interval by Lemma~\ref{lem:t-order} and
\begin{equation*}
\max_j p_j(t)
\le
\bPhi\!\left(\frac{t}{\overline\sigma}\right)
\le C d^{-a_\sigma}(\log d)^{-1/2},
\end{equation*}
we obtain
\begin{equation*}
\sum_{j=1}^d p_j(t)^2
\le
\lambda(t)\max_j p_j(t)
\le
C d^{-a_\sigma}(\log d)^{-1/2}.
\end{equation*}
Combining this with \eqref{eq:VZ-es-comparison} and \eqref{eq:es-lambda-comparison} proves \eqref{eq:gauss-factorial-main}.

Now apply Lemma~\ref{lem:comb} to $N_Z(t)$:
\begin{equation*}
G_k(t)
=
1-
\sum_{s=k}^{\infty}(-1)^{s-k}\binom{s-1}{k-1}V_{Z,s}(t).
\end{equation*}
The same identity with $V_{Z,s}(t)$ replaced by $\lambda(t)^s/s!$ equals $h_k(\lambda(t))$. Using \eqref{eq:gauss-factorial-main} for $s\le k_0$ and the Gaussian tail bound from Lemma~\ref{lem:weighted-regularity}(ii) below for $s>k_0$ gives \eqref{eq:Gk-poisson-main}.
\end{proof}

\begin{lemma}[Weighted aggregation and Gaussian regularity]
\label{lem:weighted-regularity}
Assume Assumptions~\ref{ass:variance} and \ref{ass:weakcorr}. Then the following hold.
\begin{enumerate}[label=(\roman*)]
\item There exist constants $0<\lambda_-<\lambda_+<\infty$ such that
\begin{equation*}
\lambda_-\le \lambda(t)\le \lambda_+
\qquad\text{for every }t\in\Tcal_{k,\epsilon}
\end{equation*}
for all sufficiently large $d$.
\item If $A>0$ is large enough in the definition of $k_0$, then
\begin{equation}
\sup_{t\in\Tcal_{k,\epsilon}}
\sum_{s=k}^{k_0}\binom{s-1}{k-1}M_{Z,s}(t)
\le C,
\label{eq:weighted-main-corrected}
\end{equation}
and
\begin{equation}
\sup_{t\in\Tcal_{k,\epsilon}}
\sum_{s=k_0+1}^{d}\binom{s-1}{k-1}M_{Z,s}(t)
\le C\epsn^2.
\label{eq:weighted-tail-corrected}
\end{equation}
\item $G_k$ is $C^2$ on a neighborhood of $\Tcal_{k,\epsilon}$, and there exist constants $m_{k,\epsilon},B_{k,\epsilon}>0$ such that
\begin{equation}
 f_k(c^G_{p,k})\ge m_{k,\epsilon},
 \qquad
 \bigl|(G_k^{-1})''(p)\bigr|\le B_{k,\epsilon}
 \qquad\text{for }p\in[\epsilon/2,1-\epsilon/2].
\label{eq:regularity-main-corrected}
\end{equation}
\end{enumerate}
\end{lemma}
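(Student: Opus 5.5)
Part~(i) comes from the Poisson approximation. By Lemma~\ref{lem:gaussian-factorial}, $G_k(t)=h_k(\lambda(t))+O(\eta_d)$ uniformly on $\Tcal_{k,\epsilon}$, and $G_k(t)\in[\epsilon/2,1-\epsilon/2]$ there. The map $h_k$ is a continuous, strictly decreasing bijection $[0,\infty)\to(0,1]$, so for large $d$ we get $h_k(\lambda(t))\in[\epsilon/4,1-\epsilon/4]$. We then take $\lambda_\pm:=h_k^{-1}(\epsilon/4)$ and $h_k^{-1}(1-\epsilon/4)$.

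There is a circularity to handle: Lemma~\ref{lem:gaussian-factorial} uses the tail bound in (ii), and Lemma~\ref{lem:t-order} uses Lemma~\ref{lem:gaussian-factorial}. We break it with a direct bound that does not need a compact $\lambda$-window. Using \eqref{eq:VZ-es-comparison} together with the crude inequality $e_s\le\lambda^s/s!$, and taking $t$ large enough that $\rho_d t^2$ is small (Mills' ratio gives $t\lesssim\sqrt{\log d}$ on any window where $G_k$ is bounded away from $0$, since $G_k(t)\ge\Prob(\max_j Z_j\le t)$ forces $t\le C\sqrt{\log d}$), one obtains the bounds without invoking (i) beforehand.

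For (ii) the main tool is the Berman comparison. Lemma~\ref{lem:berman} gives $\pi_I(t)\le(1+C\rho_d t^2)\prod_{j\in I}p_j(t)$ for $|I|\le k_0$, and $\rho_d t^2\asymp\rho_d\log d\to0$. This yields
\[
M_{Z,s}(t)\le 2e_s(t)\le 2\lambda(t)^s/s!\le 2\lambda_+^s/s!.
\]
Then $\sum_{s\ge k}\binom{s-1}{k-1}2\lambda_+^s/s!\le 2\sum_s 2^s\lambda_+^s/s!\le 2e^{2\lambda_+}$, which proves \eqref{eq:weighted-main-corrected}.

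The tail \eqref{eq:weighted-tail-corrected} is the main obstacle. Berman's bound with $s$-fold summation of pairs costs $s^2\rho_d t^2$, and this degrades once $s$ is large. To avoid it, for $s>k_0$ we use monotonicity:
\[
\sum_{s>k_0}\binom{s-1}{k-1}M_{Z,s}(t)=\E\sum_{s>k_0}\binom{s-1}{k-1}\binom{N_Z}{s}\le\E\bigl[2^{N_Z}\1\{N_Z>k_0\}\bigr]\cdot C.
\]
We then bound $\E[2^{N_Z}\1\{N_Z>k_0\}]$ through factorial moments of order exactly $m=k_0+1$, or a fixed multiple of it, via the identity $\sum_{s\ge m}\binom{s-1}{k-1}\binom{N}{s}\le 2^{N}\binom{N}{m}/\binom{N}{m}$. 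Alternatively, apply Lemma~\ref{lem:comb}-type Bonferroni truncation: the weighted inclusion--exclusion remainder after $k_0$ terms is bounded by the single term $\binom{k_0}{k-1}V_{Z,k_0+1}(t)$, because partial sums of $\sum(-1)^{s-k}\binom{s-1}{k-1}\binom{N}{s}$ alternate around $\1\{N\ge k\}$. That Bonferroni route is cleaner, and we will use it, since only the truncated remainder is actually needed downstream.

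With $m=k_0+1\asymp A\log(\epsn^{-1})$, the Gershgorin lemma (Lemma~\ref{lem:gershgorin}) keeps $\Corr$ on $I$ with $\lambda_{\max}\le2$ only if $m\rho_d\to0$. Otherwise we use a subset trick: for $|I|=m$ we bound $\pi_I\le\pi_{I'}$ for a subset $I'$ of size $m'\le m$ on which Berman is applied. This gives $V_{Z,m}\le\binom{m}{m'}^{-1}\cdots$, which is delicate. We will instead simply require $k_0\rho_d\log d\to0$, implied by the Assumption~\ref{ass:weakcorr} rate if $\log(\epsn^{-1})$ is dominated. Under that requirement $V_{Z,m}\le 2\lambda_+^m/m!\le(e\lambda_+/m)^m\le\epsn^{2}$ once $A$ is large, and multiplying by $\binom{k_0}{k-1}\le k_0^{k}$ is harmless.

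For (iii) we differentiate the representation $G_k=1-\sum_{s=k}^{d}(-1)^{s-k}\binom{s-1}{k-1}V_{Z,s}$ termwise. Each $\pi_I(t)$ is smooth, and by Mills-type estimates $|\pi_I'|\le C(1+t)|I|\pi_I$ and $|\pi_I''|\le C(1+t)^2|I|^2\pi_I$. Hence $G_k$ is $C^2$ and equals $h_k(\lambda(t))$ up to errors that are small relative to $\lambda'(t)\asymp t\lambda(t)$ after comparison with the Poisson derivatives. This gives $f_k(t)=-h_k'(\lambda)\lambda'(t)(1+o(1))=e^{-\lambda}\lambda^{k-1}/(k-1)!\cdot|\lambda'|(1+o(1))\ge m_{k,\epsilon}$, and $|f_k'|\le C t^2$. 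Then $(G_k^{-1})''=-f_k'/f_k^3$ is bounded.

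The scale here needs care: $f_k\asymp t\asymp\sqrt{\log d}$ and $f_k'\asymp t^2$, so $(G_k^{-1})''\asymp t^{-1}$, which is bounded. We will carry the $t$-powers explicitly through the derivative comparison.
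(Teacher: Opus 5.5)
Your outline for (i) and (iii) is the paper's. For (i) you invert $G_k\approx h_k(\lambda)$ on the window. For (iii) you differentiate termwise, compare with $H_k=h_k\circ\lambda$, and get $f_k\gtrsim t$ and $(G_k^{-1})''=O(t^{-1})$. You are also right, where the paper is silent, that Lemma~\ref{lem:t-order}, Lemma~\ref{lem:gaussian-factorial} and part~(ii) cite one another circularly. Your repair needs two corrections:
\begin{itemize}
\item $G_k(t)\ge\Prob(\max_jZ_j\le t)$ bounds $t$ from above because $G_k\le 1-\epsilon/2$ on the window, not because $G_k$ is bounded away from $0$.
\item The lower bound on $t$ is best obtained as in Lemma~\ref{lem:mix-rem-t-order}. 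Prove the Poisson approximation for every $t$ with $\lambda(t)\le 2\Lambda_{k,\epsilon}$, using Bonferroni truncation at a fixed level $m=m(k,\epsilon)$ (no $k_0$ is needed). Then use monotonicity of $G_k$ and $\lambda$.
\end{itemize}

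The genuine gap is in (ii), and it has two parts.

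First, you do not prove \eqref{eq:weighted-tail-corrected}. You replace the positive-weight tail $\sum_{s>k_0}\binom{s-1}{k-1}M_{Z,s}(t)$ by the Bonferroni remainder $\binom{k_0}{k-1}V_{Z,k_0+1}(t)$. You also add the hypothesis $k_0\rho_d\log d\to0$, which Assumption~\ref{ass:weakcorr} does not imply.

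Second, your main-sum step fails as written. Lemma~\ref{lem:berman} is an absolute comparison. Each summand of its error is of order $|r_{j\ell}|\,t^2p_j(t)p_\ell(t)$, a two-coordinate quantity, not a multiple of $\prod_{j\in I}p_j(t)$. So for $|I|\ge 3$ it gives no relative bound at all. A relative bound needs a separate interpolation argument, writing the $\partial_j\partial_\ell$ terms as the bivariate density at $(t,t)$ times a conditional orthant probability. The correct factor is then of size $\exp\{c\binom{s}{2}\rho_d t^2\}$, not $1+C\rho_dt^2$ uniformly in $s\le k_0$; in the equicorrelated model $\pi_I(t)\approx\prod_{j\in I}p_j(t)\,e^{\binom{s}{2}\rho t^2}$. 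Your own tail paragraph concedes this, so the main-sum step contradicts it. Under $k_0\rho_d\log d=O(1)$ this factor gives $V_{Z,s}(t)\le C^s\lambda(t)^s/s!$ for $s\le k_0+1$, which yields both the main sum and your Bonferroni remainder.

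That said, no argument can close these gaps for the lemma as written, and the paper's proof does not. It asserts $V_{Z,s}(t)\le C\lambda_+^s/s!$ ``for every $s\ge1$'', although Lemma~\ref{lem:gaussian-factorial} covers only $s\le k_0$ and rests on the same $s$-uniform relative Berman claim.

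Take $\Sigma=(1-\rho)I_d+\rho\mathbf 1_d\mathbf 1_d^\top$ with $\rho=(\log d)^{-2}$, which satisfies Assumptions~\ref{ass:variance}--\ref{ass:weakcorr}, and write $Z_j=\sqrt\rho\,W+\sqrt{1-\rho}\,\xi_j$.
\begin{itemize}
\item The event $\{W\ge (t+1)/\sqrt\rho\}$ has probability $e^{-(1+o(1))(\log d)^3}$. On it, each $Z_j$ exceeds $t$ with conditional probability at least $\Phi(1)$, so $N_Z(t)\ge d/2$ with conditional probability at least $1/2$.
\item Since $\sum_{s>k_0}\binom{N}{s}\ge 2^{N-1}$ for $N\ge 2k_0+2$, the left side of \eqref{eq:weighted-tail-corrected} is at least $2^{d/2-2}e^{-(1+o(1))(\log d)^3}\to\infty$.
\item With $\rho=(\log d)^{-3/2}$ and $d=n$, conditioning on $W$ and a Laplace computation give $V_{Z,k_0}(t)\ge(\lambda(t)/k_0)^{k_0}\exp\{(1+o(1))k_0^2\rho t^2/2\}\to\infty$. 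So even \eqref{eq:weighted-main-corrected} fails.
\end{itemize}

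Your Bonferroni form is what is actually used downstream, and it is what Appendix~B proves as \eqref{eq:mix-rem-weighted-tail}. The honest repair is to restate (ii) that way and to strengthen Assumption~\ref{ass:weakcorr} to something like $k_0\rho_d\log d=O(1)$.

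The same tail issue affects termwise differentiation in (iii), since derivatives do not inherit Bonferroni alternation. The density formula $f_k(t)=\sum_j\sigma_j^{-1}\phi(t/\sigma_j)\,\Prob(\#\{\ell\ne j:Z_\ell>t\}=k-1\mid Z_j=t)$, combined with a conditional Poisson approximation, avoids it.
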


\begin{proof}
Part~(i) was already proved in the proof of Lemma~\ref{lem:t-order}. For part~(ii), use Lemma~\ref{lem:gaussian-factorial}:
\begin{equation*}
M_{Z,s}(t)=V_{Z,s}(t)=\frac{\lambda(t)^s}{s!}+O(\eta_d)
\qquad(1\le s\le k_0).
\end{equation*}
Since $\lambda(t)\le\lambda_+$ and $k$ is fixed,
\begin{equation*}
\sum_{s=k}^{\infty}\binom{s-1}{k-1}\frac{\lambda_+^s}{s!}<\infty.
\end{equation*}
Therefore \eqref{eq:weighted-main-corrected} follows. For the tail, use
\begin{equation*}
M_{Z,s}(t)=V_{Z,s}(t)\le C\frac{\lambda_+^s}{s!}
\end{equation*}
for every $s\ge1$, and then Stirling's formula gives
\begin{equation*}
\sum_{s=k_0+1}^{\infty}\binom{s-1}{k-1}\frac{\lambda_+^s}{s!}
\le C\exp(-c k_0\log k_0).
\end{equation*}
Choosing $A$ large enough yields \eqref{eq:weighted-tail-corrected}.

For part~(iii), write
\begin{equation*}
H_k(t):=h_k(\lambda(t)).
\end{equation*}
On the compact interval $[\lambda_-,\lambda_+]$ one has
\begin{equation*}
 h_k'(\lambda)=-e^{-\lambda}\frac{\lambda^{k-1}}{(k-1)!},
 \qquad
 \inf_{\lambda\in[\lambda_-,\lambda_+]}|h_k'(\lambda)|>0.
\end{equation*}
Moreover,
\begin{equation*}
\lambda'(t)=-\sum_{j=1}^d\frac1{\sigma_j}\ph\!\left(\frac{t}{\sigma_j}\right),
\qquad
\lambda''(t)=\sum_{j=1}^d \frac{t}{\sigma_j^3}\ph\!\left(\frac{t}{\sigma_j}\right).
\end{equation*}
By Mills' ratio and Lemma~\ref{lem:t-order},
\begin{equation*}
|\lambda'(t)|\asymp t,
\qquad
|\lambda''(t)|\le C(1+t^2)
\qquad\text{on }\Tcal_{k,\epsilon}.
\end{equation*}
Hence
\begin{equation*}
H_k'(t)=h_k'(\lambda(t))\lambda'(t),
\qquad
H_k''(t)=h_k''(\lambda(t))(\lambda'(t))^2+h_k'(\lambda(t))\lambda''(t)
\end{equation*}
with
\begin{equation}
|H_k'(t)|\asymp t,
\qquad
|H_k''(t)|\le C(1+t^2).
\label{eq:H-derivative-bounds}
\end{equation}
Differentiating the factorial expansion termwise and using the same argument as in Lemma~\ref{lem:gaussian-factorial},
\begin{equation}
\sup_{t\in\Tcal_{k,\epsilon}}|G_k'(t)-H_k'(t)|
+
\sup_{t\in\Tcal_{k,\epsilon}}|G_k''(t)-H_k''(t)|
\le C\eta_d(1+t^2).
\label{eq:G-H-derivative-close}
\end{equation}
Since $\eta_d\to0$ and $t\asymp\sqrt{\log d}$, \eqref{eq:H-derivative-bounds} and \eqref{eq:G-H-derivative-close} imply
\begin{equation*}
 f_k(t)=G_k'(t)\ge c t\ge m_{k,\epsilon}>0
 \qquad\text{for }t\in\Tcal_{k,\epsilon}
\end{equation*}
for all sufficiently large $d$. Finally,
\begin{equation*}
(G_k^{-1})''(p)
=
-\frac{f_k'(G_k^{-1}(p))}{f_k(G_k^{-1}(p))^3}
\end{equation*}
and \eqref{eq:H-derivative-bounds}--\eqref{eq:G-H-derivative-close} imply the asserted boundedness.
\end{proof}

\subsection{Factorial-moment and distribution expansions}

\begin{theorem}[Factorial-moment expansion]
\label{thm:FM}
Assume Assumptions~\ref{ass:data}--\ref{ass:weakcorr}. Then
\begin{equation}
\sup_{t\in\Tcal_{k,\epsilon}}
\sum_{s=k}^{k_0}\binom{s-1}{k-1}|V_{n,s}(t)-M_{n,s}(t)|
\le C\epsn^2,
\label{eq:FM-data}
\end{equation}
and, with probability at least $1-C/n$,
\begin{equation}
\sup_{t\in\Tcal_{k,\epsilon}}
\sum_{s=k}^{k_0}\binom{s-1}{k-1}|V^*_{n,s}(t)-\hat M_{n,s,\gamma}(t)|
\le C\epsn^2.
\label{eq:FM-boot}
\end{equation}
\end{theorem}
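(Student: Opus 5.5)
The plan is to reduce Theorem~\ref{thm:FM} to the projected local orthant expansion, Proposition~\ref{prop:projected-local}, applied one index set at a time. Then we sum the resulting relative errors using the weighted aggregation bound of Lemma~\ref{lem:weighted-regularity}(ii).

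First, for each $s\ge1$, linearity of expectation gives the exact representation
\begin{equation*}
V_{n,s}(t)=\E\binom{N_n(t)}{s}=\sum_{\substack{I\subset[d]\\|I|=s}}\Prob\bigl(\bm S_{n,I}\in(t,\infty)^{s}\bigr),
\end{equation*}
since $\binom{N_n(t)}{s}$ counts the $s$-subsets of coordinates that all exceed $t$. By definition, $M_{n,s}(t)$ is the same sum with each probability replaced by $\int_{(t,\infty)^s}p_{n,I}(\bm u)\,d\bm u$. The identical representation holds for $V^*_{n,s}$ and $\hat M_{n,s,\gamma}$ under $\Prob^*$. Consequently, for $k\le s\le k_0$ the triangle inequality and \eqref{eq:projected-local-main} give
\begin{equation*}
|V_{n,s}(t)-M_{n,s}(t)|\le \sum_{|I|=s}C\epsn^2\pi_I(t)=C\epsn^2 M_{Z,s}(t),
\end{equation*}
uniformly in $t\in\Tcal_{k,\epsilon}$. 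The crucial point is that the constant $C$ in Proposition~\ref{prop:projected-local} does not depend on $I$ or on $s\le k_0$. Multiplying by $\binom{s-1}{k-1}$, summing over $s=k,\dots,k_0$, and invoking \eqref{eq:weighted-main-corrected} then yields \eqref{eq:FM-data}, since that sum is at most $C\epsn^2\sup_t\sum_s\binom{s-1}{k-1}M_{Z,s}(t)\le C\epsn^2$.

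For the bootstrap statement \eqref{eq:FM-boot}, we work on the event of probability at least $1-C/n$ on which \eqref{eq:projected-local-boot-main} holds simultaneously for all $I$ with $|I|\le k_0$. This simultaneity is already part of Proposition~\ref{prop:projected-local}, because that event is built from $\Omega_{n,X}$ in Lemma~\ref{lem:first-bootstrap-event}, which does not depend on $I$. On this event the same two-line argument applies verbatim with $\Prob^*$, $\hat p_{n,\gamma,I}$ and $\hat M_{n,s,\gamma}$. The weighted sum is again controlled by the deterministic Gaussian quantity $M_{Z,s}$, so no further probabilistic input is needed.

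The main obstacle is the uniformity of the constant in Proposition~\ref{prop:projected-local} over all $s\le k_0$, where $k_0\asymp\log(\epsn^{-1})$ grows. If the constant there secretly depends on $s$, for example through $C_r$, $\log(2s)$, or the product comparison \eqref{eq:gauss-joint-prod-comparison} with constants raised to the power $s$, the naive summation could pick up a factor $C^{s}$. In that case I would compensate by the factorial decay of $M_{Z,s}\lesssim\lambda_+^s/s!$. Specifically, I would check that $\sum_{s\le k_0}\binom{s-1}{k-1}C^s\lambda_+^s/s!$ remains bounded; it does, being dominated by an exponential series. I would also verify that polylogarithmic factors in $s$ are absorbed because $\log k_0\lesssim\log\log n$ is dominated by the slack in the choice of $\vartheta_n$.
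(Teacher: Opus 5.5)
Your proposal is correct and is essentially the paper's proof. Both arguments use the subset representation $V_{n,s}(t)=\sum_{|I|=s}\Prob(\bm S_{n,I}\in(t,\infty)^s)$, apply Proposition~\ref{prop:projected-local} termwise to get $|V_{n,s}(t)-M_{n,s}(t)|\le C\epsn^2M_{Z,s}(t)$, and sum against \eqref{eq:weighted-main-corrected}, with the bootstrap half run on the single event where \eqref{eq:projected-local-boot-main} holds for all $|I|\le k_0$. The paper simply takes the constant in Proposition~\ref{prop:projected-local} to be uniform over $|I|\le k_0$, so your fallback is an extra safeguard rather than a needed step; that fallback absorbs a possible $C^s$ growth via the bound $M_{Z,s}(t)\le C\lambda_+^s/s!$ invoked in the proof of Lemma~\ref{lem:weighted-regularity}.
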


\noindent\emph{Comment.} Theorem~\ref{thm:FM} converts the local projected Edgeworth expansions into a weighted approximation for the factorial moments of the exceedance count. This is the combinatorial bridge from rare orthant probabilities to the law of the $k$th largest coordinate.

\begin{proof}
For every integer $s\ge 1$,
\begin{equation*}
V_{n,s}(t)
=
\sum_{\substack{I\subset[d]\\|I|=s}}
\Prob\bigl(\bm S_{n,I}\in(t,\infty)^s\bigr),
\qquad
M_{n,s}(t)
=
\sum_{\substack{I\subset[d]\\|I|=s}}
\int_{(t,\infty)^s}p_{n,I}(u)\,du.
\end{equation*}
Hence
\begin{align*}
|V_{n,s}(t)-M_{n,s}(t)|
&\le
\sum_{\substack{I\subset[d]\\|I|=s}}
\left|
\Prob\bigl(\bm S_{n,I}\in(t,\infty)^s\bigr)
-
\int_{(t,\infty)^s}p_{n,I}(u)\,du
\right|\\
&\le
C\epsn^2
\sum_{\substack{I\subset[d]\\|I|=s}}\pi_I(t)
=
C\epsn^2 M_{Z,s}(t)
\end{align*}
by Proposition~\ref{prop:projected-local}. Summing with the weights $\binom{s-1}{k-1}$ and using \eqref{eq:weighted-main-corrected} gives \eqref{eq:FM-data}.

For the bootstrap statement, work on the event from \eqref{eq:projected-local-boot-main}. Then
\begin{equation*}
V_{n,s}^*(t)
=
\sum_{\substack{I\subset[d]\\|I|=s}}
\Prob^*\bigl(\bm S_{n,I}^*\in(t,\infty)^s\bigr),
\qquad
\hat M_{n,s,\gamma}(t)
=
\sum_{\substack{I\subset[d]\\|I|=s}}
\int_{(t,\infty)^s}\hat p_{n,\gamma,I}(u)\,du,
\end{equation*}
and therefore
\begin{align*}
|V_{n,s}^*(t)-\hat M_{n,s,\gamma}(t)|
&\le
\sum_{\substack{I\subset[d]\\|I|=s}}
\left|
\Prob^*\bigl(\bm S_{n,I}^*\in(t,\infty)^s\bigr)
-
\int_{(t,\infty)^s}\hat p_{n,\gamma,I}(u)\,du
\right|\\
&\le
C\epsn^2
\sum_{\substack{I\subset[d]\\|I|=s}}\pi_I(t)
=
C\epsn^2 M_{Z,s}(t).
\end{align*}
Summing again with the weights $\binom{s-1}{k-1}$ and using \eqref{eq:weighted-main-corrected} proves \eqref{eq:FM-boot}.
\end{proof}

\begin{theorem}[Distribution expansion]
\label{thm:distribution}
Assume Assumptions~\ref{ass:data}--\ref{ass:weakcorr}. Then
\begin{equation}
\sup_{t\in\Tcal_{k,\epsilon}}
\left|
\Prob(T_{n,[k]}\le t)-\bigl(G_k(t)+Q_{n,k}(t)\bigr)
\right|
\le C\epsn^2,
\label{eq:dist-data}
\end{equation}
and, with probability at least $1-C/n$,
\begin{equation}
\sup_{t\in\Tcal_{k,\epsilon}}
\left|
\Prob^*(T_{n,[k]}^*\le t)-\bigl(G_k(t)+\hat Q_{n,\gamma,k}(t)\bigr)
\right|
\le C\epsn^2.
\label{eq:dist-boot}
\end{equation}
Moreover,
\begin{equation}
\sup_{t\in\Tcal_{k,\epsilon}}
\Bigl(|Q_{n,k}(t)|+|Q_{n,k}'(t)|+|Q_{n,k}''(t)|\Bigr)
\le C\epsn,
\label{eq:Q-reg-data}
\end{equation}
and, with probability at least $1-C/n$,
\begin{equation}
\sup_{t\in\Tcal_{k,\epsilon}}
\Bigl(|\hat Q_{n,\gamma,k}(t)|+|\hat Q_{n,\gamma,k}'(t)|+|\hat Q_{n,\gamma,k}''(t)|\Bigr)
\le C\epsn.
\label{eq:Q-reg-boot}
\end{equation}
\end{theorem}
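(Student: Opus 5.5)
The plan is to start from the exact identity \eqref{eq:tail-factorial-main} of Lemma~\ref{lem:comb}, applied both to $N_n(t)$ and to $N_Z(t)$. Subtracting gives
\begin{equation*}
\Prob(T_{n,[k]}\le t)-G_k(t)
=
-\sum_{s=k}^{d}(-1)^{s-k}\binom{s-1}{k-1}\{V_{n,s}(t)-V_{Z,s}(t)\}.
\end{equation*}
I will split this sum at $k_0$. For $k\le s\le k_0$ I replace $V_{n,s}$ by $M_{n,s}$; by \eqref{eq:FM-data} in Theorem~\ref{thm:FM}, the weighted error is at most $C\epsn^2$. Since $V_{Z,s}=M_{Z,s}$, the replaced sum is exactly $Q_{n,k}(t)$ as defined in \eqref{eq:def-Qnk}.

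For $s>k_0$, the Gaussian part is bounded by \eqref{eq:weighted-tail-corrected}. The main obstacle is the data tail $\sum_{s>k_0}\binom{s-1}{k-1}V_{n,s}(t)$, because Proposition~\ref{prop:projected-local} is only available for $|I|\le k_0$. To handle it, I would avoid the full alternating tail and instead use a Bonferroni-type truncation. Because the alternating weighted sum in Lemma~\ref{lem:comb} satisfies a truncation inequality, the error from stopping the sum at $k_0$ is bounded by $\binom{k_0}{k-1}V_{n,k_0+1}(t)$ (or the next term). Concretely, the partial sum $\sum_{s=k}^{K}(-1)^{s-k}\binom{s-1}{k-1}\binom{m}{s}$ differs from $\ind\{m\ge k\}$ by at most $\binom{K}{k-1}\binom{m}{K+1}$, up to a polynomial factor in $K$. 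Since $|I|=k_0+1$ still lies within reach of Proposition~\ref{prop:projected-local}, after adjusting $k_0$ by one (or redefining the window slightly), I get $V_{n,k_0+1}\le M_{Z,k_0+1}(1+C\epsn^2)+|M_{n,k_0+1}-M_{Z,k_0+1}|$. The Edgeworth correction is bounded by $C\epsn$ times $M_{Z,k_0+1}$, using the strip and derivative bounds of Lemma~\ref{lem:gauss-strip}. This reduces the tail to $C\,k_0^{k}\lambda_+^{k_0}/k_0!$, which is at most $C\epsn^2$ for $A$ large by Stirling, exactly as in Lemma~\ref{lem:weighted-regularity}(ii). The bootstrap version \eqref{eq:dist-boot} follows identically on the event of \eqref{eq:FM-boot}, using the bootstrap part of Proposition~\ref{prop:projected-local}.

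For the regularity bounds \eqref{eq:Q-reg-data}--\eqref{eq:Q-reg-boot}, I would write
\begin{equation*}
M_{n,s}-M_{Z,s}=-\frac{1}{6\sqrt n}\sum_{|I|=s}\int_{(t,\infty)^s}\langle\E\bar{\bm X}_I^{\,3},\nabla^3\ph_I\rangle\,d\bm u.
\end{equation*}
Integrating once in each differentiated coordinate turns this into boundary integrals of $\nabla^2\ph_I$-type terms on faces of the orthant. Using $|\partial^\alpha\ph_I|\le C(1+\|u\|_\infty^{|\alpha|})\ph_I$ together with the rare-orthant comparison \eqref{eq:gauss-joint-prod-comparison}, each term is at most $Cn^{-1/2}(1+t)^{3}\pi_I(t)$. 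Differentiating in $t$ up to twice adds factors of $(1+t)$, by Mills' ratio and Lemma~\ref{lem:gauss-shift}. Summing over $I$ with the weights and applying \eqref{eq:weighted-main-corrected} gives the bound $Cn^{-1/2}(1+t)^5\le C\epsn$, since $t^2\asymp\log d$ by Lemma~\ref{lem:t-order} and $\epsn\gtrsim n^{-1/2}\log^{5/2}(dn)$.

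For $\hat Q_{n,\gamma,k}$, the same computation applies, with the additional second-order term $\tfrac12\langle\bar{\bm b}_I^{\,2}-\Sigma_{II},\nabla^2\ph_I\rangle$, on $\Omega_{n,X}$ from Lemma~\ref{lem:first-bootstrap-event}. The coefficient bounds there give size $C\sqrt{\log(dn)/n}\,(1+t)^{4}+Cn^{-1/2}\log(dn)(1+t)^5$, which is again at most $C\epsn$.
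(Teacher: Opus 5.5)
Your argument for \eqref{eq:dist-data}--\eqref{eq:dist-boot} is correct, and in one respect more complete than the paper's Appendix~A proof. That proof splits the inclusion--exclusion sum at $k_0$ and controls only the Gaussian tail via \eqref{eq:weighted-tail-corrected}. It never bounds the data tail $\sum_{s>k_0}(-1)^{s-k}\binom{s-1}{k-1}V_{n,s}(t)$, which Proposition~\ref{prop:projected-local} cannot reach. Your device is the one the paper itself uses in Appendix~B, \eqref{eq:mix-rem-dist-bonf-data}--\eqref{eq:mix-rem-tail-data}. It combines three pieces: a Bonferroni truncation with remainder $\binom{k_0}{k-1}V_{n,k_0+1}(t)$ (Lemma~\ref{lem:mix-rem-bonf} shows no extra polynomial factor is needed), a $(k_0+1)$-set version of Proposition~\ref{prop:projected-local}, and $V_{n,k_0+1}\le 2M_{Z,k_0+1}$. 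After that, the $s=k_0+1$ term of \eqref{eq:weighted-tail-corrected} finishes directly, without a new Stirling estimate.

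The genuine gap is in \eqref{eq:Q-reg-data}--\eqref{eq:Q-reg-boot}.

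\emph{The false step.} Your last step uses $\epsn\gtrsim n^{-1/2}\log^{5/2}(dn)$, which is false. In fact $\epsn=(b/\sigma_*)^{5/2}n^{-1/2}\log^{3/2}(dn)\log^{1/2}n$, so that inequality would require $\log(dn)\lesssim\log^{1/2}n$. Consequences:
\begin{itemize}
\item $n^{-1/2}(1+t)^5\asymp n^{-1/2}(\log d)^{5/2}$ exceeds $\epsn$ by a factor of order $(\log n)^{1/2}$ already when $d=n^c$.
\item $n^{-1/2}(1+t)^4$ exceeds $\epsn$ once $\log d\gg\log n$.
\item In the bootstrap part, the crude bound $\|\bar{\bm b}_I^{\,3}\|_\infty\le Cb^3\log(dn)$ makes even the undifferentiated term $n^{-1/2}\log(dn)(1+t)^3$ too large in the same regime. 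You would need \eqref{eq:koike-conc-third} to get $\|\bar{\bm b}_I^{\,3}\|_\infty\le C$ with high probability.
\end{itemize}

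\emph{Why sharper estimates will not rescue this route.} Your factor count is right: each $t$-derivative of an orthant integral produces face integrals of size $\asymp t\,\pi_I(t)$, by Mills' ratio. For example, take independent unit-variance coordinates with skewness $\kappa\ne0$, and let $\phi$ denote the standard normal density. One computes $Q_{n,k}(t)\approx-\frac{\kappa}{6\sqrt n}\,d\,\phi''(t)\,\Prob\{\mathrm{Bin}(d-1,\bPhi(t))=k-1\}$. Its second derivative is of order $n^{-1/2}(\log d)^{5/2}$ on $\Tcal_{k,\epsilon}$, which already exceeds $\epsn$ when $d=n^c$.

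\emph{Comparison with the paper.} The paper's proof obtains \eqref{eq:Q-reg-data} only by asserting that the face integrals are bounded by $C\epsn M_{Z,s}(t)$, without these powers of $t$. So it does not supply the missing step either.

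\emph{What your argument actually proves.} Your computation gives $|Q_{n,k}^{(j)}(t)|\le C(1+t)^j\epsn$ for $j=0,1,2$, and the same for $\hat Q_{n,\gamma,k}$ after sharpening the third-moment bound. The $t$-free bound as stated needs an extra restriction such as $(\log d)^{5/2}\lesssim\log^{3/2}(dn)\log^{1/2}n$.
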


\noindent\emph{Comment.} Theorem~\ref{thm:distribution} upgrades the factorial-moment approximation to a distributional expansion for $T_{n,[k]}$ and its bootstrap analogue. It also shows that the correction term $Q_{n,k}$ is smooth enough for the quantile inversion carried out later.

\begin{proof}
By Lemma~\ref{lem:comb},
\begin{equation}
\Prob(T_{n,[k]}>t)
=
\sum_{s=k}^{d}(-1)^{s-k}\binom{s-1}{k-1}V_{n,s}(t).
\label{eq:dist-proof-IE}
\end{equation}
Split the right-hand side at $k_0$. For $k\le s\le k_0$, Theorem~\ref{thm:FM} gives
\begin{equation}
V_{n,s}(t)
=
M_{n,s}(t)+r_{n,s}(t),
\qquad
|r_{n,s}(t)|\le C\epsn^2 M_{Z,s}(t).
\label{eq:dist-proof-rns}
\end{equation}
Substituting \eqref{eq:dist-proof-rns} into \eqref{eq:dist-proof-IE}, and using
\begin{equation*}
\sum_{s=k}^{k_0}\binom{s-1}{k-1}M_{Z,s}(t)
=
\Prob(T_{\bm Z,[k]}>t)+O(\epsn^2)
\end{equation*}
from \eqref{eq:weighted-tail-corrected}, we obtain
\begin{equation*}
\Prob(T_{n,[k]}\le t)
=
G_k(t)
-
\sum_{s=k}^{k_0}(-1)^{s-k}\binom{s-1}{k-1}\{M_{n,s}(t)-M_{Z,s}(t)\}
+O(\epsn^2).
\end{equation*}
The sum equals $Q_{n,k}(t)$ by \eqref{eq:def-Qnk}, so \eqref{eq:dist-data} follows.

For the bootstrap expansion, work on the event \eqref{eq:FM-boot}. On that event,
\begin{align*}
\Prob^*(T_{n,[k]}^*\le t)
&=
G_k(t)
-
\sum_{s=k}^{k_0}(-1)^{s-k}\binom{s-1}{k-1}
\{V_{n,s}^*(t)-V_{Z,s}(t)\}
+O(\epsn^2)
\\
&=
G_k(t)
-
\sum_{s=k}^{k_0}(-1)^{s-k}\binom{s-1}{k-1}
\{\hat M_{n,s,\gamma}(t)-M_{Z,s}(t)\}
+O(\epsn^2),
\end{align*}
uniformly on $\Tcal_{k,\epsilon}$, which is exactly \eqref{eq:dist-boot}.

It remains to prove the derivative bounds. Fix $s\in\{k,\dots,k_0\}$. By \eqref{eq:def-pnI-corrected},
\begin{equation}
M_{n,s}(t)-M_{Z,s}(t)
=
-\frac{1}{6\sqrt n}
\sum_{\substack{I\subset[d]\\|I|=s}}
\int_{(t,\infty)^s}
\left\langle \E[\bar{\bm X}_I^{\,3}],\nabla^3\phi_I(u)\right\rangle\,du.
\label{eq:dist-proof-Mn}
\end{equation}
Since $\|\E[\bar{\bm X}_I^{\,3}]\|_\infty\le C$ uniformly for $|I|\le k_0$,
\begin{equation}
|M_{n,s}(t)-M_{Z,s}(t)|
\le
\frac{C(1+t)^3}{\sqrt n}M_{Z,s}(t)
\le
C\epsn M_{Z,s}(t),
\label{eq:dist-proof-size}
\end{equation}
where the first inequality follows from the Gaussian derivative bound
\begin{equation*}
|\partial^\alpha\phi_I(u)|\le C_\alpha(1+\|u\|_\infty^3)\phi_I(u),
\qquad
|\alpha|=3,
\end{equation*}
and the second uses $t^2\asymp \log d$.

Differentiate \eqref{eq:dist-proof-Mn} with respect to $t$. By the fundamental theorem of calculus, each derivative creates a finite sum of boundary integrals over $(s-1)$-dimensional faces. Therefore
\begin{align*}
\frac{d}{dt}\{M_{n,s}(t)-M_{Z,s}(t)\}
&=
\frac{1}{6\sqrt n}
\sum_{\substack{I\subset[d]\\|I|=s}}
\sum_{r=1}^s
\int_{(t,\infty)^{s-1}}
\left\langle
\E[\bar{\bm X}_I^{\,3}],
\nabla^3\phi_I(u^{(r,t)})
\right\rangle\,du_{-r},
\end{align*}
hence
\begin{equation*}
\left|
\frac{d}{dt}\{M_{n,s}(t)-M_{Z,s}(t)\}
\right|
\le
C\epsn M_{Z,s}(t).
\end{equation*}
Differentiating once more produces second-face integrals and diagonal boundary terms. The same Gaussian derivative estimate and the strip estimate of Lemma~\ref{lem:gauss-strip} imply
\begin{equation}
\left|
\frac{d^2}{dt^2}\{M_{n,s}(t)-M_{Z,s}(t)\}
\right|
\le
C\epsn M_{Z,s}(t).
\label{eq:dist-proof-second}
\end{equation}
Summing \eqref{eq:dist-proof-size}--\eqref{eq:dist-proof-second} with the weights in \eqref{eq:def-Qnk} and using \eqref{eq:weighted-main-corrected} proves \eqref{eq:Q-reg-data}.

For the bootstrap derivative bounds, work on $\Omega_{n,X}$ from Lemma~\ref{lem:first-bootstrap-event}. By \eqref{eq:def-phatI-corrected},
\begin{align*}
\hat M_{n,s,\gamma}(t)-M_{Z,s}(t)
&=
\frac12
\sum_{\substack{I\subset[d]\\|I|=s}}
\int_{(t,\infty)^s}
\left\langle
\bar{\bm b}_I^{\,2}-\Sigma_{II},
\nabla^2\phi_I(u)
\right\rangle\,du\\
&\qquad
-
\frac{\gamma}{6\sqrt n}
\sum_{\substack{I\subset[d]\\|I|=s}}
\int_{(t,\infty)^s}
\left\langle
\bar{\bm b}_I^{\,3},
\nabla^3\phi_I(u)
\right\rangle\,du.
\end{align*}
Using \eqref{eq:first-bootstrap-projected-second}--\eqref{eq:first-bootstrap-projected-third}, together with the Gaussian derivative estimates for $|\alpha|=2,3$ and the same boundary differentiation as above, yields
\begin{equation*}
|\hat M_{n,s,\gamma}(t)-M_{Z,s}(t)|
+
\left|\frac{d}{dt}\{\hat M_{n,s,\gamma}(t)-M_{Z,s}(t)\}\right|
+
\left|\frac{d^2}{dt^2}\{\hat M_{n,s,\gamma}(t)-M_{Z,s}(t)\}\right|
\le
C\epsn M_{Z,s}(t)
\end{equation*}
uniformly on $\Omega_{n,X}$. Summing with the weights in \eqref{eq:def-Qnk} proves \eqref{eq:Q-reg-boot}.
\end{proof}

\subsection{Bootstrap centering and Cornish--Fisher inversion}
\label{sec:CF}

\begin{lemma}[Bootstrap centering]
\label{lem:centering}
Assume Assumptions~\ref{ass:data}--\ref{ass:weakcorr}. Then
\begin{equation}
\sup_{t\in\Tcal_{k,\epsilon}}
\left|
\E\bigl[\hat Q_{n,\gamma,k}(t)\bigr]-\gamma Q_{n,k}(t)
\right|
\le C\epsn^2.
\label{eq:centering-main}
\end{equation}
\end{lemma}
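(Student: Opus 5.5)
The identity follows from linearity: compute $\E[\hat Q_{n,\gamma,k}(t)]$ by taking expectations inside the projected Edgeworth densities, then compare with $\gamma Q_{n,k}(t)$ term by term. Both $Q_{n,k}$ and $\hat Q_{n,\gamma,k}$ are finite weighted sums over $s\in\{k,\dots,k_0\}$ and $|I|=s$ of integrals of $\langle\cdot,\nabla^r\ph_I\rangle$ over $(t,\infty)^s$. Hence
\begin{equation*}
\E[\hat Q_{n,\gamma,k}(t)]-\gamma Q_{n,k}(t)
=-\sum_{s=k}^{k_0}(-1)^{s-k}\binom{s-1}{k-1}\sum_{|I|=s}\int_{(t,\infty)^s}\Bigl\{\tfrac12\langle \E\bar{\bm b}_I^{\,2}-\Sigma_{II},\nabla^2\ph_I\rangle-\tfrac{\gamma}{6\sqrt n}\langle \E\bar{\bm b}_I^{\,3}-\E\bar{\bm X}_I^{\,3},\nabla^3\ph_I\rangle\Bigr\}d\bm u .
\end{equation*}
This step needs $\E\|\bar{\bm b}_I^{\,3}\|_\infty<\infty$, which holds by the $\psi_1$ moment bounds in Assumption~\ref{ass:data}. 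Alternatively, one can work on $\Omega_{n,X}$ and handle the complement with the crude bound of $\hat Q$ by a polynomial in $\log(dn)$ times $\Prob(\Omega_{n,X}^c)\le C/n^2$, using Cauchy--Schwarz.

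The next step is exact computation of the bias tensors. Since $\bm b_i=\bm X_i-\bar{\bm X}$, we have
\begin{equation*}
\E\bar{\bm b}^{\,2}=\tfrac1n\textstyle\sum_i\E\bm X_i\bm X_i^\top-\E\bar{\bm X}\bar{\bm X}^\top=\Sigma-\tfrac1n\Sigma,
\end{equation*}
so $\|\E\bar{\bm b}_I^{\,2}-\Sigma_{II}\|_{\max}\le C/n$. For the third order term, expand $\tfrac1n\sum_i(\bm X_i-\bar{\bm X})^{\otimes3}$. The cross terms have expectation involving $\E[\bm X_i^{\otimes2}\otimes\bar{\bm X}]=n^{-1}\E\bm X_i^{\otimes3}$ and $\E\bar{\bm X}^{\otimes3}=n^{-2}\E\bar{\bm X}^{\,3}$ by independence and centering. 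This gives $\E\bar{\bm b}^{\,3}=(1-3/n+2/n^2)\E\bar{\bm X}^{\,3}$, so $\|\E\bar{\bm b}_I^{\,3}-\E\bar{\bm X}_I^{\,3}\|_\infty\le C/n$.

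It remains to bound the integrals. Using $|\partial^\alpha\ph_I(\bm u)|\le C(1+\|\bm u\|_\infty^{|\alpha|})\ph_I(\bm u)$ as in the proof of Theorem~\ref{thm:distribution}, each inner integral is at most
\begin{equation*}
C\bigl(n^{-1}(1+t)^2+n^{-3/2}(1+t)^3\bigr)\pi_I(t).
\end{equation*}
The polynomial factor in $\|\bm u\|_\infty$ over the orthant is controlled by $(1+t)^r\pi_I(t)$ up to constants. This uses the Gaussian tail comparison \eqref{eq:gauss-joint-prod-comparison}, since the conditional excess over $t$ is $O(1/t)$. Summing over $I$ gives $M_{Z,s}(t)$, and the weighted sum is $O(1)$ by \eqref{eq:weighted-main-corrected}. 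Since $t^2\asymp\log d$ (Lemma~\ref{lem:t-order}), the total is $O(\log d/n)$, which is $\le C\epsn^2$ because $\epsn^2=\deln\log n\ge c\log^3(dn)\log n/n$.

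The main obstacle is justifying the moment bound $\int_{(t,\infty)^s}\|\bm u\|_\infty^r\ph_I\,d\bm u\le C(1+t)^r\pi_I(t)$ uniformly for $s\le k_0\sim\log n$. A crude bound replaces $\|\bm u\|_\infty$ by $\sum_j|u_j|$ and applies one-dimensional Mills ratio estimates after the product comparison. This gives an extra factor of at most $k_0^r$, i.e.\ a polylog loss, which is still absorbed by $\epsn^2$ because of the slack between $\log d/n$ and $\log^4(dn)/n$. The same argument is already used implicitly in \eqref{eq:dist-proof-size}, so I would simply invoke it.
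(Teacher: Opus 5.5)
Your proposal is correct and follows essentially the paper's route. The paper likewise uses the exact bias identities $\E\bar{\bm b}_I^{\,2}-\mathbf{\Sigma}_{II}=-n^{-1}\mathbf{\Sigma}_{II}$ and $\E\bar{\bm b}_I^{\,3}-\E\bar{\bm X}_I^{\,3}=O(n^{-1})$ (you even obtain the exact factor $1-3/n+2/n^2$), then Gaussian derivative bounds giving $O\{((1+t)^2/n+(1+t)^3/n^{3/2})M_{Z,s}(t)\}$ for each $s$, and finally \eqref{eq:weighted-main-corrected} with $t^2\asymp\log d$.

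One bookkeeping caution on the step you flag as the main obstacle: contracting an entrywise-bounded bias tensor with $\nabla^r\ph_I$ costs a further factor $s^r$, which your count of $k_0^r$ omits. If you bound that factor by $k_0^r$ as well, then for $r=2$ and $d\asymp n$ the total loss $k_0^4\log d/n$ is not $O(\epsn^2)$. Instead, keep the polynomial factors in $s$ inside the weighted sum. There $M_{Z,s}(t)\le C\lambda_+^s/s!$ (as in the proof of Lemma~\ref{lem:weighted-regularity}) keeps $\sum_s\binom{s-1}{k-1}s^{p}M_{Z,s}(t)$ bounded for any fixed $p$.
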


\begin{proof}
Fix $s\in\{k,\dots,k_0\}$. For every $I$ with $|I|=s$,
\begin{align}
\E\bigl[\bar{\bm b}_I^{\,2}\bigr]-\mathbf{\Sigma}_{II}
&=-\frac1n\mathbf{\Sigma}_{II},
\label{eq:b2-bias}
\\
\left\|\E\bigl[\bar{\bm b}_I^{\,3}\bigr]-\E\bigl[\bar{\bm X}_I^{\,3}\bigr]\right\|_{\max}
&\le \frac{C}{n}.
\label{eq:b3-bias}
\end{align}
Indeed, \eqref{eq:b2-bias} is the usual bias of the sample covariance, and \eqref{eq:b3-bias} follows by expanding $(\bm P_I\bm X_i-\bm P_I\bar{\bm X})^{\otimes3}$ and observing that every difference term contains at least one factor $\bar{\bm X}$.

Now integrate \eqref{eq:def-phatI-corrected} over $(t,\infty)^s$, take expectations, subtract $\gamma\{M_{n,s}(t)-M_{Z,s}(t)\}$, and use \eqref{eq:b2-bias}--\eqref{eq:b3-bias}. By Lemma~\ref{lem:gauss-strip},
\begin{equation*}
\left|
\E\bigl[\hat M_{n,s,\gamma}(t)-M_{Z,s}(t)\bigr]-\gamma\{M_{n,s}(t)-M_{Z,s}(t)\}
\right|
\le C\left(\frac{(1+t)^2}{n}+\frac{(1+t)^3}{n^{3/2}}\right)M_{Z,s}(t).
\end{equation*}
Since $t\asymp\sqrt{\log d}$ on $\Tcal_{k,\epsilon}$, the right-hand side is $O(\epsn^2 M_{Z,s}(t))$. Summing over $s$ with the weights in \eqref{eq:def-Qnk} and using \eqref{eq:weighted-main-corrected} proves \eqref{eq:centering-main}.
\end{proof}

\begin{theorem}[Cornish--Fisher expansion]
\label{thm:CF}
Assume Assumptions~\ref{ass:data}--\ref{ass:weakcorr}. Then, with probability at least $1-C/n$,
\begin{equation}
\sup_{\epsilon<\alpha<1-\epsilon}
\left|
\hat c_{1-\alpha,k}
-
\left[
 c^G_{1-\alpha,k}
 -\frac{\hat Q_{n,\gamma,k}(c^G_{1-\alpha,k})}{f_k(c^G_{1-\alpha,k})}
 +R_{n,k}(\alpha)
 \right]
\right|
\le C\epsn^3.
\label{eq:CF-main-corrected}
\end{equation}
\end{theorem}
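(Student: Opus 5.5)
The plan is to invert the bootstrap distribution expansion \eqref{eq:dist-boot} around the Gaussian quantile by a second-order Taylor expansion of $G_k$. The claimed $C\epsn^3$ accuracy then requires two ingredients: a bootstrap distribution remainder of order $\epsn^3$ rather than $\epsn^2$, and a quadratic inversion whose cubic remainder is $O(\epsn^3)$. The second ingredient is routine given Lemma~\ref{lem:weighted-regularity}(iii) and \eqref{eq:Q-reg-boot}. The first is the real work.

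\emph{Step 1 (sharpened conditional remainder).} I would work on $\Omega_{n,X}\cap\Omega_{n,w}$ and redo the bootstrap half of Proposition~\ref{prop:projected-local} conditionally on the data. The aim is to replace the bound $C\epsn^2\pi_I(t)$ by $C\epsn^3\pi_I(t)$.

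The structural point is that, conditionally, the Stein kernels $\tau^*_{i,I}=n^{-1}\tau^w(w_i)\bm b_{i,I}\bm b_{i,I}^\top$ have deterministic directions. In the Gaussian case they are even deterministic. Moreover $\sum_i\E^*\tau^*_{i,I}=\bar{\bm b}_I^{\,2}$ and $\E^*\sum_i(\xi^*_{i,I})^{\otimes3}=\gamma n^{-1/2}\bar{\bm b}_I^{\,3}$ are exactly the tensors built into $\hat p_{n,\gamma,I}$. Consequently, in the six terms of Lemma~\ref{lem:koike-decomposition}, every contribution that is linear in a coefficient tensor is absorbed into $\hat p_{n,\gamma,I}$. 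What is left is either:
\begin{itemize}
\item a centered fluctuation $\sum_i\{\tau^*_{i,I}-\E^*\tau^*_{i,I}\}$, whose size is controlled by $\max_i\|\bm b_i\|_\infty^2/n$ through \eqref{eq:first-bootstrap-centered-max}; or
\item a product of two first-order tensors, or a fourth-order moment tensor $n^{-1}\bar{\bm b}_I^{\,4}$.
\end{itemize}

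I would bound each such coefficient by a polylogarithm over $n$ via Lemma~\ref{lem:first-bootstrap-event}. I would then pair it with the localized derivative bound \eqref{eq:koike-orthant-derivative}, now with $r$ up to $6$. I would also choose a smaller smoothing parameter, $\vartheta_n\asymp\epsn^6/(\log(dn)\log(2k_0))$. With this choice the smoothing error $a_{\vartheta_n}(1+t)^4$ from \eqref{eq:prop-smoothing-error} and Lemma~\ref{lem:gauss-strip} becomes $O(\epsn^3)$, while $\log(\vartheta_n^{-1})$ remains $O(\log n)$.

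Summing over $I$ with the weights $\binom{s-1}{k-1}$, as in Theorem~\ref{thm:FM}, would give
\[
\sup_{t}\bigl|\hat F_{n,k}(t)-G_k(t)-\hat Q_{n,\gamma,k}(t)\bigr|\le C\epsn^3
\]
on an event of probability at least $1-C/n$. This requires the truncation tail \eqref{eq:weighted-tail-corrected} at level $\epsn^3$, which I would get by enlarging $A$. The window is enlarged slightly so that it contains every $t$ within $O(\epsn)$ of $\Tcal_{k,\epsilon}$.

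\emph{Step 2 (inversion).} Fix $\alpha$ and put $p=1-\alpha$, $c=c^G_{p,k}$, and write $\hat c_{p,k}=c+\Delta$.

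First I would show $|\Delta|\le C\epsn$. This follows from Step 1, \eqref{eq:Q-reg-boot}, and $f_k\ge m_{k,\epsilon}$: monotonicity of $\hat F_{n,k}$ and the bounds $\hat F_{n,k}(c\pm C\epsn)\gtrless p$ localize $\Delta$.

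Next I would use continuity of $\hat F_{n,k}$. This holds because $\bm S_n^*$ has a density conditionally when $w_1$ has a Stein kernel or is Gaussian. It gives $\hat F_{n,k}(\hat c_{p,k})=p$. Taylor expanding $G_k$ to second order and $\hat Q_{n,\gamma,k}$ to first order then yields
\[
0=f_k(c)\Delta+\tfrac12 f_k'(c)\Delta^2+\hat Q(c)+\hat Q'(c)\Delta+O(\epsn^3),
\]
where $\hat Q$ abbreviates $\hat Q_{n,\gamma,k}$. The cubic remainders $\sup|f_k''|\,|\Delta|^3$ and $\sup|\hat Q''|\,\Delta^2$ are $O(\epsn^3)$. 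This uses $G_k\in C^3$, which I would verify as in Lemma~\ref{lem:weighted-regularity}(iii) by one more termwise differentiation, together with \eqref{eq:Q-reg-boot}.

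Solving by one fixed-point iteration from $\Delta_0=-\hat Q(c)/f_k(c)$ gives
\[
\Delta=-\frac{\hat Q(c)}{f_k(c)}+\frac{f_k'(c)\hat Q(c)^2}{2f_k(c)^3}-\frac{\hat Q'(c)\hat Q(c)}{f_k(c)^2}+O(\epsn^3).
\]
The last two terms are exactly $R_{n,k}(\alpha)$. Uniformity in $\alpha$ holds because every constant depends only on $(k,\epsilon)$ through \eqref{eq:regularity-main-corrected}.

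\emph{Main obstacle.} The hard part is Step 1. One must verify, term by term in Koike's equation~(4.6), that after subtracting the bootstrap Edgeworth density no term linear in a first-order tensor survives. One must also check that the quadratic and fourth-moment coefficients, multiplied by $(1+t)^6\{1+\log\vartheta_n^{-1}\}$, are genuinely $O(\epsn^3)$ and not merely $O(\epsn^2)$.

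I would attempt this first in the Gaussian-multiplier case. There the conditional Stein kernel is deterministic, so the fluctuation term vanishes identically. I would then treat the bounded-kernel case by conditioning on $|\tau^w|\le b_w^2$.

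If the polylogarithmic bookkeeping fails to reach $\epsn^3$, the same inversion argument still applies verbatim, with the final accuracy equal to whatever order the distributional remainder from Step 1 achieves.
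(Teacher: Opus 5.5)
You are right that the remainder is the obstacle, and this applies to the paper's own proof as well. The paper does exactly your Step~2 on top of \eqref{eq:dist-boot}: it carries $r_n(c_k+\Delta_k)$ in \eqref{eq:CF-root-equation}, disposes only of the $\hat Q_{n,\gamma,k}''$ term, and then asserts $O(\epsn^3)$. But $r_n(c_k+\Delta_k)/f_k(c_k)$ is only $O(\epsn^2)$, so that inversion gives \eqref{eq:CF-main-corrected} with $C\epsn^2$. At that level $R_{n,k}(\alpha)=O(\epsn^2)$ is itself part of the error.

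The gap in your proposal is Step~1, and better bookkeeping cannot close it. After the first-order tensors are absorbed into $\hat p_{n,\gamma,I}$, the residual terms you list are not a higher-order remainder but the next Edgeworth term. These are products of two first-order tensors, such as $(\bar{\bm b}_I^{\,2}-\mathbf{\Sigma}_{II})^{\otimes 2}$ and $\gamma^2n^{-1}(\bar{\bm b}_I^{\,3})^{\otimes2}$, together with the fourth-order term $n^{-1}\bar{\bm b}_I^{\,4}$. They are of size $n^{-1}$ times a polylogarithm---your own planned bound says exactly this---whereas $\epsn^3\asymp n^{-3/2}\,\mathrm{polylog}(dn)$. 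Multiplying by $(1+t)^6\{1+\log(\vartheta_n^{-1})\}$ only makes this worse.

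These terms also do not cancel. Take Gaussian multipliers, which Assumption~\ref{ass:mult}(i) allows. Conditionally, $\bm S_n^*\sim N(\bm 0,\hat{\mathbf{\Sigma}}_X)$ exactly, and $\hat p_{n,0,I}$ is just the first-order Taylor expansion of $\phi_{\bar{\bm b}_I^{\,2}}$ about $\mathbf{\Sigma}_{II}$. The Stein-kernel fluctuation does vanish, as you say, but the quadratic term $\tfrac18\langle(\bar{\bm b}_I^{\,2}-\mathbf{\Sigma}_{II})^{\otimes2},\nabla^4\phi_I\rangle$ survives. Already for $|I|=1$, write $\hat\sigma_j^2=\sigma_j^2(1+\delta_j)$. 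The quadratic part of $\bPhi(t/\hat\sigma_j)-\bPhi(t/\sigma_j)$ is about $\tfrac18(t/\sigma_j)^4\delta_j^2p_j(t)$, with $n\delta_j^2$ of order one. Summing over $j$ gives order $t^4\lambda(t)/n\asymp(\log d)^2/n$.

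For a concrete case, take $k=1$, $\mathbf{\Sigma}=\mathbf{I}_d$ and Gaussian data, and collect the diagonal and off-diagonal contributions. This gives $\hat r_n(t):=\hat F_{n,1}(t)-G_1(t)-\hat Q_{n,0,1}(t)\approx e^{-\lambda(t)}t^4\lambda(t)\{\lambda(t)-1\}/(4n)$. This vanishes at most at one point of $\Tcal_{1,\epsilon}$. It exceeds $\epsn^3$ by a factor of order $\sqrt n/\mathrm{polylog}(dn)$, for instance when $d$ is polynomial in $n$.

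Your own inversion gives $\hat c_{1-\alpha,k}-[c^G_{1-\alpha,k}-\hat Q_{n,\gamma,k}/f_k+R_{n,k}(\alpha)]\approx-\hat r_n/f_k$. So the $\epsn^3$ accuracy fails for the quantile itself, not just for your intermediate bound.

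What you can actually prove is your fallback: \eqref{eq:CF-main-corrected} with $C\epsn^2$. That suffices for the $\epsn^2$-scale coverage expansion, because the extra quantile error re-enters only through $F_{n,k}'\approx f_k$. A genuine $\epsn^3$ statement would require adding the $n^{-1}$-order Edgeworth terms to $\hat Q_{n,\gamma,k}$ and modifying $R_{n,k}$ accordingly. These are the term quadratic in $\bar{\bm b}^{\,2}-\mathbf{\Sigma}$, the term $(\E w_1^4-3)n^{-1}\bar{\bm b}^{\,4}$, the term $\gamma^2n^{-1}(\bar{\bm b}^{\,3})^{\otimes2}$, and the cross term.
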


\noindent\emph{Comment.} Theorem~\ref{thm:CF} identifies the bootstrap critical value as a Gaussian quantile perturbed by an explicit linear term and a quadratic correction. This is the quantile-level expansion needed to turn the distributional approximation into a coverage expansion.

\begin{proof}
Fix $\alpha\in(\epsilon,1-\epsilon)$ and abbreviate
\begin{equation*}
 c_k:=c^G_{1-\alpha,k},
 \qquad
 \hat Q_k:=\hat Q_{n,\gamma,k}(c_k),
 \qquad
 \hat Q_k':=\hat Q_{n,\gamma,k}'(c_k).
\end{equation*}
On the event of \eqref{eq:dist-boot} and \eqref{eq:Q-reg-boot},
\begin{equation}
\hat F_{n,k}(t)=G_k(t)+\hat Q_{n,\gamma,k}(t)+r_n(t),
\qquad
\sup_{t\in\Tcal_{k,\epsilon}}|r_n(t)|\le C\epsn^2.
\label{eq:boot-dist-rn}
\end{equation}
Because $G_k(c_k)=1-\alpha$ and $f_k(c_k)\ge m_{k,\epsilon}>0$, the implicit function theorem yields a unique root $\hat c_{1-\alpha,k}=c_k+\Delta_k$ with $|\Delta_k|\le C\epsn$. Substituting $t=c_k+\Delta_k$ into \eqref{eq:boot-dist-rn} and using Taylor's formula up to order $2$ gives
\begin{align}
0
&=
\hat F_{n,k}(c_k+\Delta_k)-(1-\alpha)\notag\\
&=
 f_k(c_k)\Delta_k
 +\frac12 f_k'(c_k)\Delta_k^2
 +\hat Q_k
 +\hat Q_k'\Delta_k
 +\frac12\hat Q_{n,\gamma,k}''(\xi_k)\Delta_k^2
 +r_n(c_k+\Delta_k)
\label{eq:CF-root-equation}
\end{align}
for some $\xi_k$ between $c_k$ and $c_k+\Delta_k$. Since $\hat Q_{n,\gamma,k}''(\xi_k)=O(\epsn)$ by \eqref{eq:Q-reg-boot}, the last quadratic term in \eqref{eq:CF-root-equation} is $O(\epsn^3)$. Solving \eqref{eq:CF-root-equation} iteratively,
\begin{equation*}
\Delta_k
=
-\frac{\hat Q_k}{f_k(c_k)}
+\frac{f_k'(c_k)}{2f_k(c_k)^3}\hat Q_k^2
-\frac{\hat Q_k'}{f_k(c_k)^2}\hat Q_k
+O(\epsn^3).
\end{equation*}
This is exactly \eqref{eq:CF-main-corrected}; compare also the classical Cornish--Fisher inversion formulas in \cite[Chapter~2]{Hall1992}.
\end{proof}

\subsection{Coverage expansion}
\label{sec:coverage}

\begin{proof}[Proof of Theorem~\ref{thm:coverage-main}]
Fix $\alpha\in(\epsilon,1-\epsilon)$ and write
\begin{equation*}
 c_k:=c^G_{1-\alpha,k},
 \qquad
 F_{n,k}(t):=\Prob(T_{n,[k]}\le t).
\end{equation*}
By \eqref{eq:dist-data} and \eqref{eq:Q-reg-data},
\begin{equation}
F_{n,k}(t)=G_k(t)+Q_{n,k}(t)+r_n(t),
\qquad
\sup_{t\in\Tcal_{k,\epsilon}}|r_n(t)|\le C\epsn^2.
\label{eq:data-dist-rn}
\end{equation}
Let $E_n$ denote the event on which the Cornish--Fisher expansion \eqref{eq:CF-main-corrected} holds and $\hat c_{1-\alpha,k}\in\Tcal_{k,\epsilon/2}$. Then
\begin{equation*}
\Prob(E_n^c)\le \frac{C}{n}\le C\epsn^2.
\end{equation*}
On $E_n$ define
\begin{equation*}
\Delta_k:=\hat c_{1-\alpha,k}-c_k.
\end{equation*}
By Theorem~\ref{thm:CF},
\begin{equation}
\Delta_k
=
-\frac{\hat Q_{n,\gamma,k}(c_k)}{f_k(c_k)}+R_{n,k}(\alpha)+\zeta_{n,k}(\alpha),
\qquad
|\zeta_{n,k}(\alpha)|\le C\epsn^3.
\label{eq:Delta-expanded}
\end{equation}
Also $|\Delta_k|\le C\epsn$ on $E_n$. Since $F_{n,k}$ is deterministic, Taylor's formula on $E_n$ gives
\begin{align}
F_{n,k}(\hat c_{1-\alpha,k})
&=
F_{n,k}(c_k)
+F_{n,k}'(c_k)\Delta_k
+\frac12F_{n,k}''(\xi_{n,k})\Delta_k^2
\label{eq:F-at-random-threshold}
\end{align}
for some $\xi_{n,k}$ between $c_k$ and $\hat c_{1-\alpha,k}$. From \eqref{eq:data-dist-rn}, \eqref{eq:Q-reg-data}, and \eqref{eq:regularity-main-corrected},
\begin{equation*}
F_{n,k}(c_k)=(1-\alpha)+Q_{n,k}(c_k)+O(\epsn^2),
\end{equation*}
\begin{equation*}
F_{n,k}'(c_k)=f_k(c_k)+Q_{n,k}'(c_k)+O(\epsn^2),
\end{equation*}
and
\begin{equation*}
F_{n,k}''(\xi_{n,k})=f_k'(c_k)+O(1).
\end{equation*}
Substituting these bounds and \eqref{eq:Delta-expanded} into \eqref{eq:F-at-random-threshold}, using $Q_{n,k}'(c_k)=O(\epsn)$ and $\Delta_k=O(\epsn)$, yields on $E_n$,
\begin{align}
F_{n,k}(\hat c_{1-\alpha,k})
&=
(1-\alpha)
+Q_{n,k}(c_k)
-\hat Q_{n,\gamma,k}(c_k)
+R_{n,k}(\alpha)
+O(\epsn^2).
\label{eq:F-expanded-on-En}
\end{align}
Now take expectations. Since $0\le F_{n,k}(\hat c_{1-\alpha,k})\le1$,
\begin{equation*}
\left|
\E\bigl[F_{n,k}(\hat c_{1-\alpha,k})\ind_{E_n^c}\bigr]
\right|
\le \Prob(E_n^c)\le C\epsn^2.
\end{equation*}
Therefore
\begin{equation}
\Prob(T_{n,[k]}\le \hat c_{1-\alpha,k})
=
\E\bigl[F_{n,k}(\hat c_{1-\alpha,k})\ind_{E_n}\bigr]+O(\epsn^2).
\label{eq:integrate-En}
\end{equation}
Taking expectations in \eqref{eq:F-expanded-on-En} and using Lemma~\ref{lem:centering},
\begin{equation*}
\E\bigl[Q_{n,k}(c_k)-\hat Q_{n,\gamma,k}(c_k)\bigr]
=(1-\gamma)Q_{n,k}(c_k)+O(\epsn^2).
\end{equation*}
Combining this with \eqref{eq:integrate-En} gives
\begin{equation*}
\Prob(T_{n,[k]}\le \hat c_{1-\alpha,k})
=
(1-\alpha)
+(1-\gamma)Q_{n,k}(c_k)
+\E\{R_{n,k}(\alpha)\}
+O(\epsn^2).
\end{equation*}
Taking complements proves \eqref{eq:coverage-main-corrected}.
\end{proof}

\begin{proof}[Proof of Corollary~\ref{cor:thirdmatch}]
If $\gamma=1$, the first-order term disappears in Theorem~\ref{thm:coverage-main}. Also,
\begin{equation*}
|R_{n,k}(\alpha)|
\le
C\bigl(|\hat Q_{n,\gamma,k}(c_k)|^2+|\hat Q_{n,\gamma,k}'(c_k)|\,|\hat Q_{n,\gamma,k}(c_k)|\bigr)
\le C\epsn^2
\end{equation*}
by \eqref{eq:Q-reg-boot}, hence $\E|R_{n,k}(\alpha)|\le C\epsn^2$ uniformly in $\alpha$.
\end{proof}

\begin{proof}[Proof of Corollary~\ref{cor:firstorderpersist}]
The claim follows immediately from Theorem~\ref{thm:coverage-main} and the uniform bound $\E|R_{n,k}(\alpha)|\le C\epsn^2$.
\end{proof}

\subsection{Deterministic conditional theorem and double bootstrap}

\begin{theorem}[Deterministic-array conditional theorem]
\label{thm:deterministic-array}
Let $\bm a_1,\dots,\bm a_n\in\R^d$ be deterministic and define
\begin{equation*}
\bm T_n(\bm a):=\frac1{\sqrt n}\sum_{i=1}^n v_i\bm a_i.
\end{equation*}
Assume that for some constants $L_n$ and $r_n$,
\begin{equation}
\max_{1\le i\le n}\|\bm a_i\|_\infty\le L_n,
\label{eq:det-array-cond1}
\end{equation}
and for every $I\subset[d]$ with $1\le |I|\le k_0$,
\begin{equation}
\lambda_{\min}\!\left(\frac1n\sum_{i=1}^n \bm P_I\bm a_i(\bm P_I\bm a_i)^\top\right)\ge \frac12\sigma_*^2,
\label{eq:det-array-cond2}
\end{equation}
\begin{equation}
\left\|
\frac1n\sum_{i=1}^n \bm P_I\bm a_i(\bm P_I\bm a_i)^\top-\mathbf{\Sigma}_{II}
\right\|_{\max}\le r_n.
\label{eq:det-array-cond3}
\end{equation}
Then the conclusions of Theorems~\ref{thm:distribution}, \ref{thm:CF}, and \ref{thm:coverage-main} hold for the conditional law $\Prob_v(\cdot)$ of the $k$th order statistic of $\bm T_n(\bm a)$, with constants uniform over all deterministic arrays satisfying \eqref{eq:det-array-cond1}--\eqref{eq:det-array-cond3} and with the same second-order rate $C\epsn^2$.
\end{theorem}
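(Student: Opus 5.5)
The plan is to rerun the proofs of Proposition~\ref{prop:projected-local}, Theorem~\ref{thm:FM}, Theorem~\ref{thm:distribution} and Theorem~\ref{thm:CF} with the data replaced by the fixed array $\bm a_1,\dots,\bm a_n$ and the randomness coming only from $v_1,\dots,v_n$. The point is that every place where those proofs used the data, they used only three facts: the empirical moment bounds of Lemma~\ref{lem:first-bootstrap-event} (the max-entry bound, the projected second-moment deviation, and the projected third-moment bound), an exact Stein kernel, and the Gaussian reference law $N(0,\Sigma_{II})$. Conditions \eqref{eq:det-array-cond1}--\eqref{eq:det-array-cond3} are the deterministic replacements for the first of these. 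I will therefore argue line by line that the constants depend only on $(L_n,r_n,\sigma_*,\underline\sigma,\overline\sigma,b_v)$, which gives uniformity over arrays.

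\textbf{Step 1 (Stein kernel).} Set $\xi_{i,I}:=n^{-1/2}v_i\bm P_I\bm a_i$. Exactly as in the bootstrap part of Proposition~\ref{prop:projected-local}, $\tau_{i,I}=n^{-1}\tau^v(v_i)\bm P_I\bm a_i(\bm P_I\bm a_i)^\top$ is an exact Stein kernel, with $\beta\equiv0$; in the Gaussian case $\tau^v\equiv1$. The coefficient tensors in Lemma~\ref{lem:koike-decomposition} are then
\[
\bar T_I=n^{-1}\textstyle\sum_i\tau^v(v_i)\bm P_I\bm a_i(\bm P_I\bm a_i)^\top-\Sigma_{II}
\]
together with the third-order sums $n^{-3/2}\sum_i v_i^3(\bm P_I\bm a_i)^{\otimes3}$. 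I bound the centered part of $\bar T_I$ by Hoeffding's inequality, using $|\tau^v|\le b_v^2$ and $\|\bm a_i\|_\infty\le L_n$, followed by a union bound over $O(d^2)$ entries; its mean is controlled by \eqref{eq:det-array-cond3}. The third-order tensor is at most $C L_n^3 n^{-1/2}$ entrywise. Provided $L_n\lesssim\log(dn)$ and $r_n\lesssim\sqrt{\log(dn)/n}$, as on $\Omega_{n,X}$ (I read the theorem as implicitly imposing these scalings), this reproduces \eqref{eq:koike-tensor-delta} with $\delta_n$, or at worst $C\epsn$.

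\textbf{Step 2 (change of reference covariance).} Koike's decomposition is centered at the array covariance $\hat\Sigma_{II}:=n^{-1}\sum_i\bm P_I\bm a_i(\bm P_I\bm a_i)^\top$. Condition \eqref{eq:det-array-cond2} is what makes the smoothing kernel nondegenerate, so Lemma~\ref{lem:koike-orthant-derivative} applies with $\sigma_*^2/2$. Replacing $\phi_{\hat\Sigma_{II}}$ by $\phi_I$ plus the term $\tfrac12\langle\hat\Sigma_{II}-\Sigma_{II},\nabla^2\phi_I\rangle$ costs $O(r_n^2(1+t)^4)\pi_I(t)=O(\epsn^2)\pi_I(t)$. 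This yields the projected density $\phi_I+\tfrac12\langle\hat\Sigma_{II}-\Sigma_{II},\nabla^2\phi_I\rangle-\tfrac{\E v^3}{6\sqrt n}\langle\bar{\bm a}^{3}_I,\nabla^3\phi_I\rangle$. The shift and strip bounds of Lemmas~\ref{lem:gauss-shift} and \ref{lem:gauss-strip} involve only $N(0,\Sigma_{II})$ and are unchanged. This gives \eqref{eq:projected-local-boot-main} for the array, deterministically.

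\textbf{Step 3 (aggregation and inversion).} The combinatorial aggregation of Theorem~\ref{thm:FM} and Theorem~\ref{thm:distribution} uses only Lemma~\ref{lem:weighted-regularity}, which concerns $\bm Z$, so it transfers verbatim. This gives the distribution expansion with correction $\hat Q$ built from the array, together with the regularity bound \eqref{eq:Q-reg-boot}. The Cornish--Fisher inversion of Theorem~\ref{thm:CF} is purely analytic once these two facts are available.

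\textbf{Step 4 (coverage).} For the coverage statement, note that the analogue of Theorem~\ref{thm:coverage-main} here concerns a target statistic whose distribution is $G_k+Q+O(\epsn^2)$; that is the form in which it will be fed into Theorem~\ref{thm:doublewild}.

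\textbf{Main obstacle.} The hard step is Step 2. The nondegeneracy of the smoothing kernel and the uniform constants in Lemma~\ref{lem:koike-orthant-derivative} must hold for the array covariance, which satisfies only \eqref{eq:det-array-cond2}. I would first try to run Koike's interpolation directly with $N(0,\Sigma_{II})$ as the reference. The resulting $\bar T_I$ then has a nonzero mean $\hat\Sigma_{II}-\Sigma_{II}$, which is absorbed into the second-order density term. This is exactly how $\hat p_{n,\gamma,I}$ was built, and it avoids comparing two Gaussian orthant probabilities altogether.
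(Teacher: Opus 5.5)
Your plan is essentially the paper's own proof. The paper also takes the exact Stein kernel $n^{-1}\tau^v(v_i)\bm P_I\bm a_i(\bm P_I\bm a_i)^\top$ with $\beta\equiv0$, keeps $N(0,\mathbf{\Sigma}_{II})$ as the reference so that $\bar{\bm a}_I^{\,2}-\mathbf{\Sigma}_{II}$ enters the projected density $p_{n,\bm a,I}$ as a second-order term (your preferred option in the ``main obstacle'' paragraph), reruns Proposition~\ref{prop:projected-local} for the array, and then aggregates and inverts exactly as in Theorems~\ref{thm:FM}, \ref{thm:distribution} and \ref{thm:CF}. Your remark that the statement tacitly needs polylogarithmic control of $L_n$ and $r_n$ is a fair addition that the paper's proof passes over; note, however, that Lemma~\ref{lem:bootstrap-array} only supplies $L_n=C\log^2(dn)$ and $r_n=C\log^{5/2}(dn)/\sqrt n$, which exceed the scalings you assume, so those extra logarithms would have to be tracked before your version can feed Theorem~\ref{thm:doublewild}.
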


\noindent\emph{Comment.} Theorem~\ref{thm:deterministic-array} isolates the deterministic conditions needed for the second bootstrap level. Once the first-level resample satisfies these array conditions, the same second-order expansion follows conditionally.

\begin{proof}
Fix a deterministic array $\bm a_1,\dots,\bm a_n$ satisfying \eqref{eq:det-array-cond1}--\eqref{eq:det-array-cond3}. For every nonempty $I\subset[d]$ with $|I|=s\le k_0$, define
\begin{equation*}
\bm a_{i,I}:=\bm P_I\bm a_i,
\qquad
\bar{\bm a}_I^{\,2}:=\frac1n\sum_{i=1}^n \bm a_{i,I}\bm a_{i,I}^\top,
\qquad
\bar{\bm a}_I^{\,3}:=\frac1n\sum_{i=1}^n \bm a_{i,I}^{\otimes 3}.
\end{equation*}
Let
\begin{equation*}
\bm T_{n,I}(\bm a):=\bm P_I\bm T_n(\bm a)=\frac1{\sqrt n}\sum_{i=1}^n v_i\bm a_{i,I}.
\end{equation*}
Because $v_i$ satisfies the same regularity condition as Assumption~\ref{ass:mult}, the projected summand
\begin{equation*}
\xi_{i,I}^{\bm a}:=\frac1{\sqrt n}v_i\bm a_{i,I}
\end{equation*}
has exact Stein kernel
\begin{equation*}
\tau_{i,I}^{\bm a}(\xi_{i,I}^{\bm a})
=
\frac1n\tau^v(v_i)\bm a_{i,I}\bm a_{i,I}^\top,
\end{equation*}
where $\tau^v$ denotes the scalar Stein kernel of $v_i$ (or $\tau^v\equiv 1$ in the Gaussian case). Hence
\begin{equation*}
\sum_{i=1}^n \E_v[\tau_{i,I}^{\bm a}(\xi_{i,I}^{\bm a})]
=
\bar{\bm a}_I^{\,2},
\qquad
\E_v\left[\sum_{i=1}^n (\xi_{i,I}^{\bm a})^{\otimes 3}\right]
=
\frac1{\sqrt n}\bar{\bm a}_I^{\,3},
\qquad
\beta_{i,I}^{\bm a}\equiv 0.
\end{equation*}
Now define the projected deterministic-array Edgeworth density
\begin{equation*}
p_{n,\bm a,I}(u)
:=
\phi_I(u)
+\frac12\left\langle \bar{\bm a}_I^{\,2}-\Sigma_{II},\nabla^2\phi_I(u)\right\rangle
-\frac{1}{6\sqrt n}\left\langle \bar{\bm a}_I^{\,3},\nabla^3\phi_I(u)\right\rangle.
\end{equation*}

Fix $I\subset[d]$ with $1\le |I|\le k_0$ and write $s:=|I|$. Set
\begin{equation*}
\bm T_{n,I}(\bm a):=n^{-1/2}\sum_{i=1}^n v_i\bm a_{i,I},
\qquad
A_t^-:=(-\infty,-t]^s,
\qquad
h_{t,u}(x):=\E\bigl[\1_{A_t^-}(\sqrt{1-u}\,x+\sqrt u\,Z_I^-)\bigr].
\end{equation*}
Then
\begin{equation*}
\Prob_v\bigl(\bm T_{n,I}(\bm a)\in(t,\infty)^s\bigr)=\Prob_v\bigl(-\bm T_{n,I}(\bm a)\in A_t^-\bigr).
\end{equation*}
For the deterministic array $\bm a$, the proof of Proposition~\ref{prop:projected-local} uses only the following inputs:
\begin{equation*}
\max_{1\le i\le n}\|\bm a_i\|_\infty\le L_n,
\qquad
\left\|\bar{\bm a}_I^{\,2}-\Sigma_{II}\right\|_{\max}\le r_n,
\qquad
\lambda_{\min}(\bar{\bm a}_I^{\,2})\ge \frac12\sigma_*^2,
\end{equation*}
which are exactly \eqref{eq:det-array-cond1}--\eqref{eq:det-array-cond3}. Therefore,
\begin{equation}
\sup_{t\in\Tcal_{k,\epsilon}}
\left|
\Prob_v\bigl(\bm T_{n,I}(\bm a)\in(t,\infty)^s\bigr)
-
\int_{(t,\infty)^s}p_{n,\bm a,I}(u)\,du
\right|
\le
C\epsn^2\pi_I(t)
\label{eq:det-array-local}
\end{equation}
uniformly over all admissible deterministic arrays.

Starting from \eqref{eq:det-array-local}, the factorial-moment argument gives
\begin{equation*}
\sup_{t\in\Tcal_{k,\epsilon}}
\left|
\sum_{s=k}^{k_0}(-1)^{s-k}\binom{s-1}{k-1}
\left\{V_{n,s}^{(\bm a)}(t)-M_{n,s}^{(\bm a)}(t)\right\}
\right|
\le C\epsn^2,
\end{equation*}
where $V_{n,s}^{(\bm a)}$ and $M_{n,s}^{(\bm a)}$ are the conditional factorial moment and its first-order approximation built from $\bm T_n(\bm a)$. Substituting this identity into the weighted inclusion--exclusion formula gives the deterministic-array analogue of Theorem~\ref{thm:distribution}. The Cornish--Fisher and coverage expansions then follow from the same algebraic steps as in Sections~\ref{sec:CF}--\ref{sec:coverage} after replacing $Q_{n,k}$ by the corresponding deterministic-array first-order term. All constants remain uniform under \eqref{eq:det-array-cond1}--\eqref{eq:det-array-cond3}. This proves the theorem.
\end{proof}

\begin{lemma}[The first-level bootstrap array satisfies the deterministic conditions]
\label{lem:bootstrap-array}
Define
\begin{equation*}
\bm a_i:=w_i(\bm X_i-\bar{\bm X})-\bar{\bm X}^*,
\qquad
\bar{\bm X}^*:=\frac1n\sum_{r=1}^n w_r(\bm X_r-\bar{\bm X}).
\end{equation*}
Then there exists an event $\Omega_n$ such that
\begin{equation*}
\Prob(\Omega_n^c)\le \frac{C}{n}
\end{equation*}
and, on $\Omega_n$, the deterministic array $\bm a_1,\dots,\bm a_n$ satisfies \eqref{eq:det-array-cond1}--\eqref{eq:det-array-cond3} with
\begin{equation*}
L_n:=C\log^2(dn),
\qquad
r_n:=C\log^2(dn)\sqrt{\frac{\log(dn)}{n}}.
\end{equation*}
\end{lemma}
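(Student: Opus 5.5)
We intersect three high-probability events and then verify the three array conditions \eqref{eq:det-array-cond1}--\eqref{eq:det-array-cond3} deterministically on that intersection. The events are $\Omega_{n,X}$ from Lemma~\ref{lem:first-bootstrap-event}, $\Omega_{n,w}$ from Lemma~\ref{lem:multiplier-max}, and a third event $\Omega_{n,w}'$ on which the conditional second moments concentrate. Concretely, $\Omega_{n,w}'$ is the event that
\begin{equation*}
\Bigl\|\frac1n\sum_{i=1}^n (w_i^2-1)\bm b_i\bm b_i^\top\Bigr\|_{\max}\le C\log^2(dn)\sqrt{\frac{\log(dn)}{n}}
\quad\text{and}\quad
\|\bar{\bm X}^*\|_\infty\le C\log(dn)\sqrt{\frac{\log(dn)}{n}}.
\end{equation*}
We set $\Omega_n:=\Omega_{n,X}\cap\Omega_{n,w}\cap\Omega_{n,w}'$.

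To control $\Prob(\Omega_{n,w}'^c)$, we work conditionally on the data and on $\Omega_{n,X}$. For fixed $(j,\ell)$, the summands $(w_i^2-1)b_{ij}b_{i\ell}$ are independent and centered given the data. Each is bounded by $C\log^2(dn)\cdot|w_i^2-1|$, using \eqref{eq:first-bootstrap-centered-max}. In the Gaussian case $w_i^2-1$ is sub-exponential, and in the bounded case it is bounded.

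Bernstein's inequality then gives a deviation of order $\log^2(dn)\sqrt{\log(dn)/n}$ with conditional probability at least $1-(dn)^{-4}$. The variance proxy is $n^{-1}\sum_i b_{ij}^2b_{i\ell}^2\le C\log^2(dn)$, and the linear term $\log^2(dn)\cdot\log(dn)/n$ is of smaller order. A union bound over the $d^2$ pairs costs $d^2(dn)^{-4}\le n^{-2}$.

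The same argument applied to $\bar{\bm X}^*=n^{-1}\sum_i w_i\bm b_i$ gives the mean bound, using Hoeffding in the bounded case or exact Gaussianity in the Gaussian case. Integrating out the data, $\Prob(\Omega_n^c)\le C/n^2+C/n^2+C/n^2\le C/n$.

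On $\Omega_n$, condition \eqref{eq:det-array-cond1} is immediate:
\begin{equation*}
\|\bm a_i\|_\infty\le |w_i|\,\|\bm b_i\|_\infty+\|\bar{\bm X}^*\|_\infty\le C\log(dn)\cdot C b\log(dn)+o(1)\le C\log^2(dn)=L_n.
\end{equation*}
For \eqref{eq:det-array-cond3}, expanding the square and using $\sum_i\bm a_i=0$ gives the identity
\begin{equation*}
\frac1n\sum_{i=1}^n\bm a_i\bm a_i^\top=\frac1n\sum_{i=1}^n w_i^2\bm b_i\bm b_i^\top-\bar{\bm X}^*\bar{\bm X}^{*\top}.
\end{equation*}
Therefore $\|n^{-1}\sum_i\bm a_i\bm a_i^\top-\Sigma\|_{\max}$ is bounded by the sum of three pieces. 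The first is $\|n^{-1}\sum_i(w_i^2-1)\bm b_i\bm b_i^\top\|_{\max}$, which is controlled on $\Omega_{n,w}'$. The second is $\|\hat{\mathbf\Sigma}_X-\Sigma\|_{\max}$, controlled by \eqref{eq:first-bootstrap-hatSigma}. The third is $\|\bar{\bm X}^*\|_\infty^2$, which is $O(\log^3(dn)/n)$. All three are at most $r_n$. Projection onto $I$ only removes entries, so the max-norm bound holds uniformly over all $I$.

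The main obstacle is \eqref{eq:det-array-cond2}, because it requires an eigenvalue bound that is uniform over all $I$ with $|I|\le k_0$, while we only control max-norms. We use Gershgorin's inequality (Lemma~\ref{lem:gershgorin}) on the $s\times s$ perturbation matrix $E_I:=\bm P_I(n^{-1}\sum_i\bm a_i\bm a_i^\top-\Sigma)\bm P_I^\top$, whose entries are at most $r_n$ in absolute value. This gives $\|E_I\|_{\mathrm{op}}\le k_0 r_n$.

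Combining with Lemma~\ref{lem:projection}, we get
\begin{equation*}
\lambda_{\min}\Bigl(\frac1n\sum_{i=1}^n\bm P_I\bm a_i(\bm P_I\bm a_i)^\top\Bigr)\ge\sigma_*^2-k_0r_n.
\end{equation*}
Since $k_0\le C\log n$, we have $k_0r_n\le C\log^{7/2}(dn)\,n^{-1/2}$. This quantity tends to zero by Assumption~\ref{ass:data}(iii), because $\epsn\to0$ forces $\log^3(dn)/n\to0$ and hence $\log^{7/2}(dn)/\sqrt n\to0$ up to the logarithmic slack available there. So for $n$ large, $k_0r_n\le\sigma_*^2/2$, which is exactly \eqref{eq:det-array-cond2}.

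If the logarithmic powers turn out not to close, the fallback is to sharpen the multiplier bound in \eqref{eq:det-array-cond1}. Replacing the $\log(dn)$ bound on $|w_i|$ by $\sqrt{\log(dn)}$ in the Gaussian case lowers the exponent and gives more room.
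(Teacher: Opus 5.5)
Your proof has the same structure as the paper's. You use the same three events and the same identity $n^{-1}\sum_i\bm a_i\bm a_i^\top=n^{-1}\sum_i w_i^2\bm b_i\bm b_i^\top-\bar{\bm X}^*\bar{\bm X}^{*\top}$, and conclude with the same perturbation bound $\lambda_{\min}\ge\sigma_*^2-k_0r_n$. Where the paper cites Koike's Lemma~D.10 and Weyl, you use Bernstein/Hoeffding and Gershgorin.

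\textbf{The gap is in your last step.} Assumption~\ref{ass:data}(iii) only gives $\log^3(dn)\log n/n\to0$. That does not imply $\log^{7/2}(dn)/\sqrt n\to0$, which would require $\log^7(dn)/n\to0$. For example, take $\log d\asymp n^{0.3}$. Then $\epsn^2\asymp n^{-0.1}\log n\to0$, yet $r_n=C\log^{5/2}(dn)/\sqrt n\asymp n^{1/4}\to\infty$, so $\sigma_*^2-k_0r_n$ gives nothing. The paper's proof has the same hole, since it simply asserts ``$k_0r_n\to0$'' at this point. Your fallback does not help either. The factor $\log^2(dn)$ in $r_n$ comes from $\max_i\|\bm b_i\|_\infty^2$, not from the bound on $\max_i|w_i|$, and condition \eqref{eq:det-array-cond2} does not involve $L_n$ at all.

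\textbf{How to repair it with your own ingredients.} With the variance proxy you already computed, $n^{-2}\sum_i b_{ij}^2b_{i\ell}^2\le C\log^2(dn)/n$, Bernstein actually gives
\[
\Bigl\|\frac1n\sum_{i=1}^n(w_i^2-1)\bm b_i\bm b_i^\top\Bigr\|_{\max}\le C\frac{\log^{3/2}(dn)}{\sqrt n}=:r_n' .
\]
The linear Bernstein term is $C\log^3(dn)/n$, which is smaller. The other two pieces, $Cb^2\sqrt{\log(dn)/n}$ and $\|\bar{\bm X}^*\|_\infty^2\le C\log^3(dn)/n$, are also $O(r_n')$. Since $r_n'\le r_n$, this still yields \eqref{eq:det-array-cond3} with the stated $r_n$.

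For \eqref{eq:det-array-cond2}, combine $r_n'$ with $k_0\le A\log(\epsn^{-1})+1$ instead of the cruder $k_0\le C\log n$. By definition of $\epsn$,
\[
\frac{\log^{3/2}(dn)}{\sqrt n}=\Bigl(\frac{\sigma_*}{b}\Bigr)^{5/2}\frac{\epsn}{\sqrt{\log n}},
\]
so $k_0r_n'\le C\epsn\log(\epsn^{-1})\to0$, which is exactly what Assumption~\ref{ass:data}(iii) supplies. Both refinements are needed. With $k_0\le C\log n$ you would need $\epsn^2\log n\to0$, and that also fails in general.
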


\begin{proof}
Let $\Omega_{n,X}$ be the event from Lemma~\ref{lem:first-bootstrap-event}, and let $\Omega_{n,w}$ be the event from Lemma~\ref{lem:multiplier-max}. Define
\begin{equation*}
\Omega_{n,1}:=\Omega_{n,X}\cap \Omega_{n,w}.
\end{equation*}
Then
\begin{equation}
\Prob(\Omega_{n,1}^c)\le \frac{C}{n^2}.
\label{eq:bootstrap-array-Omega1}
\end{equation}
On $\Omega_{n,1}$,
\begin{equation}
\max_{1\le i\le n}\|w_i(\bm X_i-\bar{\bm X})\|_\infty
\le
\left(\max_{1\le i\le n}|w_i|\right)
\left(\max_{1\le i\le n}\|\bm X_i-\bar{\bm X}\|_\infty\right)
\le
C\log^2(dn).
\label{eq:bootstrap-array-Xistar-max}
\end{equation}

Set
\begin{equation*}
\bm X_i^*:=w_i(\bm X_i-\bar{\bm X}),
\qquad
\bar{\bm X}^*:=\frac1n\sum_{i=1}^n \bm X_i^*.
\end{equation*}
We first bound $\bar{\bm X}^*$. Conditional on the original data, the vectors $\bm X_i^*$ are independent and centered. On $\Omega_{n,1}$, every coordinate satisfies
\begin{equation*}
\left\|\frac1n X_{ij}^*\right\|_{\psi_1}
\le
\frac{C\log(dn)}{n},
\end{equation*}
because either $w_i$ is bounded or $w_i$ is Gaussian, hence sub-Gaussian, and \eqref{eq:first-bootstrap-centered-max} holds on $\Omega_{n,X}$. Apply Lemma~D.10 of \cite{Koike2026} conditionally with
\begin{equation*}
Y_i:=\frac1n\bm X_i^*,
\qquad
K=\frac{C\log(dn)}{n},
\qquad
\alpha=1,
\qquad
a=2.
\end{equation*}
Then, on $\Omega_{n,1}$,
\begin{equation}
\Prob\!\left(
\|\bar{\bm X}^*\|_\infty
>
C\log(dn)\sqrt{\frac{\log(dn)}{n}}
\ \middle|\ \bm X_1,\dots,\bm X_n
\right)
\le
\frac{1}{n^2}.
\label{eq:bootstrap-array-Xbarstar-prob}
\end{equation}

Next, write
\begin{equation*}
\hat{\mathbf{\Sigma}}_w
:=
\frac1n\sum_{i=1}^n \bm X_i^*\bm X_i^{*\top}
=
\frac1n\sum_{i=1}^n w_i^2(\bm X_i-\bar{\bm X})(\bm X_i-\bar{\bm X})^\top.
\end{equation*}
Then
\begin{equation}
\hat{\mathbf{\Sigma}}_w-\hat{\mathbf{\Sigma}}_X
=
\frac1n\sum_{i=1}^n (w_i^2-1)(\bm X_i-\bar{\bm X})(\bm X_i-\bar{\bm X})^\top.
\label{eq:bootstrap-array-Sigma-diff}
\end{equation}
Conditional on the original data, the summands in \eqref{eq:bootstrap-array-Sigma-diff} are independent and centered. On $\Omega_{n,1}$,
each entry of the matrix
\begin{equation*}
\frac1n(w_i^2-1)(\bm X_i-\bar{\bm X})(\bm X_i-\bar{\bm X})^\top
\end{equation*}
has conditional $\psi_1$-norm at most
\begin{equation*}
\frac{C\log^2(dn)}{n}.
\end{equation*}
Apply Lemma~D.10 of \cite{Koike2026} conditionally with
\begin{equation*}
Y_i:=\frac1n(w_i^2-1)\mathrm{vec}\!\left((\bm X_i-\bar{\bm X})(\bm X_i-\bar{\bm X})^\top\right),
\qquad
K=\frac{C\log^2(dn)}{n},
\qquad
\alpha=1,
\qquad
a=2.
\end{equation*}
Then, on $\Omega_{n,1}$,
\begin{equation}
\Prob\!\left(
\|\hat{\mathbf{\Sigma}}_w-\hat{\mathbf{\Sigma}}_X\|_{\max}
>
C\log^2(dn)\sqrt{\frac{\log(dn)}{n}}
\ \middle|\ \bm X_1,\dots,\bm X_n
\right)
\le
\frac{1}{n^2}.
\label{eq:bootstrap-array-Sigmaw-prob}
\end{equation}

Define the conditional events
\begin{equation*}
\Omega_{n,2}
:=
\left\{
\|\bar{\bm X}^*\|_\infty
\le
C\log(dn)\sqrt{\frac{\log(dn)}{n}}
\right\},
\end{equation*}
and
\begin{equation*}
\Omega_{n,3}
:=
\left\{
\|\hat{\mathbf{\Sigma}}_w-\hat{\mathbf{\Sigma}}_X\|_{\max}
\le
C\log^2(dn)\sqrt{\frac{\log(dn)}{n}}
\right\}.
\end{equation*}
Finally, set
\begin{equation*}
\Omega_n:=\Omega_{n,1}\cap \Omega_{n,2}\cap \Omega_{n,3}.
\end{equation*}
Using \eqref{eq:bootstrap-array-Omega1}, \eqref{eq:bootstrap-array-Xbarstar-prob}, and \eqref{eq:bootstrap-array-Sigmaw-prob},
\begin{align*}
\Prob(\Omega_n^c)
&\le
\Prob(\Omega_{n,1}^c)
+
\E\bigl[\Prob(\Omega_{n,2}^c\cup\Omega_{n,3}^c\mid \bm X_1,\dots,\bm X_n)\1_{\Omega_{n,1}}\bigr]\\
&\le
\frac{C}{n^2}+\frac{C}{n^2}
\le
\frac{C}{n}.
\end{align*}

Now work on $\Omega_n$. Recall
\begin{equation*}
\bm a_i=\bm X_i^*-\bar{\bm X}^*.
\end{equation*}
By \eqref{eq:bootstrap-array-Xistar-max},
\begin{equation*}
\|\bm a_i\|_\infty
\le
\|\bm X_i^*\|_\infty+\|\bar{\bm X}^*\|_\infty
\le
C\log^2(dn)
=
L_n.
\end{equation*}
Also,
\begin{equation*}
\frac1n\sum_{i=1}^n \bm a_i\bm a_i^\top
=
\hat{\mathbf{\Sigma}}_w-\bar{\bm X}^*\bar{\bm X}^{*\top}.
\end{equation*}
Hence
\begin{align*}
\left\|
\frac1n\sum_{i=1}^n \bm a_i\bm a_i^\top-\mathbf{\Sigma}
\right\|_{\max}
&\le
\|\hat{\mathbf{\Sigma}}_w-\hat{\mathbf{\Sigma}}_X\|_{\max}
+
\|\hat{\mathbf{\Sigma}}_X-\mathbf{\Sigma}\|_{\max}
+
\|\bar{\bm X}^*\bar{\bm X}^{*\top}\|_{\max}\\
&\le
C\log^2(dn)\sqrt{\frac{\log(dn)}{n}}
+
Cb^2\sqrt{\frac{\log(dn)}{n}}
+
C\log^2(dn)\frac{\log(dn)}{n}\\
&\le
Cr_n.
\end{align*}
Therefore, for every $I$ with $1\le |I|\le k_0$,
\begin{equation*}
\left\|
\frac1n\sum_{i=1}^n \bm P_I\bm a_i(\bm P_I\bm a_i)^\top-\mathbf{\Sigma}_{II}
\right\|_{\max}
\le
Cr_n,
\end{equation*}
which proves \eqref{eq:det-array-cond3} after enlarging the constant in $r_n$.

It remains to prove \eqref{eq:det-array-cond2}. Let
\begin{equation*}
\mathbf{\Sigma}_I(\bm a)
:=
\frac1n\sum_{i=1}^n \bm P_I\bm a_i(\bm P_I\bm a_i)^\top.
\end{equation*}
Since $\lambda_{\min}(\mathbf{\Sigma}_{II})\ge \sigma_*^2$ by Lemma~\ref{lem:projection},
\begin{align}
\lambda_{\min}(\mathbf{\Sigma}_I(\bm a))
&\ge
\lambda_{\min}(\mathbf{\Sigma}_{II})-\|\mathbf{\Sigma}_I(\bm a)-\mathbf{\Sigma}_{II}\|_{\mathrm{op}}\notag\\
&\ge
\sigma_*^2-|I|\,\|\mathbf{\Sigma}_I(\bm a)-\mathbf{\Sigma}_{II}\|_{\max}\notag\\
&\ge
\sigma_*^2-k_0 Cr_n
\label{eq:bootstrap-array-Weyl}
\end{align}
by Weyl's inequality (see, e.g., \citep[Corollary~4.3.2]{HornJohnson2012}). Since $k_0r_n\to 0$, \eqref{eq:bootstrap-array-Weyl} implies
\begin{equation*}
\lambda_{\min}(\mathbf{\Sigma}_I(\bm a))\ge \frac12\sigma_*^2
\end{equation*}
for all sufficiently large $n$. This proves \eqref{eq:det-array-cond2}. Together with the bound for $\max_i\|\bm a_i\|_\infty$, the proof is complete.
\end{proof}

\begin{proof}[Proof of Theorem~\ref{thm:doublewild}]
Let $\Omega_n$ be the event from Lemma~\ref{lem:bootstrap-array}. Since $n^{-1}=O(\epsn^2)$, it is enough to work on $\Omega_n$. On that event, the first-level bootstrap array satisfies the deterministic conditions of Theorem~\ref{thm:deterministic-array}. Because the second-level multipliers satisfy $\E v_1^3=1$, the conditional version of Corollary~\ref{cor:thirdmatch} gives
\begin{equation}
\sup_{\epsilon<\alpha<1-\epsilon}
\left|
\Prob^*\bigl(T_{n,[k]}^*\ge \hat c_{1-\alpha,k}^{**}\bigr)-\alpha
\right|
\le C\epsn^2
\qquad\text{on }\Omega_n.
\label{eq:conditional-second-order}
\end{equation}
Set $\delta_n:=C\epsn^2$, with $C$ chosen large enough that both \eqref{eq:conditional-second-order} and the first-level second-order accuracy bound hold with the same constant.

Fix $\alpha\in(2\epsilon,1-2\epsilon)$. On $\Omega_n$, \eqref{eq:conditional-second-order} with nominal level $\alpha-\delta_n$ implies
\begin{equation*}
\Prob^*\bigl(T_{n,[k]}^*>\hat c_{1-\alpha+\delta_n,k}^{**}\bigr)
\le \alpha.
\end{equation*}
Equivalently,
\begin{equation*}
\Prob^*\bigl(\hat F_{n,k}^*(T_{n,[k]}^*)>1-\alpha+\delta_n\bigr)\le \alpha.
\end{equation*}
By the definition of $\hat\beta_{\alpha,k}$,
\begin{equation*}
\hat\beta_{\alpha,k}\le 1-\alpha+\delta_n
\qquad\text{on }\Omega_n.
\end{equation*}
Since $p\mapsto \hat c_{p,k}$ is nondecreasing,
\begin{equation*}
\hat c_{\hat\beta_{\alpha,k},k}\le \hat c_{1-\alpha+\delta_n,k}
\qquad\text{on }\Omega_n.
\end{equation*}
Therefore,
\begin{align}
\Prob\bigl(T_{n,[k]}\ge \hat c_{\hat\beta_{\alpha,k},k}\bigr)
&\ge
\Prob\bigl(T_{n,[k]}\ge \hat c_{1-\alpha+\delta_n,k},\Omega_n\bigr)\notag\\
&\ge
\Prob\bigl(T_{n,[k]}\ge \hat c_{1-\alpha+\delta_n,k}\bigr)-\Prob(\Omega_n^c)\notag\\
&\ge
(\alpha-\delta_n)-C\epsn^2-\Prob(\Omega_n^c)\notag\\
&\ge \alpha-C\epsn^2.
\label{eq:double-lower}
\end{align}
Similarly, applying \eqref{eq:conditional-second-order} with level $\alpha+\delta_n$ yields
\begin{equation*}
\Prob^*\bigl(\hat F_{n,k}^*(T_{n,[k]}^*)\le 1-\alpha-\delta_n\bigr)<1-\alpha,
\end{equation*}
which implies
\begin{equation*}
\hat\beta_{\alpha,k}>1-\alpha-\delta_n
\qquad\text{on }\Omega_n.
\end{equation*}
Hence
\begin{equation*}
\hat c_{\hat\beta_{\alpha,k},k}\ge \hat c_{1-\alpha-\delta_n,k}
\qquad\text{on }\Omega_n,
\end{equation*}
and therefore
\begin{align}
\Prob\bigl(T_{n,[k]}\ge \hat c_{\hat\beta_{\alpha,k},k}\bigr)
&\le
\Prob\bigl(T_{n,[k]}\ge \hat c_{1-\alpha-\delta_n,k}\bigr)+\Prob(\Omega_n^c)\notag\\
&\le
(\alpha+\delta_n)+C\epsn^2+\Prob(\Omega_n^c)\notag\\
&\le \alpha+C\epsn^2.
\label{eq:double-upper}
\end{align}
Combining \eqref{eq:double-lower} and \eqref{eq:double-upper} proves \eqref{eq:doublewild-main-corrected}.
\end{proof}

\section{Appendix B: Proofs for the stationary exponential-mixing alternative}
\label{app:mixing-alt}

This appendix proves Theorem~\ref{prop:mixing-alt-main-rd}. Throughout Appendix~B we work under Assumptions~\ref{ass:data}, \ref{ass:mult}, \ref{ass:variance}, and \ref{ass:mixing-alt}, and we use the notation introduced in Section~\ref{subsec:mixing-alt}. Only the Gaussian aggregation part of Appendix~A needs to be modified; the projected local Edgeworth expansion is unchanged except for the shift/strip estimates established below.

\subsection*{B.1. Correlation decay and Gaussian cluster tails}

\begin{lemma}
\label{lem:mix-rem-corr}
Under Assumption~\ref{ass:mixing-alt}, for every $h\ge1$,
\begin{equation}
|\rho(h)|
\le
\sin\{2\pi\alpha(h)\}
\le
2\pi C_\alpha e^{-a_\alpha h}.
\label{eq:mix-rem-corr-decay}
\end{equation}
Consequently,
\begin{equation}
\sum_{h=1}^{\infty}|\rho(h)|
\le
\frac{2\pi C_\alpha e^{-a_\alpha}}{1-e^{-a_\alpha}}
<\infty.
\label{eq:mix-rem-corr-sum}
\end{equation}
Moreover, for every integer $m\ge2$, every index set $I\subset[d]$ with $|I|=m$, and every $t>0$,
\begin{equation}
\Prob(Z_j>t,\ \forall j\in I)
\le
\bar\Phi\!\left(
\sqrt{\frac{m}{1+(m-1)\vartheta_*}}\,
\frac{t}{\sigma}
\right)
\le
\frac{\sigma\sqrt{1+(m-1)\vartheta_*}}{\sqrt{2\pi m}\,t}
\exp\!\left\{
-\frac{m t^2}{2\sigma^2\{1+(m-1)\vartheta_*\}}
\right\}.
\label{eq:mix-rem-cluster-tail}
\end{equation}
In particular, when $m=2$,
\begin{equation}
\Prob(Z_0>t,Z_h>t)
\le
\bar\Phi\!\left(\sqrt{\frac{2}{1+\vartheta_*}}\,\frac{t}{\sigma}\right)
\le
\frac{\sigma\sqrt{1+\vartheta_*}}{\sqrt{4\pi}\,t}
\exp\!\left\{-\frac{t^2}{\sigma^2(1+\vartheta_*)}\right\}.
\label{eq:mix-rem-pair-tail}
\end{equation}
\end{lemma}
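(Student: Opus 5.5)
The plan is to prove the three assertions in order. The first uses the orthant formula for bivariate Gaussians, the second is a geometric series, and the third compares the joint exceedance event with an exceedance of the coordinate sum.

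\emph{Correlation decay.} Fix $h\ge1$ and test the definition of $\alpha(h)$ with $A:=\{Z_0>0\}\in\sigma(Z_j:j\le0)$ and $B:=\{Z_h>0\}\in\sigma(Z_j:j\ge h)$. By stationarity and Sheppard's orthant formula for a centered bivariate Gaussian pair with correlation $\rho(h)$,
\[
\Prob(A\cap B)-\Prob(A)\Prob(B)=\frac14+\frac{\arcsin\rho(h)}{2\pi}-\frac14=\frac{\arcsin\rho(h)}{2\pi}.
\]
Hence $|\arcsin\rho(h)|\le 2\pi\alpha(h)$. A strong-mixing coefficient always satisfies $\alpha(h)\le 1/4$, so $2\pi\alpha(h)\le\pi/2$. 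Since $\sin$ is increasing and odd on $[-\pi/2,\pi/2]$, this gives $|\rho(h)|\le\sin\{2\pi\alpha(h)\}$. The elementary bound $\sin x\le x$ together with Assumption~\ref{ass:mixing-alt} yields $\sin\{2\pi\alpha(h)\}\le 2\pi C_\alpha e^{-a_\alpha h}$. Summing this geometric bound over $h\ge1$ gives \eqref{eq:mix-rem-corr-sum}.

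\emph{Cluster tail.} For $|I|=m$, the event $\{Z_j>t,\ \forall j\in I\}$ is contained in $\{S_I>mt\}$, where $S_I:=\sum_{j\in I}Z_j$ is centered Gaussian. By stationarity,
\[
\Var(S_I)=\sigma^2\Bigl(m+\sum_{j\ne \ell\in I}\rho(j-\ell)\Bigr).
\]
Using $\sup_{h\ge1}|\rho(h)|\le\vartheta_*$ from \eqref{eq:mix-rem-vartheta}, we get $\Var(S_I)\le \sigma^2 m\{1+(m-1)\vartheta_*\}=:v_m$. A centered normal tail $\Prob(S_I>mt)=\bar\Phi(mt/\sqrt{\Var S_I})$ is nondecreasing in the variance for $t>0$, and the degenerate case $\Var S_I=0$ gives probability $0$. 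Therefore
\[
\Prob(Z_j>t,\ \forall j\in I)\le\bar\Phi\bigl(mt/\sqrt{v_m}\bigr)=\bar\Phi\!\left(\sqrt{\tfrac{m}{1+(m-1)\vartheta_*}}\,\tfrac{t}{\sigma}\right).
\]
The second inequality in \eqref{eq:mix-rem-cluster-tail} is the Mills-ratio bound $\bar\Phi(x)\le \ph(x)/x$ applied at $x=\sqrt{m/\{1+(m-1)\vartheta_*\}}\,t/\sigma$. The pair bound \eqref{eq:mix-rem-pair-tail} is the special case $m=2$, after simplifying $\sqrt{2\pi\cdot 2}=\sqrt{4\pi}$.

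The only delicate step is the first one. It requires choosing events that make the mixing coefficient dominate the correlation, and checking that $2\pi\alpha(h)$ stays in the range where $\sin$ is monotone. Both points are settled by the orthant formula and the universal bound $\alpha\le1/4$. The remaining steps are routine Gaussian tail calculus.
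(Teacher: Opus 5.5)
Your proposal is correct and follows essentially the same route as the paper: the orthant identity applied to $\{Z_0>0\}$ and $\{Z_h>0\}$ bounds $|\arcsin\rho(h)|$ by $2\pi\alpha(h)$, a geometric series gives the summability, and the inclusion $\{Z_j>t\ \forall j\in I\}\subset\{\sum_{j\in I}Z_j>mt\}$, combined with the $\vartheta_*$ variance bound and Mills' ratio, gives the cluster tail. Your explicit check that $\alpha(h)\le 1/4$, which keeps $2\pi\alpha(h)$ in $[0,\pi/2]$ where $\sin$ is increasing, fills a small step the paper leaves implicit.
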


\begin{proof}
For standard Gaussian variables $U,V$ with correlation $r$, one has
\[
\Prob(U>0,V>0)-\frac14=\frac{1}{2\pi}\arcsin(r).
\]
Therefore, with
\[
\mathcal F_0:=\sigma(Z_j:j\le0),
\qquad
\mathcal G_h:=\sigma(Z_j:j\ge h),
\]
we obtain
\[
\alpha(h)
\ge
\left|\Prob(Z_0>0,Z_h>0)-\Prob(Z_0>0)\Prob(Z_h>0)\right|
=
\frac{1}{2\pi}|\arcsin\rho(h)|.
\]
Hence
\[
|\rho(h)|\le \sin\{2\pi\alpha(h)\}\le 2\pi\alpha(h)\le 2\pi C_\alpha e^{-a_\alpha h},
\]
which proves \eqref{eq:mix-rem-corr-decay}. Summing the geometric series yields \eqref{eq:mix-rem-corr-sum}.

Now fix $I=\{i_1,\dots,i_m\}\subset[d]$ with $|I|=m$ and write
\[
U_r:=Z_{i_r}/\sigma,
\qquad
1\le r\le m.
\]
By \eqref{eq:mix-rem-vartheta}, every off-diagonal correlation of $(U_1,\dots,U_m)^\top$ is bounded above by $\vartheta_*$. Therefore
\[
\Var\!\left(\sum_{r=1}^m U_r\right)
\le
m+m(m-1)\vartheta_*
=
m\{1+(m-1)\vartheta_*\}.
\]
Since
\[
\{U_r>u,\ \forall r\le m\}
\subset
\left\{\sum_{r=1}^m U_r>mu\right\},
\]
we obtain
\[
\Prob(U_r>u,\ \forall r\le m)
\le
\bar\Phi\!\left(
\sqrt{\frac{m}{1+(m-1)\vartheta_*}}\,
u
\right).
\]
Applying Mills' ratio proves \eqref{eq:mix-rem-cluster-tail}, and \eqref{eq:mix-rem-pair-tail} is the case $m=2$.
\end{proof}

\subsection*{B.2. Bonferroni remainder for the $k$th exceedance event}

\begin{lemma}
\label{lem:mix-rem-bonf}
For every integer $k\ge1$, every integer $m\ge k$, and every nonnegative integer-valued random variable $N$,
\begin{equation}
\left|
\ind\{N\ge k\}
-
\sum_{s=k}^{m}(-1)^{s-k}\binom{s-1}{k-1}\binom{N}{s}
\right|
\le
\binom{m}{k-1}\binom{N}{m+1}.
\label{eq:mix-rem-bonf}
\end{equation}
Consequently,
\begin{equation}
\left|
\Prob(N\ge k)
-
\sum_{s=k}^{m}(-1)^{s-k}\binom{s-1}{k-1}\E\binom{N}{s}
\right|
\le
\binom{m}{k-1}\E\binom{N}{m+1}.
\label{eq:mix-rem-bonf-exp}
\end{equation}
\end{lemma}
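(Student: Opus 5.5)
The plan is to prove the deterministic inequality \eqref{eq:mix-rem-bonf} for every fixed value $N=n\in\{0,1,2,\dots\}$ through an explicit closed form for the remainder. The expectation version \eqref{eq:mix-rem-bonf-exp} then follows by taking expectations and using $|\E Y|\le\E|Y|$.

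\textbf{Setup.} Write
\[
S_m(n):=\sum_{s=k}^{m}(-1)^{s-k}\binom{s-1}{k-1}\binom{n}{s},
\qquad
R_m(n):=\ind\{n\ge k\}-S_m(n).
\]
If $n\le m$, every term with $s>n$ vanishes. Then $S_m(n)$ is the full sum, which equals $\ind\{n\ge k\}$ by Lemma~\ref{lem:comb}, so $R_m(n)=0=\binom{n}{m+1}$. It therefore remains to treat $n\ge m+1$; there $\ind\{n\ge k\}=1$ since $m\ge k$.

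\textbf{Step 1: first differences in $n$.} Using Pascal's rule $\binom{n}{s}-\binom{n-1}{s}=\binom{n-1}{s-1}$,
\[
S_m(n)-S_m(n-1)=\sum_{s=k}^{m}(-1)^{s-k}\binom{s-1}{k-1}\binom{n-1}{s-1}.
\]
Next apply the subset-of-a-subset identity $\binom{u}{k-1}\binom{n-1}{u}=\binom{n-1}{k-1}\binom{n-k}{u-k+1}$ with $u=s-1$. Then use the standard partial alternating sum $\sum_{r=0}^{q}(-1)^r\binom{a}{r}=(-1)^q\binom{a-1}{q}$. Together these give
\[
S_m(n)-S_m(n-1)=(-1)^{m-k}\binom{n-1}{k-1}\binom{n-k-1}{m-k}
\qquad (n\ge k+1).
\]
Since $R_m(n)$ vanishes for $n\le m$ and the indicator is constant for $n\ge k$, telescoping from $n=m$ gives
\[
R_m(n)=(-1)^{m-k+1}\sum_{j=m+1}^{n}\binom{j-1}{k-1}\binom{j-k-1}{m-k}.
\]
The key feature is that all summands share one sign, so no cancellation has to be controlled.

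\textbf{Step 2: termwise bound.} For $j\ge m+1$, use $\binom{a-1}{b-1}=\tfrac{b}{a}\binom{a}{b}\le\binom{a}{b}$ with $a=j-k$ and $b=m-k+1$. Then apply the identity $\binom{j-1}{k-1}\binom{j-k}{m-k+1}=\binom{j-1}{m}\binom{m}{k-1}$, which again counts nested subsets. This yields
\[
\binom{j-1}{k-1}\binom{j-k-1}{m-k}\le\binom{m}{k-1}\binom{j-1}{m}.
\]

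\textbf{Step 3: sum.} Summing over $j$ with the hockey-stick identity $\sum_{j=m+1}^{n}\binom{j-1}{m}=\binom{n}{m+1}$ gives
\[
|R_m(n)|\le\binom{m}{k-1}\binom{n}{m+1},
\]
which is \eqref{eq:mix-rem-bonf}.

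\textbf{Sanity check and main obstacle.} For $k=m=1$ and $N=10$, the formula gives $R_1(10)=1-10=-9$, and indeed $|{-9}|\le\binom{10}{2}=45$. The main thing to get right is Step 1: the alternating partial-sum identity and its edge case $j=k$. For that case I will use the convention $\binom{-1}{0}=1$, although only $j\ge m+1\ge k+1$ actually enters the remainder. The remaining steps are routine binomial bookkeeping.
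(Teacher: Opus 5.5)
Your proof is correct, and it takes a different route from the paper. The paper argues in two lines. It cites the generalized Bonferroni inequalities, which say that the consecutive truncations $S_m(N)$ and $S_{m+1}(N)$ lie on opposite sides of $\ind\{N\ge k\}$. It then reads off $|R_m(N)|\le |S_{m+1}(N)-S_m(N)|=\binom{m}{k-1}\binom{N}{m+1}$.

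You prove everything from scratch. Differencing in $n$ (Pascal's rule, the nested-subset identity, the partial alternating sum) gives the explicit single-signed remainder $R_m(n)=(-1)^{m-k+1}\sum_{j=m+1}^{n}\binom{j-1}{k-1}\binom{j-k-1}{m-k}$. A termwise comparison followed by the hockey-stick identity then gives the bound. The identities and their ranges of validity all check out: $a=j-k\ge1$, $1\le b=m-k+1\le a$, and $j-1\ge m\ge k$ in the telescoping.

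What your route buys is self-containedness. It actually proves the Bonferroni bracketing that the paper takes as given. Your Step~2 also shows the bound is lossy, by the factor $(m-k+1)/(j-k)\le1$ in each term.

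Once you have the sign $(-1)^{m-k+1}$ of $R_m(n)$ for every $m\ge k$, Steps~2--3 are unnecessary. The remainders $R_m(n)$ and $R_{m+1}(n)$ have opposite signs, so $|R_m(n)|\le |R_m(n)-R_{m+1}(n)|=\binom{m}{k-1}\binom{n}{m+1}$. That is exactly the paper's argument.

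Your sign formula also shows that the orientation written in the paper's proof is reversed. For $m-k$ even one has $S_m(N)\ge \ind\{N\ge k\}\ge S_{m+1}(N)$; for example, with $k=m=1$, $S_1(N)=N$. The paper instead writes $S_m\le \ind\le S_{m+1}$. The slip is harmless there, because only the bracketing is used.
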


\begin{proof}
The generalized Bonferroni inequalities for the event $\{N\ge k\}$ imply
\[
\sum_{s=k}^{m}(-1)^{s-k}\binom{s-1}{k-1}\binom{N}{s}
\le \ind\{N\ge k\}
\le
\sum_{s=k}^{m+1}(-1)^{s-k}\binom{s-1}{k-1}\binom{N}{s}
\]
when $m-k$ is even, and the inequalities are reversed when $m-k$ is odd. In either case, the difference between the two adjacent truncations equals
\[
\binom{m}{k-1}\binom{N}{m+1},
\]
which proves \eqref{eq:mix-rem-bonf}. Taking expectations yields \eqref{eq:mix-rem-bonf-exp}.
\end{proof}

\subsection*{B.3. Block construction and reduction to block exceedances}

Let
\[
s_d:=d-q_d(m_d+\ell_d),
\qquad
0\le s_d<m_d+\ell_d.
\]
Define the main blocks and gaps by
\[
I_r:=\{(r-1)(m_d+\ell_d)+1,\dots,(r-1)(m_d+\ell_d)+m_d\},
\qquad r=1,\dots,q_d,
\]
\[
J_r:=\{(r-1)(m_d+\ell_d)+m_d+1,\dots,r(m_d+\ell_d)\},
\qquad r=1,\dots,q_d,
\]
and define the remainder interval
\[
R_d:=\{q_d(m_d+\ell_d)+1,\dots,d\}
\]
when $s_d\ge1$.
For $t\in\R$, set
\[
B_r(t):=\left\{\max_{j\in I_r} Z_j>t\right\},
\qquad
Y_r(t):=\ind\{B_r(t)\},
\qquad
S_d(t):=\sum_{r=1}^{q_d}Y_r(t),
\qquad
N_d(t):=\sum_{j=1}^d \ind\{Z_j>t\}.
\]
Also define
\[
q(t):=\Prob(B_1(t)),
\qquad
\mu_d(t):=q_d q(t).
\]

\begin{lemma}
\label{lem:mix-rem-block-bad}
For every $t\in\R$,
\begin{equation}
0\le m_d p(t)-q(t)
\le
\binom{m_d}{2}
\bar\Phi\!\left(\sqrt{\frac{2}{1+\vartheta_*}}\,\frac{t}{\sigma}\right).
\label{eq:mix-rem-q-bound}
\end{equation}
Moreover,
\begin{align}
\Prob\{N_d(t)\neq S_d(t)\}
&\le
(q_d\ell_d+s_d)p(t)
+
q_d\binom{m_d}{2}
\bar\Phi\!\left(\sqrt{\frac{2}{1+\vartheta_*}}\,\frac{t}{\sigma}\right),
\label{eq:mix-rem-bad-event}
\\
|\mu_d(t)-\lambda(t)|
&\le
(q_d\ell_d+m_d+\ell_d)p(t)
+
q_d\binom{m_d}{2}
\bar\Phi\!\left(\sqrt{\frac{2}{1+\vartheta_*}}\,\frac{t}{\sigma}\right).
\label{eq:mix-rem-mu-lambda}
\end{align}
\end{lemma}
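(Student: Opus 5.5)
The argument is elementary: a union bound over within-block pairs, combined with stationarity and the pair-tail estimate \eqref{eq:mix-rem-pair-tail} of Lemma~\ref{lem:mix-rem-corr}.

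\emph{Bound on $q(t)$.} Write $B_1(t)=\bigcup_{j\in I_1}\{Z_j>t\}$. The union bound gives $q(t)\le\sum_{j\in I_1}\Prob(Z_j>t)=m_dp(t)$, using stationarity. For the reverse direction I would use the second Bonferroni inequality,
\[
q(t)\ge m_dp(t)-\sum_{\{j,j'\}\subset I_1}\Prob(Z_j>t,Z_{j'}>t).
\]
Each pair probability is bounded by \eqref{eq:mix-rem-pair-tail}. By stationarity each pair probability equals $\Prob(Z_0>t,Z_h>t)$ with $h=|j-j'|\ge1$, so the bound applies uniformly. Since there are $\binom{m_d}{2}$ pairs, this gives \eqref{eq:mix-rem-q-bound}.

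\emph{Bound on the bad event.} I would show that $N_d(t)=S_d(t)$ whenever the following two conditions hold:
\begin{itemize}
\item no coordinate in $\bigcup_r J_r\cup R_d$ exceeds $t$;
\item no main block $I_r$ contains two coordinates exceeding $t$.
\end{itemize}
Under these conditions every exceedance lies in some main block, and each block contributes at most one exceedance, so its count equals its indicator $Y_r(t)$. The complement of this event is therefore covered by two kinds of sets. The first is a single exceedance in the gaps or in the remainder. There are $q_d\ell_d+s_d$ such coordinates, each with probability $p(t)$. The second is a pair exceedance inside some block. There are $q_d\binom{m_d}{2}$ such pairs, each bounded via \eqref{eq:mix-rem-pair-tail}. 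A union bound then yields \eqref{eq:mix-rem-bad-event}.

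\emph{Comparison of $\mu_d$ and $\lambda$.} I would decompose
\[
\mu_d-\lambda=q_d\{q(t)-m_dp(t)\}-(d-q_dm_d)p(t).
\]
For the first term, \eqref{eq:mix-rem-q-bound} bounds $q_d|q-m_dp|$ by $q_d\binom{m_d}{2}\bar\Phi(\cdot)$. For the second term, note that $d-q_dm_d=q_d\ell_d+s_d$ and that $s_d<m_d+\ell_d$, which bounds it by $(q_d\ell_d+m_d+\ell_d)p(t)$. The triangle inequality then gives \eqref{eq:mix-rem-mu-lambda}.

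\emph{Main obstacle.} There is no real obstacle here. The only points to check carefully are that \eqref{eq:mix-rem-pair-tail} applies to arbitrary lags via stationarity, and that the index bookkeeping is correct, namely $d=q_d(m_d+\ell_d)+s_d$.
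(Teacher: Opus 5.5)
Your proposal is correct and follows the paper's proof essentially step for step. Both use the same two Bonferroni bounds plus \eqref{eq:mix-rem-pair-tail} for $q(t)$, the same covering of $\{N_d(t)\neq S_d(t)\}$ by single exceedances in gaps or the remainder and pair exceedances inside main blocks, and the same decomposition of $\mu_d(t)-\lambda(t)$ via $d-q_dm_d=q_d\ell_d+s_d\le q_d\ell_d+m_d+\ell_d$. One point is left implicit in both your proposal and the paper: \eqref{eq:mix-rem-pair-tail} is stated only for $t>0$. For $t\le0$ the needed pair bound follows directly from $\Prob(Z_0>t,Z_h>t)\le\bar\Phi(t/\sigma)\le\bar\Phi\bigl(\sqrt{2/(1+\vartheta_*)}\,t/\sigma\bigr)$, because $2/(1+\vartheta_*)\ge1$, so the lemma does hold for every $t\in\R$.
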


\begin{proof}
The first Bonferroni inequality gives
\[
q(t)=\Prob\Bigl(\bigcup_{j\in I_1}\{Z_j>t\}\Bigr)\le m_d p(t),
\]
and the second Bonferroni inequality yields
\[
q(t)
\ge
m_d p(t)-\sum_{1\le a<b\le m_d}\Prob(Z_a>t,Z_b>t).
\]
Using \eqref{eq:mix-rem-pair-tail} proves \eqref{eq:mix-rem-q-bound}.

If $N_d(t)\neq S_d(t)$, then either at least one exceedance occurs in a gap or in $R_d$, or some main block contains at least two exceedances. Therefore
\[
\Prob\{N_d(t)\neq S_d(t)\}
\le
\sum_{r=1}^{q_d}\Prob\left\{\max_{j\in J_r}Z_j>t\right\}
+
\Prob\left\{\max_{j\in R_d}Z_j>t\right\}
+
\sum_{r=1}^{q_d}\Prob\left\{\sum_{j\in I_r}\ind\{Z_j>t\}\ge2\right\}.
\]
Now
\[
\Prob\left\{\max_{j\in J_r}Z_j>t\right\}\le \ell_d p(t),
\qquad
\Prob\left\{\max_{j\in R_d}Z_j>t\right\}\le s_d p(t),
\]
and, by the union bound and \eqref{eq:mix-rem-pair-tail},
\[
\Prob\left\{\sum_{j\in I_r}\ind\{Z_j>t\}\ge2\right\}
\le
\sum_{1\le a<b\le m_d}\Prob(Z_a>t,Z_b>t)
\le
\binom{m_d}{2}
\bar\Phi\!\left(\sqrt{\frac{2}{1+\vartheta_*}}\,\frac{t}{\sigma}\right).
\]
This proves \eqref{eq:mix-rem-bad-event}. Finally,
\[
|\mu_d(t)-\lambda(t)|
\le
|q_d q(t)-q_d m_d p(t)|
+
|q_d m_d-d|\,p(t),
\]
and
\[
|q_d m_d-d|
\le
q_d\ell_d+m_d+\ell_d.
\]
Combining these displays with \eqref{eq:mix-rem-q-bound} proves \eqref{eq:mix-rem-mu-lambda}.
\end{proof}

\begin{lemma}
\label{lem:mix-rem-block-factorial}
Let $s\in\{1,\dots,k_0+1\}$. Then, for every $t\in\R$,
\begin{equation}
\left|
\sum_{1\le r_1<\cdots<r_s\le q_d}
\Prob\Bigl(\bigcap_{j=1}^s B_{r_j}(t)\Bigr)
-
\binom{q_d}{s}q(t)^s
\right|
\le
s2^{s-1}\binom{q_d}{s}\alpha(\ell_d).
\label{eq:mix-rem-block-factorial-1}
\end{equation}
Consequently,
\begin{equation}
\left|
\E\binom{S_d(t)}{s}-\frac{\mu_d(t)^s}{s!}
\right|
\le
C_s\left\{
q_d^{-1}\mu_d(t)^s
+
d^s\alpha(\ell_d)
\right\},
\label{eq:mix-rem-block-factorial-2}
\end{equation}
where $C_s=s2^{s-1}+s!$ is deterministic.
\end{lemma}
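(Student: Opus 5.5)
The plan is to prove \eqref{eq:mix-rem-block-factorial-1} by a telescoping decoupling argument across the separated blocks, and then to convert it into \eqref{eq:mix-rem-block-factorial-2} by elementary counting.

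\emph{Step 1: the blocks are $\ell_d$-separated.} Fix $1\le r_1<\cdots<r_s\le q_d$. For consecutive chosen blocks, every index of $I_{r_j}$ precedes every index of $I_{r_{j+1}}$ by at least $\ell_d+1$, because the gap $J_{r_j}$ of length $\ell_d$ lies between them. By stationarity, a shift of indices shows that the strong-mixing coefficient between $\sigma(Z_i:i\in I_{r_1}\cup\cdots\cup I_{r_j})$ and $\sigma(Z_i:i\in I_{r_{j+1}}\cup\cdots\cup I_{r_s})$ is at most $\alpha(\ell_d)$. This holds because the first $\sigma$-field is contained in a past $\sigma$-field and the second in a future $\sigma$-field separated by at least $\ell_d$, and $\alpha(\cdot)$ is nonincreasing in the lag.

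\emph{Step 2: induction on $s$.} I will show
\[
\Bigl|\Prob\Bigl(\bigcap_{j=1}^s B_{r_j}\Bigr)-q^s\Bigr|\le (s-1)\alpha(\ell_d).
\]
For $s=1$ this is an identity, since $\Prob(B_{r})=q(t)$ by stationarity. For the induction step, take $A=\bigcap_{j<s}B_{r_j}$ and $B=B_{r_s}$. Then
\[
|\Prob(A\cap B)-\Prob(A)q|\le\alpha(\ell_d),
\]
and by the induction hypothesis
\[
|\Prob(A)q-q^s|\le q\,(s-2)\alpha(\ell_d)\le(s-2)\alpha(\ell_d).
\]
Adding these gives the claim, and $(s-1)\le s2^{s-1}$. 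There are $\binom{q_d}{s}$ index tuples, so summing over them yields \eqref{eq:mix-rem-block-factorial-1}.

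\emph{Step 3: pass to \eqref{eq:mix-rem-block-factorial-2}.} Since $S_d$ is a sum of indicators,
\[
\E\binom{S_d}{s}=\sum_{r_1<\cdots<r_s}\Prob\Bigl(\bigcap_{j} B_{r_j}\Bigr).
\]
Use $\binom{q_d}{s}\le q_d^s/s!\le d^s$ to bound the mixing term by $s2^{s-1}d^s\alpha(\ell_d)$. It remains to compare $\binom{q_d}{s}q^s$ with $\mu_d^s/s!=q_d^sq^s/s!$. We have
\[
0\le \frac{q_d^s}{s!}-\binom{q_d}{s}=\frac{q_d^s}{s!}\Bigl\{1-\prod_{i<s}\bigl(1-i/q_d\bigr)\Bigr\}\le\frac{q_d^s}{s!}\cdot\frac{s^2}{2q_d}.
\]
Multiplying by $q^s$ gives a bound of $\frac{s^2}{2\,s!}\,q_d^{-1}\mu_d^s\le s!\,q_d^{-1}\mu_d^s$ for $s\ge1$. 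One can check $s^2/(2s!)\le s!$, or simply absorb it into $C_s$. If $s>q_d$, then $\binom{q_d}{s}=0$ and $\mu_d^s/s!\le q_d^{-1}\mu_d^s\cdot q_d^s/s!$ with $q_d<s$, which is again bounded by $s!\,q_d^{-1}\mu_d^s$ since $q_d^{s}\le s^{s}$. This case therefore also fits the constant $C_s$, possibly after enlarging it.

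The only delicate point is Step 1: justifying that the mixing coefficient for a union of several separated blocks, rather than a single half-line, is still controlled by $\alpha(\ell_d)$. This follows from monotonicity of $\sigma$-fields and stationarity, so I do not expect a genuine obstacle.
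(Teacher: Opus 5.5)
Your proof is correct. The passage from the first display to the second matches the paper's: both use $\E\binom{S_d(t)}{s}=\sum_{r_1<\cdots<r_s}\Prob\bigl(\bigcap_j B_{r_j}(t)\bigr)$ and compare $\binom{q_d}{s}$ with $q_d^s/s!$. You verify $|\binom{q_d}{s}-q_d^s/s!|\le s!\,q_d^{s-1}$ explicitly, whereas the paper only asserts it.

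For the first display you take a different route.

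\emph{The paper's route.} It passes to complements $A_j=B_{r_j}(t)^c=\{\max_{u\in I_{r_j}}Z_u\le t\}$. It then applies Lemma~3.2.2 of Leadbetter--Lindgren--Rootz\'en to every subfamily $L\subset[s]$, which gives errors $(|L|-1)\alpha(\ell_d)$. Finally it recombines via inclusion--exclusion. The factor $s2^{s-1}$ in the statement is exactly the bound $\sum_{m=2}^{s}\binom{s}{m}(m-1)\le s2^{s-1}$ produced by that recombination.

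\emph{Your route.} You peel off one block at a time and apply the mixing inequality directly to the exceedance events $B_{r_j}(t)$. This is legitimate because $\alpha(\ell)$ is a supremum over all events of the past and future $\sigma$-fields, not only events of the form $\{\max\le u\}$. The required separation follows from stationarity and the gap $J_r$, which gives a lag of $\ell_d+1$ between consecutive main blocks. Your route avoids inclusion--exclusion altogether and yields the sharper error $(s-1)\alpha(\ell_d)\le s2^{s-1}\alpha(\ell_d)$.

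\emph{What each buys.} The paper's detour through complements is what one would need under a Leadbetter-type $D(u_n)$ condition, which controls only maxima events. Under the full strong-mixing assumption used here, your argument is shorter and gives a better constant.

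One small simplification: in the case $s>q_d$, no enlargement of $C_s$ is needed. Both $\E\binom{S_d(t)}{s}$ and $\binom{q_d}{s}$ vanish, and
\[
\frac{\mu_d(t)^s}{s!}=q_d^{-1}\mu_d(t)^s\cdot\frac{q_d}{s!}\le q_d^{-1}\mu_d(t)^s ,
\]
because $q_d<s\le s!$.
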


\begin{proof}
Fix $1\le r_1<\cdots<r_s\le q_d$. Put
\[
A_j(t):=B_{r_j}(t)^c=\left\{\max_{u\in I_{r_j}} Z_u\le t\right\},
\qquad j=1,\dots,s.
\]
Since the selected main blocks are separated by at least $\ell_d$, repeated application of Lemma~3.2.2 of Leadbetter, Lindgren, and Rootz\'en yields
\begin{equation}
\left|
\Prob\Bigl(\bigcap_{j\in L}A_j(t)\Bigr)-\prod_{j\in L}\Prob\{A_j(t)\}
\right|
\le
(|L|-1)\alpha(\ell_d)
\label{eq:mix-rem-block-factorial-aux}
\end{equation}
for every nonempty $L\subset[s]$. The inclusion--exclusion identity gives
\[
\Prob\Bigl(\bigcap_{j=1}^s B_{r_j}(t)\Bigr)
=
\sum_{L\subset[s]}(-1)^{|L|}\Prob\Bigl(\bigcap_{j\in L}A_j(t)\Bigr),
\]
and the same identity with each probability replaced by the corresponding product equals $q(t)^s$, since $\Prob\{A_j(t)\}=1-q(t)$. Therefore
\begin{align*}
&\left|
\Prob\Bigl(\bigcap_{j=1}^s B_{r_j}(t)\Bigr)-q(t)^s
\right|\\
&\le
\sum_{L\subset[s]}
\left|
\Prob\Bigl(\bigcap_{j\in L}A_j(t)\Bigr)-\prod_{j\in L}\Prob\{A_j(t)\}
\right|\\
&\le
\sum_{m=2}^{s}\binom{s}{m}(m-1)\alpha(\ell_d)
\le
s2^{s-1}\alpha(\ell_d).
\end{align*}
Summing over the $\binom{q_d}{s}$ choices of $(r_1,\dots,r_s)$ gives \eqref{eq:mix-rem-block-factorial-1}.

Now
\[
\E\binom{S_d(t)}{s}
=
\sum_{1\le r_1<\cdots<r_s\le q_d}
\Prob\Bigl(\bigcap_{j=1}^s B_{r_j}(t)\Bigr).
\]
Therefore
\[
\E\binom{S_d(t)}{s}
=
\binom{q_d}{s}q(t)^s+R_{d,s}(t),
\qquad
|R_{d,s}(t)|
\le
s2^{s-1}\binom{q_d}{s}\alpha(\ell_d).
\]
Also,
\[
\left|\binom{q_d}{s}-\frac{q_d^s}{s!}\right|\le s! \, q_d^{s-1},
\]
so
\[
\left|
\binom{q_d}{s}q(t)^s-\frac{\mu_d(t)^s}{s!}
\right|
\le
s!\,q_d^{-1}\mu_d(t)^s.
\]
Finally,
\[
\binom{q_d}{s}\alpha(\ell_d)\le q_d^s\alpha(\ell_d)\le d^s\alpha(\ell_d).
\]
Combining the last three displays proves \eqref{eq:mix-rem-block-factorial-2}.
\end{proof}

\subsection*{B.4. Direct Poisson approximation on the quantile window}

\begin{lemma}
\label{lem:mix-rem-window-bounds}
If $\lambda(t)\le 2\Lambda_{k,\epsilon}$, then
\begin{equation}
\frac{t^2}{\sigma^2}
\ge
2\log d
-
3\log\log d
-
C_{k,\epsilon},
\label{eq:mix-rem-thresh-lower}
\end{equation}
and hence
\begin{equation}
\bar\Phi\!\left(\sqrt{\frac{2}{1+\vartheta_*}}\,\frac{t}{\sigma}\right)
\le
C_{k,\epsilon}
(\log d)^{-1/2}
d^{-(1+\beta_*)}.
\label{eq:mix-rem-pair-window}
\end{equation}
Consequently,
\begin{align}
\Prob\{N_d(t)\neq S_d(t)\}
&\le
C_{k,\epsilon}\eta_{1,d},
\label{eq:mix-rem-bad-window}
\\
|\mu_d(t)-\lambda(t)|
&\le
C_{k,\epsilon}\eta_{1,d}.
\label{eq:mix-rem-mu-window}
\end{align}
\end{lemma}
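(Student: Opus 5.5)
The plan has three steps: get a threshold lower bound from Mills' ratio, convert it into the pair-tail estimate by an exponent comparison, and then substitute into Lemma~\ref{lem:mix-rem-block-bad} and compare term by term with $\eta_{1,d}$.

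\textbf{Threshold lower bound.} Since $\lambda(t)=d\,\bar\Phi(t/\sigma)\le 2\Lambda_{k,\epsilon}$, we have $\bar\Phi(t/\sigma)\le 2\Lambda_{k,\epsilon}/d$.

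If $t/\sigma\le 1$, then $\bar\Phi(t/\sigma)\ge\bar\Phi(1)$, which forces $d\le C_{k,\epsilon}$. For such $d$, \eqref{eq:mix-rem-thresh-lower} holds trivially once $C_{k,\epsilon}$ is large; when $t\le 0$, use $t^2\ge0$ and absorb $2\log d$ into the constant.

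Otherwise $x:=t/\sigma>1$, and the lower Mills bound gives $\bar\Phi(x)\ge \frac{x}{1+x^2}\ph(x)\ge \frac{1}{2x\sqrt{2\pi}}e^{-x^2/2}$. Hence
\[
e^{-x^2/2}\le C\,x\,d^{-1},
\qquad\text{that is,}\qquad
x^2\ge 2\log d-2\log x-C.
\]
Bootstrapping once from the crude bound $x^2\le x^4$, or arguing by cases on whether $x^2\le 4\log d$, gives $2\log x\le \log\log d+C$. This yields \eqref{eq:mix-rem-thresh-lower}; the stated $3\log\log d$ leaves room.

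\textbf{Pair-tail estimate \eqref{eq:mix-rem-pair-window}.} Apply the upper Mills bound to $y:=\sqrt{2/(1+\vartheta_*)}\,x$:
\[
\bar\Phi(y)\le \frac{1}{y\sqrt{2\pi}}\,e^{-y^2/2}=\frac{1}{y\sqrt{2\pi}}\exp\!\Bigl(-\frac{x^2}{1+\vartheta_*}\Bigr).
\]
Insert the lower bound on $x^2$. Since $2/(1+\vartheta_*)=1+\beta_*$, the exponent is at most
\[
-(1+\beta_*)\log d+\frac{3}{1+\vartheta_*}\log\log d+C.
\]
The prefactor $1/y$ contributes $(\log d)^{-1/2}$.

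This is the delicate point. The polylog factor $(\log d)^{3/(1+\vartheta_*)}$ can exceed the claimed $(\log d)^{-1/2}$ decay, so the estimate as written may need the exponent loosened. I will first try the sharper threshold bound $x^2\ge 2\log d-\log\log d-C$ from the precise Mills asymptotic. That gives the factor $(\log d)^{1/(1+\vartheta_*)-1/2}$, which is still not $(\log d)^{-1/2}$. So the literal polylog power may be off. I will carry an extra $\log$ power into $\eta_{1,d}$, or absorb it using the slack between $d^{-3\beta_*/4}$ and the true decay $d^{-\beta_*}$ available after multiplying by $m_d^2q_d$.

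\textbf{Consequences \eqref{eq:mix-rem-bad-window} and \eqref{eq:mix-rem-mu-window}.} Use Lemma~\ref{lem:mix-rem-block-bad} with $p(t)\le 2\Lambda_{k,\epsilon}/d$ and $q_d\le d/m_d$.

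The gap and remainder terms give
\[
(q_d\ell_d+s_d)\,p(t)\le C\Bigl(\frac{\ell_d}{m_d}+\frac{m_d+\ell_d}{d}\Bigr),
\]
and the same bound covers the term $(q_d\ell_d+m_d+\ell_d)\,p(t)$ in \eqref{eq:mix-rem-mu-lambda}.

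The cluster term satisfies
\[
q_d\binom{m_d}{2}\bar\Phi(\cdot)\le C\,d\,m_d\,(\log d)^{a}\,d^{-1-\beta_*}=C\,m_d\,d^{-\beta_*}(\log d)^{a}.
\]
With $m_d\asymp d^{\beta_*/4}$ this is $C\,d^{-3\beta_*/4}(\log d)^{a}$, which matches the last summand of $\eta_{1,d}$ up to the polylog issue noted above.

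Summing the three contributions gives both bounds with $C_{k,\epsilon}\eta_{1,d}$.
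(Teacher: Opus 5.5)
Your route is the paper's: a Mills lower bound for the threshold, substitution into the pair tail \eqref{eq:mix-rem-pair-tail}, then Lemma~\ref{lem:mix-rem-block-bad} with $p(t)\le 2\Lambda_{k,\epsilon}/d$, $q_d\le d/m_d$, $s_d<m_d+\ell_d$.

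\textbf{Threshold and pair-tail steps.} Your threshold step is sound, and more careful than the paper's. The paper bounds $\log(1+t/\sigma)$ by $\log(2+2\log d+C_{k,\epsilon})$, which needs an upper bound on $t$ that the hypothesis $\lambda(t)\le 2\Lambda_{k,\epsilon}$ does not supply. Your case split on $x^2\le 4\log d$ avoids this and gives $\log\log d$ in place of $3\log\log d$.

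You are also right that \eqref{eq:mix-rem-pair-window} does not follow, and you can say so definitively: it is false. Take $t$ with $\lambda(t)=2\Lambda_{k,\epsilon}$ and $x=t/\sigma$, so $x\ge c\sqrt{\log d}$ by your first step. The upper Mills bound $\bar\Phi(x)\le \phi(x)/x$ gives $e^{-x^2/2}\ge c\,x/d$. The lower Mills bound at $y=\sqrt{1+\beta_*}\,x$ then gives
\[
\bar\Phi(y)\ge \frac{c}{y}\,e^{-(1+\beta_*)x^2/2}\ge c\,x^{\beta_*}d^{-(1+\beta_*)}\ge c\,(\log d)^{\beta_*/2}d^{-(1+\beta_*)}.
\]
This exceeds the claimed $(\log d)^{-1/2}d^{-(1+\beta_*)}$ by a factor $(\log d)^{(1+\beta_*)/2}\to\infty$. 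The paper's one-line ``substituting'' step silently drops this factor. The correct estimate is $C_{k,\epsilon}(\log d)^{\beta_*/2}d^{-(1+\beta_*)}$.

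\textbf{The gap: your final step.} Neither repair you propose yields \eqref{eq:mix-rem-bad-window}--\eqref{eq:mix-rem-mu-window} with $\eta_{1,d}$ as defined.
\begin{itemize}
\item Putting an extra logarithm into $\eta_{1,d}$ changes the statement, and $r_d$ with it.
\item There is no slack between $d^{-\beta_*}$ and $d^{-3\beta_*/4}$. Indeed $q_d\binom{m_d}{2}d^{-(1+\beta_*)}\asymp d\,m_d\,d^{-(1+\beta_*)}=m_d d^{-\beta_*}\asymp d^{-3\beta_*/4}$, so the factor $m_d$ consumes it exactly.
\end{itemize}
So ``summing the three contributions gives both bounds'' is unjustified as written.

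The missing observation is that the cluster term is dominated by a \emph{different} summand of $\eta_{1,d}$. Since $k_0+2\ge 2$, the definition of $\ell_d$ gives $\ell_d\ge 16\log(2d)/a_\alpha$. Together with $m_d\le 2d^{\beta_*/4}$,
\[
\frac{\ell_d}{m_d}\ge \frac{8}{a_\alpha}\,d^{-\beta_*/4}\log(2d),
\qquad
q_d\binom{m_d}{2}\bar\Phi(y)\le C_{k,\epsilon}\,d^{-3\beta_*/4}(\log d)^{\beta_*/2}.
\]
The ratio of the second quantity to the first is $O\bigl(d^{-\beta_*/2}(\log d)^{\beta_*/2-1}\bigr)$, which is bounded and tends to $0$. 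The same holds with the paper's weaker exponent $3/(1+\vartheta_*)-1/2$ in place of $\beta_*/2$. Hence the cluster term is at most $C_{k,\epsilon}\ell_d/m_d\le C_{k,\epsilon}\eta_{1,d}$, and both consequences hold exactly as stated. The summand $d^{-3\beta_*/4}(\log d)^{-1/2}$ of $\eta_{1,d}$ is in fact redundant.

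With this line added, and \eqref{eq:mix-rem-pair-window} replaced by its corrected form, your argument is complete.
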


\begin{proof}
If $\lambda(t)\le 2\Lambda_{k,\epsilon}$, then
\[
d\,\bar\Phi(t/\sigma)\le 2\Lambda_{k,\epsilon}.
\]
Mills' ratio implies
\[
\bar\Phi(u)\ge \frac{1}{\sqrt{2\pi}(1+u)}e^{-u^2/2},
\qquad u>0.
\]
Applying this with $u=t/\sigma$ yields
\[
\frac{1}{\sqrt{2\pi}(1+t/\sigma)}e^{-t^2/(2\sigma^2)}
\le \frac{2\Lambda_{k,\epsilon}}{d}.
\]
Taking logarithms and using $\log(1+t/\sigma)\le \log(2+t^2/\sigma^2)\le \log(2+2\log d+C_{k,\epsilon})$ yields \eqref{eq:mix-rem-thresh-lower}. Substituting \eqref{eq:mix-rem-thresh-lower} into \eqref{eq:mix-rem-pair-tail} proves \eqref{eq:mix-rem-pair-window}.

Now use \eqref{eq:mix-rem-bad-event} and \eqref{eq:mix-rem-mu-lambda}. Since
\[
p(t)=\frac{\lambda(t)}{d}\le \frac{2\Lambda_{k,\epsilon}}{d},
\]
we obtain
\[
(q_d\ell_d+s_d)p(t)
\le
2\Lambda_{k,\epsilon}
\left\{
\frac{q_d\ell_d}{d}
+
\frac{s_d}{d}
\right\}
\le
2\Lambda_{k,\epsilon}
\left\{
\frac{\ell_d}{m_d}
+
\frac{m_d+\ell_d}{d}
\right\},
\]
because $q_d\le d/m_d$ and $s_d<m_d+\ell_d$. Also,
\[
q_d\binom{m_d}{2}
\bar\Phi\!\left(\sqrt{\frac{2}{1+\vartheta_*}}\,\frac{t}{\sigma}\right)
\le
C_{k,\epsilon}\frac{d}{m_d}m_d^2
(\log d)^{-1/2}d^{-(1+\beta_*)}
=
C_{k,\epsilon}d^{-3\beta_*/4}(\log d)^{-1/2}.
\]
Combining these bounds with \eqref{eq:mix-rem-bad-event} and \eqref{eq:mix-rem-mu-lambda} proves \eqref{eq:mix-rem-bad-window} and \eqref{eq:mix-rem-mu-window}.
\end{proof}

\begin{lemma}
\label{lem:mix-rem-poisson}
For every $t$ such that $\lambda(t)\le2\Lambda_{k,\epsilon}$,
\begin{equation}
\left|
\Prob\{S_d(t)\le k-1\}-h_k\bigl(\mu_d(t)\bigr)
\right|
\le
C_{k,\epsilon}
\left\{
q_d^{-1}
+
d^{k_0+1}\alpha(\ell_d)
+
\frac{(3\Lambda_{k,\epsilon})^{k_0+1}}{(k_0+1)!}
\right\}.
\label{eq:mix-rem-poisson-S}
\end{equation}
Consequently,
\begin{equation}
\left|
G_k(t)-h_k\bigl(\lambda(t)\bigr)
\right|
\le
C_{k,\epsilon} r_d.
\label{eq:mix-rem-Gk-poisson}
\end{equation}
\end{lemma}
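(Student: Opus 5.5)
We prove \eqref{eq:mix-rem-poisson-S} by a truncated factorial-moment (Bonferroni) expansion of $\Prob\{S_d(t)\ge k\}$, and then transfer it to $N_d(t)$ using the block-reduction bounds of Lemma~\ref{lem:mix-rem-window-bounds}.

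\emph{Bonferroni truncation for $S_d$.} Fix $t$ with $\lambda(t)\le 2\Lambda_{k,\epsilon}$. Apply Lemma~\ref{lem:mix-rem-bonf} with $N=S_d(t)$ and $m=k_0$:
\[
\Prob\{S_d(t)\ge k\}=\sum_{s=k}^{k_0}(-1)^{s-k}\binom{s-1}{k-1}\E\binom{S_d(t)}{s}+\mathcal E_1,
\qquad |\mathcal E_1|\le \binom{k_0}{k-1}\E\binom{S_d(t)}{k_0+1}.
\]
Next replace each $\E\binom{S_d}{s}$, $s\le k_0+1$, by $\mu_d^s/s!$ using \eqref{eq:mix-rem-block-factorial-2}. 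By \eqref{eq:mix-rem-mu-window} and $\eta_{1,d}\to0$, we have $\mu_d(t)\le \lambda(t)+C\eta_{1,d}\le 3\Lambda_{k,\epsilon}$ for large $d$. The weighted sum of the errors is then at most
\[
\sum_{s\le k_0+1}\binom{s-1}{k-1}C_s\{q_d^{-1}(3\Lambda)^s+d^s\alpha(\ell_d)\}.
\]
Here is the delicate point: $C_s=s2^{s-1}+s!$ grows factorially. For the $q_d^{-1}$ part, the factor $s!$ must be absorbed against $\mu_d^s$. I would instead use the sharper bound $\binom{q_d}{s}\le q_d^s/s!$ together with $q_d^s/s!-\binom{q_d}{s}\le \binom{s}{2}q_d^{s-1}/s!$. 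This gives errors of order $q_d^{-1}s^2(3\Lambda)^s/s!$, which are summable against $\binom{s-1}{k-1}$. For the $\alpha(\ell_d)$ part, \eqref{eq:mix-rem-alpha-ld} provides the huge margin $n^{-8}(2d)^{-8(k_0+2)}$. This margin kills the factors $s2^{s-1}$, $k_0\binom{k_0}{k-1}$ and $d^s$, leaving at most $C d^{k_0+1}\alpha(\ell_d)$. The remainder $\mathcal E_1$ is bounded the same way: $\E\binom{S_d}{k_0+1}\le \mu_d^{k_0+1}/(k_0+1)!+\text{errors}$. The polynomial factor $\binom{k_0}{k-1}\le k_0^{k-1}$ is absorbed by slightly adjusting the constant, or by comparing $(3\Lambda)^{k_0+1}/(k_0+1)!$ with a smaller base. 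This is the main obstacle; if it fails, I would bound it via $(2.9\Lambda)^{k_0}$ and note that the same form of term still appears in $r_d$.

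\emph{Poisson identification.} By Lemma~\ref{lem:comb} applied to a Poisson$(\mu)$ variable, whose factorial moments are $\mu^s/s!$, the full alternating series equals $1-h_k(\mu_d)$. The tail $\sum_{s>k_0}\binom{s-1}{k-1}\mu_d^s/s!$ is $\le C(3\Lambda)^{k_0+1}/(k_0+1)!$. This proves \eqref{eq:mix-rem-poisson-S}.

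\emph{Transfer to $G_k$.} We have $G_k(t)=\Prob\{N_d(t)\le k-1\}$ and $|\Prob\{N_d\le k-1\}-\Prob\{S_d\le k-1\}|\le\Prob\{N_d\ne S_d\}\le C\eta_{1,d}$ by \eqref{eq:mix-rem-bad-window}. Moreover $|h_k'|\le1$, so $|h_k(\mu_d)-h_k(\lambda)|\le|\mu_d-\lambda|\le C\eta_{1,d}$ by \eqref{eq:mix-rem-mu-window}. Summing all contributions gives $C_{k,\epsilon}r_d$, which is \eqref{eq:mix-rem-Gk-poisson}.
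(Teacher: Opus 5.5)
Your proposal is correct and follows the same route as the paper: Bonferroni truncation of $S_d(t)$ at $k_0$, replacement of the block factorial moments by $\mu_d^s/s!$, Poisson identification of the truncated series, and then transfer to $N_d(t)$ via $\Prob\{N_d\neq S_d\}$ and the Lipschitz bound on $h_k$. Your two refinements make rigorous two steps that the paper asserts without comment. The first is the sharper estimate $0\le q_d^s/s!-\binom{q_d}{s}\le\binom{s}{2}q_d^{s-1}/s!$, which keeps the $q_d^{-1}$ errors summable over $s\le k_0+1$; the paper's $C_s=s2^{s-1}+s!$ grows factorially and does not give a constant uniform in $s$. The second is the smaller-base comparison, which is legitimate because $\mu_d\le 2\Lambda_{k,\epsilon}+C\eta_{1,d}$, so a base such as $2.5\Lambda_{k,\epsilon}$ absorbs the factor $\binom{k_0}{k-1}$.
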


\begin{proof}
Set
\[
V_{S,s}(t):=\E\binom{S_d(t)}{s}.
\]
By Lemma~\ref{lem:mix-rem-bonf} with $N=S_d(t)$ and $m=k_0$,
\begin{align}
&\left|
\Prob\{S_d(t)\ge k\}
-
\sum_{s=k}^{k_0}(-1)^{s-k}\binom{s-1}{k-1}V_{S,s}(t)
\right|
\notag\\
&\le
\binom{k_0}{k-1}V_{S,k_0+1}(t).
\label{eq:mix-rem-S-bonf}
\end{align}
By Lemma~\ref{lem:mix-rem-block-factorial}, for each $s\in\{k,\dots,k_0+1\}$,
\[
\left|
V_{S,s}(t)-\frac{\mu_d(t)^s}{s!}
\right|
\le
C_{k,\epsilon}
\left\{
q_d^{-1}
+
d^{k_0+1}\alpha(\ell_d)
\right\},
\]
because $\mu_d(t)\le \lambda(t)+C_{k,\epsilon}\eta_{1,d}\le 3\Lambda_{k,\epsilon}$ for all sufficiently large $d$ by \eqref{eq:mix-rem-mu-window}. Therefore
\begin{align*}
&\left|
\sum_{s=k}^{k_0}(-1)^{s-k}\binom{s-1}{k-1}V_{S,s}(t)
-
\sum_{s=k}^{k_0}(-1)^{s-k}\binom{s-1}{k-1}\frac{\mu_d(t)^s}{s!}
\right|\\
&\le
C_{k,\epsilon}
\left\{
q_d^{-1}
+
d^{k_0+1}\alpha(\ell_d)
\right\}.
\end{align*}
Applying Lemma~\ref{lem:mix-rem-bonf} to a Poisson random variable $\Pi_{\mu_d(t)}\sim\Poi(\mu_d(t))$ gives
\[
\left|
\sum_{s=k}^{k_0}(-1)^{s-k}\binom{s-1}{k-1}\frac{\mu_d(t)^s}{s!}
-
\Prob\{\Pi_{\mu_d(t)}\ge k\}
\right|
\le
\binom{k_0}{k-1}\frac{\mu_d(t)^{k_0+1}}{(k_0+1)!}
\le
C_{k,\epsilon}
\frac{(3\Lambda_{k,\epsilon})^{k_0+1}}{(k_0+1)!}.
\]
Combining the last three displays with \eqref{eq:mix-rem-S-bonf} proves \eqref{eq:mix-rem-poisson-S}.

Finally,
\[
\left|G_k(t)-\Prob\{S_d(t)\le k-1\}\right|
=
\left|\Prob\{N_d(t)\le k-1\}-\Prob\{S_d(t)\le k-1\}\right|
\le
\Prob\{N_d(t)\neq S_d(t)\}
\le
C_{k,\epsilon}\eta_{1,d}
\]
by \eqref{eq:mix-rem-bad-window}, and
\[
\left|h_k\bigl(\mu_d(t)\bigr)-h_k\bigl(\lambda(t)\bigr)\right|
\le
\sup_{0\le u\le 3\Lambda_{k,\epsilon}}|h_k'(u)|\,|\mu_d(t)-\lambda(t)|
\le
C_{k,\epsilon}\eta_{1,d}
\]
by \eqref{eq:mix-rem-mu-window}. Combining these bounds with \eqref{eq:mix-rem-poisson-S} proves \eqref{eq:mix-rem-Gk-poisson}.
\end{proof}

\subsection*{B.5. Threshold scale, shift/strip bounds, and weighted Gaussian bounds}

\begin{lemma}
\label{lem:mix-rem-t-order}
There exist constants $0<c_1<C_1<\infty$ and an integer $d_0$ such that
\begin{equation}
c_1\log d\le t^2\le C_1\log d
\qquad
\text{for every }t\in\Tcal_{k,\epsilon}\text{ and every }d\ge d_0.
\label{eq:mix-rem-t-order}
\end{equation}
\end{lemma}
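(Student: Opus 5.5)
We prove Lemma~\ref{lem:mix-rem-t-order} by mimicking the proof of Lemma~\ref{lem:t-order}, with the Poisson approximation of Lemma~\ref{lem:gaussian-factorial} replaced by Lemma~\ref{lem:mix-rem-poisson}. Under Assumption~\ref{ass:mixing-alt} all marginal variances equal $\sigma^2\in[\underline\sigma^2,\overline\sigma^2]$, so $\lambda(t)=d\,\bar\Phi(t/\sigma)$ is a continuous, strictly decreasing function of $t$. The plan is to first confine $\lambda(t)$ to a compact interval $[\lambda_-,\lambda_+]\subset(0,\infty)$ uniformly over $t\in\Tcal_{k,\epsilon}$, and then to invert Mills' ratio.

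\textbf{Step 1 (upper bound on $\lambda$).} We will show that $\lambda(t)\le\Lambda_{k,\epsilon}$ on the window. Let $t_0$ be the unique point with $\lambda(t_0)=\Lambda_{k,\epsilon}$. Then $\lambda(t_0)\le 2\Lambda_{k,\epsilon}$, so \eqref{eq:mix-rem-Gk-poisson} applies and gives $G_k(t_0)\le h_k(\Lambda_{k,\epsilon})+C r_d=\epsilon/8+Cr_d<\epsilon/2$ once $d$ is large, since $r_d\to0$. Because $G_k$ is nondecreasing, every $t$ with $G_k(t)\ge\epsilon/2$ satisfies $t>t_0$, and therefore $\lambda(t)<\Lambda_{k,\epsilon}$. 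Applying this monotonicity argument at the single point $t_0$ avoids any circularity: Lemma~\ref{lem:mix-rem-poisson} needs the hypothesis $\lambda\le2\Lambda_{k,\epsilon}$, and that hypothesis is not available a priori on the window.

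\textbf{Step 2 (lower bound on $\lambda$).} Pick $\lambda_->0$ so small that $h_k(\lambda_-)\ge 1-\epsilon/8$; this is possible because $h_k(0)=1$. Let $t_1$ satisfy $\lambda(t_1)=\lambda_-$. Lemma~\ref{lem:mix-rem-poisson} applies at $t_1$, so $G_k(t_1)\ge 1-\epsilon/8-Cr_d>1-\epsilon/2$. Monotonicity of $G_k$ then forces every window point to lie below $t_1$, hence $\lambda(t)\ge\lambda_-$ on the window. With both bounds in hand, $d\,\bar\Phi(t/\sigma)\in[\lambda_-,\Lambda_{k,\epsilon}]$, and in particular $t>0$ for large $d$.

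\textbf{Step 3 (inverting Mills' ratio).} On the upper side, \eqref{eq:mix-rem-thresh-lower} already gives $t^2\ge \sigma^2(2\log d-3\log\log d-C)\ge c_1\log d$. On the lower side, $\bar\Phi(u)\le e^{-u^2/2}/2$ for $u\ge0$ yields $\lambda_-\le d e^{-t^2/(2\sigma^2)}$, that is, $t^2\le 2\sigma^2\log(d/\lambda_-)\le C_1\log d$. The constants depend only on $\underline\sigma,\overline\sigma,k,\epsilon$.

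The main obstacle is Step 1: one must ensure that $r_d$ is small enough uniformly, with the constant $C_{k,\epsilon}$ in \eqref{eq:mix-rem-Gk-poisson} independent of $t$. This is where the threshold $d_0$ is chosen, using $r_d\to0$, which follows from \eqref{eq:mix-rem-alpha-ld} and the definition of $k_0$. The remaining steps are routine.
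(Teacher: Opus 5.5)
Your proof is correct and follows the same outline as the paper's. Both use the Poisson approximation \eqref{eq:mix-rem-Gk-poisson} to confine $\lambda(t)=d\,\bPhi(t/\sigma)$ to a compact interval on $\Tcal_{k,\epsilon}$, and then invert the Gaussian tail.

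The one substantive difference is where the Poisson approximation is applied. The paper argues pointwise by contradiction: it takes $t\in\Tcal_{k,\epsilon}$ with $\lambda(t)\ge\lambda_+$ (where $h_k(\lambda_+)=\epsilon/4$) and applies \eqref{eq:mix-rem-Gk-poisson} at that $t$. But Lemma~\ref{lem:mix-rem-poisson} is stated only under $\lambda(t)\le 2\Lambda_{k,\epsilon}$. For such $t$ that hypothesis is not yet available, because it would follow from the bound $\lambda(t)\le\lambda_+<\Lambda_{k,\epsilon}$ that is being proved. You instead apply the approximation only at the two fixed thresholds $t_0$ and $t_1$, where $\lambda$ equals $\Lambda_{k,\epsilon}$ and $\lambda_-$, so the hypothesis holds trivially (note $\lambda_-<\Lambda_{k,\epsilon}$ because $h_k(\lambda_-)>\epsilon/8$). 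You then transfer the conclusion to the whole window using three facts: $G_k$ is monotone; $G_k(c^G_{p,k})\ge p$ and $c^G_{p,k}\le t_1$ whenever $G_k(t_1)>p$, by the definition of the generalized inverse; and $\lambda$ is strictly decreasing. This closes the circularity in the upper-bound half of the paper's argument at no cost.

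Your Step~3 is also more explicit than the paper's one-line appeal to Mills' ratio. You use \eqref{eq:mix-rem-thresh-lower}, now legitimately applicable since $\lambda(t)\le\Lambda_{k,\epsilon}$, for the lower bound on $t^2$, and $\bPhi(u)\le e^{-u^2/2}/2$ for the upper bound.

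One minor caveat concerns your justification of $r_d\to0$. \eqref{eq:mix-rem-alpha-ld} and the definition of $k_0$ control only the mixing term and the factorial term of $r_d$. The terms $\eta_{1,d}$ and $q_d^{-1}$ vanish only if $m_d=\lceil d^{\beta_*/4}\rceil$ dominates $\ell_d\asymp k_0\log d+\log n$. That is an implicit growth condition on $d$ relative to $n$, which the paper also simply asserts rather than derives.
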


\begin{proof}
Choose $0<\lambda_-<\lambda_+<\infty$ such that
\[
h_k(\lambda_-)=1-\epsilon/4,
\qquad
h_k(\lambda_+)=\epsilon/4.
\]
Since $r_d\to0$, there exists $d_0$ such that
\[
C_{k,\epsilon}r_d\le \epsilon/4
\qquad
\text{for every }d\ge d_0.
\]
If $t\in\Tcal_{k,\epsilon}$ and $\lambda(t)\le\lambda_-$, then \eqref{eq:mix-rem-Gk-poisson} gives
\[
G_k(t)\ge h_k(\lambda(t))-C_{k,\epsilon}r_d\ge 1-\epsilon/4-\epsilon/4=1-\epsilon/2,
\]
which contradicts the definition of $\Tcal_{k,\epsilon}$. Similarly, if $\lambda(t)\ge\lambda_+$, then
\[
G_k(t)\le h_k(\lambda(t))+C_{k,\epsilon}r_d\le \epsilon/4+\epsilon/4=\epsilon/2,
\]
again contradicting the definition of $\Tcal_{k,\epsilon}$. Therefore
\[
\lambda_-\le \lambda(t)\le \lambda_+
\qquad
(t\in\Tcal_{k,\epsilon},\ d\ge d_0).
\]
Since $\lambda(t)=d\bar\Phi(t/\sigma)$ and $\sigma\in[\underline\sigma,\overline\sigma]$, Mills' ratio yields constants $c_1,C_1$ depending only on $(k,\epsilon,\underline\sigma,\overline\sigma)$ such that \eqref{eq:mix-rem-t-order} holds.
\end{proof}

\begin{lemma}[Shift and strip bounds]
\label{lem:mix-rem-shift}
Under Assumption~\ref{ass:mixing-alt}, the conclusions of Lemmas~\ref{lem:gauss-shift} and \ref{lem:gauss-strip} remain valid. More precisely, there exist constants $c_0,C_0>0$ such that for every nonempty $I\subset[d]$ with $|I|\le k_0+1$, every $t\in\Tcal_{k,\epsilon}$, and every $0\le a\le c_0/t$,
\begin{equation}
\pi_I(t-a)\le C_0 \pi_I(t),
\label{eq:mix-rem-shift1}
\end{equation}
and
\begin{equation}
\pi_I(t-a)-\pi_I(t)\le C_0 a(1+t)\pi_I(t).
\label{eq:mix-rem-shift2}
\end{equation}
\end{lemma}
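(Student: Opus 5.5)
The plan is to prove \eqref{eq:mix-rem-shift1}--\eqref{eq:mix-rem-shift2} directly from the Gaussian density, by a translation (exponential tilting) argument that uses only two facts already available under Assumption~\ref{ass:mixing-alt}. The first is that $\lambda_{\min}(\mathbf\Sigma_{II})\ge\sigma_*^2$, by Lemma~\ref{lem:projection}. The second is the threshold scale $t^2\asymp\log d$ from Lemma~\ref{lem:mix-rem-t-order}. The weak-correlation input of Lemma~\ref{lem:gauss-shift} (Gershgorin plus $s\rho_d\to0$) is no longer available, because under mixing the correlations inside $I$ may be as large as $\vartheta_*$. So I would not compare with a product law. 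Instead I would compare $\pi_I(t-a)$ and $\pi_I(t)$ through the same density $\ph_I$.

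Fix $I$ with $s:=|I|\le k_0+1$ and write $\bm w:=\mathbf\Sigma_{II}^{-1}\mathbf 1_s$. Substituting $\bm u\mapsto \bm u-a\mathbf 1_s$ gives
\[
\pi_I(t-a)=\int_{(t,\infty)^s}\ph_I(\bm u-a\mathbf 1_s)\,d\bm u
=\E\Bigl[\exp\bigl\{a\bm w^\top\bm Z_I-\tfrac{a^2}{2}\mathbf 1_s^\top\bm w\bigr\}\ind\{\bm Z_I\in(t,\infty)^s\}\Bigr].
\]
I would then split $\bm w^\top\bm Z_I=t\,\mathbf 1_s^\top\bm w+\bm w^\top(\bm Z_I-t\mathbf 1_s)$. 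For the deterministic part, $0<\mathbf 1_s^\top\bm w\le s/\sigma_*^2$, so this factor is at most $\exp(a t s/\sigma_*^2)$. For the overshoot part, I need the conditional exponential moment $\E[\exp\{a\bm w^\top(\bm Z_I-t\mathbf 1_s)\}\mid \bm Z_I>t\mathbf 1_s]$ to be bounded by $\exp(Cas/t)$. I would prove this one coordinate at a time: conditionally on the other coordinates, each $Z_j$ is Gaussian with variance in $[\sigma_*^2,\overline\sigma^2]$, and its overshoot above $t$ has a sub-exponential tail with scale $O(1/t)$ by Mills' ratio. Combining these with $\|\bm w\|_\infty\le\|\mathbf\Sigma_{II}^{-1}\|_{\infty\to\infty}$ gives \eqref{eq:mix-rem-shift1} with constant $\exp\{C s\,a t\}$. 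For \eqref{eq:mix-rem-shift2}, I would write $\pi_I(t-a)-\pi_I(t)=\int_0^a\frac{d}{dx}\pi_I(t-x)\,dx$. The derivative is a sum of $s$ face integrals, and each face integral is bounded via Mills' ratio by $C(1+t)\pi_I(t-x)$. Applying \eqref{eq:mix-rem-shift1} to $\pi_I(t-x)$ then yields $C a(1+t)\pi_I(t)$. The strip bounds of Lemma~\ref{lem:gauss-strip} follow verbatim from these two estimates, since its proof used only Lemma~\ref{lem:gauss-shift}.

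The main obstacle is uniformity in $s$. The argument above naturally produces the factor $\exp(C s\,a t)$, and $s$ ranges up to $k_0+1\asymp\log(\epsn^{-1})$. In addition, $\|\mathbf\Sigma_{II}^{-1}\|_{\infty\to\infty}$ need not be bounded when correlations are of size $\vartheta_*$. For the latter I would exploit the exponential decay of $\rho(h)$ from Lemma~\ref{lem:mix-rem-corr}. This decay makes $\mathbf\Sigma_{II}$ a banded-plus-exponentially-small perturbation of a matrix with $\lambda_{\min}\ge\sigma_*^2$, and the Demko--Moss--Smith bound then gives exponentially decaying entries of $\mathbf\Sigma_{II}^{-1}$, hence a bounded row sum. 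For the former, I would try first to absorb $s$ into the admissible range of $a$, i.e.\ prove the bounds for $0\le a\le c_0/\{(k_0+1)t\}$. This suffices for every use in the paper. The only shift actually invoked is $a=a_{\vartheta_n}$ with $t a_{\vartheta_n}\le C\epsn^2$ by \eqref{eq:vartheta-control-local}, and then $k_0\,t\,a_{\vartheta_n}\le C\epsn^2\log(\epsn^{-1})\to0$. If the literal range $a\le c_0/t$ is insisted upon, I would fall back on the factor $e^{Cs}$ and check that it is dominated by the $1/s!$ decay of $M_{Z,s}(t)$ in the weighted sums, which is where these bounds enter.
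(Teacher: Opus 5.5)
\textbf{The statement is false as written, and your proposal does not prove it.} You are right that the product comparison behind Lemma~\ref{lem:gauss-shift} is unavailable once correlations of size $\vartheta_*$ occur inside $I$. The paper's one-line proof (``repeat the proof with the two spectral bounds'') skips exactly this point, and its product argument would in any case give $C^{|I|}$ after multiplying over coordinates. Tilting through the single density $\phi_I$ is a sensible replacement.

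The obstacle you flag at the end is not a technical one. Take the i.i.d.\ sequence $Z_j\sim N(0,\sigma^2)$, which satisfies Assumption~\ref{ass:mixing-alt} with $\alpha\equiv0$. For $0\le\delta\le x$ we have $\bPhi(x-\delta)-\bPhi(x)\ge\delta\phi(x)\ge\delta x\,\bPhi(x)$. Hence every $I$ with $|I|=s$ satisfies
\[
\pi_I(t-a)\ge(1+at/\sigma^2)^s\,\pi_I(t),\qquad \pi_I(t-a)-\pi_I(t)\ge s\,(at/\sigma^2)\,\pi_I(t).
\]
Now let $s=k_0+1\to\infty$. At $a=c_0/t$ the first bound gives a ratio at least $(1+c_0/\sigma^2)^{k_0+1}$, contradicting \eqref{eq:mix-rem-shift1} for any fixed $C_0$. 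The second contradicts \eqref{eq:mix-rem-shift2} for every $a>0$. So your factor $e^{Csat}$ is essentially sharp, only an $|I|$-dependent statement can hold, and you should say so rather than offer the fallbacks as a proof of the lemma.

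\textbf{Further problems in the proposal.}
\begin{itemize}
\item Your derivation of \eqref{eq:mix-rem-shift2} drops a factor. $\frac{d}{dx}\pi_I(t-x)$ is a sum of $s$ face integrals, so bounding each by $C(1+t)\pi_I(t-x)$ gives $\pi_I(t-a)-\pi_I(t)\le Cs\,a(1+t)e^{Csat}\pi_I(t)$. The example shows this factor $s$ is necessary.
\item No restriction on the range of $a$ removes that factor. Your fallback $a\le c_0/\{(k_0+1)t\}$ rescues \eqref{eq:mix-rem-shift1} but not \eqref{eq:mix-rem-shift2}, and the strip bound of Lemma~\ref{lem:gauss-strip} inherits the factor $s$.
\item The claim that the restricted range covers every use is inaccurate. \eqref{eq:prop-derivative-expectation} invokes the shift bound at $a_u$ for all $u\in[\vartheta_n,1/2]$, not only at $a_{\vartheta_n}$.
\end{itemize}
The provable target is $\pi_I(t-a)\le e^{C|I|at}\pi_I(t)$ and $\pi_I(t-a)-\pi_I(t)\le C|I|\,a(1+t)e^{C|I|at}\pi_I(t)$, with the resulting $k_0$-dependence tracked explicitly downstream.

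\textbf{Two sub-steps that need real arguments.}
\begin{itemize}
\item The per-coordinate overshoot claim (scale $O(1/t)$) needs $\E[Z_j\mid \bm Z_{I\setminus\{j\}}]$ to lie well below $t$. On the orthant this can fail. For three consecutive coordinates with $\rho(1)=0.8$ and $\rho(2)=0.5$ (realizable by a stationary Gaussian AR(2)), the middle one has conditional mean about $1.07\,t$ when the outer two equal $t$, and $(\mathbf\Sigma_{II}^{-1}\mathbf 1_3)_2<0$.
\item Bounding a face integral by $C(1+t)\pi_I(t-x)$ is itself a shift estimate for a conditional orthant law, not a Mills-ratio step, once correlations may be negative.
\end{itemize}
Both need a genuine conditioning argument.
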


\begin{proof}
Fix a nonempty $I\subset[d]$ with $|I|\le k_0+1$. Since $\mathbf{\Sigma}_{II}$ is a principal submatrix of $\mathbf{\Sigma}$,
\[
\lambda_{\min}(\mathbf{\Sigma}_{II})\ge \lambda_{\min}(\mathbf{\Sigma})\ge \sigma_*^2.
\]
On the other hand, by stationarity and \eqref{eq:mix-rem-corr-sum},
\[
\lambda_{\max}(\mathbf{\Sigma}_{II})
\le
\overline\sigma^2
\left(
1+2\sum_{h=1}^{\infty}|\rho(h)|
\right)
\le
\overline\sigma^2
\left(
1+\frac{4\pi C_\alpha e^{-a_\alpha}}{1-e^{-a_\alpha}}
\right)
=:C_\Sigma.
\]
Hence every principal covariance matrix of dimension at most $k_0+1$ is uniformly well conditioned and has operator norm bounded by $C_\Sigma$. Repeating the proof of Lemmas~\ref{lem:gauss-shift} and \ref{lem:gauss-strip} with these two spectral bounds gives \eqref{eq:mix-rem-shift1} and \eqref{eq:mix-rem-shift2}.
\end{proof}

\begin{lemma}
\label{lem:mix-rem-weighted}
There exists a constant $C_{k,\epsilon}>0$ such that, for every $d\ge d_0$ and every $t\in\Tcal_{k,\epsilon}$,
\begin{equation}
\sum_{s=k}^{k_0}\binom{s-1}{k-1}M_{Z,s}(t)\le C_{k,\epsilon},
\label{eq:mix-rem-weighted-main}
\end{equation}
and
\begin{equation}
\binom{k_0}{k-1}M_{Z,k_0+1}(t)\le C_{k,\epsilon} r_d.
\label{eq:mix-rem-weighted-tail}
\end{equation}
\end{lemma}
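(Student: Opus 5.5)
Both bounds will come from the block reduction of Section~B.3 rather than from Berman's inequality, which is unavailable here because $\rho(h)$ need not be small for small lags. Recall $M_{Z,s}(t)=V_{Z,s}(t)=\E\binom{N_d(t)}{s}$, and that by Lemma~\ref{lem:mix-rem-t-order} we have $\lambda_-\le\lambda(t)\le\lambda_+\le 2\Lambda_{k,\epsilon}$ on $\Tcal_{k,\epsilon}$ (after adjusting $\lambda_+$ if needed). Hence Lemmas~\ref{lem:mix-rem-window-bounds} and \ref{lem:mix-rem-poisson} apply.

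\textbf{Step 1: decomposition of $\binom{N_d}{s}$.} I would write
\[
\binom{N_d}{s}=\binom{S_d}{s}+\Bigl\{\binom{N_d}{s}-\binom{S_d}{s}\Bigr\}.
\]
Since $N_d\ge S_d$, the difference is nonnegative. It is a sum over $s$-subsets $I$ of exceedances that are not "one per distinct main block". Each such $I$ either contains an index in a gap $J_r$ or in $R_d$, or contains two indices in the same main block. Taking expectations, the first family is bounded by
\[
(q_d\ell_d+s_d)\sum_{|I'|=s-1}\sup_j \Prob\bigl(Z_j>t,\ Z_{I'}>t\bigr),
\]
and the second family by pairs inside a block times $(s-2)$-sets elsewhere.

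\textbf{Step 2: factorization.} To control these mixed terms I would factor off the special index (or the special pair) from the remaining coordinates. Coordinates at distance $\ge\ell_d$ are handled by the mixing bound: $\pi_{I\cup\{j\}}\le \pi_I\,p(t)+\alpha(\ell_d)$, and similarly for pairs. Nearby coordinates are handled by the cluster tail \eqref{eq:mix-rem-cluster-tail}, which gives a factor $d^{-(1+\beta_*)}$ per close pair on the window, as in \eqref{eq:mix-rem-pair-window}. Iterating this (induction on $s$) should give
\[
\E\binom{N_d}{s}\le C^s\frac{\lambda_+^s}{s!}+C_s\,r_d
\]
uniformly for $s\le k_0+1$. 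The combinatorial weight $\binom{q_d}{s}\le q_d^s/s!$ supplies the factorial, and the additive errors are of the forms $\eta_{1,d}$ and $d^{k_0+1}\alpha(\ell_d)$, both part of $r_d$.

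A cleaner alternative for the main part is to use \eqref{eq:mix-rem-block-factorial-2} directly:
\[
\E\binom{S_d}{s}\le \frac{\mu_d^s}{s!}\bigl(1+C_s q_d^{-1}\bigr)+C_s d^s\alpha(\ell_d),
\qquad \mu_d\le 3\Lambda_{k,\epsilon}.
\]

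\textbf{Step 3: the two claims.} Summing over $s$ with weights $\binom{s-1}{k-1}$ and using
\[
\sum_s\binom{s-1}{k-1}\frac{(3\Lambda_{k,\epsilon})^s}{s!}<\infty
\]
gives \eqref{eq:mix-rem-weighted-main}. Here the additive errors summed over $s\le k_0$ are still $o(1)$, because $k_0$ is logarithmic while $\eta_{1,d}$ and $d^{k_0+1}\alpha(\ell_d)$ decay polynomially. For \eqref{eq:mix-rem-weighted-tail}, take $s=k_0+1$. The Poisson-type part contributes
\[
\binom{k_0}{k-1}\frac{(3\Lambda_{k,\epsilon})^{k_0+1}}{(k_0+1)!},
\]
and since $k$ is fixed, $\binom{k_0}{k-1}\le k_0^{k-1}$ is absorbed by reducing $3\Lambda$ to a slightly smaller geometric gain (enlarging constants), giving $C r_d$. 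The $d^{k_0+1}\alpha(\ell_d)$ term appears explicitly in $r_d$, and the $q_d^{-1}$ term is also in $r_d$.

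\textbf{Main obstacle.} The hard part is Step~2: bounding the non-block contribution $\E\{\binom{N_d}{s}-\binom{S_d}{s}\}$ uniformly in $s\le k_0+1$ with constants that do not blow up like $s!$ times the wrong power. My first attempt would be to bound it by
\[
\E\Bigl[\binom{N_d}{s}\ind\{N_d\ne S_d\}\Bigr],
\]
and then use Hölder together with crude moment bounds $\E N_d^{2s}\le C^{2s}(2s)!$ (from block mixing), multiplied by $\Prob\{N_d\ne S_d\}^{1/2}\le C\eta_{1,d}^{1/2}$ from \eqref{eq:mix-rem-bad-window}. If this square-root loss is too costly for the tail claim, I would fall back on the direct subset counting described in Step~2.
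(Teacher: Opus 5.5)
Your architecture is the paper's. You split $M_{Z,s}=\E\binom{N_d}{s}$ into a one-exceedance-per-main-block part, identified with $\E\binom{S_d}{s}$ and controlled through Lemma~\ref{lem:mix-rem-block-factorial}, plus ``bad'' configurations (an index in a gap or in $R_d$, or two indices in one block). You bound the latter by counting with \eqref{eq:mix-rem-cluster-tail} and mixing, then sum with the weights and take $s=k_0+1$. The paper disposes of the bad part in one line, \eqref{eq:mix-rem-MZ-vs-S}, with a constant $C_{s,k,\epsilon}$ whose growth in $s$ is never tracked. Your Step~2 is exactly that line, and the mechanism you describe does not prove it. Mixing factorizes only $\ell_d$-separated groups, so ``a factor $d^{-(1+\beta_*)}$ per close pair'' is available only for pairs isolated from the rest. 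Consider a group of $m$ indices with consecutive spacings $<\ell_d$; every such group with $m\ge2$ is bad, since distinct main blocks are more than $\ell_d$ apart. For it, \eqref{eq:mix-rem-cluster-tail} gives roughly $d^{-m/\{1+(m-1)\vartheta_*\}}$, an exponent that never exceeds $1/\vartheta_*$, while there are about $d\,\ell_d^{m-1}$ such groups. The resulting bound $\bigl[\ell_d\,d^{-(1-\vartheta_*)/\{1+(m-1)\vartheta_*\}}\bigr]^{m-1}$ diverges once $m$ is of order $k_0$ (e.g.\ for $d=n^c$, where $k_0\asymp\log n$ and $\ell_d\asymp\log^2 n$), and $m$ does range up to $s\le k_0+1$. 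What is missing is a bound for clustered configurations whose exponent grows with the cluster size. A natural source is the exponential decay of $\rho$: the spectral bound $\lambda_{\max}(\mathbf{\Sigma}_{II})\le C_\Sigma$ from the proof of Lemma~\ref{lem:mix-rem-shift} gives $\pi_I(t)\le\bPhi(\sqrt{m/C_\Sigma}\,t)$ for $|I|=m$. Beating the $\ell_d^{m-1}$ count with it still needs care, combined with the pair bound for bounded $m$. The argument should be organized so that the bad part at level $s$ is of size roughly $\eta_{1,d}\,c^s/(s-2)!$ plus $\alpha(\ell_d)$-terms. Note also that $\binom{N_d}{s}-\binom{S_d}{s}$ contains, besides the bad subsets, the surplus of one-per-block subsets coming from blocks with several exceedances; this is again a within-block-pair term.

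Your H\"older fallback does not close the gap. It requires moments of $N_d$ up to order $2k_0+2$, which already encode the cluster control you are trying to prove. With only $\E N_d^{2s}\le C^{2s}(2s)!$ one gets $(\E\binom{N_d}{s}^2)^{1/2}\le(2C)^s$, so the level-$(k_0+1)$ term is of order $\binom{k_0}{k-1}(2C)^{k_0}\eta_{1,d}^{1/2}$, which has no reason to be $O(r_d)$. The $s$-dependence matters for \eqref{eq:mix-rem-weighted-main} too. A bound ``$C_s r_d$'' with $C_s$ merely bounded gives $\sum_{s\le k_0}\binom{s-1}{k-1}C_s\eta_{1,d}\asymp k_0^{k}\eta_{1,d}$. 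Since $\eta_{1,d}\ge\ell_d/m_d\gtrsim k_0\log d/m_d$, the condition $r_d\to0$ still allows $k_0^k\eta_{1,d}\to\infty$. So comparing logarithmic growth of $k_0$ in $n$ with polynomial decay of $\eta_{1,d}$ in $d$ does not suffice. For the block part, do not use \eqref{eq:mix-rem-block-factorial-2} as stated: its constant $C_s\ge s!$ multiplies $q_d^{-1}\mu_d^s$, which is useless at $s=k_0+1$. Instead combine \eqref{eq:mix-rem-block-factorial-1} with $\binom{q_d}{s}\le q_d^s/s!$ to get $\E\binom{S_d}{s}\le\mu_d^s/s!+s2^{s-1}\binom{q_d}{s}\alpha(\ell_d)$. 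Then absorb $\binom{k_0}{k-1}$ using $\mu_d\le\lambda_++C\eta_{1,d}<\Lambda_{k,\epsilon}$ (from the proof of Lemma~\ref{lem:mix-rem-t-order} and \eqref{eq:mix-rem-mu-window}), which leaves a factor $3^{-(k_0+1)}$ against $(3\Lambda_{k,\epsilon})^{k_0+1}/(k_0+1)!$. That part of your Step~3 is fine in spirit; the real work is the clustered-configuration bound above.
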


\begin{proof}
For $s\in\{1,\dots,k_0+1\}$, decompose $M_{Z,s}(t)$ according to the block partition used in Lemmas~\ref{lem:mix-rem-block-bad} and \ref{lem:mix-rem-block-factorial}. The contribution of configurations that use only main blocks and place at most one exceedance in each selected block is $\E\binom{S_d(t)}{s}$. Every remaining configuration necessarily contains either an exceedance in a gap or in the remainder interval, or at least two exceedances inside one main block. Therefore the same counting argument used in the proof of Lemma~\ref{lem:mix-rem-block-bad}, together with the cluster bound \eqref{eq:mix-rem-cluster-tail}, yields
\begin{equation}
\left|
M_{Z,s}(t)-\E\binom{S_d(t)}{s}
\right|
\le
C_{s,k,\epsilon}\eta_{1,d}.
\label{eq:mix-rem-MZ-vs-S}
\end{equation}
Combining \eqref{eq:mix-rem-MZ-vs-S} with \eqref{eq:mix-rem-block-factorial-2} gives
\begin{equation}
\left|
M_{Z,s}(t)-\frac{\lambda(t)^s}{s!}
\right|
\le
C_{s,k,\epsilon}
\left\{
\eta_{1,d}
+
q_d^{-1}
+
d^s\alpha(\ell_d)
\right\},
\qquad
1\le s\le k_0+1.
\label{eq:mix-rem-MZ-fixed}
\end{equation}
Since $t\in\Tcal_{k,\epsilon}$ implies $\lambda(t)\in[\lambda_-,\lambda_+]$ by Lemma~\ref{lem:mix-rem-t-order}, summing \eqref{eq:mix-rem-MZ-fixed} over $s=k,\dots,k_0$ yields
\begin{align*}
\sum_{s=k}^{k_0}\binom{s-1}{k-1}M_{Z,s}(t)
&\le
\sum_{s=k}^{k_0}\binom{s-1}{k-1}\frac{\lambda_+^s}{s!}
+
\sum_{s=k}^{k_0}\binom{s-1}{k-1}
C_{s,k,\epsilon}
\left\{
\eta_{1,d}
+
q_d^{-1}
+
d^s\alpha(\ell_d)
\right\}.
\end{align*}
The first sum is bounded by a constant depending only on $(k,\epsilon)$ because it is dominated by the convergent series
\[
\sum_{s=k}^{\infty}\binom{s-1}{k-1}\frac{\lambda_+^s}{s!}.
\]
The second sum is also bounded because $k_0$ is finite for every $n$, $\eta_{1,d}\le1$ for large $d$, $q_d^{-1}\le1$, and \eqref{eq:mix-rem-alpha-ld} implies
\[
\sum_{s=k}^{k_0}\binom{s-1}{k-1} d^s\alpha(\ell_d)
\le
C_{k,\epsilon}d^{k_0}\alpha(\ell_d)
\le
C_{k,\epsilon} d^{-7k_0-16}n^{-8}.
\]
This proves \eqref{eq:mix-rem-weighted-main}.

For $s=k_0+1$, \eqref{eq:mix-rem-MZ-fixed} yields
\[
M_{Z,k_0+1}(t)
\le
\frac{\lambda_+^{k_0+1}}{(k_0+1)!}
+
C_{k,\epsilon}
\left\{
\eta_{1,d}
+
q_d^{-1}
+
d^{k_0+1}\alpha(\ell_d)
\right\}.
\]
Multiplying by $\binom{k_0}{k-1}$ and enlarging the constant proves \eqref{eq:mix-rem-weighted-tail}.
\end{proof}

\subsection*{B.6. Regularity of $G_k$}

\begin{lemma}
\label{lem:mix-rem-regularity}
There exist constants $m_{k,\epsilon}>0$, $B_{k,\epsilon}>0$, and an integer $d_1\ge d_0$ such that
\begin{equation}
f_k(t)=G_k'(t)\ge m_{k,\epsilon}
\qquad
\text{for every }t\in\Tcal_{k,\epsilon}\text{ and every }d\ge d_1,
\label{eq:mix-rem-fk-pos}
\end{equation}
and
\begin{equation}
\left|(G_k^{-1})''(p)\right|\le B_{k,\epsilon}
\qquad
\text{for every }p\in[\epsilon/2,1-\epsilon/2]\text{ and every }d\ge d_1.
\label{eq:mix-rem-inv-second}
\end{equation}
\end{lemma}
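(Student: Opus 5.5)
We follow the route of Lemma~\ref{lem:weighted-regularity}(iii), replacing the weak-correlation Gaussian inputs by their block-mixing counterparts from B.3--B.5. Under Assumption~\ref{ass:mixing-alt} all marginal variances equal $\sigma^2$, so $\lambda(t)=d\bar\Phi(t/\sigma)$. Put $H_k(t):=h_k(\lambda(t))$. Then
\[
\lambda'(t)=-\frac{d}{\sigma}\ph(t/\sigma),
\qquad
\lambda''(t)=\frac{dt}{\sigma^3}\ph(t/\sigma).
\]
By Lemma~\ref{lem:mix-rem-t-order}, $\lambda(t)\in[\lambda_-,\lambda_+]$ and $t^2\asymp\log d$ on $\Tcal_{k,\epsilon}$. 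Mills' ratio then gives $|\lambda'(t)|\asymp t$ and $|\lambda''(t)|\le C(1+t^2)$. Since $h_k'(\lambda)=-e^{-\lambda}\lambda^{k-1}/(k-1)!$ is bounded away from zero on $[\lambda_-,\lambda_+]$, we get $H_k'(t)\ge ct$ and $|H_k''(t)|\le C(1+t^2)$ on a neighbourhood of $\Tcal_{k,\epsilon}$. So it suffices to show that $G_k'$ and $G_k''$ are close to $H_k'$ and $H_k''$, with an error that is $o(t)$ for the first derivative and $O(1+t^2)$ for the second.

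\textbf{Derivative comparison.} A uniform bound $|G_k-H_k|\le Cr_d$ (from \eqref{eq:mix-rem-Gk-poisson}) cannot be differentiated directly. Instead we differentiate the truncated inclusion--exclusion representation. By Lemma~\ref{lem:mix-rem-bonf} with $m=k_0$,
\[
G_k(t)=1-\sum_{s=k}^{k_0}(-1)^{s-k}\binom{s-1}{k-1}M_{Z,s}(t)+E(t),
\]
where $E$ is controlled by $M_{Z,k_0+1}$. Each $M_{Z,s}(t)=\sum_{|I|=s}\pi_I(t)$ is smooth, and
\[
\frac{d}{dt}\pi_I(t)=-\sum_{r\in I}\int_{(t,\infty)^{s-1}}\ph_I(u^{(r,t)})\,du_{-r}
=-\sum_{r\in I}\frac{\ph(t/\sigma)}{\sigma}\,\Prob\bigl(Z_{I\setminus r}>t\mid Z_r=t\bigr).
\]
The conditional law of $Z_{I\setminus r}$ given $Z_r=t$ is Gaussian with mean $\rho(\cdot)t$ and well-conditioned covariance, by Lemma~\ref{lem:mix-rem-shift} and \eqref{eq:mix-rem-vartheta}. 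Hence this derivative equals $s\,\lambda'(t)$ times a factor that can be compared with $\lambda^{s-1}/(s-1)!$. That comparison reuses the block decomposition of Lemma~\ref{lem:mix-rem-weighted}: coordinates within distance $\ell_d$ of $r$ contribute at most $C\ell_d\,\bar\Phi\bigl(\sqrt{(1-\vartheta_*)/(1+\vartheta_*)}\,t/\sigma\bigr)$, which is negligible by \eqref{eq:mix-rem-pair-window}-type bounds. Distant coordinates are handled by the mixing factorization of Lemma~\ref{lem:mix-rem-block-factorial}.

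This yields
\[
\frac{d}{dt}M_{Z,s}(t)=\lambda'(t)\,\frac{\lambda(t)^{s-1}}{(s-1)!}+O\bigl(t\,r_d\bigr).
\]
The analogous statement for the second derivative holds with error $O((1+t^2)r_d)$. The second derivative produces face integrals on codimension-two faces plus the $\ph'$ term, and these are bounded in the same way, using $\pi_I$-type bounds after conditioning on two coordinates. Summing with the weights $\binom{s-1}{k-1}$, which are summable against $\lambda_+^s/s!$, reproduces $H_k'$ and $H_k''$ by termwise differentiation of the Poisson identity. The $s=k_0+1$ tail is handled as in \eqref{eq:mix-rem-weighted-tail}, so
\[
|G_k'(t)-H_k'(t)|+|G_k''(t)-H_k''(t)|\le C(1+t^2)r_d .
\]

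\textbf{Conclusion and main obstacle.} The estimate above gives $f_k(t)\ge ct-C(1+t^2)r_d$. This requires $t\,r_d\to0$, that is $r_d\sqrt{\log d}\to0$. That holds by inspecting $r_d$: the polynomial-in-$d$ terms dominate $\sqrt{\log d}$, and the factorial term decays superexponentially in $k_0$. Choosing $d_1$ large then gives $f_k\ge m_{k,\epsilon}$. Finally,
\[
(G_k^{-1})''(p)=-\frac{f_k'}{f_k^3}\quad\text{is bounded by}\quad \frac{C(1+t^2)}{(ct)^3}\le C,
\]
which is \eqref{eq:mix-rem-inv-second}. The main obstacle is the derivative comparison: controlling the face integrals uniformly in $|I|\le k_0+1$ when near-neighbour coordinates are strongly correlated. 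I would first try to reduce it to the undifferentiated block estimates by the conditioning identity above, treating the conditional exceedance count as a new stationary-type Gaussian field with shifted threshold.
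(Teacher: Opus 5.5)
Your route differs from the paper's. The paper never differentiates a Gaussian functional. It inserts the sup-bound \eqref{eq:mix-rem-Gk-poisson} into central first and second difference quotients of $G_k$ and $H_k$ with step $\delta_d=r_d^{1/4}$. Your objection that $|G_k-H_k|\le Cr_d$ cannot be differentiated is well taken, and it applies to that argument too. A difference quotient only controls $G_k'$ or $G_k''$ at some unspecified point of $(t-\delta_d,t+\delta_d)$, so a pointwise conclusion needs a priori bounds on $G_k''$ and $G_k'''$, which must come from a representation like yours.

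In your proposal, however, the decisive estimate $\frac{d}{dt}M_{Z,s}(t)=\lambda'(t)\lambda(t)^{s-1}/(s-1)!+O(t\,r_d)$ is asserted rather than proved, and the tools you cite for it do not apply as stated.
\begin{itemize}
\item \eqref{eq:mix-rem-weighted-tail} controls the truncation remainder $E(t)$ only in size. Its derivatives $E'$ and $E''$ are signed tails $\sum_{s=k_0+1}^{d}$ of derivatives of Gaussian factorial moments. Bounding them needs conditional exponential moments of the exceedance count given $Z_r=t$, which is a new estimate.
\item After conditioning on the null event $\{Z_r=t\}$, the other coordinates form a nonstationary Gaussian field with means $\rho(i-r)t$. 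The coefficient $\alpha(\ell)$ bounds $|\Prob(A\cap B)-\Prob(A)\Prob(B)|$ only for the unconditional sequence, and neither it nor Lemma~\ref{lem:mix-rem-block-factorial} says anything about this field.
\end{itemize}

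Your closing idea repairs both points once you drop inclusion--exclusion altogether. Put $N_{-r}(t):=\sum_{j\ne r}\ind\{Z_j>t\}$ and note $|\lambda'(t)|=d\sigma^{-1}\ph(t/\sigma)$. The exact density of $T_{\bm Z,[k]}$ is
\[
f_k(t)=|\lambda'(t)|\,\frac1d\sum_{r=1}^d\Prob\bigl(N_{-r}(t)=k-1\mid Z_r=t\bigr),
\]
to be compared with $H_k'(t)=|\lambda'(t)|\,e^{-\lambda(t)}\lambda(t)^{k-1}/(k-1)!$.
\begin{itemize}
\item Near field: given $Z_r=t$, coordinates with $0<|i-r|\le\ell_d$ exceed $t$ with total probability at most $2\ell_d\bar\Phi(\sqrt{\beta_*}\,t/\sigma)$, as you observe.
\item Far field: write $Z_i=\rho(i-r)Z_r+\tilde Z_i$ with $\tilde{\bm Z}$ independent of $Z_r$. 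Given $Z_r=t$, $(Z_i)_{i\ne r}$ has the law of $(\tilde Z_i+\rho(i-r)t)_{i\ne r}$. So the conditional far-field count and the unconditional count $\#\{i:|i-r|>\ell_d,\ Z_i>t\}$ can be realised on one space. They differ with probability at most $C(1+t)\sum_{|h|>\ell_d}|\rho(h)|\le C(1+t)e^{-a_\alpha\ell_d}$, by \eqref{eq:mix-rem-corr-decay}.
\item Poisson step: the unconditional far-field count equals $k-1$ with probability $G_k(t)-G_{k-1}(t)+O(\ell_d/d)$, where $G_0\equiv0$. By \eqref{eq:mix-rem-Gk-poisson} applied with $k$ and with $k-1$, this is $e^{-\lambda(t)}\lambda(t)^{k-1}/(k-1)!+O(r_d)$.
\end{itemize}
Together these give $|f_k-H_k'|\le C|\lambda'(t)|\,r_d$. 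Differentiating the same formula once more gives the crude bound $|G_k''|\le C(1+t^2)$, which is all \eqref{eq:mix-rem-inv-second} needs.

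Finally, your claim that $t\,r_d\to0$ holds ``by inspection'' is not justified. The term $(3\Lambda_{k,\epsilon})^{k_0+1}/(k_0+1)!$ depends on $d$ only through $k_0\asymp\log(\epsn^{-1})$. Under Assumption~\ref{ass:data}(iii), $\epsn$ may vanish arbitrarily slowly while $\log d$ is nearly of order $n^{1/3}$. The paper's condition \eqref{eq:mix-rem-delta-small} has the same defect. The claim is also unnecessary: keep the first-derivative error proportional to $|\lambda'(t)|\asymp t$, so that $f_k(t)\ge t(c_H-Cr_d)$, and then $r_d\to0$ suffices.
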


\begin{proof}
Set
\[
H_k(t):=h_k(\lambda(t)).
\]
By Lemma~\ref{lem:mix-rem-t-order}, there exist constants $c_\lambda,C_\lambda>0$ such that
\begin{equation}
c_\lambda t
\le
|\lambda'(t)|
\le
C_\lambda t,
\qquad
|\lambda''(t)|\le C_\lambda(1+t^2),
\qquad
t\in\Tcal_{k,\epsilon},\ d\ge d_0.
\label{eq:mix-rem-lambda-derivs}
\end{equation}
Since $\lambda(t)\in[\lambda_-,\lambda_+]$ on $\Tcal_{k,\epsilon}$, the derivatives of $h_k$ are bounded on this compact interval. Hence there exist constants $c_H,C_H>0$ such that
\begin{equation}
|H_k'(t)|\ge c_H t,
\qquad
|H_k''(t)|\le C_H(1+t^2),
\qquad
t\in\Tcal_{k,\epsilon},\ d\ge d_0.
\label{eq:mix-rem-H-derivs}
\end{equation}

Define
\[
\delta_d:=r_d^{1/4}.
\]
Since $r_d\to0$, there exists $d_1\ge d_0$ such that
\begin{equation}
\delta_d(1+C_1\log d)\le \frac{c_H}{4}
\qquad
\text{for every }d\ge d_1,
\label{eq:mix-rem-delta-small}
\end{equation}
where $C_1$ is the constant from Lemma~\ref{lem:mix-rem-t-order}. For $t\in\Tcal_{k,\epsilon}$ and $d\ge d_1$, Taylor's theorem gives
\begin{align}
\left|
\frac{H_k(t+\delta_d)-H_k(t-\delta_d)}{2\delta_d}
-
H_k'(t)
\right|
&\le
C_H\delta_d(1+t^2),
\label{eq:mix-rem-H-first-diff}
\\
\left|
\frac{H_k(t+\delta_d)-2H_k(t)+H_k(t-\delta_d)}{\delta_d^2}
-
H_k''(t)
\right|
&\le
C_H\delta_d(1+t^2).
\label{eq:mix-rem-H-second-diff}
\end{align}
By \eqref{eq:mix-rem-Gk-poisson},
\begin{align}
\left|
\frac{G_k(t+\delta_d)-G_k(t-\delta_d)}{2\delta_d}
-
\frac{H_k(t+\delta_d)-H_k(t-\delta_d)}{2\delta_d}
\right|
&\le
C_{k,\epsilon}\frac{r_d}{\delta_d}
=
C_{k,\epsilon}r_d^{3/4},
\label{eq:mix-rem-GH-first-diff}
\\
\left|
\frac{G_k(t+\delta_d)-2G_k(t)+G_k(t-\delta_d)}{\delta_d^2}
-
\frac{H_k(t+\delta_d)-2H_k(t)+H_k(t-\delta_d)}{\delta_d^2}
\right|
&\le
C_{k,\epsilon}\frac{r_d}{\delta_d^2}
=
C_{k,\epsilon}r_d^{1/2}.
\label{eq:mix-rem-GH-second-diff}
\end{align}
Combining \eqref{eq:mix-rem-H-derivs}--\eqref{eq:mix-rem-GH-first-diff}, Lemma~\ref{lem:mix-rem-t-order}, and \eqref{eq:mix-rem-delta-small} shows that
\[
G_k'(t)\ge \frac{c_H}{2}t
\qquad
(t\in\Tcal_{k,\epsilon},\ d\ge d_1).
\]
Since $\Tcal_{k,\epsilon}$ is separated away from $0$ by Lemma~\ref{lem:mix-rem-t-order}, this proves \eqref{eq:mix-rem-fk-pos}.

Likewise, \eqref{eq:mix-rem-H-second-diff} and \eqref{eq:mix-rem-GH-second-diff} imply
\[
|G_k''(t)|\le C_{k,\epsilon}(1+t^2)
\qquad
(t\in\Tcal_{k,\epsilon},\ d\ge d_1).
\]
Finally,
\[
(G_k^{-1})''(p)
=
-\frac{G_k''(G_k^{-1}(p))}{G_k'(G_k^{-1}(p))^3},
\qquad
p\in[\epsilon/2,1-\epsilon/2],
\]
and \eqref{eq:mix-rem-fk-pos} together with the bound on $G_k''$ proves \eqref{eq:mix-rem-inv-second}.
\end{proof}

\subsection*{B.7. Completion of the proof of Theorem~\ref{prop:mixing-alt-main-rd}}

The local projected Edgeworth expansion in Proposition~\ref{prop:projected-local} and its bootstrap version depend on the Gaussian law only through the spectral bounds for principal submatrices and the shift/strip inequalities. By Lemma~\ref{lem:mix-rem-shift}, the proof of Proposition~\ref{prop:projected-local} remains valid under Assumption~\ref{ass:mixing-alt}; moreover, with the same argument one may enlarge the range from $|I|\le k_0$ to $|I|\le k_0+1$. Thus, for every nonempty $I\subset[d]$ with $|I|\le k_0+1$,
\begin{equation}
\sup_{t\in\Tcal_{k,\epsilon}}
\left|
\Prob\bigl(\bm S_{n,I}\in(t,\infty)^{|I|}\bigr)
-
\int_{(t,\infty)^{|I|}}p_{n,I}(\bm u)\,d\bm u
\right|
\le
C\epsn^2\pi_I(t),
\label{eq:mix-rem-proj-local-data}
\end{equation}
and, with probability at least $1-C/n$,
\begin{equation}
\sup_{t\in\Tcal_{k,\epsilon}}
\left|
\Prob^*\bigl(\bm S_{n,I}^*\in(t,\infty)^{|I|}\bigr)
-
\int_{(t,\infty)^{|I|}}\hat p_{n,\gamma,I}(\bm u)\,d\bm u
\right|
\le
C\epsn^2\pi_I(t)
\label{eq:mix-rem-proj-local-boot}
\end{equation}
holds simultaneously for all such $I$.

Summing \eqref{eq:mix-rem-proj-local-data} and \eqref{eq:mix-rem-proj-local-boot} over $|I|=s$ gives, for every $s\in\{k,\dots,k_0+1\}$,
\begin{align}
|V_{n,s}(t)-M_{n,s}(t)|
&\le
C\epsn^2 M_{Z,s}(t),
\label{eq:mix-rem-VM-data}
\\
|V_{n,s}^*(t)-\hat M_{n,s,\gamma}(t)|
&\le
C\epsn^2 M_{Z,s}(t)
\label{eq:mix-rem-VM-boot}
\end{align}
uniformly over $t\in\Tcal_{k,\epsilon}$, with the bootstrap bound holding on an event of probability at least $1-C/n$.

To prove \eqref{eq:mix-rem-dist-data}, apply Lemma~\ref{lem:mix-rem-bonf} with $N=N_n(t)$ and $m=k_0$ to obtain
\begin{equation}
\left|
\Prob(T_{n,[k]}>t)
-
\sum_{s=k}^{k_0}(-1)^{s-k}\binom{s-1}{k-1}V_{n,s}(t)
\right|
\le
\binom{k_0}{k-1}V_{n,k_0+1}(t).
\label{eq:mix-rem-dist-bonf-data}
\end{equation}
Using \eqref{eq:mix-rem-VM-data} with $s=k_0+1$ and the bound
\[
|M_{n,k_0+1}(t)-M_{Z,k_0+1}(t)|
\le
C\epsn M_{Z,k_0+1}(t)
\]
from the proof of Theorem~\ref{thm:distribution}, we obtain
\[
V_{n,k_0+1}(t)\le (1+C\epsn+C\epsn^2)M_{Z,k_0+1}(t)\le 2M_{Z,k_0+1}(t)
\]
for all sufficiently large $n$. Hence \eqref{eq:mix-rem-weighted-tail} yields
\begin{equation}
\binom{k_0}{k-1}V_{n,k_0+1}(t)\le C_{k,\epsilon}r_d.
\label{eq:mix-rem-tail-data}
\end{equation}
Also, \eqref{eq:mix-rem-VM-data} and \eqref{eq:mix-rem-weighted-main} imply
\[
\sum_{s=k}^{k_0}\binom{s-1}{k-1}|V_{n,s}(t)-M_{n,s}(t)|
\le
C\epsn^2.
\]
Applying Lemma~\ref{lem:mix-rem-bonf} with $N=N_Z(t)$ gives
\[
\left|
\Prob(T_{\bm Z,[k]}>t)
-
\sum_{s=k}^{k_0}(-1)^{s-k}\binom{s-1}{k-1}M_{Z,s}(t)
\right|
\le
\binom{k_0}{k-1}M_{Z,k_0+1}(t)
\le
C_{k,\epsilon}r_d.
\]
Combining the last three displays with the definition of $Q_{n,k}(t)$ proves \eqref{eq:mix-rem-dist-data}. The bootstrap expansion \eqref{eq:mix-rem-dist-boot} follows in the same way from \eqref{eq:mix-rem-VM-boot}, and the probability of the exceptional event remains bounded by $C/n$.

The derivative bounds in the proof of Theorem~\ref{thm:distribution} use only the derivative estimates for the projected Gaussian densities and the uniform weighted bound \eqref{eq:mix-rem-weighted-main}. Since both inputs are available here, the same argument yields
\begin{equation}
\sup_{t\in\Tcal_{k,\epsilon}}
\Bigl(
|Q_{n,k}(t)|+|Q_{n,k}'(t)|+|Q_{n,k}''(t)|
\Bigr)
\le
C\epsn,
\label{eq:mix-rem-Q-reg-data}
\end{equation}
and, with probability at least $1-C/n$,
\begin{equation}
\sup_{t\in\Tcal_{k,\epsilon}}
\Bigl(
|\hat Q_{n,\gamma,k}(t)|+|\hat Q_{n,\gamma,k}'(t)|+|\hat Q_{n,\gamma,k}''(t)|
\Bigr)
\le
C\epsn,
\label{eq:mix-rem-Q-reg-boot}
\end{equation}
exactly as in Theorem~\ref{thm:distribution}.

Next, let
\[
\hat F_{n,k}(t)=G_k(t)+\hat Q_{n,\gamma,k}(t)+\hat r_n(t),
\qquad
\sup_{t\in\Tcal_{k,\epsilon}}|\hat r_n(t)|\le C(\epsn^2+r_d),
\]
which follows from \eqref{eq:mix-rem-dist-boot}. Since $G_k'(t)\ge m_{k,\epsilon}>0$ on $\Tcal_{k,\epsilon}$ by Lemma~\ref{lem:mix-rem-regularity}, the same implicit-function argument as in the proof of Theorem~\ref{thm:CF} yields a unique solution
\[
\hat c_{1-\alpha,k}=c^G_{1-\alpha,k}+\Delta_{n,k}(\alpha),
\qquad
|\Delta_{n,k}(\alpha)|\le C(\epsn+r_d).
\]
Substituting $t=c^G_{1-\alpha,k}+\Delta_{n,k}(\alpha)$ into the identity $\hat F_{n,k}(t)=1-\alpha$ and expanding as in \eqref{eq:CF-root-equation} gives
\[
\left|
\Delta_{n,k}(\alpha)
+
\frac{\hat Q_{n,\gamma,k}(c^G_{1-\alpha,k})}{f_k(c^G_{1-\alpha,k})}
-
R_{n,k}(\alpha)
\right|
\le
C(\epsn^3+r_d),
\]
uniformly in $\alpha\in(\epsilon,1-\epsilon)$, which proves \eqref{eq:mix-rem-CF}.

For the coverage expansion, write
\[
F_{n,k}(t)=G_k(t)+Q_{n,k}(t)+r_n(t),
\qquad
\sup_{t\in\Tcal_{k,\epsilon}}|r_n(t)|\le C(\epsn^2+r_d),
\]
which follows from \eqref{eq:mix-rem-dist-data}. Insert \eqref{eq:mix-rem-CF} into the Taylor formula
\[
F_{n,k}(\hat c_{1-\alpha,k})
=
F_{n,k}(c^G_{1-\alpha,k})
+
F_{n,k}'(c^G_{1-\alpha,k})\Delta_{n,k}(\alpha)
+
\frac12F_{n,k}''(\xi_{n,k,\alpha})\Delta_{n,k}(\alpha)^2.
\]
Using \eqref{eq:mix-rem-Q-reg-data}, Lemma~\ref{lem:mix-rem-regularity}, and Lemma~\ref{lem:centering}, the same algebra as in the proof of Theorem~\ref{thm:coverage-main} yields
\[
\left|
\Prob(T_{n,[k]}\le \hat c_{1-\alpha,k})
-
\left[
(1-\alpha)+(1-\gamma)Q_{n,k}(c^G_{1-\alpha,k})+\E\{R_{n,k}(\alpha)\}
\right]
\right|
\le
C(\epsn^2+r_d).
\]
Taking complements proves \eqref{eq:mix-rem-coverage}.

If $\gamma=1$, then the linear term disappears and
\[
|R_{n,k}(\alpha)|
\le
C\left(
|\hat Q_{n,\gamma,k}(c^G_{1-\alpha,k})|^2
+
|\hat Q_{n,\gamma,k}'(c^G_{1-\alpha,k})|\,|\hat Q_{n,\gamma,k}(c^G_{1-\alpha,k})|
\right)
\le
C\epsn^2
\]
by \eqref{eq:mix-rem-Q-reg-boot}. Therefore \eqref{eq:mix-rem-thirdmatch} follows from \eqref{eq:mix-rem-coverage}.

Finally, the deterministic-array conditional theorem in Section~A.8 is proved from the conditional versions of Theorems~\ref{thm:distribution}, \ref{thm:CF}, and \ref{thm:coverage-main}. Repeating that argument with \eqref{eq:mix-rem-dist-boot}, \eqref{eq:mix-rem-CF}, and \eqref{eq:mix-rem-coverage} gives the same deterministic-array statement with $C(\epsn^2+r_d)$ in place of $C\epsn^2$. Inserting that conditional bound into the proof of Theorem~\ref{thm:doublewild} yields \eqref{eq:mix-rem-doublewild}. This completes the proof of Theorem~\ref{prop:mixing-alt-main-rd}.

\bibliographystyle{apa}
\bibliography{merged_references}

\end{document}